
\documentclass{amsart}
\usepackage{amsmath,amscd}
\usepackage{amsthm}
\usepackage{amssymb}
\usepackage{setspace}
\usepackage{fancyhdr}
\usepackage{stmaryrd}
\usepackage{bbm}
\input xy
\xyoption{all}
\usepackage[unicode]{hyperref}
\usepackage{color}

\newtheorem{theorem}{{Theorem}}[section]
\newtheorem{conjecture}[theorem]{{Conjecture}}

\newtheorem{corollary}[theorem]{{Corollary}}
\newtheorem{lemma}[theorem]{{Lemma}}
\newtheorem{proposition}[theorem]{Proposition}

\newtheorem{remark}[theorem]{{Remark}}

\theoremstyle{definition}

\newcommand{\QQ}{\mathbb{Q}}

\newcommand{\VV}{\mathbf{V}}
\newcommand{\Zp}{\mathbb{Z}_p}
\newcommand{\Qp}{\mathbb{Q}_p}
\newcommand{\Fp}{\mathbb{F}_p}

\newcommand{\FF}{\mathbb{F}}
\newcommand{\cR}{\mathcal{R}}
\newcommand{\cH}{\mathcal{H}}

\newcommand{\cZ}{\mathcal{Z}}

\newcommand{\cF}{\mathcal{F}}

\newcommand{\fa}{\mathfrak{a}}
\newcommand{\fp}{\mathfrak{p}}
\newcommand{\fq}{\mathfrak{q}}

\newcommand{\fm}{\mathfrak{m}}

\newcommand{\cO}{\mathcal{O}}

\newcommand{\ra}{\rightarrow}
\newcommand{\lra}{\longrightarrow}

\DeclareMathOperator{\Spec}{Spec}
\DeclareMathOperator{\Ker}{Ker}
\DeclareMathOperator{\Sp}{Sp}

\DeclareMathOperator{\End}{End}
\DeclareMathOperator{\Hom}{Hom}

\DeclareMathOperator{\Aut}{Aut}

\DeclareMathOperator{\Gal}{Gal}
\DeclareMathOperator{\GL}{GL}

\newcommand{\rInj}{\mathrm{Inj}}
\newcommand{\rsoc}{\mathrm{soc}}

\newcommand{\F}{\mathbb F}
\newcommand{\N}{\mathbb N}
\newcommand{\Q}{\mathbb Q}
\newcommand{\R}{\mathbb R}

\newcommand{\m}{\mathfrak{m}}
 
\newcommand{\Ht}{\mathrm{ht}} 
\newcommand{\cI}{\mathcal{I}} 
\newcommand{\ide}{\mathbbm{1}}

\newcommand{\bQp}{\overline{\Q}_p}

\newcommand{\Sym}{\mathrm{Sym}}

\newcommand{\Ind}{\mathrm {Ind}}

\providecommand{\cInd}{\mathrm{c}\textrm{-}\mathrm{Ind}}

\newcommand{\Ext}{\mathrm{Ext}}
\newcommand{\Ord}{\mathrm{Ord}}

\newcommand{\fn}{\mathfrak{n}}
\newcommand{\fC}{\mathfrak{C}}
\newcommand{\st}{\mathrm{st}}
\newcommand{\ps}{\mathrm{ps}}
\newcommand{\ver}{\mathrm{ver}}
\newcommand{\irr}{\mathrm{irr}}
\newcommand{\tr}{\mathrm{tr}}

\newcommand{\brho}{\overline{\rho}}
\newcommand{\rhobar}{\overline{\rho}}

\newcommand{\mSpec}{\mathrm{m}\textrm{-}\mathrm{Spec}}

\newcommand{\xto}[1][]{\xrightarrow{#1}}
\newcommand{\simto}{
\xto[\sim]} 

\newcommand{\matr}[4]{\begin{pmatrix}{#1}&{#2}\\ {#3}&{#4}\end{pmatrix}}
\newcommand{\smatr}[4]{\bigl(\begin{smallmatrix} {#1}& {#2}\\ {#3}&{#4}\end{smallmatrix}\bigl)}

\fancyhead[LE,RO]{\thepage}

\raggedbottom

\begin{document}

\title[The Breuil-M\'ezard conjecture for split residual representations]{The Breuil-M\'ezard conjecture for non-scalar split residual representations
}


\author{Yongquan Hu         \and
        Fucheng Tan 
}

\date{}
\maketitle

\begin{abstract}
We prove the Breuil-M\'{e}zard conjecture for split non-scalar residual representations of $\Gal(\bQp/\Q_p)$ by local methods. Combined with the cases previously proved in \cite{ki09} and \cite{pa12}, this completes the proof of the conjecture  (when $p\geq 5$). As a consequence, the local restriction in the proof of the Fontaine-Mazur conjecture in \cite{ki09} is removed. 
 
\end{abstract}

\tableofcontents

\section*{Notation}

\begin{itemize}

\item  $p\geq 5$ is a prime number. The $p$-adic valuation is normalized as $v_p(p)=1$.

\item $E/\Qp$ is a sufficiently large finite extension with ring of integers $\cO$, a (fixed) uniformizer $\varpi$, and residue field $\FF$. Its subring of Witt vectors is denoted by $W(\FF)$. 

\item For a number field $F$, the completion at a place $v$ is written as $F_v$, for which we fix a uniformizer denoted by $\varpi_v$.

\item For a local or global field $L$, $G_L=\Gal(\overline{L}/L)$. The inertia subgroup for the local field is written as $I_L$.

\item For each finite place $v$ in a number field $F$, fix a map $G_{F_v}\ra G_{F}$ by choosing an inclusion $\overline{F}\hookrightarrow \overline{F}_v$ of algebraic closures.

\item  $\epsilon: G_{\Qp}\ra \Zp^{\times}$ is the cyclotomic character, $\omega: G_{\Qp}\ra \Fp^{\times}$ is its reduction mod $p$, and $\tilde{\omega}$ is the Teichm\"{u}ler lifting of $\omega$. 

\item $\mathbbm{1}: G_{\Qp}\ra \Fp^{\times}$ is the trivial character. We also let $\mathbbm{1}$ denote other trivial representations, if no confusion arises.

\item Normalize the local class field map $\Qp^{\times}\ra G_{\Qp}^{\rm ab}$ so that uniformizers correspond to geometric Frobenii. Then a character of $G_{\Qp}$ will also be regarded as a character of $\Qp^{\times}$. 


\item For a ring $R$, $\mSpec R$ denotes the set of maximal ideals.

\item For $R$ a noetherian ring and $M$ a finite $R$-module of dimension at most $d$, let $\ell_{R_{\fp}}(M_{\fp})$ denote the length of the $R_{\fp}$-module $M_{\fp}$, and let $\cZ_d(M)=\sum_{\fp}\ell_{R_{\fp}}(M_{\fp})\fp$   for all $\fp\in \Spec R$ such that  $\mathrm{dim}R/\fp=d$. When the context is clear, we simply denote it by $\cZ(M)$.

\item  For $R$ a noetherian local ring with maximal ideal $\fm$ and $M$ a finite $R$-module,  and for an $\fm$-primary ideal $\fq$ of $R$, let $e_{\fq}(R,M)$ denote the Hilbert-Samuel multiplicity of $M$ with respect to $\fq$. 
We abbreviate $e_{\fm}(R,M)=e(R,M)$ and $e_{\fq}(R,R)=e_{\fq}(R)$. 

\item For $r\geq 0$, we let $\Sym^rE^2$ (resp. $\Sym^r\F^2$) be the usual symmetric power representation of $\GL_2(\Zp)$ (resp. of $\GL_2(\F_p)$, but viewed as a representation of $\GL_2(\Zp)$). 
\end{itemize}

\section{Introduction}

Consider the following data: 
\begin{enumerate}
\item[-] an integer $k\geq 2$,

\item[-] a representation $\tau: I_{\Qp}\ra \GL_2(E)$ with open kernel,

\item[-] a continuous character $\psi: G_{\Qp}\ra \cO^{\times}$ such that $\psi|_{I_{\Qp}}=\epsilon^{k-2}\mathrm{det} \tau$.
\end{enumerate}
We call such a triple $(k,\tau,\psi)$ a $p$-adic Hodge type. We say a $2$-dimensional continuous representation $\rho: G_{\Qp}\ra \GL_2(E)$ is of type $(k,\tau,\psi)$ if $\rho$ is potentially semi-stable (i.e. de Rham) such that its Hodge-Tate weights are $(0,k-1)$,  $\mathrm{WD}(\rho)|_{I_{\Qp}}\simeq \tau$, and $\mathrm{det}\rho\simeq \psi\epsilon$. Here $\mathrm{WD}(\rho)$ is the Weil-Deligne representation associated to $\rho$ by Fontaine \cite{Fo}.

By a result of Henniart \cite{He},  there is a unique finite dimensional smooth irreducible $\bQp$-representation $\sigma(\tau)$ (resp. $\sigma^{\rm cr}(\tau)$) of $\GL_2(\Zp)$ associated to $\tau$, such that for any infinite dimensional smooth absolutely irreducible representation $\pi$ of $\GL_2(\Qp)$ and the associated Weil-Deligne representation $LL(\pi)$ via classical local Langlands correspondence, we have $\Hom_{\GL_2(\Zp)}(\sigma(\tau),\pi)\neq 0$ if and only if $LL(\pi)|_{I_{\Qp}}\simeq \tau$ (resp. $\Hom_{\GL_2(\Zp)}(\sigma^{\rm cr}(\tau),\pi)\neq 0$ if and only if $LL(\pi)|_{I_{\Qp}}\simeq \tau$ and the monodromy operator is trivial). We remark that  $\sigma(\tau)$ and  $\sigma^{\rm cr}(\tau)$ differ only when $\tau=\chi\oplus\chi$ is scalar, in which case 
\[\sigma({\tau})=\tilde{\rm st}\otimes\chi\circ\det,\quad \sigma^{\rm cr}(\tau)=\chi\circ\det\]
where $\tilde{\st}$ is the inflation to $\GL_2(\Zp)$ of the Steinberg representation of $\GL_2(\F_p)$.

Enlarging $E$ if needed, we may and do assume $\sigma(\tau)$ is defined over $E$. Form the finite dimensional $\GL_2(\Zp)$-representation \[ \sigma(k,\tau)=\Sym^{k-2}E^2\otimes_E\sigma(\tau)\] and the semi-simplification $\overline{\sigma(k,\tau)}^{\rm ss}$ of the reduction modulo $\varpi$ of a $\GL_2(\Zp)$-stable $\cO$-lattice inside  $\sigma(k,\tau)$. Then $\overline{\sigma(k,\tau)}^{\rm ss}$ does not depend on the choice of the lattice. 

Recall that the finite dimensional irreducible $\FF$-representations of $\GL_2(\Zp)$  are of the form 
\[\sigma_{n,m}:=\Sym^n\FF^2\otimes\mathrm{det}^m, \quad n\in\{0,\cdots, p-1\}, m\in\{0,\cdots, p-2\}.\]
For each $\sigma_{n,m}$ let $a_{n,m}=a_{n,m}(k,\tau)$ be the multiplicity with which $\sigma_{n,m}$ occurs in $\overline{\sigma(k,\tau)}^{\rm ss}$. We have the obvious analogue in the crystalline case by considering 
\[\sigma^{\rm cr}(k,\tau):=\Sym^{k-2}E^2\otimes_E \sigma^{\rm cr}(\tau)\]
and denote the resulting numbers by $a_{n,m}^{\rm cr}=a_{n,m}^{\rm cr}(k,\tau)$.

Let $\overline{\rho}:G_{\Qp}\ra \GL_2(\F)$  be a continuous representation and $R^{\square}(\overline{\rho})$ be its universal framed deformation ring (\cite{ki08}). The following results on the structure of potentially semi-stable framed deformation rings are known.

\begin{theorem}[Kisin, \cite{ki08}]\label{pst}
There is a unique (possibly trivial) quotient $R^{\square,\psi}(k,\tau,\overline{\rho})$ (resp. $R_{\rm cr}^{\square,\psi}(k,\tau,\overline{\rho})$) of  $R^{\square}(\overline{\rho})$  such that 

(i) A map $x: R^{\square}(\overline{\rho})\ra E'$, for any finite extension $E'/E$, factors through  $R^{\square,\psi}(k,\tau,\overline{\rho})$ (resp. $R_{\rm cr}^{\square,\psi}(k,\tau,\overline{\rho})$) if and only if the Galois representation $\rho_{x}$ corresponding to $x$ is of type $(k,\tau,\psi)$ (resp. and is potentially crystalline).  

(ii) $R^{\square,\psi}(k,\tau,\overline{\rho})$ (resp. $R_{\rm cr}^{\square,\psi}(k,\tau,\overline{\rho})$) is $p$-torsion free.

(iii) $R^{\square,\psi}(k,\tau,\overline{\rho})[1/p]$  (resp. $R_{\rm cr}^{\square,\psi}(k,\tau,\overline{\rho})[1/p]$) is reduced, all of whose irreducible components are smooth of dimension $4$.
\end{theorem}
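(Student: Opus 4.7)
The plan is to construct $R^{\square,\psi}(k,\tau,\overline{\rho})$ (and its crystalline analogue) as the scheme-theoretic image of a projective morphism from a moduli space of integral $p$-adic Hodge data, and then read off (i)--(iii) from the geometry of that moduli space together with local Galois-cohomology computations on the generic fiber. The starting point is to fix a finite Galois extension $L/\Qp$ on which $\tau$ becomes trivial, so that representations of type $(k,\tau,\psi)$ become semi-stable upon restriction to $G_{L}$. I would then construct a projective scheme
\[ \Theta : \fX^{\psi}(k,\tau) \to \Spec R^{\square}(\overline{\rho}) \]
whose $A$-points parametrize Breuil--Kisin modules over $\mathfrak{S}_{L} \otimes_{\Zp} A$ with descent data from $L$ to $\Qp$, of $E$-height at most $k-1$ and determinant corresponding to $\psi$, equipped with an identification of their associated $G_{\Qp}$-representation with a $G_{\Qp}$-stable lattice in the tautological framed deformation. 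Sitting inside a relative affine Grassmannian cut out by the height condition, $\fX^{\psi}(k,\tau)$ is projective over $\Spec R^{\square}(\overline{\rho})$, and I would define $R^{\square,\psi}(k,\tau,\overline{\rho})$ as the scheme-theoretic image of $\Theta$. Part (i) then follows because, via the Breuil--Kisin functor on the generic fiber combined with Colmez--Fontaine, an $E'$-point of $R^{\square}(\overline{\rho})$ lies in the image of $\Theta$ if and only if the corresponding Galois representation is of type $(k,\tau,\psi)$.

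Because Breuil--Kisin modules are finite free over $\mathfrak{S}$, $\fX^{\psi}(k,\tau)$ is $\cO$-flat by construction, so its scheme-theoretic image $R^{\square,\psi}(k,\tau,\overline{\rho})$ is $p$-torsion free, giving (ii). For (iii), after inverting $p$ the completed local ring of $R^{\square,\psi}(k,\tau,\overline{\rho})[1/p]$ at any closed point $x$ pro-represents the functor of framed deformations of the associated $\rho_{x}$ which remain of type $(k,\tau,\psi)$ on Artinian $E'$-algebras. Its tangent space is the framed version of the Bloch--Kato Selmer group $H^{1}_{g}(G_{\Qp}, \mathrm{ad}^{0}\,\rho_{x})$ with fixed determinant. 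A standard local Bloch--Kato and Tate calculation over $\Qp$, using that the Hodge--Tate weights of $\rho_{x}$ are distinct, yields
\[ \dim H^{1}_{g}(G_{\Qp}, \mathrm{ad}^{0}\,\rho_{x}) = 1 + \dim H^{0}(G_{\Qp}, \mathrm{ad}^{0}\,\rho_{x}); \]
adding the framing contribution $3 - \dim H^{0}(G_{\Qp}, \mathrm{ad}^{0}\,\rho_{x})$ (that is, $\dim \GL_{2} - \dim Z(\rho_{x})$) brings the total tangent dimension to $4$. Hence every closed point of $R^{\square,\psi}(k,\tau,\overline{\rho})[1/p]$ is regular of dimension $4$, so the ring is reduced and each irreducible component is smooth of dimension $4$.

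The main obstacle, and the heart of Kisin's construction in \cite{ki08}, is establishing the representability and projectivity of $\fX^{\psi}(k,\tau)$ and verifying that it has the expected generic fiber. This requires the full strength of the integral $p$-adic Hodge theory of Breuil--Kisin modules with descent data: one must set up a well-behaved functor from such modules to Galois-stable lattices, bound the $E$-height of the modules corresponding to each fixed Hodge type, and realize the moduli inside a suitable affine Grassmannian to deduce projectivity. The dimension and smoothness statements in (iii) become clean consequences of the Galois-cohomological computations above, but only after this moduli-theoretic picture is in place.
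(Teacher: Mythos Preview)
The paper does not give its own proof of this theorem: it is stated as background and attributed entirely to Kisin \cite{ki08}, with no argument supplied. So there is nothing in the paper to compare your proposal against.

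That said, your sketch is a reasonable high-level summary of Kisin's original argument in \cite{ki08}, with one caveat worth flagging. You assert that $\fX^{\psi}(k,\tau)$ is $\cO$-flat ``by construction'' and deduce (ii) directly; this is too quick. In Kisin's setup the moduli space of $\mathfrak{S}$-modules is not automatically flat over $\cO$, and the quotient ring is not literally the scheme-theoretic image of $\Theta$ but rather the maximal $p$-torsion-free quotient of $R^{\square}(\overline{\rho})$ whose generic fiber agrees with the image of $\Theta[1/p]$. Equivalently, one first identifies the correct closed subscheme of $\Spec R^{\square}(\overline{\rho})[1/p]$ (via the equivalence between semi-stable representations and weakly admissible filtered $(\varphi,N)$-modules with descent data, applied in families), and then takes the unique $\cO$-flat closure in $\Spec R^{\square}(\overline{\rho})$. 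With that correction, the rest of your outline---the tangent-space computation via $H^{1}_{g}$ and the framing adjustment yielding dimension $4$---is the standard route to (iii).
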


The following conjecture, the so-called Breuil-M\'ezard conjecture, relates the Hilbert-Samuel multiplicity of $R^{\square,\psi}(k,\tau,\brho)/\varpi$ (resp. $R^{\square,\psi}_{\rm cr}(k,\tau,\brho)/\varpi$) with the numbers $a_{n,m}$ (resp. $a_{n,m}^{\rm cr}$). 

\begin{conjecture}[Breuil-M\'{e}zard, \cite{bm1}]\label{bm} For any $(k,\tau,\psi)$ as above, we have
 
\begin{equation}\label{bme}e(R^{\square,\psi}(k,\tau,\overline{\rho})/\varpi)=\sum_{n,m}a_{n,m}(k,\tau)\mu_{n,m}(\overline{\rho}),\end{equation} 
\begin{equation}\label{bmecr}e(R_{\rm cr}^{\square,\psi}(k,\tau,\overline{\rho})/\varpi)=\sum_{n,m}a_{n,m}^{\rm cr}(k,\tau)\mu_{n,m}(\overline{\rho})\end{equation}for some integers  $\mu_{n,m}(\overline{\rho})$ which are independent of $k$, $\tau$ and $\psi$. 
\end{conjecture}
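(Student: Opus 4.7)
The plan is to extend Pa\v{s}k\=unas's local representation-theoretic method from \cite{pa12} to the remaining case $\brho\simeq\chi_1\oplus\chi_2$ non-scalar split. The central tool is Colmez's functor $\mathbf{V}$ from the category $\Modlfin$ of locally finite smooth $\GL_2(\Qp)$-representations (with fixed central character $\xi$ corresponding to $\psi\epsilon$) to $2$-dimensional representations of $G_{\Qp}$. For such $\brho$ one first isolates the block $\mathfrak{B}$ of $\Modlfin$ whose simple objects are the principal series representations whose socles realize the Serre weights of $\brho$, and then works with an injective envelope $\tilde{J}$ of the socle of $\mathfrak{B}$ together with its endomorphism ring $\tilde{E}:=\End_{\mathfrak{B}}(\tilde{J})^{\rm op}$.

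The first main step is to identify deformation-theoretic objects with representation-theoretic ones: one expects $\mathbf{V}(\tilde{J})$ to realize the universal deformation of $\brho$ over $\tilde{E}$, and the center $Z(\tilde{E})$ to coincide with the universal pseudo-deformation ring $R^{\ps,\psi}(\brho)$. Once this is in place, there is a natural functor $M$ from $\GL_2(\Zp)$-representations to finitely generated $\tilde{E}$-modules, roughly $\sigma\mapsto \Hom_{\GL_2(\Zp)}(\sigma,\tilde{J}^{\vee})^{\vee}$, through which every framed potentially semi-stable deformation ring $R^{\square,\psi}(k,\tau,\brho)$ can be controlled: using Theorem~\ref{pst} and the fact that the image of $\mathbf{V}$ on $M(\sigma(k,\tau))$ is forced to have Hodge type $(k,\tau,\psi)$, one identifies $R^{\square,\psi}(k,\tau,\brho)$ with an algebra acting faithfully on $M(\sigma(k,\tau))$, and its Hilbert-Samuel multiplicity mod $\varpi$ with a normalized length of $M(\sigma(k,\tau))/\varpi$.

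The integers appearing in \eqref{bme} are then defined by $\mu_{n,m}(\brho):=e(\tilde{E}/\varpi,\,M(\sigma_{n,m})/\varpi)$ with a fixed normalization over $R^{\ps,\psi}(\brho)$, which manifestly depends only on $\brho$. The identity \eqref{bme} follows from additivity of Hilbert-Samuel multiplicities in short exact sequences, applied to a Jordan-H\"older filtration of the reduction of a $\GL_2(\Zp)$-stable lattice in $\sigma(k,\tau)$: each composition factor $\sigma_{n,m}$ occurs with multiplicity $a_{n,m}(k,\tau)$ and contributes $a_{n,m}(k,\tau)\mu_{n,m}(\brho)$ to the right-hand side. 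The crystalline identity \eqref{bmecr} is proved in parallel with $\sigma^{\rm cr}(\tau)$ in place of $\sigma(\tau)$.

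The main obstacle, and what distinguishes this case from those treated in \cite{pa12}, is the explicit analysis of the block $\mathfrak{B}$ when $\brho$ is split non-scalar. Compared with the reducible non-split generic situation, $\mathfrak{B}$ now contains non-trivial self-extensions and extensions in both directions between the two principal series attached to $\chi_1$ and $\chi_2$, so $\tilde{E}$ is essentially non-commutative and of larger rank over its center. Verifying the isomorphism $R^{\ps,\psi}(\brho)\simto Z(\tilde{E})$ in this setting, controlling the $\tilde{E}$-module structure and the lengths of $M(\sigma_{n,m})$, and establishing the compatibility between the Hodge-theoretic quotient defining $R^{\square,\psi}(k,\tau,\brho)$ and the $\tilde{E}$-module $M(\sigma(k,\tau))$, form the technical core of the argument. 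A secondary technical point is the verification that both sides of \eqref{bme} are indeed integers and independent of the chosen $\GL_2(\Zp)$-stable lattice, which uses the equidimensionality statement from Theorem~\ref{pst}(iii).
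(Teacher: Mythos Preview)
Your proposal has a genuine gap at the step where you ``identify $R^{\square,\psi}(k,\tau,\brho)$ with an algebra acting faithfully on $M(\sigma(k,\tau))$''. The center $Z(\tilde{E})$ is the pseudo-deformation ring $R^{\ps,\psi}(\brho)$, and for split $\brho$ this is \emph{strictly smaller} than the versal (hence framed) deformation ring: in the paper's coordinates, $R^{\ver,\psi}$ has an extra variable $b$ recording the choice of extension class, and the trace map $f^{\ver}:R^{\ps,\psi}\to R^{\ver,\psi}$ sends $c_i\mapsto bc_i$. Consequently the $\tilde{E}$-module $M(\sigma(k,\tau))$ only sees the pseudo-deformation ring, and there is no way to recover the Hilbert--Samuel multiplicity of $R^{\square,\psi}(k,\tau,\brho)/\varpi$ from it by the additivity argument you sketch. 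Indeed, the paper states explicitly in the introduction that it is \emph{not} able to construct a $\GL_2(\Qp)$-representation over $R^{\ver}(\brho)$ giving the universal deformation via Colmez's functor when $\brho\simeq\ide\oplus\omega$.

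The paper's actual route is quite different. First, it works over $R^{\ps,\psi}$ with the projective envelopes $N_1$ of $\pi_\alpha^{\vee}$ and $N_2$ of $\Sp^{\vee}$ \emph{separately} (not a single $\tilde{J}$), obtaining cycle identities on $R^{\ps,\psi}$ supported on a single prime $J$ (Theorems~\ref{theorem-M1} and~\ref{Theorem-M2}); note that $N_2$ is not flat over $R^{\ps}$, which already forces substantial extra work. Second, it writes down the rings $R^{\ver,\psi}$, $R^{1,\psi}$, $R^{\rm peu,\psi}$ explicitly and constructs maps $f^{\ver}$, $f^{\rm peu}$, $\gamma_i$ among them (Section~\ref{maps}). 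The ideal $JR^{\ver,\psi}$ has three minimal primes $\fp_1,\fp_2,\fp_3$; the localizations $R^{\ps,\psi}_J\to R^{\ver,\psi}_{\fp_i}$ are flat and radical-preserving for $i=1,2$ (Proposition~\ref{f1f2}), giving the lengths at $\fp_1,\fp_2$ directly from the pseudo-ring computation, while for $\fp_2,\fp_3$ one uses the flat maps $\gamma_i:R^{\rm peu,\psi}_{\fq_i}\to\widehat{R^{\ver,\psi}_{\fp_i}}$ (Proposition~\ref{f3}) together with the already-proved Breuil--M\'ezard conjecture for the non-split peu ramifi\'e $\brho^{\rm peu}$ to pin down the remaining length at $\fp_3$. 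Your description of the block is also off in the non-generic case: for $\brho\simeq\ide\oplus\omega$ the block is $\{\Sp,\ide,\pi_\alpha\}$, not two principal series.
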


In particular, the conjecture implies that $$\mu_{n,m}(\overline{\rho})=e\left(R_{\rm cr}^{\square, \psi}(n+2,(\tilde{\omega}^{m})^{\oplus 2},\overline{\rho})/\varpi\right)$$ which can be computed. We refer the reader to \cite[1.1.6]{ki09} for these numbers, and remark that when $n=p-2$ and $\overline{\rho}$ is scalar, $\mu_{p-2,m}(\overline{\rho})=4$, as is shown in \cite{sa}. 

Conjecture \ref{bm} was proved by Kisin  \cite{ki09} in the cases that $\overline{\rho}$ is not (a twist of) an  extension of  $\mathbbm{1}$ by   $\omega$. He first proved the ``$\leq$'' part of (\ref{bme})  and (\ref{bmecr}) using the $p$-adic local Langlands \cite{co}, and then combined it with the (global) modularity lifting method to deduce the  ``$\geq$'' part. Years later, the conjecture was proved by Pa\v{s}k\={u}nas  \cite{pa12} for all  $\overline{\rho}$ with only scalar endomorphisms, using the $p$-adic local Langlands and his previous (local) results in \cite{pa10}. We prove, also using local methods (except for one global input due to Emerton \cite{em2}, see the introduction of \cite{pa12}), the following theorem (in the language of cycles of \cite{eg}), which in particular includes the remaining case of the conjecture (when $p\geq 5$).

\begin{theorem}[Remark \ref{bmgenericsplit}, Theorem \ref{bmng}, Theorem \ref{bmngcris}]\label{mainth}
For any continuous representation $\overline{\rho}: G_{\Qp}\ra \GL_2(\FF)$ which is isomorphic to the direct sum of two distinct characters, and for any $(k,\tau,\psi)$ as above, there are $4$-dimensional cycles $\cZ_{n,m}$ of $R^{\square}(\overline{\rho})$ which are independent of $(k,\tau,\psi)$ such that 
\[
\cZ(R^{\square,\psi}(k,\tau,\overline{\rho})/\varpi)=\sum_{n,m}a_{n,m}(k,\tau)\cZ_{n,m}.\]
\[
\cZ(R_{\rm cr}^{\square,\psi}(k,\tau,\overline{\rho})/\varpi)=\sum_{n,m}a_{n,m}^{\rm cr}(k,\tau)\cZ_{n,m}.\]
 Moreover, we have $\cZ_{n,m}=\cZ(R_{\rm cr}^{\square, \psi}(n+2,(\tilde{\omega}^{m})^{\oplus 2},\overline{\rho})/\varpi))$. In particular, the Breuil-M\'{e}zard Conjecture \ref{bm} is true. \end{theorem}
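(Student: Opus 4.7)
The plan is to adapt Paškūnas's local approach from \cite{pa12} to the split non-scalar case $\overline{\rho}\cong\chi_1\oplus\chi_2$ with $\chi_1\neq\chi_2$. The new difficulty is that $\End_{G_{\Qp}}(\overline{\rho})=\FF\times\FF$, so the block of the Paškūnas category $\Modlfin$ attached to $\overline{\rho}$ is not controlled by the injective envelope of a single smooth irreducible $\GL_2(\Qp)$-representation, and $R^{\square}(\overline{\rho})$ cannot be extracted directly from a Colmez-functor computation in the way it is for indecomposable $\overline{\rho}$. On the other hand, the pseudo-deformation ring $R^{\ps}$ only sees the semisimplification and therefore coincides with the one for the corresponding non-split extensions, which is controlled by \cite{pa12}. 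The strategy is to build the cycle identities of Theorem \ref{mainth} at the level of $R^{\ps}$ first and then refine to the framed deformation ring by exploiting the extra data carried by the framing and by the decomposition of the split block.

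In the generic range $\chi_1\chi_2^{-1}\notin\{\mathbbm{1},\omega^{\pm 1}\}$ (Remark \ref{bmgenericsplit}), the block of $\overline{\rho}$ decomposes as a direct sum of two subblocks, one for each attached principal series, each of which is covered by \cite{pa12}. The presentation of $R^{\square}(\overline{\rho})$ as an algebra over $R^{\ps}$ splits accordingly, and since the cycle identity
\[
\cZ(R^{\square,\psi}(k,\tau,\overline{\rho})/\varpi)=\sum_{n,m}a_{n,m}(k,\tau)\,\cZ_{n,m}
\]
in the non-split cases can be rewritten purely in terms of $R^{\ps}$-data together with the smooth factor coming from the framing, transferring it to split $\overline{\rho}$ is essentially formal. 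The candidate cycles $\cZ_{n,m}$ are then forced to equal $\cZ(R^{\square,\psi}_{\cris}(n+2,(\tilde\omega^m)^{\oplus 2},\overline{\rho})/\varpi)$ by specializing the identity to the types $(\tilde\omega^m)^{\oplus 2}$, for which only a single Serre weight appears.

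The main obstacle is the non-generic case $\chi_1\chi_2^{-1}=\omega^{\pm 1}$ (Theorems \ref{bmng} and \ref{bmngcris}), so up to twist $\overline{\rho}\cong\mathbbm{1}\oplus\omega$. Here the block of $\overline{\rho}$ in $\Modlfin$ also contains the indecomposable non-split extensions of $\mathbbm{1}$ by $\omega$ (and vice versa), and the irreducible components of $\Spec R^{\square,\psi}(k,\tau,\overline{\rho})$ mix split and non-split deformations. To handle this I would (i) for each Hodge type $(k,\tau,\psi)$ choose a $\GL_2(\Zp)$-stable $\cO$-lattice in $\sigma(k,\tau)$ whose reduction realises each Serre weight $\sigma_{n,m}$ with multiplicity $a_{n,m}(k,\tau)$; (ii) use Paškūnas's description of the injective envelope of the non-generic block, together with the one global input from \cite{em2}, to produce an admissible unitary $E$-Banach representation $\Pi$ of $\GL_2(\Qp)$ from which $\Hom_{\GL_2(\Zp)}(\sigma(k,\tau),\Pi)$ recovers the cycle of $R^{\square,\psi}(k,\tau,\overline{\rho})/\varpi$; and (iii) upgrade the resulting multiplicity identity to a cycle identity by analysing the ``crossed'' components of the deformation space using the parallel results for the non-split indecomposable $\overline{\rho}$ already available in \cite{pa12}. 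The hardest step is (iii): one must verify that the multiplicity of each $4$-dimensional prime of $R^{\square,\psi}(k,\tau,\overline{\rho})/\varpi$ is determined by the combinatorics $\{a_{n,m}(k,\tau)\}$ alone and is independent of the particular $(k,\tau,\psi)$ realising them, so that universal cycles $\cZ_{n,m}$ exist. The crystalline case is parallel, and the Hilbert-Samuel Conjecture \ref{bm} follows from the cycle version by taking degrees.
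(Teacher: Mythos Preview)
Your overall architecture—prove a cycle identity on the pseudo-deformation ring $R^{\ps}$ first, then transfer to the (framed or versal) deformation ring of the split $\overline{\rho}$—matches the paper. But your step (iii) in the non-generic case hides a genuine obstruction, and as written the argument would not close.

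Concretely, for $\overline{\rho}\cong\mathbbm{1}\oplus\omega$ the paper shows that $R^{\rm ver,\psi}/JR^{\rm ver,\psi}$ has exactly three minimal primes $\fp_1,\fp_2,\fp_3$, and the trace map $f^{\rm ver}:R^{\ps,\psi}\to R^{\rm ver,\psi}$ is flat and radical-preserving after localizing at $\fp_1$ and $\fp_2$, but \emph{not} at $\fp_3$. So the ``transfer from $R^{\ps}$'' that you describe in (iii) only determines the multiplicities at $\fp_1$ and $\fp_2$ (via Theorems~\ref{theorem-M1} and~\ref{Theorem-M2}, applied to the two projective envelopes $N_1,N_2$ separately, not to a single $\Pi$ as you suggest in (ii)). The multiplicity at $\fp_3$ cannot be read off from $R^{\ps}$-data alone: the image of $\Spec(R^{\rm ver,\psi}/\fp_3)$ in $\Spec R^{\ps,\psi}$ collapses to a point. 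Your proposal does not indicate how to handle this component.

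The paper's fix is to bring in a \emph{third} ring, the universal deformation ring $R^{\rm peu,\psi}$ of the non-split peu ramifi\'e extension $\overline{\rho}^{\rm peu}$, and to construct explicit flat, radical-preserving local maps $\gamma_i:R^{\rm peu,\psi}_{\fq_i}\to\widehat{R^{\rm ver,\psi}_{\fp_i}}$ for $i=2,3$ that are compatible with the trace maps from $R^{\ps,\psi}$. Since Breuil--M\'ezard is already known for $\overline{\rho}^{\rm peu}$ by \cite{pa12}, this pins down the sum of the multiplicities at $\fp_2$ and $\fp_3$; combined with the value at $\fp_2$ already obtained, the multiplicity at $\fp_3$ drops out. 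This peu ramifi\'e comparison is the missing ingredient in your step (iii).

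Two smaller points. In the generic case, the block containing the two principal series is a single block with two irreducibles linked by nontrivial extensions, not a direct sum of two subblocks; the paper's argument there is simply that $f^{\rm ver}:R^{\ps,\psi}\to R^{\rm ver,\psi}$ is globally flat and radical-preserving, so the known cycle identities for the two non-split extensions transfer directly. And in your step (ii), note that the paper explicitly says it is \emph{unable} to construct a single $\GL_2(\Qp)$-representation over $R^{\rm ver}(\overline{\rho})$ playing the role of Pa\v{s}k\={u}nas's $N$; instead it works with the two projectives $N_1,N_2$ over $R^{\ps}$ and accepts that $N_2$ is not even flat over $R^{\ps}$, which is why Section~\ref{subsection-M_2} requires extra care.
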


In fact, we prove Theorem \ref{mainth} in the language  of versal deformation rings $R^{\rm ver}(\rhobar)$ (see \S\ref{maps}). This implies the result,  as is explained in \S\ref{section6}.

Remark \ref{bmgenericsplit} is for the \emph{generic} case, i.e. for $\overline{\rho}=\chi_1\oplus\chi_2$ with $\chi_1\chi_2^{-1}\notin \{ \ide,\omega^{\pm}\}$, while Theorem \ref{bmng} and Theorem \ref{bmngcris} are for the \emph{non-generic} case, i.e. $\overline{\rho}\simeq \ide\oplus\omega$ (up to twist), which is a new result.

For the  proof, we follow closely that of \cite{pa12}, but have to deal with some extra complications, especially  when $\brho$ is a twist of $\ide\oplus \omega$, which we explain now. In \cite{pa12}, Pa\v{s}k\={u}nas developed  a general formalism to deduce the Breuil-M\'ezard conjecture, the key of which is to construct an appropriate  representation of $\GL_2(\Q_p)$ with coefficients in $R^{\rm ver}(\brho)$ satisfying several good properties, one of which is that it gives the universal deformation of $\overline{\rho}$ over $R^{\ver}(\brho)$ via Colmez's functor (in fact, to do so, we should work with deformation rings with fixed determinant, but we ignore this issue in this introduction).  Then, using the $p$-adic local Langlands, he reduces the proof of the conjecture to  representation theory of $\GL_2(\Q_p)$. 
When $\brho$ is split and generic, such a construction can be done easily and essentially follows from that of \cite{pa12}. 

However, we are not able to do it directly when $\brho$ is a twist of $\ide\oplus\omega$. In contrast, such a $\GL_2(\Q_p)$-representation over the pseudo-deformation ring of (the trace of) $\brho$  is known, thanks to Pa\v{s}k\={u}nas' previous work \cite{pa10}.
This naturally suggests that we first mimic Pa\v{s}k\=unas' strategy in the setting of potentially semi-stable pseudo-deformation rings, and then pass to the corresponding versal deformation rings, as   Kisin did in \cite{ki09}.  There are however two complications in doing so. The first one is that the $\GL_2(\Q_p)$-representation over the pseudo-deformation ring constructed in \cite{pa10} is not flat, which makes the arguments more involved when verifying the setting of \cite{pa12}; see \S\ref{subsection-M_2}. The second is that even if the (analogous) conjecture for pseudo-deformation rings is proven,  the local argument in \cite[\S1.7]{ki09} only gives the inequality ``$\leq$''. To resolve these, we construct and  study morphisms among various deformation rings, and reduce the conjecture to the (analogous) statement for pseudo-deformation rings and to the cases which have been treated in \cite{pa12}.  Thus, our proof may also be viewed as a refinement of the local argument in \cite{ki09}.  
 \bigskip

With the main result of \cite{pa12}  and Theorem \ref{mainth} in hand, Kisin's original proof \cite{ki09} applies to give the Fontaine-Mazur conjecture for geometric Galois representations $\rho:G_{\Q}\ra \GL_2(\cO)$ such that $\overline{\rho}|_{G_{\Qp}}$ is  a twist of an extension of $\ide$ by $\omega$, split or not. These are complementary to the  cases treated in \cite{ki09}. Putting them together, we have
the following theorem (recall that $p\geq 5$).

 \begin{theorem}\label{fm}
 
 Let $\rho: G_{\Q}\ra \GL_2(\cO)$ be a continuous representation which is unramified away from a finite set of primes,  whose residual representation $\rhobar$ is odd with restriction $\overline{\rho}|_{\Q(\zeta_p)}$ being absolutely irreducible. If $\rho|_{G_{\Qp}}$ is potentially semi-stable with distinct Hodge-Tate weights, then $\rho$ comes from a modular form, up to a twist.  
 \end{theorem}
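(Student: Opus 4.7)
The strategy is to apply Kisin's proof of the Fontaine-Mazur conjecture from \cite{ki09} verbatim; that proof reduces modularity (up to twist), under the stated hypotheses on $\rhobar$, to two ingredients, namely Kisin's modification of the Taylor-Wiles patching method in the potentially semi-stable setting and the Breuil-M\'ezard Conjecture \ref{bm} for $\rhobar|_{G_{\Qp}}$.

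Concretely, the plan is as follows. After a twist, and after a solvable totally real base change if needed, one works with algebraic modular forms on a definite quaternion algebra $D/\Q$ split at $p$ and at the auxiliary primes of bad reduction. The hypothesis that $\rhobar|_{\Q(\zeta_p)}$ is absolutely irreducible enters here to ensure both the existence of Taylor-Wiles primes and Kisin's adequacy/big-image hypotheses. Fixing the Hodge type $(k,\tau,\psi)$ of $\rho|_{G_{\Qp}}$, Kisin's patching produces a nonzero finite module $M_\infty$ over a ring $R_\infty$ that is formally smooth over the completed tensor product of the local framed deformation rings at the ramified primes, the factor at $p$ being $R^{\square,\psi}(k,\tau,\rhobar)$. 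The modularity of $\rho$ up to twist then reduces to showing that the point of $R^{\square,\psi}(k,\tau,\rhobar)[1/p]$ cut out by $\rho|_{G_{\Qp}}$ lies in $\Supp(M_\infty)$, and in turn to the assertion that every irreducible component of $\Spec R^{\square,\psi}(k,\tau,\rhobar)[1/p]$ meets $\Supp(M_\infty)$.

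Kisin's construction always supplies the inequality $e(R^{\square,\psi}(k,\tau,\rhobar)/\varpi)\geq\sum_{n,m}a_{n,m}(k,\tau)\mu_{n,m}(\rhobar)$ by comparing $M_\infty$ to classical modular forms of weight $\sigma_{n,m}$, while the Breuil-M\'ezard conjecture provides the matching reverse inequality, forcing equality of Hilbert-Samuel multiplicities and hence full component coverage. The conjecture is now known for every $\rhobar|_{G_{\Qp}}$ when $p\geq 5$, by combining \cite{ki09} (the case in which $\rhobar|_{G_{\Qp}}$ is not a twist of an extension of $\ide$ by $\omega$), \cite{pa12} (the non-split such extension), and Theorem \ref{mainth} (the remaining split case $\chi\oplus\chi\omega$).

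The main obstacle had been precisely that last case: the split twists of extensions of $\ide$ by $\omega$ were outside the reach of both \cite{ki09} and \cite{pa12}, and they are exactly what Theorem \ref{mainth} settles. With Breuil-M\'ezard now complete, Kisin's argument applies uniformly in $\rhobar|_{G_{\Qp}}$, forcing $\rho$ onto a modular component, after which standard cyclic descent and twisting recover the statement.
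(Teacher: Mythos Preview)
Your proposal is correct and follows the same strategy as the paper: run Kisin's patching argument from \cite{ki09}, now that the Breuil--M\'ezard conjecture is available for every $\rhobar|_{G_{\Qp}}$ via \cite{ki09}, \cite{pa12}, and Theorem~\ref{mainth}. The paper is, however, more careful about two points you elide with ``verbatim'': first, Kisin's \cite[2.2.15]{ki09} on the existence of modular lifts of prescribed type does not treat the residual shapes $\smatr{\omega}{*}{0}{\ide}$ at $p$, so the paper invokes \cite[Theorem~9.7]{kw2} there and changes the auxiliary building-block rings $\bar{R}_{v,i}^{\square,\psi}$ to semi-stable (rather than crystalline) deformation rings in the peu/tr\`es ramifi\'e cases, to account for the extra component of semi-stable non-crystalline points; second, the paper works with the corrected version of \cite[\S2]{ki09} from \cite[Appendix~B]{GK}. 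These are genuine, if small, modifications rather than a literal rerun.
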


 Note that the majority cases of Theorem \ref{fm} was also  proved by Emerton \cite{em2}, namely the cases for which  $\overline{\rho}|_{G_{\Qp}}$ is  not a twist of an extension   of $\omega$ by  $\ide$ or an extension of  $\ide$ by  $\ide$. Thus the only new case proved here is when $\overline{\rho}|_{G_{\Qp}}$ is a twist of the direct sum $\ide\oplus \omega$. \bigskip

The paper is organized as follows. Section 2 and Section 4 are devoted to the study of the pseudo-deformation rings using representation theory of $\GL_2(\Q_p)$ via the theory developed in \cite{pa10},\cite{pa12}. In Section 3, we give explicit descriptions of certain deformation rings and maps among them.  We prove Theorem \ref{mainth}  in Section 5 and prove  Theorem \ref{fm} in Section 6.
 \bigskip

 \textbf{Acknowledgments.}
The authors are deeply indebted to Mark Kisin and Vytautas Pa\v{s}k\={u}nas for the works \cite{ki09} and  \cite{pa12}. The first named author would like to thank Xavier Caruso and Laurent Moret-Bailly for several discussions, and Vytautas Pa\v{s}k\={u}nas for helpful correspondences. They thank the referee for providing many constructive  comments and for help in  improving the content of this paper.
 They are grateful to the Morningside Center of Mathematics and the Max-Planck Institute for Mathematics for their hospitality in the final stages of the project.

\section{Preparations on $\F$-representations of $\GL_2(\Q_p)$} 

In  this section, we redefine and study Kisin's map $\theta$ \cite[1.5.11]{ki09}. It will be used in \S 4.\medskip

Let $G:=\GL_2(\Qp)$, $K:=\GL_2(\Zp)$ and $Z\subset G$ be the centre. Denote by  $P\subset G$   the upper triangular Borel subgroup, by $I\subset K$ the upper triangular Iwahori subgroup, and by $I_1\subset K$ the  upper triangular pro-$p$-Iwahori subgroup.

Let $\mathrm{Mod}_{G}^{\rm sm}(\cO)$ be the category of smooth $G$-representations on $\cO$-torsion modules and $\mathrm{Mod}_{G}^{\rm l,fin}(\cO)$ be its full subcategory consisting of locally finite objects. Here an object $\tau\in\mathrm{Mod}_{G}^{\rm sm}(\cO)$ is said to be \emph{locally finite} 
 if for all $v\in \tau$ the $\cO[G]$-submodule generated by $v$ is of finite length. For $\tau\in \mathrm{Mod}_G^{\rm l,fin}(\cO)$, we write $\rsoc_G\tau$ for its $G$-socle, namely the largest semi-simple sub-representation of $\tau$. Let $\mathrm{Mod}_{G}^{\rm sm}(\F)$ and $\mathrm{Mod}_{G}^{\rm l,fin}(\F)$ be respectively  the full subcategory consisting of $G$-representations on $\F$-modules, i.e. killed by $\varpi$. Moreover, for a continuous character $\zeta:Z\ra\cO^{\times}$, adding the subscript $\zeta$ in any of the above categories indicates  the corresponding full subcategory of $G$-representations with central character $\zeta$.   
 
Let $\mathrm{Mod}_{G}^{\rm pro}(\cO)$ be the category of compact $\cO\llbracket K\rrbracket$-modules with an action of $\cO[G]$ such that the two actions coincide when restricted to $\cO[K]$. This category is anti-equivalent to $\mathrm{Mod}_{G}^{\rm sm}(\cO)$   
under the Pontryagin dual $\tau\mapsto\tau^{\vee}:=\Hom_{\cO}(\tau,E/\cO)$, the latter being equipped with the compact-open topology.  Finally let $\mathfrak{C}_{\zeta}(\cO)$ and $\mathfrak{C}_{\zeta}(\F)$ be respectively the full subcategory of $\mathrm{Mod}_G^{\rm pro}(\cO)$ anti-equivalent to  $\mathrm{Mod}_{G,\zeta}^{\rm l,fin}(\cO)$ and $\mathrm{Mod}_{G,\zeta}^{\rm l,fin}(\F)$.

\subsection{Some $\F$-representations of $G$}\label{subsection-F-rep}

Fix an integer $r\in \{0,...,p-1\}$ and consider the representation $\Sym^{r}\F^2$ of $KZ$ obtained by letting $p\in Z$ act trivially.  Fix a continuous character $\chi:\Q_p^{\times}\ra \F^{\times}$  and  $\lambda \in \F$. For our purpose we will assume:\medskip
 
({\bf H})  $\lambda\neq0 $ and $(r,\lambda)\neq (p-1,\pm 1)$.\medskip

Write $I({\Sym^{r}\F^2}):=\cInd_{KZ}^G\Sym^{r}\F^2$, the compact induction of $\Sym^{r}\F^2$ from $KZ$ to $G$, and $I_{\chi}(\Sym^r\F^2):=I(\Sym^r\F^2)\otimes\chi\circ\det$. By \cite[Proposition 8]{BL}, we have 
$\End_{G}(I_{\chi}(\Sym^r\F^2))\cong \F[T_r]$
for certain Hecke operator  $T_{r}$ (as normalized in \cite[\S3.1]{BL} or in \cite[1.2.1]{ki09}). We will often write $T=T_{r}$ if no confusion is caused.  Write \[\pi(r,\lambda,\chi):=I_{\chi}(\Sym^r\F^2)/(T-\lambda).\] By \cite[Theorem 30]{BL}, $\pi(r,\lambda,\chi)$ is an irreducible principal series if $(r,\lambda)\neq (0,\pm1)$ (under our assumption (\textbf{H})), and is reducible of length 2 if $(r,\lambda)=(0,\pm1)$ in which case  we have a \emph{non-split} short exact sequence:
\[0\ra \Sp\otimes\chi\mu_{\pm1}\circ\det\ra \pi(0,\pm1,\chi)\ra \chi\mu_{\pm1}\circ\det\ra0\]
where $\Sp$ denotes the Steinberg representation of $G$ and $\mu_{\pm1}:\Q_{p}^{\times}\ra \F^{\times}$ denotes the unramified character  sending $p$ to $\pm1$.

Since $\F[T]$  acts freely on $I_{\chi}(\Sym^r\F^2)$ by \cite[Theorem 19]{BL}, for each $n\in \N$  we have a natural $G$-equivariant injection
\[(T-\lambda): I_{\chi}(\Sym^r\F^2)/(T-\lambda)^n\ra I_{\chi}(\Sym^r\F^2)/(T-\lambda)^{n+1}.\]
Write $\pi_n(r,\lambda,\chi):=I_{\chi}(\Sym^r\F^2)/(T-\lambda)^n$ for $n\geq 1$ so that $\pi_1(r,\lambda,\chi)=\pi(r,\lambda,\chi)$. For convenience, we set $\pi_0(r,\lambda,\chi):=0$.
Then, for $1\leq m\leq n$, we have an exact sequence of $G$-representations:
\begin{equation}\label{equation-pi-m-n}0\ra \pi_{m}(r,\lambda,\chi)\overset{(T-\lambda)^{n-m}}{\lra} \pi_{n}(r,\lambda,\chi)\ra \pi_{n-m}(r,\lambda,\chi)\ra0\end{equation}
which is non-split because $\F[T]$ acts freely on $I_{\chi}(\Sym^r\F^2)$. 

Put   $$\pi_{\infty}(r,\lambda,\chi):=\varinjlim_n\pi_n(r,\lambda,\chi).$$ Then $\pi_{\infty}(r,\lambda,\chi)$ is a smooth locally finite $\F$-representation of $G$ with central character $\chi^2\omega^r$. 
Taking $m=1$ and passing to the limit over $n$ in (\ref{equation-pi-m-n}),  we obtain a non-split exact sequence
\begin{equation}\label{equation-pi-infty}
0\ra \pi(r,\lambda,\chi)\ra \pi_{\infty}(r,\lambda,\chi)\ra \pi_{\infty}(r,\lambda,\chi)\ra0.\end{equation}
 
\begin{lemma}\label{Lemma-soc-pi-infty}
(i) The $\F$-vector space $\Hom_{G}\left(\pi(r,\lambda,\chi),\pi_{\infty}(r,\lambda,\chi)\right) $ is of dimension 1  and is spanned by the second  arrow constructed in (\ref{equation-pi-infty}). In particular, any non-zero $G$-equivariant morphism $\pi(r,\lambda,\chi)\ra\pi_{\infty}(r,\lambda,\chi)$ is injective.

(ii)  We have  \[\rsoc_G\pi_{\infty}(r,\lambda,\chi)=\rsoc_{G}\pi(r,\lambda,\chi)=\left\{\begin{array}{lll}\pi(r,\lambda,\chi)&\mathrm{if}\ (r,\lambda)\neq (0,\pm1)\\
 \Sp\otimes\chi\mu_{\pm1}\circ\det&\mathrm{if} \ (r,\lambda)=(0,\pm1).\end{array}\right. \] 
\end{lemma}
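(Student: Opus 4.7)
The plan is to prove (ii) first by analysing the $\F[T]$-module structure, and then to deduce (i) by restricting $G$-morphisms to the socle. The key observation is that the Hecke operator $T$ descends to a $G$-equivariant endomorphism of each $\pi_n(r,\lambda,\chi) = I_\chi(\Sym^r\F^2)/(T-\lambda)^n$, on which $T-\lambda$ is nilpotent of exact order $n$. Using the freeness of $I_\chi(\Sym^r\F^2)$ over $\F[T]$ from \cite[Theorem 19]{BL}, one checks
\[\ker\bigl(T-\lambda\mid\pi_n\bigr)\;=\;(T-\lambda)^{n-1}\pi_n,\]
and this subobject is the image of the injection $\pi(r,\lambda,\chi)\hookrightarrow\pi_n$ in (\ref{equation-pi-m-n}) with $m=1$.

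To prove (ii), let $V\subseteq\pi_\infty(r,\lambda,\chi)$ be any irreducible subobject. Since $\pi_\infty=\varinjlim\pi_n$ and $V$ is cyclic, we have $V\subseteq\pi_n$ for some $n$. Schur's lemma (over the algebraically closed field $\F$) says $T-\lambda$ acts on $V$ as a scalar $c$; the relation $(T-\lambda)^n=0$ on $\pi_n$ forces $c=0$, so $V\subseteq\ker(T-\lambda\mid\pi_n)$, a copy of $\pi(r,\lambda,\chi)$. The transition $\pi_n\hookrightarrow\pi_{n+1}$ (multiplication by $T-\lambda$) restricts to an isomorphism $(T-\lambda)^{n-1}\pi_n\simto(T-\lambda)^n\pi_{n+1}$, so these copies coalesce in $\pi_\infty$ into a single subobject $\pi(r,\lambda,\chi)\subseteq\pi_\infty$, namely the one appearing in (\ref{equation-pi-infty}). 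Hence $\rsoc_G\pi_\infty(r,\lambda,\chi)=\rsoc_G\pi(r,\lambda,\chi)$, and the two cases of the explicit formula follow from the structure of $\pi(r,\lambda,\chi)$ recalled in the discussion preceding (\ref{equation-pi-m-n}).

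For (i), consider the restriction-to-socle map
\[\Hom_G\bigl(\pi(r,\lambda,\chi),\pi_\infty(r,\lambda,\chi)\bigr)\lra\End_G\bigl(\rsoc_G\pi(r,\lambda,\chi)\bigr)\cong\F,\]
where the right-hand equality is Schur's lemma applied to the irreducible socle. If $f$ lies in the kernel, then $\ker f$ is a subobject of $\pi(r,\lambda,\chi)$ containing the socle. In the generic case $\pi(r,\lambda,\chi)$ is irreducible, forcing $f=0$. In the non-generic case $(r,\lambda)=(0,\pm1)$, either $f=0$ or $f$ factors through a nonzero embedding $\chi\mu_{\pm1}\circ\det\hookrightarrow\pi_\infty$; the latter is impossible, since the image would be a subobject of $\rsoc_G\pi_\infty=\Sp\otimes\chi\mu_{\pm1}\circ\det$, which is irreducible and not isomorphic to $\chi\mu_{\pm1}\circ\det$. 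So the Hom space has dimension at most one, and the arrow in (\ref{equation-pi-infty}) provides a nonzero element; hence the dimension is exactly one. Finally, any nonzero $f$ is nonzero on the socle, so $\ker f$ is a subobject of $\pi(r,\lambda,\chi)$ that misses the socle, forcing $\ker f=0$ in both cases.

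The main technical point is the identification $\ker(T-\lambda\mid\pi_n)=(T-\lambda)^{n-1}\pi_n$, which crucially uses the $\F[T]$-freeness of $I_\chi(\Sym^r\F^2)$. Without it, the nilpotent operator $T-\lambda$ could in principle admit a larger kernel containing additional irreducible subobjects, which would derail both the socle computation and the bound on Hom.
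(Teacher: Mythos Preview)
There is a gap in your argument for (ii). You write that Schur's lemma forces $T-\lambda$ to act on an irreducible sub $V\subseteq\pi_n$ as a scalar, but this presupposes $(T-\lambda)V\subseteq V$, which is not given: $T-\lambda$ is a $G$-endomorphism of $\pi_n$, not of $V$. The restriction $(T-\lambda)|_V:V\to\pi_n$ is either zero or an isomorphism onto its image $(T-\lambda)V$; if the image happens to equal $V$ you do get a scalar and then a contradiction with nilpotency, but nothing rules out that $(T-\lambda)V$ is an irreducible subobject \emph{disjoint} from $V$. Iterating, you find that some $(T-\lambda)^{k-1}V\cong V$ lies in $\ker(T-\lambda)\cong\pi_1$, so every irreducible sub of $\pi_n$ is \emph{isomorphic} to $\rsoc_G\pi_1$; but this does not show there is only one such sub, which is exactly the content of the socle computation you are after. (Incidentally, $\F$ is a finite field, not algebraically closed; this is harmless here since the relevant irreducibles are absolutely irreducible.)

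The paper circumvents this by proving (i) first, via induction on $n$ using that the sequences (\ref{equation-pi-m-n}) are non-split: applying $\Hom_G(\pi_1,-)$ to $0\to\pi_{n-1}\to\pi_n\to\pi_1\to 0$ bounds $\dim_{\F}\Hom_G(\pi_1,\pi_n)$ by $2$, with equality forcing a splitting. From (i), part (ii) in the case $\pi_1$ irreducible is immediate; the reducible case $(r,\lambda)=(0,\pm1)$ needs an additional non-splitness argument, together with $\Ext^1_{G/Z}(\ide,\ide)=0$, to show $\dim_{\F}\Hom_G(\Sp,\pi_n)=1$. Your deduction of (i) from (ii) via restriction to the socle is correct and pleasantly clean, but it rests on (ii), whose proof is incomplete as written.
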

\begin{proof}
We give a  proof for the sake of completeness although the argument is standard. To simplify the notation, we write   $\pi_n$ for $\pi_n(r,\lambda,\chi)$ (where $n\in\N\cup\{\infty\}$).

(i) By construction it suffices to prove  that for any $n\geq 1$, the $\F$-vector space  $\Hom_G(\pi_1,\pi_n)$ is of dimension 1 and is  spanned by $(T-\lambda)^{n-1}:\pi_1\ra \pi_n$.
 This is clear  when $n=1$.  Let $n\geq 2$ and assume the assertion is true for $n-1$. Then the exact sequence (\ref{equation-pi-m-n}) with $m=n-1$  induces  
 \[0\ra \Hom_G(\pi_1,\pi_{n-1})\ra \Hom_{G}(\pi_1,\pi_n)\ra \Hom_G(\pi_1,\pi_1).\]
We deduce that $\Hom_G(\pi_1,\pi_n)$ is of dimension $\leq 2$, and the equality holds if and only if the last arrow is surjective if and only if (\ref{equation-pi-m-n}) is split (when $m=n-1$). Since (\ref{equation-pi-m-n}) is non-split, the result follows.   

(ii) The second equality is clear by what we have recalled. For the first one,  if $(r,\lambda)\neq (0,\pm1)$, then   $\pi_1$ is irreducible and each irreducible constituent of $\pi_{\infty}$ is isomorphic to $\pi_1$ so the lemma follows from (i).  

 
Assume   now  $(r,\lambda)=(0,\pm1)$ so that $\rsoc_G\pi_1=\Sp\otimes\chi\mu_{\pm1}\circ\det$.  We assume moreover $\lambda=1$ and $\chi$ is trivial;  the general  case  can be deduced by twisting. In particular, the central character of $\pi_n$ is trivial. Clearly if $\pi$ is an irreducible smooth $\F$-representation of $G$ such that  $\Hom_G(\pi,\pi_{\infty})\neq 0$ then $\pi\cong \Sp$ or $\pi\cong \ide$ (the trivial $\F$-representation of $G$). Moreover, by (i) the natural morphism $\Hom_G(\pi_1,\pi_{\infty})\ra\Hom_G(\Sp,\pi_{\infty})$ is non-zero,  hence $\dim_{\F}\Hom_{G}(\Sp,\pi_{\infty})\geq 1$ and    $\Hom_G(\ide,\pi_{\infty})=0$. 

We are left to show  $\dim_{\F}\Hom_G(\Sp,\pi_{\infty})=1$, or equivalently $\dim_{\F}\Hom_G(\Sp,\pi_{n})=1$ for all $n\geq 1$.   
For each $n\geq 2$ we define $\tau_{n}$ to be the kernel of the  composition  $\pi_n\twoheadrightarrow \pi_1\twoheadrightarrow \ide$. Then $\tau_n$ fits into the exact sequence
\begin{equation}\label{equation-tau-n}0\ra \pi_{n-1}\ra \tau_{n}\ra \Sp\ra0.\end{equation}  If we had $\dim_{\F}\Hom_G(\Sp,\pi_k)\geq 2$ for some $k\in\N$ which we choose to be the smallest,  then $k\geq 2$ and the sequence (\ref{equation-tau-n}) with $n=k$ must split and would induce an exact sequence
\[0\ra \Sp\oplus \pi_{k-1}\ra \pi_k\ra \ide\ra0.\]  
Since $\Hom_G(\pi_{k-1},\ide)\neq 0$ and $\Ext^1_{G/Z}(\ide,\ide)=0$ (since $p\neq 2$, see \cite[\S10.1]{pa10}), this would imply $\dim_{\F}\Hom_G(\pi_k,\ide)\geq 2$ hence
\[\dim_{\F}\Hom_{K}(\Sym^{0}\F^2,\ide)=\dim_{\F}\Hom_{G}(I(\Sym^{0}\F^2),\ide)\geq 2.\]
This being  impossible,  the assertion follows. 
\end{proof}

Let $\rInj_G\pi(r,\lambda,\chi)$ be an injective envelope of $\pi(r,\lambda,\chi)$ in $\mathrm{Mod}_{G,\zeta}^{\rm l,fin}(\F)$, where $\zeta:Z\ra\cO^{\times}$ is a  continuous character whose reduction modulo $\varpi$ is equal to $\chi^2\omega^r$, the central character of $\pi(r,\lambda,\chi)$. Lemma  \ref{Lemma-soc-pi-infty} implies the existence of a $G$-equivariant injection \[\theta: \pi_{\infty}(r,\lambda,\chi)\hookrightarrow \rInj_G\pi(r,\lambda,\chi).\] Such an injection need  not be unique. We will show later that the image of $\theta$ does not depend on the choice; see Corollary \ref{Coro-Imaga-theta}.

Let $\cH$ be the Hecke algebra associated to $\cInd_{I_1Z}^G\zeta$ and $\mathrm{Mod}_{\cH}$ the category of $\cH$-modules. Denote by $\cI:\mathrm{Mod}_{G,\zeta}^{\rm sm}(\F)\ra \mathrm{Mod}_{\cH}$  the left exact functor induced by taking $I_1$-invariants and $\R^i\cI$ its right derived functors for $i\geq 1$, cf. \cite[\S5.4]{pa10} for a collection of properties about this functor.  Recall  the following result.

\begin{lemma}\label{Lemma-Ext^2(H)}
Let $\pi$ be a smooth irreducible non-supersingular $\F$-representation of $G$. Then 
\begin{enumerate}
\item[(i)] $\Ext^2_{\cH}(\cI(\pi),*)=0$;
\item[(ii)]  $\Ext^1_{\cH}(\cI(\pi),\cI(\pi(r,\lambda,\chi)))=0$ except when $\pi\cong \rsoc_G\pi(r,\lambda,\chi)$ in which case the space is of dimension  $1$ over $\F$.
\end{enumerate}
\end{lemma}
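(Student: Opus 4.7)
The plan is to work entirely inside the module category of the pro-$p$-Iwahori Hecke algebra $\cH$, whose structure for $\GL_2(\Q_p)$ with fixed central character $\zeta$ is explicitly known. First, I would recall the classification of non-supersingular simple $\cH$-modules: each $\pi$ as in the statement is a character, a Steinberg twist, or an irreducible principal series, and $\cI(\pi)$ is in each case a simple $\cH$-module whose action of the standard generators can be written down explicitly. In particular, for $\pi = \pi(r,\lambda,\chi)$ (or the socle thereof in the reducible case), the resulting $\cH$-module is determined by the parameter $(r,\lambda,\chi)$ and sits in the non-supersingular block of $\cH$.

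For part (i), I would show that on the non-supersingular part $\cH$ has global dimension at most $1$. Concretely, one exhibits a length-one projective resolution $0 \to P_1 \to P_0 \to \cI(\pi) \to 0$ in which both $P_0$ and $P_1$ are direct summands of the standard projective generator $\cI(\cInd_{I_1Z}^G \zeta)$; this is feasible because away from the supersingular maximal ideals the Bernstein-type presentation of $\cH$ reduces its non-supersingular part to (matrix algebras over) a hereditary commutative ring. Once such a resolution is in hand, $\Ext^2_{\cH}(\cI(\pi),-)$ vanishes automatically.

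For part (ii), armed with the length-one projective resolution from (i), I would compute $\Ext^1_{\cH}(\cI(\pi),\cI(\pi(r,\lambda,\chi)))$ as the cokernel of $\Hom_{\cH}(P_0,\cI(\pi(r,\lambda,\chi))) \to \Hom_{\cH}(P_1,\cI(\pi(r,\lambda,\chi)))$. Via the Yoneda interpretation, a nonzero class corresponds to a non-split extension of $\cI(\pi)$ by $\cI(\pi(r,\lambda,\chi))$ in the category of $\cH$-modules; the existence of such an extension is detected by whether $\cI(\pi)$ appears as a quotient of the radical of an indecomposable injective (equivalently, projective) envelope of $\cI(\pi(r,\lambda,\chi))$, and a direct inspection of the non-supersingular blocks shows this occurs exactly when $\pi \cong \rsoc_G \pi(r,\lambda,\chi)$, with multiplicity one.

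The main obstacle will be the case $(r,\lambda)=(0,\pm 1)$, where $\pi(r,\lambda,\chi)$ is of length two with Steinberg socle and character quotient, so $\cI(\pi(r,\lambda,\chi))$ is an indecomposable $\cH$-module of dimension two whose radical filtration must be tracked carefully in order to distinguish the Steinberg and character contributions to the Ext. Here the key input is the vanishing $\Ext^1_{G/Z}(\ide,\ide)=0$ (valid because $p\neq 2$), already used in the proof of Lemma~\ref{Lemma-soc-pi-infty}, which forces the correct multiplicity-one outcome.
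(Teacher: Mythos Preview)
The paper's own proof is simply a citation: part~(i) is \cite[Lemma~5.24]{pa10} and part~(ii) is \cite[Lemma~5.27(ii),(iii)]{pa10}. Your proposal essentially sketches a re-derivation of those cited lemmas, and the outline is along the right lines: the non-supersingular blocks of $\cH$ are indeed hereditary (this is what underlies \cite[Lemma~5.24]{pa10}), so simple non-supersingular $\cH$-modules have projective dimension at most~$1$, giving~(i); and the $\Ext^1$ computation in~(ii) is what \cite[Lemma~5.27]{pa10} carries out block by block. So the approaches coincide once unpacked, though the paper buys brevity by citing and you would have to reproduce the block-by-block analysis of \cite[\S5]{pa10}.

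There is, however, a genuine confusion in your final paragraph. You invoke $\Ext^1_{G/Z}(\ide,\ide)=0$ as the ``key input'' for the case $(r,\lambda)=(0,\pm1)$, but this is an $\Ext$ group in the category of smooth $G$-representations with trivial central character, not in the category of $\cH$-modules. What the computation actually requires is the values of $\Ext^1_{\cH}$ between the simple $\cH$-modules $\cI(\Sp\otimes\chi\mu_{\pm1}\circ\det)$ and $\cI(\chi\mu_{\pm1}\circ\det)$, together with the long exact sequence coming from $0\to\cI(\Sp\otimes\cdots)\to\cI(\pi(0,\pm1,\chi))\to\cI(\chi\mu_{\pm1}\circ\det)\to 0$; the dimension-one outcome then follows from \cite[Lemma~5.27(iii)]{pa10} combined with the vanishing of $\Ext^2_{\cH}$ from part~(i), exactly as the paper indicates. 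The relationship between $\Ext^1_{G,\zeta}$ and $\Ext^1_{\cH}$ runs through the five-term sequence involving $\R^1\cI$ and is not an identification in general, so importing the $G$-side vanishing $\Ext^1_{G/Z}(\ide,\ide)=0$ directly into an $\cH$-module calculation is not legitimate without further justification.
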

\begin{proof}
(i) It is a special case of   \cite[Lemma 5.24]{pa10}. 
(ii) If $(r,\lambda)\neq (0,\pm1)$ so that $\pi(r,\lambda,\chi)$ is irreducible, it is a special case of \cite[Lemma 5.27(ii)]{pa10}. If $(r,\lambda)=(0,\pm1)$,  then it follows from \cite[Lemma 5.27(iii)]{pa10}, using (i) for the second assertion.
\end{proof}
\begin{proposition}\label{Prop-Image-theta}
The morphism $\theta$ identifies $\pi_{\infty}(r,\lambda,\chi)$ with the largest $G$-stable subspace of $\rInj_G\pi(r,\lambda,\chi)$  generated by its $I_1$-invariants. In other words, $\theta$ induces an isomorphism
\[\theta: \pi_{\infty}(r,\lambda,\chi)\simto \langle G\cdot (\rInj_G\pi(r,\lambda,\chi))^{I_1}\rangle. \] 
\end{proposition}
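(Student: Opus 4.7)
My plan is to establish the two inclusions $\theta(\pi_\infty) \subseteq V$ and $V \subseteq \theta(\pi_\infty)$, where I abbreviate $V := \langle G\cdot J^{I_1}\rangle$, $J := \rInj_G\pi(r,\lambda,\chi)$, and $\pi_\infty := \pi_\infty(r,\lambda,\chi)$.

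\emph{First inclusion.} It suffices to prove that $\pi_\infty$ is generated by its $I_1$-invariants. Since $\pi_\infty = \varinjlim_n \pi_n$ and each $\pi_n$ is a $G$-equivariant quotient of $I_\chi(\Sym^r\F^2)$, the claim reduces to showing $I_\chi(\Sym^r\F^2)$ is generated by its $I_1$-invariants. This is routine: the $K$-module $\Sym^r\F^2$ is generated by its $I_1$-fixed highest weight vector (as $0\le r\le p-1$), and the compact induction $I_\chi(\Sym^r\F^2)=\cInd_{KZ}^G\Sym^r\F^2\otimes\chi\circ\det$ is generated as a $G$-representation by the copy of $\Sym^r\F^2$ at the identity coset.

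\emph{Second inclusion.} By construction $V^{I_1}=J^{I_1}$, so the claim is equivalent to $J^{I_1}\subseteq \theta(\pi_\infty)$. Assume for contradiction that some $v\in J^{I_1}$ does not lie in $\theta(\pi_\infty)$, and set $W:=\langle G\cdot v\rangle+\theta(\pi_\infty)\subseteq J$. Then $W/\theta(\pi_\infty)$ is nonzero, locally finite, and cyclic (generated by the image of $v$), so it contains a nonzero irreducible subrepresentation $\pi'$; pulling back along $W\twoheadrightarrow W/\theta(\pi_\infty)$ yields a non-split extension
\[0\to \pi_\infty \to \tilde\pi' \to \pi' \to 0\]
sitting inside $J$. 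Because $J$ is an injective envelope of $\rsoc_G\pi(r,\lambda,\chi)$, every irreducible constituent of $J$ (in particular $\pi'$) lies in the block of $\rsoc_G\pi(r,\lambda,\chi)$; by hypothesis (\textbf{H}) that block contains no supersingular representation.

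\emph{The key step.} Applying $\cI$ and invoking the $\Ext^2_\cH$-vanishing of Lemma~\ref{Lemma-Ext^2(H)}(i), the extension $\tilde\pi'$ is classified by a class in $\Ext^1_\cH(\cI(\pi'),\cI(\pi_\infty))$. Combining this with the exact sequence (\ref{equation-pi-infty}) and passing to the colimit along the $\pi_n$-filtration reduces the analysis to $\Ext^1_\cH(\cI(\pi'),\cI(\pi_1))$, which by Lemma~\ref{Lemma-Ext^2(H)}(ii) vanishes unless $\pi'\cong \rsoc_G\pi(r,\lambda,\chi)$, in which case it is one-dimensional. In that remaining case the extension must coincide, up to isomorphism, with one of the canonical extensions $0\to \pi_n\to \pi_{n+1}\to \pi_1\to 0$ from (\ref{equation-pi-m-n}); but this already embeds into $\pi_\infty$ via $\theta$, contradicting $v\notin\theta(\pi_\infty)$.

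\emph{The main obstacle} is the Hecke-module bookkeeping in this last step: one must control $\Ext^1_\cH(\cI(\pi'),\cI(\pi_\infty))$ through the direct limit $\pi_\infty=\varinjlim \pi_n$, and, in the non-generic case $(r,\lambda)=(0,\pm 1)$, handle separately the one-dimensional quotient $\chi\mu_{\pm 1}\circ\det$ of $\pi(r,\lambda,\chi)$. Indeed, since in that case $\rsoc_G\pi(r,\lambda,\chi)$ is only the Steinberg summand, the character constituent could a priori produce extra extension classes in $J$, and one must verify—using Lemma~\ref{Lemma-soc-pi-infty}(ii) and the structure of the $\pi_n$—that any such class is already realized inside $\pi_\infty$ itself.
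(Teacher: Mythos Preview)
Your overall strategy is the paper's: reduce to showing $\Ext^1_{\cH}(\cI(\pi'),\cI(\pi_\infty))=0$ for every irreducible $\pi'$ in the block and conclude via Ollivier's theorem. But the execution of your ``key step'' has two real gaps.

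First, the sentence ``applying $\cI$ and invoking the $\Ext^2_\cH$-vanishing \dots\ the extension $\tilde\pi'$ is classified by a class in $\Ext^1_\cH(\cI(\pi'),\cI(\pi_\infty))$'' is not justified. A $G$-extension lives in $\Ext^1_{G,\zeta}(\pi',\pi_\infty)$, and the map $\mathcal{T}$ from $\Ext^1_\cH$ into that group is generally not surjective; by Ollivier, its image consists exactly of those extensions whose middle term is generated by $I_1$-invariants. Your $\tilde\pi'$ is only a \emph{subrepresentation} of $W$, and a sub of something generated by $I_1$-invariants need not share that property. (The $\Ext^2_\cH$-vanishing plays no role at this point; in the paper it enters later, in the long exact sequence controlling the transition maps.) This can be repaired, e.g.\ by arranging $W/\theta(\pi_\infty)$ itself irreducible, but you have not done so.

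Second---and this is the substantive gap---even once the class lies in $\Ext^1_\cH(\cI(\pi'),\cI(\pi_\infty))$, your ``reduction to $\Ext^1_\cH(\cI(\pi'),\cI(\pi_1))$'' is exactly the hard step and is not carried out. One has $\Ext^1_\cH(\cI(\pi'),\cI(\pi_\infty))\cong\varinjlim_n\Ext^1_\cH(\cI(\pi'),\cI(\pi_n))$; when $\pi'\cong\rsoc_G\pi_1$ each term is one-dimensional, and the colimit vanishes only if every transition map $\alpha_n$ is zero. The paper proves this by applying $\Hom_\cH(\cI(\pi'),-)$ to the short exact sequence $0\to\cI(\pi_n)\to\cI(\pi_{n+1})\to\cI(\pi_1)\to 0$, using $\Ext^2_\cH$-vanishing to get a four-term exact sequence with $\alpha_n$ in the middle, and then counting dimensions inductively to force $\alpha_n=0$. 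Your concluding clause---that the extension ``must coincide with one of the canonical $0\to\pi_n\to\pi_{n+1}\to\pi_1\to 0$ and hence already lies in $\pi_\infty$''---conflates an extension of $\pi'$ by $\pi_\infty$ with one of $\pi_1$ by $\pi_n$; what is actually needed is that the former $\Ext^1$ group \emph{vanishes}, and that is precisely the computation you have skipped.
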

\begin{proof}
To simplify the notation, we  write $\pi_{n}$ for $\pi_n(r,\lambda,\chi)$ where $n\in\N\cup\{\infty\}$.

Let $\pi$ be an irreducible object in $\mathrm{Mod}_{G,\zeta}^{\rm sm}(\F)$. Recall that we have the following exact sequence 
\[0\ra \Ext^1_{\cH}(\cI(\pi),\cI(\pi_{\infty}))\overset{\mathcal{T}}{\ra} \Ext^1_{G,\zeta}(\pi,\pi_{\infty})\ra \Hom_{\cH}(\cI(\pi),\R^1\cI(\pi_{\infty}))\]
see for example \cite[\S 5.4]{pa10}, where $\mathcal{T}:\mathrm{Mod}_{\cH}\ra \mathrm{Mod}_{G,\zeta}^{\mathrm{sm}}(\F)$ denotes the functor $M\mapsto  M\otimes_{\cH}\cInd_{I_1Z}^G\zeta$ and $\Ext^1_{G,\zeta}$ indicates that the extensions are calculated in the category $\mathrm{Mod}^{\rm sm}_{G,\zeta}(\F)$. 
By the main result of \cite{Ol09}, an extension $0\ra \pi_{\infty}\ra V\ra \pi\ra0$ lies in the image of $\mathcal{T}$ if and only if $V$ is generated by its $I_1$-invariants, i.e. $V=\langle G\cdot V^{I_1}\rangle$. We will show  $\Ext^1_{\cH}(\cI(\pi),\cI(\pi_{\infty}))=0$  which will imply the assertion.  

By definition of $\pi_{\infty}$, we have an isomorphism $\cI(\pi_{\infty})\cong \varinjlim_n\cI(\pi_n)$ as $\cH$-modules which induces $\Ext^1_{\cH}(\cI(\pi),\cI(\pi_{\infty}))\cong \varinjlim_n\Ext^1_{\cH}(\cI(\pi),\cI(\pi_n))$. The latter isomorphism holds because $\cI(\pi)$ is a finitely presented $\cH$-module, see \cite{Vi04}. So it suffices to show that the transition map
\[\alpha_n: \Ext^1_{\cH}(\cI(\pi),\cI(\pi_{n})) {\ra} \Ext^1_{\cH}(\cI(\pi),\cI(\pi_{n+1}))\] 
is zero for any $n\geq 1$. 
By Lemma \ref{Lemma-Ext^2(H)}, we may assume $\pi= \rsoc_G\pi_1$. The exact sequence (\ref{equation-pi-m-n})  induces a sequence of $\cH$-modules
\begin{equation}\label{equation-I(pi_n)}0\ra \cI(\pi_{n})\ra \cI(\pi_{n+1})\ra\cI(\pi_1)\ra0,\end{equation}
which is still exact by the main result of \cite{Ol09} because $\pi_{n+1}$ is generated by its $I_1$-invariants.  Applying $\Hom_{\cH}(\cI(\pi),*)$ to it  and using Lemma \ref{Lemma-Ext^2(H)}(i) and the fact that $\Hom_{\cH}(\cI(\pi),\cI(\pi_n))\cong \Hom_G(\pi,\pi_n)\cong \F$ for all $n\geq 1$ by Lemma \ref{Lemma-soc-pi-infty}(ii),  we get a long exact sequence
\begin{multline*}0\ra \Hom_{\cH}(\cI(\pi),\cI(\pi_1))\ra \Ext^1_{\cH}(\cI(\pi),\cI(\pi_{n}))\overset{\alpha_n}{\ra} \Ext^1_{\cH}(\cI(\pi),\cI(\pi_{n+1}))\\
\ra\Ext^1_{\cH}(\cI(\pi),\cI(\pi_1))\ra0.\end{multline*}
 Since this holds for all $n\geq 1$, an induction on $n$, using Lemma \ref{Lemma-Ext^2(H)}(ii), implies that all dimensions over $\F$ appeared in the last exact sequence are equal to 1, and the morphism $\alpha_n$
 must be zero. This finishes the proof.
\end{proof}

\begin{corollary}\label{Coro-Imaga-theta}
The image of $\theta$ does not depend on the choice of $\theta$. More generally, for any non-zero morphism $\theta':\pi_{\infty}(r,\lambda,\chi)\ra \rInj_G\pi(r,\lambda,\chi)$, its image coincides with that of $\theta$.
\end{corollary}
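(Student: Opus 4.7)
My plan is to leverage Proposition \ref{Prop-Image-theta} to reduce everything to an endomorphism-theoretic fact about $\pi_{\infty}(r,\lambda,\chi)$. Set $J := \rInj_G\pi(r,\lambda,\chi)$ and $V := \langle G\cdot J^{I_1}\rangle$. The first assertion is then immediate, since Proposition \ref{Prop-Image-theta} says that every $\theta$ has image equal to the intrinsic subspace $V$. For the general statement, I would show that for any nonzero $G$-map $\theta'\colon\pi_{\infty}(r,\lambda,\chi)\to J$, its image again equals $V$, in three steps.

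First, $\theta'$ is injective. By Lemma \ref{Lemma-soc-pi-infty}(ii), $\rsoc_G\pi_{\infty}(r,\lambda,\chi) = \pi := \rsoc_G\pi(r,\lambda,\chi)$ is irreducible, and the general identity $\rsoc_GW = W\cap\rsoc_GM$ (for $W\subseteq M$) gives $\rsoc_G\pi_n(r,\lambda,\chi)=\pi$ for each $n$. Any nonzero $v\in\pi_{\infty}(r,\lambda,\chi)$ lies in some $\pi_n(r,\lambda,\chi)$ and generates a $G$-submodule of finite length whose nonzero socle is contained in the irreducible $\pi$; so $\pi_{\infty}(r,\lambda,\chi)$ is essential over $\pi$, and any nonzero $G$-map out of it is injective. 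Secondly, $\theta'(\pi_{\infty}(r,\lambda,\chi))\subseteq V$: the proof of Proposition \ref{Prop-Image-theta}, via \cite{Ol09}, shows that each $\pi_n(r,\lambda,\chi)$ is generated by its $I_1$-invariants, whence so is $\pi_{\infty}(r,\lambda,\chi) = \varinjlim_n\pi_n(r,\lambda,\chi)$ and consequently so is its $G$-image $\theta'(\pi_{\infty}(r,\lambda,\chi))$. By maximality, this image lies in $V$.

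Finally, since $\theta$ identifies $\pi_{\infty}(r,\lambda,\chi)$ with $V$, the composite $\phi := \theta^{-1}\circ\theta'$ is an injective $G$-endomorphism of $\pi_{\infty}(r,\lambda,\chi)$, and it suffices to show $\phi$ is surjective. By Lemma \ref{Lemma-soc-pi-infty}(i) the restriction $\phi|_{\pi_1(r,\lambda,\chi)}$ equals $c\cdot(\pi_1(r,\lambda,\chi)\hookrightarrow\pi_{\infty}(r,\lambda,\chi))$ for some $c\in\F^{\times}$. Hence $\phi - c\cdot\mathrm{id}$ annihilates $\pi_1(r,\lambda,\chi)$, and (\ref{equation-pi-infty}) identifies $\pi_{\infty}(r,\lambda,\chi)/\pi_1(r,\lambda,\chi)$ with $\pi_{\infty}(r,\lambda,\chi)$ via the surjection $T-\lambda\colon\pi_{\infty}(r,\lambda,\chi)\twoheadrightarrow\pi_{\infty}(r,\lambda,\chi)$ with kernel $\pi_1(r,\lambda,\chi)$. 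This gives $\phi = c\cdot\mathrm{id} + \psi\circ(T-\lambda)$ for some $G$-endomorphism $\psi$ of $\pi_{\infty}(r,\lambda,\chi)$. Iterating the same decomposition on $\psi$ produces scalars $c_0 := c, c_1, c_2,\ldots\in\F$ (with $c_0\neq 0$) and $G$-endomorphisms $\psi_k$ such that
\[
\phi = \sum_{i=0}^{k-1} c_i (T-\lambda)^i + \psi_k\circ(T-\lambda)^k \qquad\text{for every }k\geq 1.
\]
Restricting to $\pi_n(r,\lambda,\chi)$ and taking $k\geq n$, the tail vanishes, so $\phi|_{\pi_n(r,\lambda,\chi)}$ is given by a polynomial in $T-\lambda$ with nonzero constant term $c_0$. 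Since $T-\lambda$ is nilpotent on $\pi_n(r,\lambda,\chi)$, this polynomial is a unit in $\F[T-\lambda]/(T-\lambda)^n$, hence $\phi|_{\pi_n(r,\lambda,\chi)}\colon\pi_n(r,\lambda,\chi)\to\pi_n(r,\lambda,\chi)$ is an isomorphism. Passing to the colimit in $n$, $\phi$ is surjective, as required.

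The main obstacle is surjectivity of $\phi$: injectivity of a $G$-endomorphism of $\pi_{\infty}(r,\lambda,\chi)$ does not in general imply surjectivity, and a direct socle-filtration induction is awkward in the case $(r,\lambda)=(0,\pm1)$, where $\pi_1(r,\lambda,\chi)$ is reducible of length two. The iteration in the Hecke variable $T-\lambda$ circumvents this by implicitly identifying $\End_G(\pi_{\infty}(r,\lambda,\chi))$ with $\F[[T-\lambda]]$, in which the injective endomorphisms are precisely the units, i.e., those with nonzero constant term.
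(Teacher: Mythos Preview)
Your first step contains a genuine error: the assertion that any nonzero $G$-map $\theta'\colon\pi_\infty(r,\lambda,\chi)\to J$ is injective is false. Essentiality of $\pi_\infty$ over $\pi$ controls maps \emph{into} $\pi_\infty$, not out of it. A concrete counterexample is $\theta' := \theta\circ(T-\lambda)$, where $(T-\lambda)$ denotes the third arrow of (\ref{equation-pi-infty}); this is nonzero with kernel $\pi_1(r,\lambda,\chi)$. The error then propagates: your conclusion $c_0\in\F^\times$ in the decomposition $\phi=\sum c_i(T-\lambda)^i$ relies on injectivity of $\phi$, and without it you cannot deduce that $\phi|_{\pi_n}$ is an isomorphism.

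Your approach is, however, easily repaired and genuinely different from the paper's. Drop the injectivity claim; your iteration still produces the coefficients $c_i$ and shows that $\phi|_{\pi_n}=\sum_{i<n}c_i(T-\lambda)^i$. Since $\phi\neq 0$, not all $c_i$ vanish; let $j$ be minimal with $c_j\neq 0$. Then $\phi=(T-\lambda)^j\cdot u$ with $u$ a unit in $\F[[T-\lambda]]\cong\End_G(\pi_\infty)$, and $(T-\lambda)^j$ is surjective by (\ref{equation-pi-infty}), so $\phi$ is surjective and you are done. In effect you have computed $\End_G(\pi_\infty)\cong\F[[T-\lambda]]$ and observed that every nonzero element acts surjectively. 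The paper instead takes the largest $k$ with $\pi_k\subseteq\ker\theta'$, notes $\pi_\infty/\pi_k\cong\pi_\infty$, checks that the induced map to $J$ is injective (this needs a word when $\pi_1$ is reducible, since one must rule out a copy of the trivial character inside $J$), and invokes the first assertion. The two routes are dual: the paper strips off the kernel and applies Proposition \ref{Prop-Image-theta}; you show directly that nonzero endomorphisms of $\pi_\infty$ are surjective.
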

\begin{proof}
The first assertion follows from Proposition \ref{Prop-Image-theta}.  Since $\theta'$ is nonzero, we can define the largest integer $k\in\N$ such that $\theta'$ factors through $\pi_{k}(r,\lambda,\chi)$.  Then the induced map $$\pi_{\infty}(r,\lambda,\chi)/\pi_k(r,\lambda,\chi)\ra \rInj_G\pi(r,\lambda,\chi)$$ must be an injection, using  Lemma \ref{Lemma-soc-pi-infty} when $\pi(r,\lambda,\chi)$ is reducible.  The  quotient $\pi_{\infty}(r,\lambda,\chi)/\pi_k(r,\lambda,\chi)$ is isomorphic to $\pi_{\infty}(r,\lambda,\chi)$ by (\ref{equation-pi-infty}), so we can apply the first assertion to conclude.  
\end{proof}

\begin{corollary}\label{Coro-Hom-pi-infty}
For any smooth irreducible  $\F$-representation $\sigma$ of $K$, $\theta$ induces an isomorphism
\[\Hom_{K}(\sigma,\pi_{\infty}(r,\lambda,\chi))\cong \Hom_{K}(\sigma,\rInj_G\pi(r,\lambda,\chi)).\]
Moreover, the two spaces are non-zero if and only if $\Hom_K(\sigma,\pi(r,\lambda,\chi))\neq 0$.
\end{corollary}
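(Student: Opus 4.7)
The plan is to combine the description of $\pi_{\infty}(r,\lambda,\chi)$ inside $\rInj_G\pi(r,\lambda,\chi)$ given by Proposition \ref{Prop-Image-theta} with the standard fact that any irreducible smooth $\F$-representation $\sigma$ of $K=\GL_2(\Z_p)$ factors through $\GL_2(\F_p)$, has one-dimensional $I_1$-invariants, and is generated as a $K$-representation by $\sigma^{I_1}$. Since $\theta$ is injective, it induces an injection $\Hom_K(\sigma,\pi_{\infty}(r,\lambda,\chi))\hookrightarrow \Hom_K(\sigma,\rInj_G\pi(r,\lambda,\chi))$, so the substantive content is surjectivity.

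For surjectivity, I would take any $K$-equivariant map $\phi:\sigma\to \rInj_G\pi(r,\lambda,\chi)$ together with a generator $0\neq v\in \sigma^{I_1}$. Then $\phi(v)\in (\rInj_G\pi(r,\lambda,\chi))^{I_1}$, which by Proposition \ref{Prop-Image-theta} lies in the image of $\theta$; moreover, this image is $K$-stable. Since $\sigma=K\cdot v$, we get $\phi(\sigma)=K\cdot\phi(v)\subset \mathrm{Im}(\theta)$, so $\phi$ factors through $\pi_{\infty}(r,\lambda,\chi)$ via $\theta$.

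For the ``if and only if'' statement, the direction $\Hom_K(\sigma,\pi(r,\lambda,\chi))\neq 0\Rightarrow \Hom_K(\sigma,\rInj_G\pi(r,\lambda,\chi))\neq 0$ is immediate from the essential embedding $\pi(r,\lambda,\chi)\hookrightarrow \rInj_G\pi(r,\lambda,\chi)$. For the converse, using the isomorphism from the first part it suffices to check that $\Hom_K(\sigma,\pi_{\infty}(r,\lambda,\chi))\neq 0$ forces $\Hom_K(\sigma,\pi(r,\lambda,\chi))\neq 0$. The exact sequences (\ref{equation-pi-m-n}) with $m=n-1$ realize $\pi_{\infty}(r,\lambda,\chi)$ as the union of an increasing filtration $0=\pi_0\subset \pi_1\subset \pi_2\subset\cdots$ whose successive quotients $\pi_n(r,\lambda,\chi)/\pi_{n-1}(r,\lambda,\chi)$ are all isomorphic to $\pi(r,\lambda,\chi)$. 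Since $\sigma$ is finite-dimensional, any non-zero $K$-map $\phi:\sigma\to\pi_{\infty}(r,\lambda,\chi)$ factors through some $\pi_n(r,\lambda,\chi)$; taking $n$ minimal, $\phi(\sigma)\not\subset\pi_{n-1}(r,\lambda,\chi)$, and composing with the projection to $\pi_n(r,\lambda,\chi)/\pi_{n-1}(r,\lambda,\chi)\cong \pi(r,\lambda,\chi)$ produces a non-zero $K$-equivariant map $\sigma\to \pi(r,\lambda,\chi)$.

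The main conceptual input, already supplied by Proposition \ref{Prop-Image-theta}, is the identification of $\pi_{\infty}(r,\lambda,\chi)$ with the $G$-span of the $I_1$-invariants of $\rInj_G\pi(r,\lambda,\chi)$; once this is available, the remaining arguments are elementary, so I do not anticipate any genuine obstacle.
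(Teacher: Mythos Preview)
Your proof is correct and follows essentially the same route as the paper. The only cosmetic difference is that the paper passes through Frobenius reciprocity to rewrite both $\Hom_K$-spaces as $\Hom_G(I_{\chi'}(\sigma),-)$ and then invokes Proposition \ref{Prop-Image-theta} for the image of a $G$-map from the compact induction, whereas you stay at the $K$-level and use directly that $\sigma$ is generated by $\sigma^{I_1}$; both arguments rest on exactly the same input from Proposition \ref{Prop-Image-theta}, and your filtration argument for the second assertion is just an unpacking of what the paper calls ``by definition of $\pi_{\infty}(r,\lambda,\chi)$''.
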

\begin{proof}
The second assertion follows from the first one by definition of $\pi_{\infty}(r,\lambda,\chi)$. By Frobenius reciprocity, we need to show that the injection (induced from $\theta$)
\[\Hom_G(I_{\chi'}(\sigma),\pi_{\infty}(r,\lambda,\chi))\hookrightarrow \Hom_G(I_{\chi'}(\sigma),\rInj_{G}\pi(r,\lambda,\chi))\]
is an isomorphism, where $\chi':\Q_p^{\times}\ra \F^{\times}$ is the character making the central character of $I_{\chi'}(\sigma)$ to be that of $\pi(r,\lambda,\chi)$.  But this follows from Proposition \ref{Prop-Image-theta} since the image of   $I_{\chi'}(\sigma)\ra \rInj_G\pi(r,\lambda,\chi)$ is generated by its $I_1$-invariants, hence lies in $\theta(\pi_{\infty}(r,\lambda,\chi))$.  
\end{proof}

\begin{remark}
The above results (Proposition \ref{Prop-Image-theta} and Corollaries \ref{Coro-Imaga-theta}, \ref{Coro-Hom-pi-infty}) hold true in the case $(r,\lambda)=(p-1,\pm1)$. To see this one can either modify the above proofs or apply (the proof of) \cite[1.5.5]{ki09}.
\end{remark}

The next lemma will be used in the proof of Proposition \ref{Prop-J-prime}.
\begin{lemma}\label{Lemma-M(sigma)}
For any smooth irreducible $\F$-representation $\sigma$ of $K$, the following sequence induced by (\ref{equation-pi-infty}) is exact
\[0\ra \Hom_K(\sigma,\pi(r,\lambda,\chi))\ra\Hom_K(\sigma,\pi_{\infty}(r,\lambda,\chi))\ra\Hom_K(\sigma,\pi_{\infty}(r,\lambda,\chi))\ra0.\]
\end{lemma}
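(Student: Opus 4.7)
The plan is to apply $\Hom_K(\sigma,-)$ to the short exact sequence (\ref{equation-pi-infty}). Since this functor is left exact, left-exactness of the sought sequence is automatic and the substantive content is the surjectivity of the last arrow, which is multiplication by $T-\lambda$ on the target $\Hom_K(\sigma,\pi_\infty(r,\lambda,\chi))$.

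The first step is to descend the problem to the level of $I_1$-invariants. By the argument used in the proof of Proposition~\ref{Prop-Image-theta}, each $\pi_n(r,\lambda,\chi)$ is generated by its $I_1$-invariants (via Ollivier's theorem \cite{Ol09}), and this property passes to the filtered colimit $\pi_\infty(r,\lambda,\chi)$. Applying $(-)^{I_1}$ to (\ref{equation-pi-infty}), Ollivier's exactness result then yields a short exact sequence of $\cH$-modules
\[
0 \to \pi(r,\lambda,\chi)^{I_1} \to \pi_\infty(r,\lambda,\chi)^{I_1} \xrightarrow{T-\lambda} \pi_\infty(r,\lambda,\chi)^{I_1} \to 0.
\]
Next, since $I/I_1 \cong \F_p^{\times}\times \F_p^{\times}$ has order prime to $p$, the algebra $\F[I/I_1]$ is semisimple, so extracting the $\chi_\sigma$-isotypic component (where $\chi_\sigma$ is the $I/I_1$-character on $\sigma^{I_1}$) preserves exactness and gives an analogous short exact sequence of $\F$-vector spaces.

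Finally, I would transfer back to $\Hom_K(\sigma,-)$ using the natural evaluation map $f \mapsto f(v_0)$ for $v_0$ a generator of $\sigma^{I_1}$; this injection $\Hom_K(\sigma,V) \hookrightarrow (V^{I_1})^{\chi_\sigma}$ is compatible with the $T-\lambda$ action and with the connecting maps in the long exact sequence. Provided it is an isomorphism for $V=\pi(r,\lambda,\chi)$ and $V=\pi_\infty(r,\lambda,\chi)$, the desired surjectivity (and hence the full short exact sequence) descends directly from the $\chi_\sigma$-isotypic short exact sequence above.

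The hard part will be justifying that this evaluation map is an isomorphism. In the regular case $\sigma = \sigma_{n,m}$ with $0 < n < p-1$, the character $\chi_\sigma$ uniquely determines $\sigma$ among irreducible smooth $K$-representations, and the identification is automatic. In the degenerate cases $\sigma = \sigma_{0,m}$ or $\sigma_{p-1,m}$, where $\sigma_{0,m}$ and $\sigma_{p-1,m}$ share the same $I/I_1$-character on their $I_1$-invariants, one must argue separately, using the explicit $K$-structure of $\pi_n(r,\lambda,\chi)$ coming from the Mackey decomposition of $I_\chi(\Sym^r\F^2)|_K$ together with the shifting action of the Hecke operator $T$, to show that each individual $\Hom_K(\sigma,\pi_\infty(r,\lambda,\chi))$ exhausts its isotypic component (or at least is stable under and surjected onto by $T-\lambda$).
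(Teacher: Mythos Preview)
Your reduction to the short exact sequence on $I_1$-invariants and then to the $\chi_\sigma$-isotypic component is sound, but the key claim that the evaluation map
\[
\Hom_K(\sigma,V)\hookrightarrow (V^{I_1})^{\chi_\sigma}
\]
is an isomorphism in the regular case is not justified, and in fact is false for general $V$. The assertion ``$\chi_\sigma$ uniquely determines $\sigma$, hence the identification is automatic'' does not hold: take $\sigma=\sigma_{n,m}$ with $0<n<p-1$ and $V=\Ind_I^K\chi_\sigma$. By Frobenius reciprocity $\Hom_I(\chi_\sigma,V|_I)$ is one-dimensional, so $(V^{I_1})^{\chi_\sigma}\neq 0$, whereas $\Hom_K(\sigma,V)=\Hom_I(\sigma|_I,\chi_\sigma)$ computes the $\chi_\sigma$-part of the $I_1$-\emph{coinvariants} of $\sigma$, which carries the character $\smatr{a}{*}{0}{d}\mapsto a^md^{n+m}\neq\chi_\sigma$. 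So the evaluation map is the zero map into a nonzero target. Uniqueness of $\sigma$ among irreducibles with a given $\chi_\sigma$ only controls $(\rsoc_KV)^{I_1}$, not $V^{I_1}$.

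For the specific $V=\pi_n(r,\lambda,\chi)$ the evaluation map \emph{is} an isomorphism, but proving it amounts to showing $\dim_\F\Hom_K(\sigma,\pi_n)=n$ (since your exact sequence already gives $\dim_\F(\pi_n^{I_1})^{\chi_\sigma}=n$), and this is exactly the computation the paper carries out directly. The paper first handles $\sigma=\Sym^r\F^2$ via Frobenius reciprocity for compact induction, $\Hom_K(\Sym^r\F^2,\pi_n)\cong\Hom_G(I(\Sym^r\F^2),\pi_n)$, exhibiting an explicit basis of size $n$. The remaining case $\sigma=\Sym^{p-1-r}\F^2$ (only relevant when $r\in\{0,p-1\}$) is then deduced from $\dim_\F\pi_n^{I_1}=2n$ together with the observation that $\Ind_I^K\ide\cong\Sym^0\F^2\oplus\Sym^{p-1}\F^2$ is semisimple, which forces $\langle K\cdot\pi_n^{I_1}\rangle=\rsoc_K\pi_n$ and hence $(\rsoc_K\pi_n)^{I_1}=\pi_n^{I_1}$. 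Ironically, it is precisely in the degenerate case --- where you anticipated extra work --- that your evaluation-map heuristic actually goes through cleanly, because there $\Ind_I^K\chi_\sigma$ is semisimple; in the regular case one needs the compact-induction argument.
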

\begin{proof}
To simplify the notation, we write   $\pi_n$ for $\pi_n(r,\lambda,\chi)$ (where $n\in\N\cup\{\infty\}$). We may assume $\chi$ is trivial by twisting. We also assume that $\Hom_K(\sigma,\pi_1)\neq0$, otherwise the assertion is trivial by Corollary \ref{Coro-Hom-pi-infty}.  By \cite[Theorem 34]{BL}, this implies  that $\sigma\cong\Sym^{r}\F^2$ if $r\notin\{0,p-1\}$, and $\sigma\in\{\Sym^0\F^2,\Sym^{p-1}\F^2\}$ otherwise. Moreover, in all cases, we have $\dim_{\F}\Hom_K(\sigma,\pi_1)=1$. 

Since $\Hom_K(\sigma,\pi_{\infty})\cong\varinjlim_{n\geq1}\Hom_K(\sigma,\pi_n)$, it suffices to prove the exactness of the  sequence 
\[0\ra\Hom_{K}(\sigma,\pi_1)\ra\Hom_K(\sigma,\pi_n)\ra\Hom_K(\sigma,\pi_{n-1})\ra0\]
for all $n\geq 1$, or equivalently, to  prove $\dim_{\F}\Hom_K(\sigma,\pi_n)=n$ for all $n\geq 1$. 
This is true if $\sigma\cong\Sym^{r}\F^2$, since an easy induction on $n$ shows that 
\[\Hom_K(\Sym^r\F^2,\pi_n)\cong \Hom_G(I(\Sym^r\F^2),\pi_n) \]
is of dimension $n$ over $\F$, with a basis given by 
\[\bigl\{ I(\Sym^{r}\F^2)\twoheadrightarrow \pi_i\hookrightarrow \pi_n, 1\leq i\leq n\bigr\}, \]
where the first arrow is the natural quotient map and the second is given by (\ref{equation-pi-m-n}).   

If   $\sigma\ncong\Sym^r\F^2$, then we have $r\in\{0,p-1\}$ and $\sigma\cong\Sym^{p-1-r}\F^2$ so that 
\[\rsoc_K(\pi_n)=(\Sym^{r}\F^2)^{\oplus n_1}\oplus(\Sym^{p-1-r}\F^2)^{\oplus n_2}\]
for some $n_1,n_2\geq 0$.    By the case already treated, we have $n_1=n$. On the other hand, since $\dim_{\F}\cI(\pi_1)=2$, an induction on $n$ using the exact sequence (\ref{equation-I(pi_n)}) shows that $\dim_{\F}\cI(\pi_n)=2n$. Using the fact that $\Ind_I^K\ide\cong \Sym^0\F^2\oplus\Sym^{p-1}\F^2$, we show that $\rsoc_K\pi_n$ is generated by $\pi_n^{I_1}$ as a $K$-representation, so that 
\[2n=\dim_{\F}\cI(\pi_n)=\dim_{\F}(\rsoc_K\pi_n)^{I_1}=n_1+n_2.\]
This implies $n_2=n$ and finishes the proof.
\end{proof}
\subsection{The prime ideal $J$}
\label{subsection-J}

We keep the notation   in the preceding subsection. Let $\pi^{\vee}(r,\lambda,\chi)$ be the Pontryagin dual of $\pi(r,\lambda,\chi)$ and $\pi^{\vee}_{\infty}(r,\lambda,\chi)$ be that of $\pi_{\infty}(r,\lambda,\chi)$. They are objects in $\mathfrak{C}_{\zeta}(\F)$, the dual category of $\mathrm{Mod}^{\rm l,fin}_{G,\zeta}(\F)$. Dualizing the sequence (\ref{equation-pi-infty}), we get an injective $G$-equivariant endomorphism of $\pi_{\infty}^{\vee}(r,\lambda,\chi)$ which we denote by $S$. We have 
\begin{equation}\label{equation-sequence-S}
0\ra \pi_{\infty}^{\vee}(r,\lambda,\chi)\overset{S}{\ra} \pi_{\infty}^{\vee}(r,\lambda,\chi)\ra\pi^{\vee}(r,\lambda,\chi)\ra0\end{equation} and
$\pi^{\vee}_{\infty}(r,\lambda,\chi)\cong \varprojlim_n\pi_{\infty}^{\vee}(r,\lambda,\chi)/S^{n}$
so that $\pi_{\infty}^{\vee}(r,\lambda,\chi)$ can be naturally viewed as an $\F\llbracket S\rrbracket$-module.

Let $\widetilde{P}:=\mathrm{Proj}_{\mathfrak{C}_{\zeta}(\cO)}\pi^{\vee}(r,\lambda,\chi)$, a projective envelope of $\pi^{\vee}(r,\lambda,\chi)$ in $\mathfrak{C}_{\zeta}(\cO)$, and $\widetilde{E}:=\End_{\mathfrak{C}_{\zeta}(\cO)}(\widetilde{P})$ which acts naturally on $\widetilde{P}$.  Then we have an isomorphism $\widetilde{P}\otimes_{\cO}\F\cong \big(\rInj_G\pi(r,\lambda,\chi)\big)^{\vee}$ in  $\mathfrak{C}_{\zeta}(\F)$. 
The injection  $\theta: \pi_{\infty}(r,\lambda,\chi) \hookrightarrow \rInj_{G}\pi(r,\lambda,\chi)$ chosen in \S\ref{subsection-F-rep} induces a surjection in $\mathfrak{C}_{\zeta}(\cO)$:
\[\theta^{\vee}:\widetilde{P}\twoheadrightarrow \widetilde{P}\otimes_{\cO}\F \twoheadrightarrow \pi_{\infty}^{\vee}(r,\lambda,\chi).\]
Define a right ideal of $\widetilde{E}$ as follows:
\[J:=\{\varphi\in \widetilde{E}: \theta^{\vee}\circ \varphi=0\}.\]
According to Proposition \ref{Prop-Image-theta}, $J$ does not depend on the choice of $\theta$.

\begin{remark}
Note that $\widetilde{E}$ need not be commutative, see \cite[\S9]{pa10}. In fact, it is shown in \cite{pa10} that $\widetilde{E}$ is commutative if and only if $(r,\lambda)\neq(p-2,\pm1)$.
\end{remark}
 
Let $W$ be a smooth   $\F$-representation of $K$ of finite length. Recall from \cite[Definition 2.2]{pa12} the compact left $\widetilde{E}$-module $M(W)$ defined as
\[M(W):=\Hom_{\cO\llbracket K\rrbracket}^{\rm cont}(\widetilde{P},W^{\vee})^{\vee}.\]
The main result of \cite{EP} implies that $\widetilde{P}$ is also projective in $\mathrm{Mod}_{K,\zeta}^{\rm pro}(\cO)$, so that $M(\cdot)$ is an exact functor.  
Write $\mathrm{Ann}(M(W))$ for the annihilator of $M(W)$ in $\widetilde{E}$, i.e.  $$\mathrm{Ann}(M(W)):=\{\varphi\in \widetilde{E}: u\circ\varphi=0, \ \forall u\in \Hom_K(\widetilde{P},W^{\vee})\}.$$

\begin{proposition}\label{Prop-J-prime}
Let $\sigma$ be a smooth  irreducible $\F$-representation of $K$.  We have $M(\sigma)\neq 0$ if and only if $\Hom_{K}(\sigma,\pi(r,\lambda,\chi))\neq 0$. If this is the case, then 
 $J=\mathrm{Ann}(M(\sigma))$. Moreover,  $\widetilde{E}/J \cong \F\llbracket S\rrbracket$, and $J$ is a (two-sided) prime  ideal of $\widetilde{E}$.
\end{proposition}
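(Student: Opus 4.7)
The plan is to prove the four claims in sequence, since each builds on the previous. The equivalence $M(\sigma)\neq 0 \iff \Hom_K(\sigma,\pi(r,\lambda,\chi))\neq 0$ is a direct unravelling: since $\sigma^\vee$ is killed by $\varpi$, any continuous $\cO\llbracket K\rrbracket$-map $\widetilde{P}\to\sigma^\vee$ factors through $\widetilde{P}\otimes_\cO\F$, so Pontryagin duality gives
\[M(\sigma)^\vee = \Hom_{\cO\llbracket K\rrbracket}^{\mathrm{cont}}(\widetilde{P},\sigma^\vee) \cong \Hom_K(\sigma,(\widetilde{P}\otimes_\cO\F)^\vee) = \Hom_K(\sigma,\rInj_G\pi(r,\lambda,\chi)),\]
and the result then follows from Corollary \ref{Coro-Hom-pi-infty}.

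For the isomorphism $\widetilde{E}/J\cong\F\llbracket S\rrbracket$, I plan to factor it through $\End_{\mathfrak{C}_\zeta(\cO)}(\pi_\infty^\vee(r,\lambda,\chi))$. The natural map
\[\widetilde{E} \longrightarrow \Hom_{\mathfrak{C}_\zeta(\cO)}(\widetilde{P},\pi_\infty^\vee(r,\lambda,\chi)),\qquad \varphi\mapsto\theta^\vee\circ\varphi,\]
has kernel $J$ by definition, is surjective by projectivity of $\widetilde{P}$, and its target is identified with $\End_\mathfrak{C}(\pi_\infty^\vee)$ via the unique factorization $f=g\circ\theta^\vee$ guaranteed by Corollary \ref{Coro-Imaga-theta} (a quick diagram chase shows this identification is a ring isomorphism). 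The remaining point is to show $\End_\mathfrak{C}(\pi_\infty^\vee)\cong\F\llbracket S\rrbracket$; dually this amounts to $\End_G(\pi_\infty)\cong\F\llbracket t\rrbracket$ with $t=T-\lambda$, which I would verify by computing $\End_G(\pi_n) = \F[t]/(t^n)$ from the Hecke presentation $\pi_n = I_\chi(\Sym^r\F^2)/(T-\lambda)^n$, observing (as in the proof of Lemma \ref{Lemma-M(sigma)}) that $\Hom_G(\pi_n,\pi_\infty)$ is also $n$-dimensional and is generated by the inclusion $\pi_n\hookrightarrow\pi_\infty$ composed with the endomorphisms in $\F[t]/(t^n)$, and finally passing to the projective limit.

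To prove $J=\mathrm{Ann}(M(\sigma))$, I would dualize once more. The inclusion $J\subset\mathrm{Ann}(M(\sigma))$ follows because Corollary \ref{Coro-Hom-pi-infty} implies every $K$-equivariant $u:\widetilde{P}\to\sigma^\vee$ factors through $\theta^\vee$, so $\theta^\vee\circ\varphi=0$ forces $u\circ\varphi=0$. For the reverse inclusion, given $\varphi\notin J$, I write its image in $\widetilde{E}/J\cong\F\llbracket S\rrbracket$ as $g = S^k\cdot a$ with $a$ a unit and some $k\geq 0$. The proof of Lemma \ref{Lemma-M(sigma)} gives $\dim_\F\Hom_K(\sigma,\pi_n)=n$, so there exists $v:\sigma\to\pi_{k+1}$ not factoring through $\pi_k$; its dual $u':\pi_\infty^\vee\to\sigma^\vee$ is then nonzero on $S^k\pi_\infty^\vee$, and $u:=u'\circ\theta^\vee$ satisfies $u\circ\varphi=u'\circ g\circ\theta^\vee\neq 0$, as desired.

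The two-sidedness of $J$ is immediate from the previous steps: for $\varphi\in J$ and $\psi\in\widetilde{E}$, the factorization $\theta^\vee\circ\psi = g_\psi\circ\theta^\vee$ gives $\theta^\vee\circ(\psi\varphi) = g_\psi\circ\theta^\vee\circ\varphi = 0$, so $\psi\varphi\in J$; and $J$ is prime because $\widetilde{E}/J\cong\F\llbracket S\rrbracket$ is a commutative integral domain. I expect the main technical obstacle to be the identification $\End_\mathfrak{C}(\pi_\infty^\vee)\cong\F\llbracket S\rrbracket$, since the inclusions $\pi_n\hookrightarrow\pi_{n+1}$ are given by multiplication by $T-\lambda$ (not the naïve quotient maps), so some care is needed in matching the Hecke description of $\End_G(\pi_n)=\F[t]/(t^n)$ with $\Hom_G(\pi_n,\pi_\infty)$ along the filtration, and one must also handle both the irreducible and the reducible cases of $\pi(r,\lambda,\chi)$ uniformly (the latter using the non-splitness in Lemma \ref{Lemma-soc-pi-infty}).
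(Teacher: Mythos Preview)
Your proposal is correct and complete, but it follows a genuinely different route from the paper for the key isomorphism $\widetilde{E}/J\cong\F\llbracket S\rrbracket$, and consequently reorders the argument.

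The paper proves $J=\mathrm{Ann}(M(\sigma))$ \emph{first} (by the same factorization argument you use for the inclusion $J\subseteq\mathrm{Ann}$, and by a contradiction argument via Corollaries \ref{Coro-Imaga-theta} and \ref{Coro-Hom-pi-infty} for the reverse). It then shows $M(\sigma)$ is a \emph{cyclic} $\widetilde{E}$-module: by \cite[Proposition 2.4]{pa12} one has $M(\sigma)\otimes_{\widetilde{E}}\F\cong\Hom_{\cO\llbracket K\rrbracket}^{\mathrm{cont}}(\widetilde{P}\otimes_{\widetilde{E}}\F,\sigma^\vee)^\vee$, so by Nakayama it suffices to bound the latter by~$1$. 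For this the paper constructs an explicit $x\in\widetilde{E}$ lifting $S$ (using projectivity of $\widetilde{P}$), shows $\dim_\F\Hom(\widetilde{P}/x\widetilde{P},\sigma^\vee)=1$ via (\ref{equation-sequence-S}) and (\ref{equation-J-prime}), and concludes since $x$ lies in the maximal ideal. Cyclicity then gives $M(\sigma)\cong\widetilde{E}/J$, and finally Lemma \ref{Lemma-M(sigma)} plus the isomorphism (\ref{equation-J-prime}) yield the short exact sequence $0\to M(\sigma)\to M(\sigma)\to\F\to 0$, whence $M(\sigma)\cong\F\llbracket S\rrbracket$.

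Your route instead identifies $\widetilde{E}/J$ directly with $\End_{\mathfrak{C}}(\pi_\infty^\vee)$ via the factorization property of Corollary \ref{Coro-Imaga-theta}, computes the latter as $\F\llbracket S\rrbracket$ from the Hecke description of $\pi_n$, and only \emph{then} proves $J=\mathrm{Ann}(M(\sigma))$ (the reverse inclusion now being constructive, using the already-known structure of $\widetilde{E}/J$). This is arguably cleaner for the proposition in isolation and bypasses the Nakayama step. However, the paper's approach yields two byproducts that are reused later: the explicit isomorphism $M(\sigma)\cong\F\llbracket S\rrbracket$ (giving the $1$-cycle $\cZ_1(M(\sigma))=J$ used in Theorems \ref{theorem-M1} and \ref{Theorem-M2}) and the element $x\in\widetilde{E}$ lifting $S$ (used in the Cohen--Macaulay verification in the proof of Theorem \ref{Theorem-M2}). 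Your argument does not produce these directly, though $M(\sigma)\cong\F\llbracket S\rrbracket$ would follow once you also observe it is cyclic.
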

\begin{proof}
We write $\pi_n=\pi_n(r,\lambda,\chi)$ for all $n\in\N\cup\{\infty\}$ to simplify the notation.

The first assertion follows from Corollary \ref{Coro-Hom-pi-infty} and  that $\widetilde{P}\otimes_{\cO}\F\cong \big(\rInj_G\pi(r,\lambda,\chi)\big)^{\vee}$.
Assume $\Hom_{K}(\sigma,\pi_1)\neq0$.   Dualizing, Corollary \ref{Coro-Hom-pi-infty} gives an isomorphism  
\begin{equation}\label{equation-J-prime}\Hom_{\cO\llbracket K\rrbracket}^{\rm cont}(\pi_{\infty}^{\vee},\sigma^{\vee})\simto\Hom_{\cO\llbracket K\rrbracket}^{\rm cont}(\widetilde{P},\sigma^{\vee})\end{equation}
so that $J\subseteq \mathrm{Ann}(M(\sigma))$. Conversely, let $\varphi\in \mathrm{Ann}(M(\sigma))$ and assume $\theta^{\vee}\circ \varphi\neq 0$. The image of $\theta^{\vee}\circ\varphi$ is then a non-zero sub-object of $\pi_{\infty}^{\vee}$ whose Pontryagin dual is  the image of some  non-zero morphism $\theta':\pi_{\infty}\ra \rInj_G\pi_1$. However, we have $\Hom_K(\sigma,\mathrm{Im}(\theta'))\neq 0$ by Corollaries \ref{Coro-Imaga-theta} and \ref{Coro-Hom-pi-infty}, hence $\Hom_{\cO\llbracket K\rrbracket}^{\rm cont}(\mathrm{Im}(\theta^{\vee}\circ \varphi),\sigma^{\vee})\neq0$. In view of (\ref{equation-J-prime}), it contradicts that  $\varphi\in\mathrm{Ann}(M(\sigma))$. 

For the last assertion, we claim that $M(\sigma)$ is a cyclic $\widetilde{E}$-module. This implies that $M(\sigma)\cong \widetilde{E}/\mathrm{Ann}(M(\sigma))\cong \widetilde{E}/J$. However,  Lemma \ref{Lemma-M(sigma)} and the isomorphism (\ref{equation-J-prime}) give an exact sequence  
\begin{equation}\label{equation-S-regular}
0\ra M(\sigma) {\ra} M(\sigma)\ra \Hom_{\cO\llbracket K\rrbracket}^{\rm cont}(\pi_1^{\vee},\sigma^{\vee})^{\vee}\ra0.\end{equation}
Since $\Hom_{\cO\llbracket K\rrbracket}^{\rm cont}(\pi_1^{\vee},\sigma^{\vee})^{\vee}\cong\F$, we get $M(\sigma)\cong \F\llbracket S\rrbracket$ hence $\widetilde{E}/J\cong \F\llbracket S\rrbracket$. 

Now we prove the claim. Since we have a natural  isomorphism $M(\sigma)\otimes_{\widetilde{E}}\F\cong \Hom_{\cO\llbracket K\rrbracket}^{\rm cont}(\widetilde{P}\otimes_{\widetilde{E}}\F,\sigma^{\vee})^{\vee}$ by \cite[Proposition 2.4]{pa12},   it suffices to show that the latter space, whenever non-zero, is $1$-dimensional over $\F$ by Nakayama's lemma.    
By the projectivity of $\widetilde{P}$, we can find $x\in \widetilde{E}$ which makes the following diagram commutative:
\[\xymatrix{\widetilde{P}\ar@{->>}[d]\ar@{-->}^{x}[r]&\widetilde{P}\ar@{->>}[d]\\
\pi_{\infty}^{\vee}\ar^{S}[r]&\pi_{\infty}^{\vee}.}\] 
Applying $\Hom_{\cO\llbracket K\rrbracket}^{\rm cont}(*,\sigma^{\vee})^{\vee}$ to the diagram  and the cokernels
 we get using  (\ref{equation-sequence-S}) and (\ref{equation-J-prime}):
\begin{equation}\label{equation-dim=1}
\dim_{\F}\Hom_{\cO\llbracket K\rrbracket}^{\rm cont}(\widetilde{P}/x\widetilde{P},\sigma^{\vee})^{\vee}=\dim_{\F} \Hom_{\cO\llbracket K\rrbracket}^{\rm cont}(\pi_1^{\vee},\sigma^{\vee})^{\vee}=1.\end{equation}
Since $\widetilde{E}$ is a local ring by \cite[Corollary 2.5]{pa10} and $x$ is not an isomorphism (as $S$ is not surjective), $x$  lies in the maximal ideal of $\widetilde{E}$.  This implies a natural surjection $\widetilde{P}/x\widetilde{P}\twoheadrightarrow \widetilde{P}\otimes_{\widetilde{E}}\F$, and therefore \[\Hom_{\cO\llbracket K\rrbracket}^{\rm cont}(\widetilde{P}/x\widetilde{P},\sigma^{\vee})^{\vee}\twoheadrightarrow\Hom_{\cO\llbracket K\rrbracket}^{\rm cont}(\widetilde{P}\otimes_{\widetilde{E}}\F,\sigma^{\vee})^{\vee}.\]  This proves the claim using (\ref{equation-dim=1}).  
\end{proof}

\begin{corollary}\label{Coro-Ann(V)=J}
For $W$ a non-zero smooth $\F$-representation of $K$ of finite length,  $J$ is the only associated prime ideal of $M(W)$.
\end{corollary}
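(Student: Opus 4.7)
My plan is to exploit the fact that $J$ annihilates $M(W)$, so that $M(W)$ descends to a module over the commutative ring $\widetilde{E}/J \cong \F\llbracket S\rrbracket$, which is a discrete valuation ring. The annihilation comes from Proposition \ref{Prop-J-prime}: each irreducible composition factor $\sigma$ of $W$ has $M(\sigma)$ either zero or isomorphic to $\widetilde{E}/J$, and since $M(\cdot)$ is exact and $J$ is a two-sided ideal, $J$ annihilates $M(W)$ as well. Consequently any associated prime of $M(W)$ over $\widetilde{E}$ contains $J$, and therefore corresponds to one of the two primes $(0)$, $(S)$ of $\F\llbracket S\rrbracket$, pulling back to $J$ or to the maximal ideal of $\widetilde{E}$ respectively. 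The task then reduces to showing that the maximal ideal is \emph{not} associated to $M(W)$, equivalently that $S$ acts injectively on $M(W)$; and since this action factors through $\widetilde{E}/J$, it suffices to show that some fixed lift $x\in\widetilde{E}$ of $S$ acts injectively on $M(W)$.

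The heart of the argument will be to establish this injectivity by induction on the length of $W$. In the base case $W=\sigma$ irreducible, either $M(\sigma)=0$ (nothing to show) or the short exact sequence (\ref{equation-S-regular}) obtained in the proof of Proposition \ref{Prop-J-prime} directly exhibits $S$, hence $x$, as acting injectively on $M(\sigma)\cong \F\llbracket S\rrbracket$. For the inductive step, I would take any short exact sequence $0\to W'\to W\to \sigma\to 0$ with $\sigma$ irreducible and apply the exact functor $M(\cdot)$ to obtain a short exact sequence $0\to M(W')\to M(W)\to M(\sigma)\to 0$. Multiplication by $x$ commutes with the maps of this sequence by functoriality of $M(\cdot)$, so the snake lemma yields a left exact sequence $0\to M(W')[x]\to M(W)[x]\to M(\sigma)[x]$; the outer terms vanish by the inductive hypothesis, forcing $M(W)[x]=0$.

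With the injectivity of $x$ in hand, $M(W)$ is a torsion-free module over the DVR $\F\llbracket S\rrbracket$. The corollary is vacuous if $M(W)=0$; otherwise, torsion-freeness gives $\mathrm{Ass}_{\F\llbracket S\rrbracket}(M(W))=\{(0)\}$, which under $\widetilde{E}\twoheadrightarrow\widetilde{E}/J$ pulls back to $\mathrm{Ass}_{\widetilde{E}}(M(W))=\{J\}$. The main point to be careful about is the potential non-commutativity of $\widetilde{E}$ (cf.\ the remark after Proposition \ref{Prop-J-prime}); this is handled cleanly by systematically working modulo the prime $J$, so that the entire argument takes place over the commutative DVR $\widetilde{E}/J \cong \F\llbracket S\rrbracket$.
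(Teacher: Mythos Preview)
Your approach is correct and essentially parallels the paper's, which dispatches the corollary in one line: since $M(\cdot)$ is exact, $M(W)$ admits a filtration whose graded pieces are each $0$ or isomorphic to $\widetilde{E}/J$ by Proposition~\ref{Prop-J-prime}, and the standard fact $\mathrm{Ass}(M(W))\subseteq\bigcup_i\mathrm{Ass}(M(\sigma_i))=\{J\}$ finishes immediately. Your snake-lemma argument for $x$-regularity is a hands-on unfolding of this same filtration principle. One minor imprecision: from $J\cdot M(\sigma_i)=0$ and exactness you only obtain $J^{\ell(W)}\cdot M(W)=0$, not $J\cdot M(W)=0$; but since associated primes are prime this still forces them to contain $J$, so your conclusion is unaffected.
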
 
\begin{proof}
It follows from Proposition \ref{Prop-J-prime} since $M(\cdot)$ is exact.
\end{proof}

\subsection{Colmez's functor}

We keep the notation of the preceding subsection. Recall that Colmez (\cite{co}) has  defined  an exact and covariant functor $\VV$ from the category of smooth, finite length representations of $G$ on $\cO$-torsion modules with a central character to the category of continuous  finite length representations of $G_{\Q_p}$ on $\cO$-torsion modules. Moreover, if $\pi$ is an object of finite length in $\mathrm{Mod}^{\rm sm}_{G,\zeta}(\cO)$, then the determinant of $\VV(\pi)$ is equal to $\epsilon\zeta$.  Following Pa\v{s}k\={u}nas \cite[\S3]{pa12}, we define an exact {covariant} functor $\check{\VV}:\mathfrak{C}_{\zeta}(\cO)\ra \mathrm{Rep}_{G_{\Q_p}}(\cO)$ as follows: for $M\in\mathfrak{C}_{\zeta}(\cO)$ of finite length, we let $\check{\VV}(M):=\VV(M^{\vee})^{\vee}(\epsilon\zeta)$ where $\vee{}$ denotes the Pontryagin dual. For general $M\in\mathfrak{C}_{\zeta}(\cO)$, write $M=\varprojlim M_i$ with $M_i$ of finite length in $\mathfrak{C}_{\zeta}(\cO)$ and define $\check{\VV}(M):=\varprojlim \check{\mathbf{V}}(M_i)$. 

\begin{proposition}\label{gl2theta}
The $G_{\Qp}$-representation $\check{\VV}(\pi_{\infty}^{\vee}(r,\lambda,\chi))$ is of rank $1$ over $\F\llbracket S\rrbracket $ and isomorphic to $\chi\mu_{S+\lambda}^{-1}$, where $\mu_{S+\lambda}:G_{\Q_p}\ra \F\llbracket S\rrbracket^{\times}$ is the unramified character sending geometric Frobenii to $S+\lambda$.
\end{proposition}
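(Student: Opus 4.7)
The plan is to combine exactness of Colmez's functor $\check{\VV}$ with a direct computation on the principal series $\pi(r,\lambda,\chi)$ and with an identification of the Frobenius eigenvalue via the Hecke action.

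Since $\pi_\infty^\vee(r,\lambda,\chi) = \varprojlim_n \pi_n^\vee(r,\lambda,\chi)$ is the inverse limit of finite length objects in $\mathfrak{C}_\zeta(\F)$, and $\check{\VV}$ is exact on finite length objects and by definition commutes with such inverse limits, applying $\check{\VV}$ to the exact sequence (\ref{equation-sequence-S}) produces
\begin{equation*}
0 \to \check{\VV}(\pi_\infty^\vee(r,\lambda,\chi)) \xrightarrow{S} \check{\VV}(\pi_\infty^\vee(r,\lambda,\chi)) \to \check{\VV}(\pi^\vee(r,\lambda,\chi)) \to 0.
\end{equation*}
Thus $\check{\VV}(\pi_\infty^\vee(r,\lambda,\chi))$ is $S$-torsion free, with reduction modulo $S$ equal to $\check{\VV}(\pi^\vee(r,\lambda,\chi))$.

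Next, I would compute the base case $\check{\VV}(\pi^\vee(r,\lambda,\chi)) \cong \chi\mu_\lambda^{-1}$, a one-dimensional unramified character. When $(r,\lambda) \neq (0,\pm 1)$, so that $\pi(r,\lambda,\chi)$ is an irreducible principal series, this is a standard computation of Colmez (\cite{co}; see also \cite[Proposition 6.1]{pa10}). In the reducible case $(r,\lambda) = (0,\pm 1)$, one uses the filtration $0 \to \St \otimes \chi\mu_{\pm 1}\circ\det \to \pi(0,\pm 1,\chi) \to \chi\mu_{\pm 1}\circ\det \to 0$: since $\check{\VV}$ kills one-dimensional representations of $G$ and its value on the Steinberg component is known explicitly, the result is still $\chi\mu_\lambda^{-1}$. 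Combined with the previous paragraph and Nakayama's lemma, $\check{\VV}(\pi_\infty^\vee(r,\lambda,\chi))$ is a cyclic $\F\llbracket S\rrbracket$-module, and the $S$-torsion-freeness upgrades this to freeness of rank $1$. Consequently the $G_{\Qp}$-action factors through a continuous character $\F\llbracket S\rrbracket^\times$ whose reduction modulo $S$ is $\chi\mu_\lambda^{-1}$.

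The main obstacle is to identify this character precisely as $\chi\mu_{S+\lambda}^{-1}$, i.e.\ to pin down the correct deformation of the Frobenius eigenvalue from $\lambda^{-1}$ to $(S+\lambda)^{-1}$ rather than some other lift. For this I plan to invoke Kisin's explicit $(\phi,\Gamma)$-module computation \cite[Lemma 1.5.11]{ki09}, which, translated through $\check{\VV}$, is exactly this determination. A more intrinsic route is via the compatibility of Colmez's functor with the Hecke action on $I_\chi(\Sym^r\F^2)$: the Hecke operator $T_r$ acts on $\check{\VV}$ of this compact induction as the Frobenius eigenvalue up to the normalization given by $\chi$, so the dualized perturbation $S = T_r - \lambda$ governs the deformation of this eigenvalue, forcing Frobenius to act as $\chi(\mathrm{Frob})(S+\lambda)^{-1}$. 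Unramifiedness on inertia is automatic: it holds modulo $S$ and is preserved under $S$-adic deformation of an unramified character.
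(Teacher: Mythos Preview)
Your proposal is correct, but it takes a somewhat longer route than the paper. The paper's proof is a direct citation: by \cite[1.5.9]{ki09} one already knows $\VV(\pi_n(r,\lambda,\chi))\cong \chi\omega^{r+1}\mu_{S+\lambda}$ as a character valued in $(\F\llbracket S\rrbracket/S^n)^\times$ for \emph{every} $n$, so unwinding the definition $\check{\VV}(M)=\VV(M^\vee)^\vee(\epsilon\zeta)$ and passing to the inverse limit over $n$ gives $\chi\mu_{S+\lambda}^{-1}$ immediately. Your approach instead first establishes the rank-$1$ freeness structurally (via exactness, the base case $n=1$, and Nakayama) and only afterwards pins down the Frobenius eigenvalue; this is perfectly valid, but the ``main obstacle'' you flag---identifying the deformation as $(S+\lambda)^{-1}$ rather than some other lift---is precisely what \cite[1.5.9]{ki09} already computes at each finite level, so once you invoke Kisin there is no need for the preliminary structural argument. (Note also that the relevant reference in Kisin is 1.5.9, the computation of $\VV$ on $I_\chi(\Sym^r\F^2)/(T-\lambda)^n$; the item 1.5.11 you cite is the construction of the map $\theta$ on pseudo-deformation rings, which is downstream of this.) Your more conceptual ``intrinsic route'' via Hecke--Frobenius compatibility is a reasonable heuristic but would require care to make precise, since $I_\chi(\Sym^r\F^2)$ is not of finite length and $\check{\VV}$ is only defined on it through the system of quotients.
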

\begin{proof}
By the proof of \cite[1.5.9]{ki09}, $\VV(I_{\chi}(\Sym^r\F^2)/(T-\lambda)^{n})$ is isomorphic to the character
\[\chi\omega^{r+1}\mu_{S+\lambda}:G_{\Q_p}\ra (\F\llbracket S\rrbracket/S^n)^{\times}.\]
Using the fact that $\zeta$ reduces to $\chi^2\omega^r$, this implies by definition that 
\[\check{\VV}\bigl((I_{\chi}(\Sym^r\F^2)/(T-\lambda)^{n})^{\vee}\bigr)=(\chi\omega^{r+1}\mu_{S+\lambda})^{-1}\cdot(\omega\chi^2\omega^r)=\chi\mu_{S+\lambda}^{-1}.\]
The result follows by passing to the limit.
\end{proof}

As in \cite[\S1.5]{ki09}, denote by $\bar{\mathfrak{r}}$ the  pseudo-representation defined by
\[\chi\omega^{r+1}\mu_{\lambda}+\chi\mu_{\lambda^{-1}}\] 
and by $R^{\ps,\zeta}(\bar{\mathfrak{r}})$ the  universal pseudo-deformation ring with fixed determinant $\epsilon\zeta$ (see \S\ref{subsection-pseudo-def} for more details). It follows from results of \cite{pa10} that $\widetilde{E}\cong R^{\ps,\zeta}(\bar{\mathfrak{r}})$ if $(r,\lambda)\neq(p-2,\pm1)$ and $R^{\ps,\zeta}(\bar{\mathfrak{r}})\hookrightarrow \widetilde{E}$ otherwise (note that the definition of $R^{\ps,\zeta}(\bar{\mathfrak{r}})$ in \cite{pa10} is slightly different from ours).
Recall that Proposition \ref{Prop-J-prime} gives a surjective ring morphism $\widetilde{E}\twoheadrightarrow \F\llbracket S\rrbracket$, which we denote by $\tilde{\theta}$.
         
\begin{corollary}\label{kisintheta}
Assume (\textbf{H}) and moreover that $(r,\lambda)\neq (p-2,\pm1)$. Then, via the natural isomorphism $\widetilde{E}\cong R^{\ps,\zeta}(\bar{\mathfrak{r}})$, the map $\tilde{\theta}:\widetilde{E}\ra\F\llbracket S\rrbracket$ coincides with the map $\theta:R^{\ps,\zeta}(\bar{\mathfrak{r}})\ra \F\llbracket S\rrbracket$ constructed in \cite[1.5.11]{ki09}. 
\end{corollary}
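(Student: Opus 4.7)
The plan is to invoke the universal property of $R^{\ps,\zeta}(\bar{\mathfrak{r}})$: an $\cO$-algebra map $R^{\ps,\zeta}(\bar{\mathfrak{r}})\to\F\llbracket S\rrbracket$ is determined by the pseudo-deformation of $\bar{\mathfrak{r}}$ it induces, so the claim reduces to showing that $\tilde{\theta}$ (precomposed with the natural isomorphism) and Kisin's $\theta$ classify the same pseudo-deformation over $\F\llbracket S\rrbracket$.

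First, I would recall that the natural iso $R^{\ps,\zeta}(\bar{\mathfrak{r}})\simto\widetilde E$ from \cite{pa10} is itself the classifying map of a canonical pseudo-deformation $T_{\widetilde E}:G_{\Qp}\to\widetilde E$ built via Colmez's functor $\check\VV$ applied to $\widetilde P$. Then $\tilde{\theta}\circ T_{\widetilde E}$ is the pseudo-deformation corresponding to the composition $R^{\ps,\zeta}(\bar{\mathfrak{r}})\simto\widetilde E\xto[\tilde{\theta}]\F\llbracket S\rrbracket$. To evaluate it, I would use that $\tilde{\theta}$ is, by Proposition \ref{Prop-J-prime}, the quotient $\widetilde E\twoheadrightarrow\widetilde E/J\cong\F\llbracket S\rrbracket$ and that $J$ annihilates $\pi_\infty^{\vee}(r,\lambda,\chi)$ via $\theta^{\vee}$, so the pseudo-deformation is already visible on $\widetilde P/J\widetilde P$, which surjects onto $\pi_\infty^{\vee}(r,\lambda,\chi)$. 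By Proposition \ref{gl2theta}, $\check\VV(\pi_\infty^{\vee}(r,\lambda,\chi))\cong \chi\mu_{S+\lambda}^{-1}$, so one character appearing in $\tilde{\theta}\circ T_{\widetilde E}$ is $\chi\mu_{S+\lambda}^{-1}$; the fixed-determinant condition $\det=\epsilon\zeta$ (reducing to $\chi^2\omega^{r+1}$ in $\F\llbracket S\rrbracket^{\times}$) then forces the other to be $\chi\omega^{r+1}\mu_{S+\lambda}$.

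Next I would compare with \cite[1.5.11]{ki09}: there $\theta$ is defined to be the classifying map of the pseudo-deformation $\chi\mu_{S+\lambda}^{-1}+\chi\omega^{r+1}\mu_{S+\lambda}$, constructed from $\pi_\infty(r,\lambda,\chi)$ via its Hecke-module structure (which through Colmez translates to the same pair of Galois characters). Hence the two pseudo-deformations coincide and $\tilde{\theta}=\theta$ follows by universality.

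The delicate point will be the middle step: because $\check\VV(\pi_\infty^{\vee}(r,\lambda,\chi))$ is only one-dimensional over $\F\llbracket S\rrbracket$, the second character summand is not directly visible from this quotient, and one has to appeal either to the fixed determinant, or (equivalently) to the other irreducible constituent of the block of $\pi(r,\lambda,\chi)$ whose image under $\check\VV$ contributes the missing character. Making this rigorous requires unpacking Pa\v{s}k\=unas's explicit construction of the iso $\widetilde E\cong R^{\ps,\zeta}(\bar{\mathfrak{r}})$ in \cite{pa10}, and in particular the tautological pseudo-deformation $T_{\widetilde E}$, which is where the real content of the argument lies.
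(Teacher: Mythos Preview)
Your proposal is correct and follows essentially the same approach as the paper: the paper's proof also notes that the isomorphism $\widetilde{E}\cong R^{\ps,\zeta}(\bar{\mathfrak{r}})$ is given by applying $\check{\VV}$ to $\widetilde{P}$, and then invokes Proposition~\ref{gl2theta} together with \cite[1.5.11]{ki09} to conclude that $\tilde{\theta}$ and $\theta$ classify the same pseudo-deformation, with the parenthetical ``taking into account of the determinant'' handling exactly the issue you flag as the delicate point. Your write-up is simply a more unpacked version of the same argument.
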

\begin{proof}
The isomorphism $\widetilde{E}\cong R^{\ps,\zeta}(\bar{\mathfrak{r}})$ in \cite{pa10} is compatible with Colmez's functor, namely it is given by 
\[\check{\VV}: \widetilde{E}=\End_{\mathfrak{C}_{\zeta}(\cO)}(\widetilde{P})\cong\End_{G_{\Q_p}}(\check{\VV}(\widetilde{P}))\cong R^{\ps,\zeta}(\bar{\mathfrak{r}}).\]
The corollary follows since both $\tilde{\theta}$ and $\theta$ induce the same pseudo-deformation of $\overline{\mathfrak{r}}$ over $\F\llbracket S\rrbracket$ by Proposition \ref{gl2theta} and \cite[1.5.11]{ki09} (taking into account of the determinant).
\end{proof}

\section{The versal and pseudo-deformation rings}\label{maps}

Let $\overline{\rho}: G_{\Qp}\ra \GL_2(\FF)$ be a (continuous) representation. We aim to describe the versal deformation rings for various $\overline{\rho}$ explicitly, and then construct maps between them.  For these we follow the methods of \cite{bo} and \cite[Appendix B]{pa10}.

\subsection{The versal deformation rings}

We refer the reader to \cite{maz1} for the general theory of Galois deformations. The deformation functor $D(\overline{\rho})$ on the category of Artinian local $\cO$-algebras with residue field  $\FF$    always has a versal hull $R^{\rm ver}=R^{\rm ver}(\overline{\rho})$, which is unique up to \emph{non-canonical} isomorphisms.  

In the rest of this section, we always assume that $\rhobar$ is of the form 
$\smatr{\ide}{*}0{\omega}$ or $\smatr{\ide}{0}{*}{\omega}$. It is obvious 
that the deformation functor $D(\overline{\rho})$ is representable by a universal deformation ring if $\overline{\rho}$ is non-split, and has only a versal hull otherwise.

Denote by $L\subset \overline{\QQ}_p$ the fixed field of $\mathrm{Ker}(\overline{\rho})$, and write $H=\Gal(L/\Qp)$.  Let $U\subset H$ be its $p$-Sylow subgroup, which is isomorphic to $\F_p$ if $\overline{\rho}$ is a non-split extension, and is trivial otherwise. Write $F$ as the fixed field of $\Ker \omega$. Then the quotient $C=\Gal(F/\Qp)$ is isomorphic to $\F_p^{\times}$.  For a deformation $\rho_A$ to $(A,\fm)$, the image of $G_F\subset G_{\Qp}$ thus has diagonal entries lying in $1+\fm$ and the lower left (resp. upper right)  entry lying in $\fm$, hence $\rho_A$ factors through $\Gal(F(p)/\Qp)$, with $F(p)$ the composition of all the finite extensions of $F$ whose degrees are powers of $p$.  As the order of $C$ is prime to $p$, we can and do fix an isomorphism
\[\Gal(F(p)/\Qp)\cong G_F(p)\rtimes C.\] We regard $C$ as a subgroup of $\GL_2(R)$ for any complete noetherian  local ring $R$, via the map \[g\mapsto \matr{1}00{\tilde{\omega}_C(g) }
 \]where $\tilde{\omega}_C: C\ra \Zp^{\times}$ is the Teichm\"{u}ller lifting of $\omega|_C: C\ra \F_p^{\times}$.

A pro-$p$ group $D$ is called a Demu\v{s}kin group if $\mathrm{dim}_{\Fp}H^1(D, \Fp):=n(D)<\infty$, $\mathrm{dim}_{\Fp}H^2(D, \Fp)=1$, and the cup product \[H^1(D, \Fp)\times H^1(D, \Fp)\stackrel{\cup}{\ra} H^2(D, \Fp)\]is a non-degenerate bilinear form.
Since we assume $p\neq 2$, the Demu\v{s}kin group $D$ is determined (up to isomorphism) by $n(D)$ and $t(D)$, where $t(D)$ denotes the order of the torsion subgroup of $D^{\rm ab}$. Namely, $D$ is isomorphic to the pro-$p$ group with $n(D)$ generators $t_1,\cdots, t_{n(D)}$ and one relation \[t_1^{t(D)}[t_1,t_2][t_3,t_4]\cdots[t_{n(D)-1},t_{n(D)}]=1,\] where $[t_i,t_j]=t_i^{-1}t_j^{-1}t_it_j$ are commutators; see \cite[Theorem 3]{la}. It is well-known (see e.g. \cite[Theorem 7]{la}) that $G_F(p)$ is  a Demu\v{s}kin group, for which  $n(G_F(p))=p+1$ and $t(G_F(p))=p$.

For a pro-$p$ group $\cF$,  define a filtration $\{\cF_i\}_{i\geq 1}$ by setting \[\cF_1=\cF, \qquad \cF_i=\cF_{i-1}^p[\cF_{i-1},\cF],\qquad \mathrm{gr}_i\cF=\cF_i/\cF_{i+1}.\] The Frattini quotient $\mathrm{gr}_1\cF$ will play an important role in the following. By \cite[Lemma 3.1]{bo} the action on $\cF$ of a group of order prime to $p$ is determined by the action on $\mathrm{gr}_1\cF$, up to inner automorphisms of $\cF$.

We choose $\cF$ to be a free pro-$p$ group in $p+1$ generators, and a surjection \[\phi: \cF\twoheadrightarrow G_F(p)\] whose kernel $\cR$ is generated by a single element $r\in \cF$. We see that $\mathrm{gr}_1\cF\simeq \mathrm{gr}_1G_F(p)$, hence $r\in \cF_2$. By \cite[Lemma 3.1]{bo}, the $C$-action on $G_F(p)$ extends uniquely to $\cF$ and makes $\phi$ a $C$-equivariant homomorphism, hence gives a homomorphism 
\begin{equation}\label{phic}\phi: \cF\rtimes C\ra G_F(p)\rtimes C\simeq\Gal(F(p)/\Qp).\end{equation}  We will relate $r$ with  the Demu\v{s}kin relation.

The local class field theory  and the $C$-module structure of  $G_F(p)^{\rm ab}$ determined by Iwasawa \cite[Theorem 1]{iw} give 
the following result.

\begin{lemma} \label{pplus1}
There is a natural isomorphism of $\Fp[C]$-modules\[\mathrm{gr}_1\cF\simeq \mathrm{gr}_1G_F(p)\simeq \Fp\oplus\mu_p\oplus \Fp[C]\] such that $\mu_p$ is the image of the torsion subgroup of $G_F(p)^{\rm ab}$ under the projection $G_F(p)^{\rm ab}\twoheadrightarrow \mathrm{gr}_1G_F(p)$ on which $C$ acts by $\omega$, and $\Fp[C]$ is the image of the $2$nd ramification subgroup $I_{F,2}$ of the inertia $I_F$. 
\end{lemma}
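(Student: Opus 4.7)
My plan is to use local class field theory to identify $\mathrm{gr}_1 G_F(p)$ with $F^\times/(F^\times)^p$ as an $\Fp[C]$-module, then read off the three summands using the $p$-adic logarithm and the fact that $F=\Qp(\zeta_p)$ contains $\mu_p$ but not $\mu_{p^2}$. The first isomorphism $\mathrm{gr}_1\cF \simeq \mathrm{gr}_1 G_F(p)$ is automatic: both sides are $\Fp$-vector spaces of dimension $p+1$, so the $C$-equivariant surjection $\phi\colon\cF\twoheadrightarrow G_F(p)$ induces an isomorphism on Frattini quotients. For the second, local reciprocity provides a $C$-equivariant isomorphism $G_F^{\rm ab}\simeq\widehat{F^\times}$, and passing to the maximal pro-$p$ quotient and reducing modulo $p$ yields $\mathrm{gr}_1 G_F(p)\cong F^\times/(F^\times)^p$ as $\Fp[C]$-modules. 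Since $\Fp[C]$ is semisimple, fixing any uniformizer splits off an $\Fp$-summand with trivial $C$-action, and it remains to analyze the units piece $U^{(1)}_F/(U^{(1)}_F)^p$.

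Because $\zeta_p\in F$ but $\zeta_{p^2}\notin F$, the torsion of $U^{(1)}_F$ is exactly $\mu_p$; and because the ramification index of $F/\Qp$ satisfies $e/(p-1)=1<2$, the $p$-adic logarithm induces a $C$-equivariant isomorphism $\log\colon U^{(2)}_F\simto\fm_F^2$, so $U^{(2)}_F$ is torsion-free and meets $\mu_p$ trivially. The index calculation $[U^{(2)}_F\mu_p:U^{(2)}_F]=|\mu_p|=p=[U^{(1)}_F:U^{(2)}_F]$ then forces the direct-sum decomposition $U^{(1)}_F=\mu_p\oplus U^{(2)}_F$ as $C$-modules. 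Combining with the graded-piece computation $\fm_F^k/\fm_F^{k+1}\cong\Fp(\omega^k)$, which cycles through all characters of $C$ with period $p-1$, I would conclude that $\fm_F^2/p\fm_F^2\cong\Fp[C]$, and hence $U^{(1)}_F/(U^{(1)}_F)^p\cong\mu_p\oplus\Fp[C]$.

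Finally, to match the geometric identifications: the $\mu_p$-summand is by construction the image of the torsion of $G_F(p)^{\rm ab}$ with $C$-action $\omega$, while the $\Fp[C]$-summand is the image of $U^{(2)}_F$, which under local reciprocity corresponds to the image of $I_{F,2}$ (the second upper-numbering ramification subgroup of $I_F$) in $G_F(p)^{\rm ab}$. The main obstacle I anticipate is verifying the splitting $U^{(1)}_F=\mu_p\oplus U^{(2)}_F$, a feature specific to $F=\Qp(\zeta_p)$ that fails for larger extensions, and accurately matching the ramification-theoretic description of $I_{F,2}$ with the unit filtration $U^{(2)}_F$ via local reciprocity.
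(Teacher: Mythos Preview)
Your proposal is correct and follows the same approach as the paper: both invoke local class field theory to identify $\mathrm{gr}_1 G_F(p)$ with $F^\times/(F^\times)^p$ as an $\Fp[C]$-module. The paper simply cites Iwasawa's result \cite[Theorem 1]{iw} for the $C$-module structure of $G_F(p)^{\rm ab}$, whereas you spell out the computation directly via the $p$-adic logarithm and the unit filtration --- this is essentially how that result is established, so no genuinely different idea is involved.
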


({\bf GEN}) Fix generators $\xi_0,\cdots,\xi_p$ of $\mathrm{gr}_1G_F(p)$ so that  $\xi_1$ generates $\mu_p$ and $\xi_2,\cdots,\xi_p$ generate $\Fp[C]$, and such that  $C$ acts on $\xi_i$ by $\omega^{i}$. \medskip

We remark that Lemma \ref{lapa} below is the best one can achieve, when choosing generators of $\cF$ that respect both $C$-actions and the Demu\v{s}kin relation; cf. \cite[Proposition 3.6]{bo}.

\begin{lemma}\label{lapa}

There exist generators $t_0,\cdots,t_p$ in $\cF$ lifting  $\xi_0,\cdots,\xi_p$ such that

(i) $\forall i\in \{0,\cdots,p\}, \forall g\in C$, we have $gt_ig^{-1}=t_i^{\tilde{\omega}_C^i(g)}$.

(ii) The element $r_D:=t_1^p[t_0,t_p][t_1,t_{p-1}]\cdots[t_{\frac{p-1}{2}},t_{\frac{p+1}{2}}]$ is congruent to $r$ modulo $\cF_3$.

\end{lemma}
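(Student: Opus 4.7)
The plan is to establish (i) by a cohomological lifting argument exploiting that $|C| = p-1$ is coprime to $p$, and to deduce (ii) by analyzing the image $\overline{r} \in \mathrm{gr}_2\cF$ using $C$-equivariance together with Labute's classification of Demu\v{s}kin groups.

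For (i), each quotient $\cF_n/\cF_{n+1}$ is an $\F_p[C]$-module, and since $|C|=p-1$ is invertible in $\F_p$, we have $H^j(C, \cF_n/\cF_{n+1}) = 0$ for $j \geq 1$. I would inductively construct $C$-equivariant lifts $t_i^{(n)} \in \cF/\cF_{n+1}$ of $\xi_i$: given $t_i^{(n)}$, pick any lift $\tilde{t} \in \cF/\cF_{n+2}$ and observe that
\[
\delta(g) := g\tilde{t}g^{-1}\cdot \tilde{t}^{-\tilde{\omega}_C^i(g)} \in \cF_{n+1}/\cF_{n+2}
\]
is a $1$-cocycle for a suitably twisted $C$-action, hence a coboundary; correcting $\tilde{t}$ by the corresponding $0$-cochain produces $t_i^{(n+1)}$. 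The inverse limit yields $t_i$.

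For (ii), since $\cR$ is $C$-stable and $\cR/[\cR,\cF]\cR^p$ is a $1$-dimensional $\F_p[C]$-module (cyclic as a normal subgroup generated by one element) dual to $H^2(G_F(p), \F_p)$, we may choose $r$ so that $\overline{r} \in \mathrm{gr}_2\cF$ is a $C$-eigenvector. Local Tate duality identifies $H^2(G_F, \mu_p) \cong \F_p$ with trivial $C$-action via the invariant map, giving $H^2(G_F(p),\F_p)\cong \mu_p^{-1}$ of weight $\omega^{-1}$, so $\overline{r}$ has weight $\omega$. This matches $\overline{r_D}$, since each $[t_i, t_{p-i}]$ has weight $\omega^{p}=\omega$ and so does $t_1^p$. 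Explicitly, the $\omega$-isotypic component of $\mathrm{gr}_2\cF$ has an $\F_p$-basis given by $t_1^p$, $t_p^p$, and the commutators $[t_i,t_j]$ with $0\leq i < j \leq p$ and $i+j\in\{1,p,2p-1\}$.

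To finish, I would normalize the $t_i$'s (keeping (i) intact) so that $\overline{r}=\overline{r_D}$ in this basis. The available freedom consists of: (a) $C$-equivariant change of basis in $\mathrm{gr}_1\cF$, which by Lemma \ref{pplus1} is $\mathrm{GL}_2(\F_p)$ on each of the two $2$-dimensional eigenspaces $\langle \xi_0, \xi_{p-1}\rangle$ (weight $1$) and $\langle \xi_1, \xi_p\rangle$ (weight $\omega$), and scalar rescaling on each $1$-dimensional eigenspace; and (b) higher-order perturbations $t_i\mapsto t_i u_i$ with $u_i\in\cF_2$ in the $\omega^i$-weight component (constructed equivariantly via part (i) applied to the next filtration level). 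Combining the non-degeneracy of the Demu\v{s}kin cup product on $H^1(G_F(p), \F_p)$ with Labute's classification by $(n,t)=(p+1, p)$ \cite[Theorem 3]{la}: since $\xi_1$ already generates $\mu_p \subset \mathrm{gr}_1 G_F(p)$ by Lemma \ref{pplus1} and (\textbf{GEN}), the Bockstein forces $t_1$ to carry the $p$-th power term of $\overline{r}$; the non-degeneracy then forces the ``diagonal'' commutator coefficients $c_{i, p-i}$ to be simultaneously normalizable to $1$ after rescaling within eigenspaces, while the residual freedom kills the coefficients of $[t_0, t_1]$, $[t_{p-1}, t_p]$ and $t_p^p$. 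The main obstacle is the careful book-keeping of these several degrees of freedom against the constraints of $C$-equivariance --- essentially a $C$-equivariant refinement of the Labute normal form, in the spirit of \cite[Proposition 3.6]{bo}.
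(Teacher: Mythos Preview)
Your argument for (i) is correct and is exactly what the paper does, only spelled out in more detail; the paper simply cites \cite[Lemma~3.1]{bo} for the lifting.

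For (ii) your approach diverges from the paper's, and there is a genuine gap. Two points:

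First, freedom (b) is useless for (ii). Since $[\cF_2,\cF]\subset\cF_3$ and $\cF_2^p\subset\cF_3$, the image $\overline{r_D}\in\mathrm{gr}_2\cF$ depends only on the $\xi_i$, not on the choice of lifts $t_i$. So perturbing $t_i\mapsto t_iu_i$ with $u_i\in\cF_2$ cannot move $\overline{r_D}$ at all; the comparison $\overline{r}=\overline{r_D}$ is entirely a question about the $\xi_i$ and about $r$.

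Second, and more seriously, freedom (a) as you state it is not available. The convention (\textbf{GEN}) does not merely pin down the $C$-weights: it requires $\xi_1\in\mu_p$ and $\xi_2,\dots,\xi_p\in\F_p[C]$ (the image of $I_{F,2}$). Hence on the weight-$\omega$ eigenspace $\langle\xi_1,\xi_p\rangle$ you may only rescale each vector separately, not apply an arbitrary $\GL_2(\F_p)$; likewise on $\langle\xi_0,\xi_{p-1}\rangle$ you are limited to rescalings and $\xi_0\mapsto\xi_0+c\,\xi_{p-1}$. With this reduced freedom you can indeed kill the $t_p^p$ coefficient (in fact it vanishes automatically, since $\xi_p\in\F_p[C]$ lies in the torsion-free direction so $\beta(\xi_p^*)=0$) and the $[t_{p-1},t_p]$ coefficient (via the shear in $\xi_0$), and you can rescale the diagonal $[t_i,t_{p-i}]$ coefficients to~$1$. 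But the coefficient of $[t_0,t_1]$, i.e.\ the value $\xi_0^*\cup\xi_1^*$, is invariant under all of these moves: rescalings only rescale it, and the shear $\xi_0\mapsto\xi_0+c\,\xi_{p-1}$ leaves $\xi_0^*$ (hence $\xi_0^*\cup\xi_1^*$) unchanged. So your normalization argument cannot force it to zero.

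The paper's route is different: rather than normalizing, it invokes Labute's dictionary \cite[Proposition~3]{la} between $\overline r\in\mathrm{gr}_2\cF$ and the pair (Bockstein, cup product) on $H^1(G_F(p),\F_p)$, and then asserts that for $G_F(p)$ this pair, read off in the basis dual to the $\xi_i$ of (\textbf{GEN}), is exactly the one encoded by $r_D$; the vanishing of the $[t_0,t_1]$ and $[t_{p-1},t_p]$ coefficients is thus an \emph{arithmetic} input (local Tate duality for $F=\Q_p(\zeta_p)$), for which the paper points to \cite[Proposition~3.6]{bo} and \cite[Lemma~B.1]{pa12}. To repair your argument you would need to supply that computation rather than appeal to a basis change that (\textbf{GEN}) forbids.
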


\begin{proof}
Take a lifting $t_0,\cdots,t_p\in \cF$ so that the $C$-actions are as in (i), which is achievable as $C$ is of order prime to $p$; recall \cite[Lemma 3.1]{bo}. That they may be chosen to satisfy (ii) follows essentially from \cite[Proposition 3]{la} (see also \cite[Lemma B.1]{pa12}), where it is shown how  the cup product $H^1(D,\Fp)\times H^1(D,\Fp)\ra H^2(D,\Fp)$ for a Demu\v{s}kin group $D$ is determined by the image of an element of $\cF_2$ modulo $\cF_3$. Namely, the image in  $\cF_2/\cF_3$ of such an element must be of the form of Demu\v{s}kin relation (up to rescaling), and it defines an isomorphism $H^2(D,\Fp)\simto \Fp$. It is then easy to see that $r_D$  defines the same cup product on $H^1(G_F(p),\Fp)\times H^1(G_F(p),\Fp)$ as that defined by $r$, hence has the same image as $r$ modulo $\cF_3$.\end{proof} 


To construct the (uni-)versal deformations for some $\rhobar$ with semi-simplification $\mathbbm{1}\oplus\omega$,  we first introduce the following general result.

\begin{proposition}\label{R}

Let $R$ be a complete noetherian local $\cO$-algebra with residue field $\FF$. Suppose there are matrices $m_i$ in $\GL_2(R)$  which satisfy the following conditions:

(1) $C$-actions: $gm_ig^{-1}=m_i^{\tilde{\omega}_C^i(g)}, \forall g\in C$.
 
(2) Demu\v{s}kin relation: $m_1^p[m_0,m_p][m_1,m_{p-1}]\cdots[m_{\frac{p-1}{2}},m_{\frac{p+1}{2}}]=1$.

\noindent Then we have

(i) The assignment $t_i\mapsto m_i$  ($i=0,\cdots, p$) is a $C$-equivariant group homomorphism, hence defines a homomorphism $\alpha_R: \cF\rtimes C \ra \GL_2(R)$, which satisfies  that $\alpha_R(r)\in \alpha_R(\cF_3)$.

(ii) There is a continuous homomorphism \[\rho_R: \Gal(F(p)/\Qp)\ra \GL_2(R)\] and a continuous homomorphism \[\phi': \cF\rtimes C\ra \Gal(F(p)/\Qp)\] with the properties $\Ker \phi'\in \cF_2$, $\alpha_R=\rho_R\circ \phi'$, and $\phi'\equiv \phi$ mod $\cF_3$.  

(iii) Moreover,  $\phi'$ can be chosen uniformly for  various $R$ if $\cap_{R}\Ker{\alpha_R}$ is non-empty.

\end{proposition}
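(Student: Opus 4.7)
The plan is to extend $t_i\mapsto m_i$ to a continuous $C$-equivariant homomorphism $\alpha_R:\cF\rtimes C\to\GL_2(R)$, then replace the Demu\v{s}kin relator $r$ by a nearby $r'\equiv r\pmod{\cF_3}$ lying in $\Ker\alpha_R$, and descend $\alpha_R$ through the resulting Demu\v{s}kin quotient of $\cF$, which is identified with $G_F(p)$ via Labute's classification.

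For (i), I first observe that each $m_i$ lies in the pro-$p$ congruence subgroup $1+M_2(\fm_R)$ of $\GL_2(R)$: reducing condition (1) modulo $\fm_R$, the element $\bar m_i\in\GL_2(\FF)$ is conjugate to $\bar m_i^{\tilde\omega_C^i(g)}$ for every $g\in C$, and combined with the prescribed shape of $\overline{\rho}$ this forces $\bar m_i=1$. The universal property of the free pro-$p$ group $\cF$ then produces a unique continuous homomorphism $\alpha_R^\circ:\cF\to\GL_2(R)$ with $\alpha_R^\circ(t_i)=m_i$, and condition (1) together with Lemma \ref{lapa}(i) makes $\alpha_R^\circ$ $C$-equivariant, so it extends to $\alpha_R:\cF\rtimes C\to\GL_2(R)$. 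Finally, condition (2) reads $\alpha_R(r_D)=1$, and Lemma \ref{lapa}(ii) gives $rr_D^{-1}\in\cF_3$, whence $\alpha_R(r)=\alpha_R(rr_D^{-1})\in\alpha_R(\cF_3)$.

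For (ii), using (i) I pick $u\in\cF_3$ with $\alpha_R(u)=\alpha_R(r)$ and set $r':=ru^{-1}\in\cF_2\cap\Ker\alpha_R$, which is $\equiv r\equiv r_D$ modulo $\cF_3$. Let $G':=\cF/\langle\langle r'\rangle\rangle$; since the image of $r'$ in $\cF_2/\cF_3$ coincides with that of $r_D$, the cup product on $H^1(G',\F_p)$ is non-degenerate of the prescribed shape, so Labute's classification \cite[Theorem 3]{la} identifies $G'$ with the Demu\v{s}kin group $G_F(p)$ of invariants $n=p+1$, $t=p$. After composing with a suitable automorphism of $G_F(p)$, I can pick $\psi:G'\simto G_F(p)$ so that $\phi':=\psi\circ(\cF\twoheadrightarrow G')$ satisfies $\phi'\equiv\phi$ modulo $\cF_3$; then $\Ker\phi'=\langle\langle r'\rangle\rangle\subset\cF_2$, and $\alpha_R$ descends through $\phi'$ to the required $\rho_R:\Gal(F(p)/\Qp)\to\GL_2(R)$ after extending by the fixed $C$-splitting. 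For (iii), the only ingredient that depends on $R$ is the choice of $u$; if $\bigcap_R(r\cF_3\cap\Ker\alpha_R)$ is nonempty, the same $r'$ and hence the same $\phi'$ may be used uniformly across the family.

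The main obstacle I anticipate is the rigidification step in (ii): adjusting $\psi$ so that $\phi'\equiv\phi$ modulo $\cF_3$. This amounts to showing that two surjections $\cF\twoheadrightarrow G_F(p)$ whose relators are congruent modulo $\cF_3$ differ by an automorphism of $G_F(p)$ trivial on $\mathrm{gr}_1 G_F(p)$, which can then be absorbed into $\psi$; this hinges on the explicit $C$-module structure of $\mathrm{gr}_1 G_F(p)$ from Lemma \ref{pplus1} together with the $C$-equivariant choice of generators from Lemma \ref{lapa}(i).
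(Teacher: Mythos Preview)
Your argument for (i) is essentially the paper's, fleshed out. The real gap lies in (ii): your element $r'=ru^{-1}$, with $u\in\cF_3$ chosen only so that $\alpha_R(u)=\alpha_R(r)$, need not be a $C$-eigenvector. Since $C$ acts on $r$ by $\tilde\omega$ (dual to the $C$-action on $H^2(G_F(p),\F_p)\cong\mu_p^\vee$), you would need $g\cdot r'=(r')^{\tilde\omega(g)}$ for the normal closure $\langle\langle r'\rangle\rangle$ in $\cF$ to be $C$-stable. Without this, the $C$-action does not descend to $G'=\cF/\langle\langle r'\rangle\rangle$, so there is no way to extend the quotient map to a homomorphism $\phi':\cF\rtimes C\to G_F(p)\rtimes C\cong\Gal(F(p)/\Q_p)$, and ``extending by the fixed $C$-splitting'' does not make sense. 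Your proposed fix via the $C$-module structure of $\mathrm{gr}_1 G_F(p)$ addresses the wrong step: the problem is choosing $u$ inside $\cF_3$ $C$-equivariantly, not adjusting $\psi$ afterwards.

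The paper resolves this by following B\"ockle \cite[Proposition 3.8]{bo}: for each $i\ge 2$ one forms $N_i=\cF_i\cdot\bigl(\cap_R\Ker\alpha_R\bigr)$ and invokes \cite[Lemma 3.2]{bo}, an averaging-type result, to produce $r_i\in N_i$ with $r_i\equiv r\pmod{\cF_i}$ \emph{and} $g\cdot r_i=r_i^{\tilde\omega(g)}$. A compactness argument (closures of the tails $\{r_j\}_{j\ge i}$) then yields a single element $r'\in(\cap_R\Ker\alpha_R)\cap\cF_2$ that is a $\tilde\omega$-eigenvector and congruent to $r$ modulo $\cF_3$. This simultaneously gives the $C$-stability of $\langle\langle r'\rangle\rangle$, the congruence $\phi'\equiv\phi\pmod{\cF_3}$, and the uniformity in (iii) from the stated hypothesis that $\cap_R\Ker\alpha_R$ is non-trivial---your reformulation ``$\cap_R(r\cF_3\cap\Ker\alpha_R)\neq\emptyset$'' is a stronger condition that you have not derived from the given one.
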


\begin{proof}

(i) follows from (1) and (2); we have $\alpha_R(r)\in \alpha_R(\cF_3)$  by Lemma \ref{lapa}(ii)  and that $r_D\in \Ker\alpha_R$. 

(ii) and (iii) can be obtained by  the proof of \cite[Proposition 3.8]{bo}, with $\Ker\alpha$ \emph{loc.cit.} replaced by the intersection of the $\Ker\alpha_R$'s; the intersection is taken for the uniformness of $\phi'$. More precisely, we first note that  $C$ acts on $H^2(G_F(p),\Fp)$ by $\omega^{-1}$, since the latter is the $\Fp$-dual of  $\mu_p$ on which $C$ acts by $\omega$. Thus, by the discussion on \cite[Page 118]{bo}, $C$ acts on $r$ by $\tilde{\omega}$. Now, for any $i\geq 2$, form the composite $N_i$ of $\cF_i$ and $\cap_{R}\Ker{\alpha_R}$. Then \cite[Lemma 3.2]{bo} shows that there is an element $r_i\in N_i$ on which $C$ acts as $\tilde{\omega}$, and $r_i\equiv r$ mod $\cF_i$ for any $i\geq  2$, hence all $r_i\in \cF_2$. 

Denote by $C_{r_i}$ the closure of $\{r_j\}_{j\geq i}$ in $N_i\cap \cF_2$. Then $I:=\cap_{i\geq 2}C_{r_i}$ is non-empty by the compactness of $\cF$, and lies in $(\cap_{R}\Ker{\alpha_R})\cap \cF_2$. 
Note that $C$ acts on any element in $C_{r_i}$ (for any $i\geq 2$) via $\tilde{\omega}$, because the set $\{x\in \cF|g\cdot x=x^{\tilde{\omega}(g)}, \forall g\in G\}$ is closed. Thus $C$ acts on any element in $I$ via $\tilde{\omega}$. Furthermore, an element in $I$ is congruent to $r$ modulo $\cF_3$ by the construction of $r_i's$, hence $\cF$ modulo such an element defines a Demu\v{s}kin group which is isomorphic to $G_F(p)$, by \cite[Proposition 3]{la}. Then, $\cF$ modulo an element in $I\subset \cF_2$ gives the wanted homomorphisms $\varphi'$ and $\rho_R$. 
\end{proof}

\medskip

Depending on the shapes of the representations $\overline{\rho}$, the (candidates for) versal deformations and deformation rings are listed  below.   

 For each $R=R^{\rm ver}, R^1, R^{\rm peu}$  and the matrices $m_i$ in $\GL_2(R)$ below, direct computation shows that the conditions (1) and (2) in Proposition \ref{R} are satisfied. (We refer the reader to \cite[Lemma 5.3 (i)-(iii)]{bo} for more details on the choices of these matrices.) Moreover, the intersection $\cap_{R}\Ker{\alpha_R}$ of these rings is non-empty, because, for instance, $t_2$ lies in it. Therefore Proposition \ref{R} applies.

\subsubsection{The split  case}\label{splitng}

Let $\overline{\rho}$ be  $\mathbbm{1}\oplus\omega$. We pick indeterminate variables $a_0, a_1, b, c_0, c_1$, $d_0, d_1$ and write

\[m_0=(1+a_0)^{1/2}
\left(
\begin{array}{cc}
  (1+d_0)^{1/2} &  0 \\
 0 &     (1+d_0)^{-1/2}
\end{array}
\right), \qquad m_1=
\left(
\begin{array}{cc}
  1 &  0 \\
 c_0 &   1
\end{array}
\right), \]\[
m_{p-1}=(1+a_1)^{1/2}
\left(
\begin{array}{cc}
  (1-p+d_1)^{1/2}&  0 \\
 0 &     (1-p+d_1)^{-1/2}
\end{array}
\right), \quad
m_p=
\left(
\begin{array}{cc}
 1 &  0 \\
 c_1 &   1
\end{array}
\right),
\]\[m_{p-2}=
\left(
\begin{array}{cc}
  1&   b   \\
  0& 1 
\end{array}
\right),\quad m_2=\cdots=m_{p-3}=I_{2\times 2}.\]
Set \[R^{\rm ver}=\frac{\cO\llbracket a_0,a_1,b, c_0,c_1,d_0,d_1\rrbracket}{(c_0d_1-c_1d_0)}.\]
By the description above, the same proof as in \cite[B.4]{pa10}  shows that the reducibility ideal of $R^{\rm ver}$ is $(bc_0,bc_1)$, that is, for $x:R^{\ver}\ra E$ a closed point, the corresponding deformation $\rho_x$ is reducible if and only if $(bc_0,bc_1)\subset \Ker x$.

 \subsubsection{The non-split cases}\label{nsplitng}

(1) Assume $\overline{\rho}$ is a  non-split extension of $\omega$ by $\mathbbm{1}$ (unique up to scalar as $p\geq 5$). Pick indeterminate variables $a_0,a_1, c_0,c_1, d_0,d_1$. Set $m_0,\cdots,m_p$ as in the split case, except that we replace $m_{p-2}=\smatr{1}{b}01$ with $m_{p-2}=\smatr{1}101$.

Set \[R^1=\frac{\cO\llbracket a_0,a_1,c_0,c_1,d_0,d_1\rrbracket}{(c_0d_1-c_1d_0)}.\]
One sees easily that the reduction of $\rho_{R^1}$ given by Proposition \ref{R} is a non-split extension of $\omega$ by $\mathbbm{1}$. Similarly as before, we have that the reducibility ideal of $R^1$ is $(c_0,c_1)$.\medskip

(2)  Assume $\overline{\rho}$ is a non-split extension $0\ra \omega\ra\overline{\rho}\ra\ide\ra0$. We know that $\mathrm{Ext}_{G_{\Qp}}^1(\mathbbm{1},\omega)\simeq  H^1(G_{\Qp},\omega)$ is of dimension $2$, so $\overline{\rho}$ could be either peu ramifi\'{e} or tr\`{e}s ramifi\'{e} extensions,  as defined by Serre  \cite[\S2.4]{se}. We recall the definition below.

Write $K_0=\Qp^{\rm ur}$ the maximal unramified extension of $\Qp$, and $K_t=K_0(\mu_p)$ the tamely ramified field. Then Kummer theory tells us that the Galois representation $\rhobar|_{\Gal(\bQp/K_t)}$ must factor through $\Gal(K/K_t)$ for some $K$ of the form 
\[K=K_t(x_1^{1/p},\cdots,x_m^{1/p})\quad \text{for}\quad x_i\in K_0^{\times}/(K_0^{\times})^p,\]for some $m\geq 1$.
We then say $\rhobar$ is \emph{peu ramifi\'{e}} if $p|v_p(x_i)$ for each $i$, and say the associated element in  $H^1(G_{\Qp},\omega)$ is a peu ramifi\'{e} extension. A peu ramifi\'{e} extension is unique up to scalars. Depending on context, we sometimes call the trivial extension $\mathbbm{1}\oplus\omega$ a peu ramifi\'{e} extension. All the other extensions are called \emph{tr\`{e}s ramifi\'{e}} extensions.

The following equivalent variation  of Serre's definition is easy to obtain.

\begin{lemma}\label{peuline}
An extension $0\ra \omega\ra \overline{\rho}\ra \mathbbm{1}\ra 0$ is peu ramifi\'{e} if and only if the image of $2$nd ramification subgroup $I_{F,2}\subset \Gal(\bQp/K_t)$ under $\rhobar$ is trivial.
\end{lemma}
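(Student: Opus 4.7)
My plan is the following. Since $\omega|_{G_F}$ and $\mathbbm{1}|_{G_F}$ are trivial, the restriction $\overline{\rho}|_{G_F}$ has image in the unipotent subgroup and is determined by its upper-right entry, a character $\psi\colon G_F\to\F_p$. This character factors through the pro-$p$ quotient and hence through $\mathrm{gr}_1 G_F(p)=\F_p\oplus\mu_p\oplus\F_p[C]$ of Lemma~\ref{pplus1}. A short matrix calculation shows that conjugation by $C\subset G_{\Q_p}$ translates into $c\cdot\psi=\omega(c)^{-1}\psi$, so $\psi$ lies in the $\omega^{-1}$-isotypic component of $\Hom_{\F_p}(\mathrm{gr}_1 G_F(p),\F_p)$. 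A quick character count gives that this isotypic component is two-dimensional, matching $\dim_{\F_p}H^1(G_{\Q_p},\omega)=2$: one line comes from $\Hom(\mu_p,\F_p)$ (which has $C$-type $\omega^{-1}$) and the other from the $\omega^{-1}$-component of $\Hom(\F_p[C],\F_p)$.

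Next, by Lemma~\ref{pplus1} the image of $I_{F,2}$ in $\mathrm{gr}_1 G_F(p)$ is exactly the $\F_p[C]$ summand, so the condition $\overline{\rho}(I_{F,2})=1$ is equivalent to $\psi$ vanishing on $\F_p[C]$, i.e.\ factoring through the quotient $\F_p\oplus\mu_p$. Combined with the $\omega^{-1}$-isotypic constraint, this cuts out precisely the one-dimensional subspace coming from $\Hom(\mu_p,\F_p)$. Since the peu ramifi\'e locus is also one-dimensional inside $H^1(G_{\Q_p},\omega)$ by the uniqueness statement recalled in the paper, it suffices to verify that these two lines agree.

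For the remaining identification I would apply Kummer theory to $K_t$, using $G_{K_t}=I_F$ and $\mu_p\subset K_t$: the restriction $\psi|_{I_F}$ corresponds to a Kummer class $[x]\in K_t^\times/(K_t^\times)^p$, and by definition $\overline{\rho}$ is peu ramifi\'e iff $[x]$ lies in the image of $\mathcal{O}_{K_0}^\times$. Combining the Kummer isomorphism with local class field theory for $F$ and the Iwasawa decomposition of $U_F^{(1)}$ (recalled in Lemma~\ref{pplus1}), one checks that classes arising from $\mathcal{O}_{K_0}^\times\subset K_t^\times$ pair trivially, via the Hilbert symbol, with the subgroup of $I_F^{\rm ab}/p$ corresponding to the $\F_p[C]$ summand; equivalently, such classes give characters that factor through the $\mu_p$-quotient of $I_F(p)^{\rm ab}/p$, which is exactly the line cut out by $\overline{\rho}(I_{F,2})=1$. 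I expect the main obstacle to be this last step: making the Hilbert-symbol/Iwasawa matching precise, which requires tracking the inclusion $K_0\hookrightarrow K_t$ through LCFT and comparing the unit filtration on $\mathcal{O}_F^\times$ with the ramification filtration on $I_F$ via Iwasawa's theorem.
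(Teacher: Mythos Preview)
Your reduction is correct and parallels the paper's: both use Lemma~\ref{pplus1} to translate the condition $\overline{\rho}(I_{F,2})=1$ into the statement that the cocycle $\psi\colon G_F\to\F_p$ vanishes on the $\F_p[C]$ summand of $\mathrm{gr}_1 G_F(p)$, and both observe that this cuts out a single line inside the two-dimensional $H^1(G_{\Q_p},\omega)$ that must then be matched with the peu ramifi\'e line.

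The difference is in how that final matching is made. You propose to unwind local class field theory and the Hilbert symbol to show that Kummer classes coming from $\mathcal{O}_{K_0}^\times$ pair trivially with the $\F_p[C]$ part of $I_F^{\rm ab}/p$; this is doable but, as you acknowledge, requires carefully tracking the unit filtration against Iwasawa's decomposition. The paper bypasses this entirely with an elementary ramification computation: for a degree-$p$ Kummer extension $K_t(u^{1/p})/K_t$ with $u\in K_0^\times$, one checks directly (using \cite[p.~68, Corollary]{se2}) that the second ramification group $\Gal(K_t(u^{1/p})/K_t)_2$ is trivial if and only if $p\mid v_p(u)$. Since the latter is Serre's definition of peu ramifi\'e, the identification is immediate. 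Your route is more conceptual and would generalize better, but the paper's is shorter and needs nothing beyond the standard description of ramification in Kummer extensions of equal residue characteristic.
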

\begin{proof}

First  recall from Lemma \ref{pplus1}  that the image of $I_F$ (or equivalently, that of the wild inertia $I_{F,1}$) in $\mathrm{gr}_1G_F(p)$ is isomorphic to $\mu_p\oplus \Fp[C]$ as $\Fp[C]$-modules, and that  under the same reduction the $2$nd ramification subgroup $I_{F,2}$ is mapped onto $\Fp[C]$ and  the $p$-torsion subgroup of $I_{F,1}$ is mapped onto $\mu_p$.

Let $H$ be the kernel of the projection $I_{F,1}\twoheadrightarrow \mu_p\subset \mathrm{gr}_1G_F(p)$ and $K$ be the fixed field of $H$. Then $H=I_{F,2}$ and $K$ is an abelian extension over $K_t$ of degree $p$. Moreover, since $K_t$ contains the $p$-th roots of unity, $K$ is a Kummer extension and of the form $K=K_t(u^{1/p})$ with $u\in K_0^{\times}/(K_0^{\times})^p$. We then have the $2$nd ramification subgroup $\Gal(K/K_t)_2=\{1\}$ by \cite[p.68, Corollary]{se2}. On the other hand, it is elementary to check that $\Gal(K_t(u^{1/p})/K_t)_2=\{1\}$ if and only if  $v_p(u)=0\ (\mathrm{mod}\ p)$. The claim follows.\end{proof}

\begin{remark}By Kummer theory, we have the isomorphism\[\Qp^{\times}/(\Qp^{\times})^p\simto H^1(G_{\Qp},\omega), \quad u\mapsto (g\mapsto g(u^{1/p})/u^{1/p}).\] Then the image of $\Zp^{\times}/(\Zp^{\times})^p$ in $H^1(G_{\Qp},\omega)$ is the peu ramifi\'{e} line. Hence a peu ramifi\'{e} extension $\rhobar$ must factor through $\Gal(\Qp(\mu_p,(1-p)^{1/p})/\Qp)$; take $u=1-p$ in the proof of Lemma \ref{peuline}. \end{remark}

Assume $\overline{\rho}$ is  a  non-split extension of $\mathbbm{1}$ by $\omega$ which is peu ramifi\'{e}. 
We pick indeterminate variables $a_0,a_1,x_1,x_2,x_3$ and write

\[m_0=(1+a_0)^{1/2}
\left(
\begin{array}{cc}
  (1+x_1)^{1/2} &  0 \\
 0 &     (1+x_1)^{-1/2}
\end{array}
\right), \qquad m_1=
\left(
\begin{array}{cc}
  1 &  0 \\
 1 &   1
\end{array}
\right), \]\[
m_{p-1}=(1+a_1)^{1/2}
\left(
\begin{array}{cc}
 (1-p+x_1x_2)^{1/2} &  0 \\
 0 &     (1-p+x_1x_2)^{-1/2}
\end{array}
\right), \quad
m_p=
\left(
\begin{array}{cc}
 1 &  0 \\
 x_2 &   1
\end{array}
\right),
\]\[m_{p-2}=
\left(
\begin{array}{cc}
  1&   x_3   \\
  0& 1 
\end{array}
\right),\quad
m_2=\cdots=m_{p-3}=I_{2\times 2}.\]Set \[R^{\rm peu}=\cO\llbracket a_0,a_1,x_1,x_2,x_3\rrbracket.\]The reducibility ideal of $R^{\rm peu}$ is $(x_3)$.

By Lemma \ref{peuline}, ({\bf GEN}) and the choices $m_i$, the deformation $\rho_{R^{\rm peu}}$ obtained via Proposition \ref{R} reduces to the peu ramifi\'{e} extension $\overline{\rho}$ modulo the maximal ideal of $R^{\rm peu}$ (up to isomorphism).  This justifies the notation $\rho_{R^{\rm peu}}$.

\medskip
 
\begin{corollary}\label{Risversal}

The rings $R=R^{\rm ver}, R^1, R^{\rm peu}$ in \S\S\ref{splitng}, \ref{nsplitng} are the (uni-)versal deformation rings of the corresponding $\overline{\rho}$, and  the continuous homomorphisms $\rho_R$ obtained via Proposition \ref{R}   are the associated (uni-)versal deformations.

\end{corollary}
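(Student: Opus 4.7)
The plan is to prove (uni-)versality by an explicit matching argument, following the strategy of B\"ockle \cite[\S5]{bo} and Pa\v{s}k\={u}nas \cite[Appendix B]{pa10}. First, I would verify that $\rho_R$ is a lift of $\overline\rho$ in each case: reducing the explicit formulas for $m_0,\ldots,m_p$ modulo $\fm_R$ and transporting via the isomorphism $\Gal(F(p)/\Qp)\simeq G_F(p)\rtimes C$, the surjection $\phi$, and the generators fixed in (\textbf{GEN}) identifies the reduction with the intended $\overline\rho$. For $R^{\mathrm{peu}}$, Lemma \ref{peuline} supplies the additional verification that the image of the $2$nd ramification subgroup is trivial in the reduction, because $m_2,\ldots,m_{p-2}$ and $m_p$ all reduce to the identity (and $m_{p-1}$ reduces to the identity as well, using $1-p\equiv 1\pmod{\varpi}$).

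For the (uni-)versal property, I would take an arbitrary deformation $\rho_A:G_{\Qp}\to \GL_2(A)$ of $\overline\rho$ and construct a continuous $\cO$-algebra homomorphism $R\to A$ classifying it. Since $\rho_A$ factors through $\Gal(F(p)/\Qp)$, I can pull back along $\phi'$ from Proposition \ref{R}(ii)-(iii) to obtain matrices $M_i:=\rho_A(\phi'(t_i))$ satisfying the $C$-equivariance (1) and the Demu\v{s}kin relation (2). The conjugation action of $C$ on $2\times 2$ matrices via $g\mapsto \mathrm{diag}(1,\tilde\omega_C(g))$ has eigenspaces: diagonal (eigenvalue $\ide$), strict upper triangular (eigenvalue $\omega^{-1}$), and strict lower triangular (eigenvalue $\omega$). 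Condition (1) then forces $M_0$ and $M_{p-1}$ to be diagonal, $M_1$ and $M_p$ to be lower-triangular unipotent, $M_{p-2}$ to be upper-triangular unipotent, and $M_j$ for $j\in\{2,\ldots,p-3\}$ to be scalar; since $p\geq 5$, the characters $\omega^j$ for these $j$ are distinct from $\ide,\omega,\omega^{-1}$, which forces the scalar part of $M_j$ to be trivial as well. After conjugating by an element of $\GL_2(A)$ congruent to $I$ modulo $\fm_A$, I can match each $M_i$ with the parametric form of $m_i$ in \S\S\ref{splitng}, \ref{nsplitng}, reading off values in $\fm_A$ for each parameter.

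The main obstacle is to check that the Demu\v{s}kin relation on the $M_i$ translates \emph{exactly} into $c_0d_1-c_1d_0=0$ (for $R^{\mathrm{ver}}$ and $R^1$) or into no relation (for $R^{\mathrm{peu}}$), with no further constraints at higher order. By Lemma \ref{lapa}(ii) combined with Proposition \ref{R}(iii), I may choose $\phi'$ so that $\rho_A(\phi'(r))=\rho_A(\phi'(r_D))$, reducing the problem to the direct computation of $r_D(M_i)=M_1^p[M_0,M_p][M_1,M_{p-1}]$ (the remaining commutators in $r_D$ involve the scalar $M_j$ and so trivialize). A careful calculation of the $(2,1)$-entry of this product in the first two cases shows that the $p$-torsion contribution $pc_0$ from $M_1^p$ cancels precisely against the $-pc_0$ arising from the twist $(1-p+d_1)$ in $m_{p-1}$, leaving $c_0d_1-c_1d_0$ as the unique obstruction; in the peu ramifi\'ee case, the explicit choices $c_0=1$ in $m_1$ and $d_1=x_1x_2$ in $m_{p-1}$ are precisely what makes the resulting $(2,1)$-entry vanish identically, so no relation is imposed. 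Higher-order corrections are absorbed using the freedom in choosing $\phi'$ afforded by Proposition \ref{R}(iii), and uniqueness of the map $R\to A$ in the representable cases follows from the rigidity of the normalized matrix forms, while in the split case the residual conjugation freedom matches the non-canonical nature of the versal hull.
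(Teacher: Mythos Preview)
Your proposal follows the strategy of B\"ockle's \cite[Theorem 6.2]{bo}, which is exactly what the paper invokes, and the outline (reduce to $C$-equivariant maps out of $\cF$, read off parameters from the constrained matrix shapes, identify the relation) is correct. There is, however, a genuine gap in the step where you claim the matrices $M_i:=\rho_A(\phi'(t_i))$ satisfy the Demu\v{s}kin relation $r_D(M_i)=1$. The kernel of $\phi'$ is generated by some $r'$ with $r'\equiv r\equiv r_D\bmod\cF_3$, so what you actually know is $r'(M_i)=1$, not $r_D(M_i)=1$. Your attempts to bridge this---asserting that ``$\rho_A(\phi'(r))=\rho_A(\phi'(r_D))$'' and that ``higher-order corrections are absorbed using the freedom in choosing $\phi'$''---are not justified: Proposition \ref{R}(iii) \emph{fixes} a single $\phi'$ uniformly for the three rings, it does not let you vary $\phi'$ with $A$, and the congruence $r'\equiv r_D\bmod\cF_3$ only controls the leading (degree-two) part of the relation.

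The way B\"ockle's argument actually closes this is not by forcing $r_D(M_i)=1$, but by working in the free power series ring $\tilde R$ (no relation imposed) and showing that the obstruction ideal---generated by the entries of $\alpha_{\tilde R}(r)-I$, or equivalently of $\alpha_{\tilde R}(r')-I$---is principal with leading term $c_0d_1-c_1d_0$. The leading term is determined by $r\bmod\cF_3=r_D\bmod\cF_3$, which is exactly your $(2,1)$-entry computation; that there is a \emph{single} relation and that its leading term determines the ideal up to a change of coordinates follows from $\dim_{\F}H^2(G_{\Qp},\mathrm{ad}\,\overline\rho)=1$ together with the matching count $\dim_{\F}H^1=\#\{\text{parameters}\}$. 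So your explicit calculation is the right ingredient, but it establishes the leading term of the obstruction, not the full relation; you need the cohomological dimension count (or equivalently B\"ockle's obstruction-theoretic framework in \cite[\S4--5]{bo}) to finish.
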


\begin{proof}
This is by the same proof as in  \cite[Theorem 6.2]{bo}.
\end{proof}

We need to consider the deformations with fixed determinants, which is needed to link the deformation rings to $p$-adic Langlands correspondence.

\begin{corollary}\label{det} Keep the notation above.
Let $\psi: G_{\Qp}\ra \cO^{\times}$ be a continuous character whose reduction mod $\varpi$ is equal to $\mathbbm{1}$, and let $D(\overline{\rho})^{\psi}$ be the sub-functor of $D(\overline{\rho})$ parametrizing the deformations with determinants equal to $\epsilon\psi$. Then the functor $D(\overline{\rho})^{\psi}$  is (pro-)represented by the quotient of $R$ by $(a_0-\alpha_0,a_1-\alpha_1)$ for some $\alpha_0,\alpha_1\in \varpi\cO$.   
\end{corollary}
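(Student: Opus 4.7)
The plan is to read off the determinant character $\det \rho_R \colon G_{\Qp} \to R^{\times}$ from the explicit form of the matrices $m_0,\ldots,m_p$ defining $\rho_R$, and to check that its only dependence on $R$ is through the two variables $a_0$ and $a_1$.

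A direct calculation using the formulas of \S\ref{splitng} and \S\ref{nsplitng} yields $\det m_0 = 1+a_0$, $\det m_{p-1} = 1+a_1$, and $\det m_i = 1$ for every other $i$: the matrices $m_1, m_p, m_{p-2}$ are unipotent, the matrices $m_2,\ldots,m_{p-3}$ are the identity, and in $m_0, m_{p-1}$ the factors $(1+d_0)^{\pm 1/2}$ and $(1-p+d_1)^{\pm 1/2}$ cancel in pairs. On the subgroup $C \hookrightarrow \GL_2(R)$ the determinant equals $\tilde{\omega}_C$ by the very definition of the embedding. Since $\det \rho_R$ is abelian it factors through $(\cF \rtimes C)^{\rm ab}$, and Lemma \ref{lapa}(i) implies that the image of $t_i$ in this abelianization vanishes for $i \in \{1,2,\ldots,p-2,p\}$: indeed, the identity $g t_i g^{-1} = t_i^{\tilde{\omega}_C^i(g)}$ becomes $t_i^{\tilde{\omega}_C^i(g)-1}=1$, and for some $g \in C$ the exponent is a unit in $\Zp$ whenever $\tilde{\omega}_C^i \neq 1$. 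The Demu\v{s}kin relation $r_D$ involves only $t_1$ and commutators, which both vanish in this abelianization, so it imposes no further constraint. Hence $\det \rho_R$ is entirely determined by the scalars $1+a_0$, $1+a_1$, together with the fixed restriction $\tilde{\omega}_C$ on $C$.

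Put $T_i := \phi'(t_i) \in G_F(p)$ for $i \in \{0,p-1\}$. As $\omega$ factors through $\Gal(F/\Qp)$ and is trivial on $G_F(p)$, we have $\omega(T_i)=1$, whence $\epsilon(T_i) \in 1+p\Zp \subseteq 1+\varpi\cO$ and $\psi(T_i) \in 1+\varpi\cO$. Therefore
\[\alpha_i \,:=\, \epsilon\psi(T_i) - 1 \,\in\, \varpi\cO.\]
Set $R' := R/(a_0-\alpha_0,\, a_1-\alpha_1)$ and let $\rho_{R'}$ be the specialization of $\rho_R$. The characters $\det \rho_{R'}$ and $\epsilon\psi$ (each of which factors through $\Gal(F(p)/\Qp)$) agree on $T_0$, $T_{p-1}$ and on $C$; by the first paragraph this is enough to force $\det \rho_{R'} = \epsilon\psi$, and hence $\rho_{R'} \in D(\overline{\rho})^{\psi}(R')$.

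For the (co)universal property, let $\rho_A \in D(\overline{\rho})^{\psi}(A)$. The (co)versality of $R$ supplies a morphism $f \colon R \to A$ inducing $\rho_A$. Since determinants are invariant under strict equivalence,
\[1+f(a_i) \,=\, f(\det m_i) \,=\, \det\rho_A(T_i) \,=\, \epsilon\psi(T_i) \,=\, 1+\alpha_i\]
for $i\in\{0,p-1\}$, so $f$ factors through $R'$. Uniqueness of the factorization is automatic when $R$ is universal ($R=R^1,R^{\rm peu}$); when $R = R^{\rm ver}$ it holds on tangent spaces, which is what is needed for $R'$ to be a versal hull of $D(\overline{\rho})^{\psi}$. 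The main delicacy is ensuring that no ``hidden'' variable of $R$ secretly contributes to the determinant, and this is exactly what the cancellations of the first paragraph, together with the abelianization of $\cF \rtimes C$, rule out.
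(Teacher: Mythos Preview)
Your proof is correct and follows essentially the same approach as the paper's one-line argument ``This is clear by the choice of the matrices $m_i=\alpha_R(t_i)$'': the point in both cases is that $\det m_i$ depends on $R$ only through $a_0$ and $a_1$, and you have simply unpacked this in full detail (the abelianization argument, the verification that $\epsilon\psi|_C = \tilde{\omega}_C$, and the universal property). One minor notational slip: you index $\alpha_i$ by $i\in\{0,p-1\}$, whereas the statement uses $\alpha_0,\alpha_1$ with $\alpha_1$ attached to $t_{p-1}$; this is harmless but worth aligning.
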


\begin{proof}

This is clear by the choice of the matrices $m_i=\alpha_R(t_i)$. 
\end{proof}

\subsection{Comparison of various deformation rings}
\label{subsection-pseudo-def}

 Let $\overline{\rho}: G_{\Qp}\ra \GL_2(\FF)$ be the representation as before. 
 Define $D^{\rm ps}=D^{\rm ps}(\mathrm{tr}\overline{\rho})$ as the functor from the category of Artinian local $\cO$-algebras $A$ with residue field $\FF$ to the category of sets of pseudo-deformations of $\mathrm{tr}\overline{\rho}$, which is always  (pro-)represented by a complete noetherian local $\cO$-algebra $R^{\rm ps}=R^{\rm ps}({\mathrm{tr}\overline{\rho}})$,   equipped with the universal pseudo-deformation $T^{\rm univ}$.  Furthermore, we define $D^{\ps, \psi}$ to be the sub-functor of $D^{\rm ps}$ parametrizing the pseudo-deformations $T\in D^{\rm ps}(A)$ such that $\epsilon\psi(g)$ is mapped to 
 $\frac{T^2(g)-T(g^2)}{2}$ under the structure morphism. The noetherian local $\cO$-algebra representing $D^{\ps, \psi}$ is denoted by $R^{\ps, \psi}$ and the corresponding universal pseudo-deformation is denoted by $T^{\rm univ,\psi}$.

\medskip
  
By the constructions in \S\S \ref{splitng}, \ref{nsplitng} and Proposition \ref{R}, we will write down the maps among various (pseudo-)deformation rings, adapting the idea of \cite[Appendix B]{pa10}. \medskip  

 \subsubsection{The map $f^1$}
  \label{subsubsection-f1}
 
First consider a non-split  extension $0\ra \mathbbm{1}\ra \overline{\rho}^1\ra \omega\ra 0$. 
The construction of $R^1=R^{\ver}({\overline{\rho}^1})$ provides  the following description of the pseudo-deformation ring $R^{\rm ps}=R^{\rm ps}({\tr\overline{\rho}})$.

 \begin{proposition}\label{ngenericps}
 
 The natural homomorphism $R^{\ps}\ra R^1$ given by taking traces is an isomorphism and induces the isomorphism  \[f^1: R^{\rm ps,\psi}\simeq R^{1,\psi}.\]

 \end{proposition}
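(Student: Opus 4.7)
The plan is to show $R^{\ps} \simto R^1$ by proving that the natural transformation of deformation functors
\[D(\overline{\rho}^1) \lra D^{\ps}(\tr\overline{\rho}^1), \qquad \rho \longmapsto \tr\rho,\]
is an isomorphism of functors on Artinian local $\cO$-algebras with residue field $\FF$; the fixed-determinant isomorphism $f^1$ will then follow immediately, since the condition $\det\rho = \epsilon\psi$ on the deformation side and the condition $\epsilon\psi(g) = \tfrac{T(g)^2 - T(g^2)}{2}$ on the pseudo-deformation side cut out compatible quotients on both sides of the isomorphism.

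I would establish bijectivity on $A$-points separately in each direction. For \emph{injectivity}: if $\rho, \rho'$ are two deformations of $\overline{\rho}^1$ to $A$ with $\tr\rho = \tr\rho'$, then $\rho$ is strictly equivalent to $\rho'$. Since $\overline{\rho}^1$ is non-split with distinct diagonal characters $\ide \neq \omega$ (and $p \geq 5$), we have $\End_{G_{\Qp}}(\overline{\rho}^1) = \FF$; the standard Carayol--Nyssen--Rouquier argument then produces, step by step up the filtration by powers of the maximal ideal $\mathfrak{m}_A$, a conjugator in $1 + M_2(\mathfrak{m}_A)$ that carries $\rho$ to $\rho'$.

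For \emph{surjectivity}: given a pseudo-deformation $T : G_{\Qp} \ra A$ lifting $\tr\overline{\rho}^1$, I construct a representation $\rho : G_{\Qp} \ra \GL_2(A)$ with $\tr\rho = T$ lifting $\overline{\rho}^1$. Here I would use Chenevier's generalized matrix algebra / determinant formalism (equivalently, the concrete construction in \cite[Appendix B]{pa10} that the proposition explicitly adapts): since $\ide$ and $\omega$ are distinct characters of $G_{\Qp}$, the orthogonal idempotents they define in the Cayley--Hamilton quotient modulo $\varpi$ lift uniquely to $A$ by Hensel's lemma (using $p \neq 2$), decomposing the quotient into ``diagonal'' and ``off-diagonal'' blocks; the non-splitness of $\overline{\rho}^1$ ensures that the reduction mod $\varpi$ of the off-diagonal block is a $1$-dimensional $\FF$-vector space, so by Nakayama the off-diagonal $A$-module is cyclic, from which one reads off explicit matrix entries for the desired $\rho$.

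The main obstacle will be this surjectivity step, since \emph{a priori} a pseudo-deformation carries strictly less data than a representation; the non-splitness assumption on $\overline{\rho}^1$ enters essentially through the Nakayama step above, and is exactly what distinguishes this case from the split case treated separately in \S\ref{splitng}, where the analogous naive statement fails. Once bijectivity on $A$-points is established, taking inverse limits yields the ring isomorphism $R^{\ps} \simto R^1$, and factoring out by the ideal encoding $\det = \epsilon\psi$ on both sides produces the second isomorphism $f^1 : R^{\ps,\psi} \simto R^{1,\psi}$.
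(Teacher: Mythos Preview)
Your proposal is correct and essentially unpacks what the paper does: the paper's entire proof is the one-line citation ``This is \cite[Proposition B.15]{pa10},'' and your sketch (Carayol--Nyssen for injectivity, the GMA/idempotent-lifting construction for surjectivity, with non-splitness of $\overline{\rho}^1$ entering via Nakayama on the off-diagonal block) is precisely the content of that cited result. So you have reproduced the argument behind the reference rather than diverged from it.
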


 \begin{proof}
 This is \cite[Proposition B.15]{pa10}.
 \end{proof}

We identify  $R^{\rm ps,\psi}=R^{1,\psi}$ from now on, hence have an isomorphism \begin{equation}\label{equation-Rps-isom}
R^{\ps,\psi}\cong \cO\llbracket c_0,c_1,d_0,d_1\rrbracket/(c_0d_1-c_1d_0).\end{equation}

Recall that we have defined in \S\ref{subsection-J} a prime ideal $J$ of $R^{\ps,\psi}$, the kernel of the map $\theta: R^{\ps,\psi}\twoheadrightarrow \F\llbracket S\rrbracket$. (Here we have  taken $\zeta$ \emph{loc. cit.} to be $\psi$, whose reduction mod $\varpi$ is trivial.)

 \begin{lemma}\label{frob}
Under the identification (\ref{equation-Rps-isom}), we have $J=(\varpi, c_0,c_1,d_1)$.
  \end{lemma}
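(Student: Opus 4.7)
The plan is to write out the map $\theta: R^{\ps,\psi} \cong R^{1,\psi} \twoheadrightarrow \F\llbracket S\rrbracket$ explicitly under the presentation (\ref{equation-Rps-isom}) and read off its kernel. Since $\tr\bar\rho^1 = \ide + \omega$, matching with $\bar{\mathfrak{r}} = \chi\omega^{r+1}\mu_\lambda + \chi\mu_{\lambda^{-1}}$ forces $(r,\lambda,\chi) = (0,\pm 1,\mu_\lambda)$; this satisfies (\textbf{H}) and avoids $(p-2,\pm 1)$ as $p \geq 5$, so Corollary \ref{kisintheta} identifies $\theta$ with Kisin's map. Combining Proposition \ref{gl2theta} with the determinant condition ($\det \equiv \omega$ mod $\varpi$), the pseudo-deformation on $\F\llbracket S\rrbracket$ induced by $\theta$ should equal
\[ T^{\theta} \;=\; \mu_\lambda\mu_{S+\lambda}^{-1} \;+\; \omega\mu_\lambda^{-1}\mu_{S+\lambda},\]
a sum of two distinct characters, the first unramified and the second differing by the $\omega$-twist through the inertia quotient $C$.

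Next I would compute $\rho_{R^{1,\psi}}$ modulo $\fa := (\varpi, c_0, c_1, d_1)$ using the matrices $m_i$ from \S\ref{nsplitng}(1). With $c_0 = c_1 = 0$ all $m_i$ become upper triangular; with $\alpha_0 \equiv \alpha_1 \equiv 0$ mod $\varpi$ and $1-p \equiv 1$ mod $\varpi$ and $d_1=0$, one finds $m_0 = \mathrm{diag}((1+d_0)^{1/2},(1+d_0)^{-1/2})$, $m_{p-1} = m_1 = m_p = I$, $m_{p-2} = \smatr{1}{1}{0}{1}$ and $m_j = I$ for the remaining $j$; and the $C$-action contributes $\mathrm{diag}(1,\tilde\omega_C)$. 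Hence the reduced deformation is upper triangular with diagonal characters $\chi_1',\chi_2'$ of $G_{\Qp}$ that are trivial on the inertial generators $t_1,\ldots,t_p$, satisfy $\chi_1'|_C = \ide$ and $\chi_2'|_C = \omega$, and take values $(1+d_0)^{\pm 1/2}$ on the Frobenius generator $t_0$ (which represents the unramified direction by Lemma \ref{pplus1}). Thus $\chi_1'$ is unramified while $\chi_2'$ is of the form $\omega\cdot(\text{unramified})$, and the substitution $d_0 \mapsto \lambda^2(S+\lambda)^{-2}-1$ yields $\chi_1' = \mu_\lambda\mu_{S+\lambda}^{-1}$ and $\chi_2' = \omega\mu_\lambda^{-1}\mu_{S+\lambda}$, matching $T^{\theta}$.

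This produces a well-defined ring homomorphism $\bar\theta: R^{1,\psi} \to \F\llbracket S\rrbracket$ (the relation $c_0d_1 - c_1d_0$ vanishes automatically) whose induced pseudo-deformation is $T^{\theta}$. By universality of $R^{\ps,\psi}$ we conclude $\bar\theta = \theta$, so $J \supseteq (\varpi, c_0, c_1, d_1)$. For the reverse inclusion I will use a dimension argument: $R^{1,\psi}/(\varpi,c_0,c_1,d_1) \cong \F\llbracket d_0\rrbracket$ is a $1$-dimensional complete local domain, and Proposition \ref{Prop-J-prime} gives $\widetilde{E}/J \cong \F\llbracket S\rrbracket$, so the induced surjection between these two $1$-dimensional domains must be an isomorphism.

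The hard part will be the identification of $\chi_1',\chi_2'$ as the specific characters $\mu_\lambda\mu_{S+\lambda}^{-1}$ and $\omega\mu_\lambda^{-1}\mu_{S+\lambda}$ of $G_{\Qp}$. This requires careful bookkeeping of how the generators $t_i \in \cF$ travel through $\cF \to \Gal(F(p)/\Qp) = G_F(p) \rtimes C$—in particular, that $t_0$ lifts Frobenius, that $t_{p-1}$ (the trivially-$C$-acting inertial generator inside $\F_p[C]$) must reduce so that its diagonals equal $I$ and thus $d_1 \in J$, and that the $\omega$-twist on $\chi_2'$ arises solely from the semidirect $C$-factor $\mathrm{diag}(1,\tilde\omega_C)$.
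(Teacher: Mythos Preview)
Your approach is correct and follows the same two-step outline as the paper: first show $(\varpi,c_0,c_1,d_1)\subseteq J$, then conclude equality by a height/dimension count. The paper's execution of the first step is more economical, however. Rather than constructing the map $\theta$ explicitly on all generators, it simply verifies membership of each generator in $J$: $\varpi\in J$ since the target has characteristic $p$; $c_0,c_1\in J$ because they generate the reducibility ideal and $T^\theta$ is visibly a sum of two characters; and $d_1\in J$ because $t_{p-1}'=\phi'(t_{p-1})$ lies in the image of inertia $I_F$ (by ({\bf GEN}) and Lemma~\ref{pplus1}, $\xi_{p-1}$ lies in the $\F_p[C]$ summand which is the image of $I_{F,2}$), while $\chi\mu_{S+\lambda}^{-1}$ is unramified, so $T^{\rm univ,\psi}(t_{p-1}')$ must map to $2$ in $\F\llbracket S\rrbracket$.

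In particular, the ``hard part'' you flag --- identifying $t_0$ with a geometric Frobenius to nail down $\theta(d_0)=\lambda^2(S+\lambda)^{-2}-1$ exactly --- is not needed. The lemma only asks for $\ker\theta$, so knowing that $\theta$ kills $\varpi,c_0,c_1,d_1$ and induces a surjection $\F\llbracket d_0\rrbracket\twoheadrightarrow\F\llbracket S\rrbracket$ of one-dimensional domains suffices. Your proposal is not wrong, just doing more work than the result requires.
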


 \begin{proof}
First, $\varpi\in J$ as  the image of $\theta$ is $\FF\llbracket S\rrbracket$. Since $c_0,c_1$ lie in the reducibility ideal, they lie in $J$. By Proposition \ref{gl2theta},  the image of the inertia $I_{F}$ under $\theta\circ T^{\rm univ,\psi}$ is trivial as $\chi$ is trivial in our case. By Lemma \ref{pplus1} and the choice ({\bf GEN}) of generators of $\mathrm{gr}_1\cF\simeq \mathrm{gr}_1G_{F}(p)$ (and \cite[Lemma 3.1]{bo} again), the image $t_{p-1}'=\phi'(t_{p-1})\in G_F(p)$ of $t_{p-1}\in \cF$  comes from $I_{F}$, hence has trivial action under $\theta\circ T^{\rm univ,\psi}$.  Thus we have $\theta(d_1)=0$, noting that $T^{\rm univ,\psi}(t_{p-1}')= (1+\alpha_1)^{1/2}((1-p+d_1)^{1/2}+(1-p+d_1)^{-1/2})$ with $\alpha_1\in \varpi\cO$ by Corollary \ref{det}, and $\theta(p)=0$. We thus get the inclusion $(\varpi,c_0,c_1,d_1)\subseteq J$, from which  the result follows since they have the same height.
\end{proof}

\subsubsection{The map $f^{\rm peu}$}
\label{subsubsection-fpeu} 
 
Let $\overline{\rho}^{\rm peu}$ be a (non-split) peu ramifi\'{e} extension.  By the construction of $R^{\rm peu,\psi}\cong\cO\llbracket x_1,x_2,x_3\rrbracket$  in \S\ref{nsplitng}, the ideal of $R^{\rm peu,\psi}$ generated by the $(1,2)$-entry of $\rho_{R^{\rm peu,\psi}}(g)$ for all $g\in G_{\Q_p}$ is just $(x_3)$, so the conjugation 
\[\smatr{x_3^{-1}}001\rho_{R^{\rm peu,\psi}}\smatr{x_3}001\]
  still  takes values in $R^{\rm peu,\psi}$. We check easily that this gives a representation on $R^{\rm peu,\psi}$ whose residual representation is a non-split extension of $\omega$ by $\mathbbm{1}$, hence induces a ring homomorphism $R^{1,\psi}\ra R^{\rm peu,\psi}$. It is seen at the same time that $\smatr{x_3^{-1}}001\rho_{R^{\rm peu,\psi}}\smatr{x_3}001$ is isomorphic to the base change to $R^{\rm peu,\psi}$ of the universal representation on $R^{1,\psi}$. By Proposition \ref{R}(iii) and the fact that taking conjugation does not change traces, the composition of the above map with (\ref{equation-Rps-isom}) gives us the trace map:
 \begin{equation}\label{pspeu} 
f^{\rm peu}:R^{\rm ps,\psi} \simeq\frac{\cO\llbracket c_0,c_1,d_0,d_1\rrbracket}{(c_0d_1-c_1d_0)}\hookrightarrow  R^{\rm peu,\psi},\end{equation}
 \[c_0\mapsto x_3, \quad c_1\mapsto x_2x_3, \quad d_0\mapsto x_1,\quad d_1\mapsto x_1x_2.\]
  
\subsubsection{The map $f^{\ver}$} 
\label{subsubsection-fver}
 
Assume $\overline{\rho}=\mathbbm{1}\oplus \omega$ is split. As in \S\ref{subsubsection-fpeu}, one checks, using the construction in \S\S\ref{splitng}, \ref{nsplitng} and Proposition \ref{R}, that the conjugation by $\smatr{b}001$  on the universal representation $\rho_{R^{\rm ver,\psi}}$ gives a map $R^{1,\psi}\ra R^{\rm ver,\psi}$, hence the trace map:
 \begin{equation}\label{ngf}f^{\rm ver}: R^{\rm ps,\psi} \ra R^{\rm ver,\psi}, \quad c_i\mapsto bc_i, \quad d_i\mapsto d_i, \quad i=0,1. \end{equation}

By Lemma \ref{frob}, $R^{\rm ver,\psi}/JR^{\rm ver,\psi}$ has three minimal prime ideals:
\begin{equation}\label{threeideals}\fp_1=(\varpi, c_0,c_1,d_1),\quad \fp_2=(\varpi,b,c_1,d_1),\quad \fp_3=(\varpi,b,d_0,d_1).\end{equation}
In fact, one checks that $JR^{\ver,\psi}=\fp_1\cap \fp_2\cap\fp_3$. 
Write 
 \begin{equation}\label{ngf1}f_i^{\rm ver}: R^{\rm ps,\psi}\ra   R_{\fp_i}^{\rm ver,\psi}\end{equation} 
for the induced homomorphism. The following property of $f_1^{\ver}$ and $f_2^{\ver}$ will be used in the proof of the  Breuil-M\'ezard conjecture later.

\begin{proposition}\label{f1f2}
For $i=1,2$, $f_i^{\ver}$  is flat, 
and for any radical ideal $\fa$ of $R^{\ps}$, $\fa R^{\rm ver}_{\fp_i}$ is still radical.
\end{proposition}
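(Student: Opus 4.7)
My plan is to analyze the maps $f^{\ver}_i : R^{\ps,\psi} \to R^{\ver,\psi}_{\fp_i}$ via the explicit descriptions $R^{\ps,\psi} \cong \cO\llbracket c_0, c_1, d_0, d_1\rrbracket/(c_0 d_1 - c_1 d_0)$ and $R^{\ver,\psi} \cong \cO\llbracket b, c_0, c_1, d_0, d_1\rrbracket/(c_0 d_1 - c_1 d_0)$, with $f^{\ver}(c_j) = bc_j$ and $f^{\ver}(d_j) = d_j$. The first step is to identify both $R^{\ps,\psi}_J$ and $R^{\ver,\psi}_{\fp_i}$ as three-dimensional regular local rings: at $J = (\varpi, c_0, c_1, d_1)$ and at $\fp_1$ the unit $d_0$ allows me to eliminate $c_1 = c_0 d_1/d_0$, leaving parameters $(\varpi, c_0, d_1)$ (with $b$ an extra unit for $\fp_1$), while at $\fp_2$ the units $c_0, d_0$ allow elimination of $d_1 = c_1 d_0/c_0$, leaving parameters $(\varpi, b, c_1)$.

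Next, to establish flatness, I will check that $(f^{\ver})^{-1}(\fp_i) = J$ by computing the kernel of the induced map $R^{\ps,\psi} \to R^{\ver,\psi}/\fp_i$, and that $J \cdot R^{\ver,\psi}_{\fp_i} = \fp_i \cdot R^{\ver,\psi}_{\fp_i}$: the unit $b$ converts each $bc_j$ to $c_j$ for $i=1$, while the units $c_0, d_0$ convert $bc_0$ to $b$ and $d_1 = c_1 d_0/c_0$ to $c_1$ (up to units) for $i=2$. Thus the induced local map $R^{\ps,\psi}_J \to R^{\ver,\psi}_{\fp_i}$ between three-dimensional regular local rings has zero-dimensional closed fibre, and miracle flatness yields the flatness of $f^{\ver}_i$.

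For the radical-preservation part, my strategy is to present $R^{\ver,\psi}_{\fp_i}$ as a localization of a power series ring over $R^{\ps,\psi}$, so that the preservation of radicals becomes formal. For $i=1$, the substitution $B \leftrightarrow b$, $c_j \leftrightarrow bc_j$, $d_j \leftrightarrow d_j$ (whose inverse $c_j \mapsto c_j/B$ is well-defined after inverting $B$) gives a ring isomorphism $R^{\ver,\psi}[1/b] \cong R^{\ps,\psi}\llbracket B \rrbracket[1/B]$, and hence $R^{\ver,\psi}_{\fp_1} \cong (R^{\ps,\psi}\llbracket B\rrbracket)_{J R^{\ps,\psi}\llbracket B\rrbracket}$. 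For any radical $\fa \subset R^{\ps,\psi}$, the quotient $R^{\ps,\psi}\llbracket B\rrbracket/\fa R^{\ps,\psi}\llbracket B\rrbracket \cong (R^{\ps,\psi}/\fa)\llbracket B\rrbracket$ is reduced (a power series over a reduced Noetherian ring embeds into the product of power series over the domains $R^{\ps,\psi}/\fq$, $\fq \in \operatorname{Min}(\fa)$), and localization preserves radicality.

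The main obstacle will be the case $i=2$: since $b \in \fp_2$ is not a unit, the direct analogue of the substitution $B \leftrightarrow b$ fails, and after inverting $c_0$ alone the element $c_1$ is not accessible from the image $f^{\ver}(R^{\ps,\psi})$. My plan is to search for an appropriate variant of the power-series presentation in this case (possibly after also inverting further elements on both sides, or by a change of variables that exchanges the roles played by $b$ and $c_0$), and failing that, to argue via Cohen's structure theorem on the completions $\widehat{R^{\ps,\psi}_J}$ and $\widehat{R^{\ver,\psi}_{\fp_2}}$ that the map is formally smooth (equivalently, flat with geometrically regular fibres); the radical preservation would then follow from the standard theorem that flat maps with geometrically reduced fibres preserve reducedness.
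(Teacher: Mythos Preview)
Your flatness argument is correct and coincides with the paper's: both verify $(f^{\ver})^{-1}(\fp_i)=J$, observe that $JR^{\ver,\psi}_{\fp_i}=\fp_iR^{\ver,\psi}_{\fp_i}$, and invoke miracle flatness for the local map $R^{\ps,\psi}_J\to R^{\ver,\psi}_{\fp_i}$ between a regular source and a Cohen--Macaulay target.

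The genuine gap is in your radical-preservation argument for $i=1$. The claimed $R^{\ps,\psi}$-algebra isomorphism $R^{\ver,\psi}[1/b]\cong R^{\ps,\psi}\llbracket B\rrbracket[1/B]$ is false: the map $c_j\mapsto bc_j$, $d_j\mapsto d_j$, $B\mapsto b$ is not surjective even after inverting $b$. In the toy model $\cO\llbracket c',B\rrbracket\to\cO\llbracket b,c\rrbracket$ given by $c'\mapsto bc$, $B\mapsto b$, the image of $\cO\llbracket c',B\rrbracket$ consists of series $\sum_{l\geq 0} b^l P_l(c)$ with each $P_l\in\cO[c]$ of degree at most $l$. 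Hence for any $s$ in the image and any $k\geq 0$, the $b^0$-coefficient of $s/b^k$ is a polynomial in $c$ of degree $\leq k$. But $(1-c)^{-1}=\sum_{n\geq 0}c^n\in\cO\llbracket b,c\rrbracket$ has $b^0$-coefficient which is not a polynomial, so it lies outside the image of $\cO\llbracket c',B\rrbracket[1/B]$. The same obstruction persists after localizing at $\fp_1$ rather than just inverting $b$, so your identification $R^{\ver,\psi}_{\fp_1}\cong(R^{\ps,\psi}\llbracket B\rrbracket)_{JR^{\ps,\psi}\llbracket B\rrbracket}$ as $R^{\ps,\psi}$-algebras does not hold, and the power-series route collapses.

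Your fallback plan for $i=2$ is the right one, and it is exactly what the paper does uniformly for $i=1,2$. The paper invokes a criterion of Ionescu: if $A\to B$ is a flat local map of noetherian local rings with $A$ Nagata and $B/\fm_AB$ geometrically reduced over $A/\fm_A$, then the map is reduced, hence preserves radical ideals. Here $R^{\ps,\psi}_J$ is Nagata (a localization of a complete noetherian local ring), and since $JR^{\ver,\psi}_{\fp_i}=\fp_iR^{\ver,\psi}_{\fp_i}$ the closed fibre is the residue field $\mathrm{Frac}(R^{\ver,\psi}/\fp_i)$, namely $\FF(\!(d_0,b)\!)$ for $i=1$ and $\FF(\!(c_0,d_0)\!)$ for $i=2$, over $R^{\ps,\psi}_J/J\cong\FF(\!(d_0)\!)$. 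One checks directly (embedding into $k'(\!(b)\!)$ or $k'(\!(c_0)\!)$ for any finite $k'/\FF(\!(d_0)\!)$) that these are geometrically reduced. So abandon the power-series presentation and carry out this closed-fibre computation instead.
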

\begin{proof}
 
We only prove the claim for $\fp_1$; the proof for $\fp_2$ goes over verbatim. Note that  $R^{\ps,\psi}_J$ is a regular local ring, because its Krull dimension is $3$ and  its maximal ideal is generated by $\varpi, c_0,d_1$ (as $c_1=c_0d_1d_0^{-1}$). Also, $R^{\ver,\psi}_{\fp_1}$ is Cohen-Macaulay since it is a localization of a Cohen-Macaulay ring.  
Since $(f^{\ver})^{-1}(\fp_1)=J$, the map  
$f_1^{\rm ver}: R^{\rm ps,\psi}\ra   R_{\fp_1}^{\rm ver,\psi}$
factors as 
\[R^{\ps,\psi}\hookrightarrow R^{\ps,\psi}_J\ra R^{\ver,\psi}_{\fp_1},\]
where the second map is  a local homomorphism. The first map is clearly flat.  The second map is also flat by \cite[Theorem 23.1]{mat}, since one checks directly that $\dim R^{\ver,\psi}_{\fp_1}=\dim R^{\ps,\psi}_J+\dim  R^{\ver,\psi}_{\fp_1}/J R^{\ver,\psi}_{\fp_1}=3$. In fact, since $JR^{\ver,\psi}_{\fp_1}=\fp_1R^{\ver,\psi}_{\fp_1}$, the quotient ring $R^{\ver,\psi}_{\fp_1}/JR^{\ver,\psi}_{\fp_1}$ is a field. Thus the map $f_1^{\rm ver}$ is flat.

 Recall \cite[Theorem 2.1]{Io}: Let $u: A\ra B$ be a local flat morphism of noetherian local rings, with $A$ a Nagata ring. If $B/\fm_AB$  is a geometrically reduced $A/\fm_A$-algebra, then $u$ is a reduced morphism (see \cite[Definition 1.1]{Io} for its definition), which in particular sends radical ideals to radical ideals.

For the second assertion, we first note that  $\fa R^{\ps,\psi}_J$ is a radical ideal.  Hence, it suffices to check that the map $R^{\ps,\psi}_J\ra R^{\ver,\psi}_{\fp_1}$ sends radical ideals to radical ideals. The last map is a flat local morphism of noetherian local rings. The ring  $R^{\ps,\psi}_J$ is a Nagata ring, since it is a localization of a complete noetherian local ring; see \cite[Chapitre IX, \S4, n$^{\circ}$4]{bour}. 
By  \cite[Theorem 2.1]{Io},  we only need to show that $R^{\ver,\psi}_{\fp_1}/J R^{\ver,\psi}_{\fp_1}\simeq  \FF(\!( d_0,b)\!)$, the  field of fractions of $R^{\ver,\psi}/\fp_1\simeq \FF\llbracket d_0, b\rrbracket$, is geometrically reduced over $R^{\ps,\psi}_J/JR^{\ps,\psi}_J\simeq\FF(\!(d_0)\!)=:k$. To see this, let $k'$ be any finite extension of $k$. Then $k'\otimes_kk(\!(b)\!)$ is reduced since it is a field. But we have the inclusion $k'\otimes_k\FF(\!( d_0,b)\!)\subset k'\otimes_kk(\!(b)\!)$ by the flatness of $k'$ over $k$, which implies that $k'\otimes_k\FF(\!( d_0,b)\!)$ is also reduced. 
\end{proof}

\begin{remark}
One sees easily that the induced homomorphism $f_3^{\ver}: R^{\ps,\psi}\ra R^{\ver,\psi}_{\fp_3}$ is not flat.
\end{remark}
 
 \begin{remark}\label{flatgeneric}

 In the case that $\brho$ is split generic, that is, $\brho\cong\chi_1\oplus\chi_2$ with $\chi_1\chi_2^{-1}\notin\{\ide,\omega^{\pm1}\}$, the situation is similar and in fact simpler. More precisely, using the machinery above, one gets, after choosing parameters, that $R^{\rm ps,\psi}=\cO\llbracket y_1, y_2,y_3\rrbracket$ and $R^{\rm ver,\psi}=\cO\llbracket b,y_1, y_2,y_3\rrbracket$.  By a similar construction as in \S\S \ref{splitng}, \ref{nsplitng}, taking traces induces the homomorphism 
 \[
 f^{\rm ver}: R^{\rm ps,\psi}\hookrightarrow R^{\rm ver,\psi},\quad y_1\mapsto y_1, \quad y_2\mapsto y_2,  \quad y_3\mapsto by_3.\]
 One then sees that $f^{\rm ver}$ is flat and maps radical ideals to radical ideals.

\end{remark}

\subsubsection{The maps $\gamma_i$}
\label{subsubsection-gamma}

Consider the ideals $\fp_2$ and $\fp_3$ of $R^{\ver,\psi}$. Meanwhile, one checks that $R^{\rm peu}/JR^{\rm peu}$ has two minimal prime ideals, which we denote by $\fq_2,\fq_3$ (notation chosen to be consistent with $\fp_2,\fp_3$):
\[\fq_2=(\varpi,x_2,x_3),\quad \fq_3=(\varpi,x_1,x_3).\]


\begin{proposition}\label{f3}  Let    $\widehat{R_{\fp_i}^{\rm ver,\psi}}$ ($i=2,3$) be the completion of $R_{\fp_i}^{\rm ver,\psi}$ with respect to its maximal ideal. We still write $f^{\ver}$ for the composition $R^{\ps,\psi}\ra R^{\ver,\psi}\ra \widehat{R^{\ver,\psi}_{\fp_i}}$.  

(i) There is a unique local homomorphism of $\cO$-algebras \begin{equation}\label{peus}
\gamma_i: R^{\rm peu,\psi}_{\fq_i}\ra  \widehat{R_{\fp_i}^{\rm ver,\psi}}\end{equation}
which is compatible with the trace maps $f^{\rm peu}$  and $f^{\rm ver}$. That is, we have the following commutative diagram: 
 \begin{equation}\label{equation-diagram}
\xymatrix{R^{\rm ps,\psi}\ar_{f^{\rm peu}}[d]\ar^{f^{\ver}}[dr] \\
 R^{\rm peu,\psi}_{\fq_i}\ar[r]^{\gamma_i}&\widehat{R_{\fp_i}^{\rm ver,\psi}}.}\end{equation}

(ii) The map $\gamma_i$ is flat and  sends radical ideals to radical ideals.

\end{proposition}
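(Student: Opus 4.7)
The plan is to follow closely the pattern established in Proposition \ref{f1f2}, exploiting the very explicit descriptions of the rings involved. For part (i), since $R^{\rm peu,\psi}\cong \cO\llbracket x_1,x_2,x_3\rrbracket$, any continuous $\cO$-algebra homomorphism out of it into a complete local $\cO$-algebra is uniquely determined by the images of $x_1,x_2,x_3$, and such a homomorphism exists provided the chosen images lie in the maximal ideal of the target. Commutativity of \eqref{equation-diagram} together with the formulas \eqref{pspeu} and \eqref{ngf} forces $\gamma_i(x_1)=d_0$ and $\gamma_i(x_3)=bc_0$. The element $\gamma_i(x_2)$ must simultaneously satisfy $\gamma_i(x_2)\cdot d_0=d_1$ and $\gamma_i(x_2)\cdot bc_0=bc_1$; these two conditions are compatible because the relation $c_0d_1=c_1d_0$ holds in $R^{\ver,\psi}$. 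For $i=2$ we have $d_0\notin\fp_2$, forcing $\gamma_2(x_2)=d_1/d_0$; for $i=3$ we have $c_0\notin\fp_3$, forcing $\gamma_3(x_2)=c_1/c_0$. A direct check using the explicit generators of $\fp_i$ shows that $\gamma_i$ sends exactly $\fq_i$ into the maximal ideal of $\widehat{R^{\ver,\psi}_{\fp_i}}$, so it factors through the localization as a local homomorphism.

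For flatness in part (ii), I would apply miracle flatness \cite[Theorem 23.1]{mat}. The source $R^{\rm peu,\psi}_{\fq_i}$ is regular local of dimension $3$, because $\fq_i$ is generated by a regular sequence of length $3$ in the regular ring $\cO\llbracket x_1,x_2,x_3\rrbracket$. The target $\widehat{R^{\ver,\psi}_{\fp_i}}$ is Cohen--Macaulay of dimension $3$, being the completion of a localization at a height $3$ prime of the hypersurface $R^{\ver,\psi}=\cO\llbracket b,c_0,c_1,d_0,d_1\rrbracket/(c_0d_1-c_1d_0)$. It remains to show the special fiber has dimension zero. A short calculation shows that $\gamma_i(\fq_i)\widehat{R^{\ver,\psi}_{\fp_i}}$ coincides with $\fp_i\widehat{R^{\ver,\psi}_{\fp_i}}$: for $i=2$ the unit $d_0$ absorbs the generator $d_1/d_0$, and the relation $c_1=c_0d_1/d_0$ makes $c_1$ redundant; similarly for $i=3$. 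Hence the fiber is a field, and miracle flatness applies.

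For the radical ideal statement I would use the same criterion \cite[Theorem 2.1]{Io} as in the proof of Proposition \ref{f1f2}: a flat local morphism from a Nagata noetherian local ring with geometrically reduced closed fiber preserves radical ideals. The ring $R^{\rm peu,\psi}_{\fq_i}$ is Nagata as a localization of a complete noetherian local ring. The residue field extension to verify is $k=\F(\!(x_1)\!)\hookrightarrow K=\F(\!(c_0,d_0)\!)$ when $i=2$ (via $x_1\mapsto d_0$), and $k=\F(\!(x_2)\!)\hookrightarrow K=\F(\!(c_0,c_1)\!)$ when $i=3$ (via $x_2\mapsto c_1/c_0$). In each case I would embed $K\hookrightarrow k(\!(c_0)\!)$: the identity on $d_0=x_1$ for $i=2$, and $c_0\mapsto c_0$, $c_1\mapsto x_2 c_0$ for $i=3$, the latter being injective because $x_2$ is transcendental over $\F$. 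Then for any finite extension $k'/k$, flatness of $k'/k$ gives $k'\otimes_k K\hookrightarrow k'\otimes_k k(\!(c_0)\!)=k'(\!(c_0)\!)$, which is a field and therefore reduced. The main obstacle is exactly this geometric reducedness check, and the explicit embeddings above make it routine.
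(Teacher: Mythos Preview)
Your argument is correct and follows essentially the same route as the paper: the same explicit formula for $\gamma_i$ (your $d_1/d_0$ equals the paper's $c_0^{-1}c_1$ in $R^{\ver,\psi}_{\fp_2}$ via the defining relation), the same miracle-flatness check using $\fq_i\widehat{R^{\ver,\psi}_{\fp_i}}=\fp_i\widehat{R^{\ver,\psi}_{\fp_i}}$, and the same appeal to \cite[Theorem 2.1]{Io} with the embedding $K\hookrightarrow k(\!(c_0)\!)$. One small expositional slip: your stated criterion ``images lie in the maximal ideal of the target'' fails for $x_1\mapsto d_0$ (when $i=2$) and $x_2\mapsto c_1/c_0$ (when $i=3$), yet the map from $\cO\llbracket x_1,x_2,x_3\rrbracket$ is still well-defined because the offending image is already a power-series variable in $R^{\ver,\psi}$, so one may first sum there and then localize and complete; the paper's ``one checks that this is well-defined'' glosses over exactly the same point.
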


\begin{proof}

 (i)  Define  \[\gamma_i: R^{\rm peu,\psi}\ra  \widehat{R_{\fp_i}^{\rm ver,\psi}}, \quad x_1\mapsto d_0, \quad x_2\mapsto c_0^{-1}c_1, \quad x_3\mapsto bc_0.\]
One checks that this is  well-defined. Now look at the inverse image of the maximal ideal $\fp_i\widehat{R_{\fp_i}^{\rm ver,\psi}}$ in $R^{\rm peu,\psi}$, which is a prime ideal   containing $\fq_i$ but not the maximal ideal (because it does not contain $x_1$ (resp. $x_2$) when $i=2$ (resp. $i=3$)), hence must be equal to $\fq_i$. This implies that $\gamma_i$ factors through  $R^{\rm peu,\psi}_{\fq_i}\ra \widehat{R_{\fp_i}^{\rm ver,\psi}}$, and  it makes the diagram (\ref{equation-diagram})   commute using  the definitions of  (\ref{pspeu}), (\ref{ngf}). On the other hand, any morphism $\gamma':R^{\rm peu,\psi}_{\fq_i}\ra  \widehat{R_{\fp_i}^{\rm ver,\psi}}$ fitting into the commutative diagram (\ref{equation-diagram}) must be of the form above.

(ii) It can be proved similarly as in the proof of Proposition \ref{f1f2}.
The flatness of $\gamma_i$  follows from \cite[Theorem 23.1]{mat}, which applies as $R^{\rm peu,\psi}_{\fq_i}$ is regular and $\widehat{R_{\fp_i}^{\rm ver,\psi}}$ is Cohen-Macaulay, being the completion of a localization of the Cohen-Macaulay ring $R^{\rm ver,\psi}$. More concretely, one checks that \[\fm_{R^{\rm peu,\psi}_{\fq_i}}\widehat{R_{\fp_i}^{\rm ver,\psi}}=\fp_i\widehat{R_{\fp_i}^{\rm ver,\psi}}\] 
is   the maximal ideal, 
hence
$\dim \widehat{R_{\fp_i}^{\rm ver,\psi}}=\dim R^{\rm peu,\psi}_{\fq_i}+\dim \widehat{R_{\fp_i}^{\rm ver,\psi}}/\fm_{R^{\rm peu,\psi}_{\fq_i}}\widehat{R_{\fp_i}^{\rm ver,\psi}}=3$.

That $\gamma_i$ sends radical ideals to radical ideals follows from  \cite[Theorem 2.1]{Io}. Namely, it suffices  to show, say for $i=2$, that $\widehat{R_{\fp_2}^{\rm ver,\psi}}/\fm_{R^{\rm peu,\psi}_{\fq_2}}\widehat{R_{\fp_2}^{\rm ver,\psi}}\simeq \FF(\!( c_0, d_0)\!)$   is geometrically reduced over the residue field $\FF(\!(x_1)\!)$ of $R^{\rm peu,\psi}_{\fq_2}$, via the map $\gamma_i: x_1\mapsto d_0$; but  we have seen how to show this in the proof of Proposition \ref{f1f2}. The same argument goes through when $i=3$.
\end{proof}


\begin{remark}
One checks easily that there does not exist an $R^{\ps,\psi}$-homomorphism from $R^{\rm peu,\psi}$ to $R^{\ver,\psi}$.
\end{remark}

\section{The multiplicity of pseudo-deformation rings}
\label{section-pseudo-mult}

In this section, we will study the multiplicity of potentially semi-stable pseudo-deformation rings of $\overline{\rho}:=\ide\oplus\omega$.  \medskip

Recall that $R^{\ps,\psi}=R^{\ps,\psi}(\mathrm{tr}\rhobar)$ denotes the universal pseudo-deformation ring  of $\overline{\rho}$ with fixed determinant $\epsilon\psi$, where $\psi:G_{\Q_p}\ra \cO^{\times}$ is a continuous character. To lighten the notation, we will omit the superscript $\psi$ in the rest of the section; for example, we write $R^{\ps}$ for $R^{\ps,\psi}$. 

For   $\fn\in\mSpec R^{\ps}[1/p]$ we denote by $\kappa(\fn)$ the quotient field $R^{\rm ps}[1/p]/\fn$, $\cO_{\kappa(\fn)}$ the ring of integers of $\kappa(\fn)$,  and $T_{\fn}$ for the induced pseudo-deformation  of $\brho$ defined over $\kappa(\fn)$.

Denote by $I_{\irr}^{\ps}$ the intersection of all  $\fn\in\mSpec R^{\ps}[1/p]$ such that $T_{\fn}$ is the trace of an absolutely irreducible representation of $G_{\Qp}$ which is potentially semi-stable of type $(k,\tau,\psi)$, and by  $I_{1}^{\ps}$  (resp. $I_2^{\ps}$)
the intersection of all $\fn\in\mSpec R^{\ps}[1/p]$ such that $T_{\fn}$ is the trace of an absolutely reducible representation which is potentially semi-stable of type $(k,\tau,\psi)$ and contains a one-dimensional sub-representation lifting $\mathbbm{1}$ (resp. $\omega$) with the higher Hodge-Tate weight. We define in a similar way $I^{\ps}_{\rm cr, irr}$ and $I^{\ps}_{\mathrm{cr},i}$ ($i=1,2$) by replacing ``semi-stable'' by ``crystalline'' in the above definition. Here we note that for an indecomposable reducible  potentially semi-stable representation of distinct Hodge-Tate weights, the unique one-dimensional sub-representation is automatically of higher weight.


\begin{remark} \label{remark-I_{omega}}In the definition of $I^{\ps}_{i}$ (and $I^{\ps}_{\mathrm{cr}, i}$), we could have demanded that $T_{\fn}$ come from an \emph{indecomposable} reducible representation, because it follows from \cite{pr} that, for instance for $I^{\ps}_2$, if $\rho=\delta_1\oplus\delta_2$ is potentially semi-stable of type $(k,\tau,\psi)$, such that $\delta_1$ (resp. $\delta_2$) lifts $\ide$ (resp. $\omega$) and $G_{\Q_p}$ acts on $\delta_2$ via the higher Hodge-Tate weight, then any \emph{non-split} extension 
\[0\ra \delta_2\ra \rho'\ra \delta_1\ra0\] 
 is also potentially semi-stable of the same type.  Moreover, $\rho'$ is automatically potentially crystalline except when $k=2$ and $\tau=\chi\oplus\chi$ is scalar, in which case  $\delta_2\delta_1^{-1}=\epsilon$ and $\Ext^1_{G_{\Q_p}}(\delta_1,\delta_2)$ is 2-dimensional and we can always find a non-split potentially crystalline extension. 
\end{remark}

Fix a $p$-adic Hodge type $(k,\tau,\psi)$, and write  $V$ for $\sigma(k,\tau):=\Sym^{k-2}E^2\otimes \sigma(\tau)$ or $\sigma^{\rm cr}(k,\tau):=\Sym^{k-2}E^2\otimes\sigma^{\rm cr}(\tau)$ (when we consider potentially crystalline deformation rings). Choose  a $K$-stable $\cO$-lattice $\Theta$ inside $V$.  Let $N_1$, $N_2$ be respectively a projective envelope of $\pi_{\alpha}^{\vee}$ and of $\Sp^{\vee}$ in the category $\mathfrak{C}_{\psi}(\cO)$,  where $\pi_{\alpha}:=\Ind_P^G\alpha$ with $\alpha:=\omega\otimes\omega^{-1}$   the smooth character of $T:=\smatr{\Q_{p}^{\times}}00{\Q_p^{\times}}$. For $i=1,2$, set 
\[ M_i(\Theta):=\Hom_{\cO}(\Hom_{\cO\llbracket K\rrbracket}^{\rm cont}(N_i,\Theta^{d}),\cO), \]
where $\Theta^d$ denotes the Schikhof dual of $\Theta$ (see \cite{ST02}). 
Then $M_1(\Theta)$  and $M_2(\Theta)$ are natural compact $R^{\rm ps}$-modules where $R^{\ps}$ acts on $N_1$ and $N_2$ via the natural isomorphisms 
$R^{\ps}\cong \End_{\fC_{\psi}(\cO)}(N_1)\cong \End_{\fC_{\psi}(\cO)}(N_2)$  (cf. \cite[\S10]{pa10}).

\subsection{The module $M_1(\Theta)$}\label{subsection-M_1}

Recall that $\brho^1$ denotes a non-split extension of $\omega$ by $\ide$ (unique up to scalars),  $R^{\rm ver}({\brho^1})$ the universal deformation ring of $\brho^1$ with determinant $\epsilon\psi$ and $R^{\rm ver}(k,\tau,\brho^1)$ the potentially semi-stable deformation ring of type $(k,\tau,\psi)$. (The superscript $\psi$ is omitted as we remarked.)
The following theorem is a consequence of results of \cite{pa10},\cite{pa12}.

\begin{theorem}\label{theorem-M1}
We have an isomorphism $$\mathrm{Ann}(M_1(\Theta))\cong I^{\ps}_{\irr}\cap I^{\ps}_1$$  and an equality of 1-dimensional cycles (where $J$ is the prime ideal defined in \S\ref{subsection-J})
\[\mathcal{Z}_1\left(R^{\ps}/(I_{\irr}^{\ps}\cap I_1^{\ps},\varpi)\right)=a_{p-3,1}J.\]

The same statement holds if we replace $I^{\ps}_{\irr}$, $I^{\ps}_{1}$, $a_{p-3,1}$ by $I^{\ps}_{\rm cr,irr}$, $I^{\ps}_{\rm cr,1}$, $a_{p-3,1}^{\rm cr}$ respectively.
\end{theorem}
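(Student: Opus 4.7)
My plan is to follow the general strategy of Pa\v{s}k\=unas \cite{pa12}, transposed to the pseudo-deformation setting and anchored on the results of Section 2. By \cite{pa10} there is an isomorphism $R^{\ps}\cong \End_{\fC_{\psi}(\cO)}(N_1)$ under which Colmez's functor $\check{\VV}$ converts $N_1$ into a $G_{\Q_p}$-module over $R^{\ps}$ whose induced pseudo-trace is the universal pseudo-deformation $T^{\rm univ,\psi}$. In this setting $\pi_{\alpha}$ plays the role of $\pi(r,\lambda,\chi)$ from Section 2 with $(r,\chi|_{\Z_p^{\times}})=(p-3,\omega)$ and some $\lambda\in\F^{\times}$ satisfying hypothesis $(\mathbf{H})$; the prime $J$ of the statement is then the $J$ defined in \S\ref{subsection-J}, and the functor $M_1$ is exact because $N_1$ is projective in $\mathrm{Mod}_{K,\psi}^{\rm pro}(\cO)$ by \cite{EP}.

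First I would pin down $\mathrm{Ann}(M_1(\Theta))$. For $\fn\in\mSpec R^{\ps}[1/p]$, adjunction together with Schikhof duality gives that $M_1(\Theta)\otimes_{R^{\ps}}\kappa(\fn)$ is non-zero if and only if there is a non-zero $K$-equivariant map $V\to \Pi_{\fn}$, where $\Pi_{\fn}$ is the admissible unitary Banach $G$-representation attached to $N_1$ via the specialisation at $\fn$. Colmez's $p$-adic local Langlands correspondence \cite{co} combined with classical local Langlands (and Remark \ref{remark-I_{omega}} to handle decomposable and indecomposable reducible liftings on equal footing) then translates this condition into the statement that $T_{\fn}$ is the trace of a potentially semi-stable representation of type $(k,\tau,\psi)$ that is either absolutely irreducible or contains a one-dimensional sub-representation lifting $\mathbbm{1}$ at the higher Hodge-Tate weight. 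The choice $\alpha=\omega\otimes\omega^{-1}$ --- with this specific ordering of the two characters --- is what picks out $I_1^{\ps}$ rather than $I_2^{\ps}$, since the parabolic ordering of characters is recovered from the Hodge filtration of the associated filtered $(\varphi,N)$-module. The crystalline statement is proved identically with $V=\sigma^{\rm cr}(k,\tau)$.

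Next I would compute the cycle. By exactness of $M_1(\cdot)$ applied to a $K$-stable filtration of $\Theta/\varpi$ whose graded pieces realise the Jordan-H\"older constituents of $\overline{V}^{\rm ss}$, one obtains
\[
\mathcal{Z}_1\bigl(M_1(\Theta)/\varpi\bigr)=\sum_{n,m}a_{n,m}(k,\tau)\,\mathcal{Z}_1\bigl(M_1(\sigma_{n,m})\bigr).
\]
By the direct analogue of Proposition \ref{Prop-J-prime} in the present setting, $M_1(\sigma_{n,m})$ is non-zero precisely when $\Hom_K(\sigma_{n,m},\pi_{\alpha})\neq 0$; a Frobenius-reciprocity computation applied to $\pi_{\alpha}|_K = \mathrm{Ind}_{B(\Z_p)}^K(\omega\otimes\omega^{-1})$ shows that this happens only for $(n,m)=(p-3,1)$, and in that case $M_1(\sigma_{p-3,1})\cong R^{\ps}/J$ (as in Proposition \ref{Prop-J-prime}), contributing the cycle $1\cdot J$. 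A Cohen-Macaulay/cyclicity argument in the spirit of \cite[\S 2]{pa12} --- showing that $M_1(\Theta)$ is Cohen-Macaulay of the expected dimension over $R^{\ps}/\mathrm{Ann}(M_1(\Theta))$ and is cyclic modulo $\varpi$ over this quotient --- then identifies $\mathcal{Z}_1(M_1(\Theta)/\varpi)$ with $\mathcal{Z}_1\bigl(R^{\ps}/(I_{\irr}^{\ps}\cap I_1^{\ps},\varpi)\bigr)$, yielding the claimed formula $a_{p-3,1}J$.

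The principal obstacle is the annihilator computation: separating $I_1^{\ps}$ from $I_2^{\ps}$ cannot be done at the level of the pseudo-representation alone, since $T_{\fn}$ does not see the order of the Jordan-H\"older constituents in a reducible deformation, nor the distinction between a decomposable and an indecomposable reducible lift. The asymmetry must be extracted from the module structure of $N_1$ over $R^{\ps}$ --- in particular from the fact that $N_1$ is \emph{not} flat over $R^{\ps}$ (as flagged in the introduction) --- together with a careful matching between the parabolic ordering of characters inside $\pi_{\alpha}$ and the Hodge filtration of the filtered $(\varphi,N)$-module of the corresponding Galois deformation. Carrying out this bookkeeping rigorously, in a way that also yields the Cohen-Macaulay input needed for the cycle step, is the technical heart of the argument.
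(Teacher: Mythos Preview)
Your approach could be made to work but takes a much longer route than the paper's, and rests on one factual confusion. The paper's proof is essentially a one-line reduction: by Proposition~\ref{ngenericps} the trace map $f^1:R^{\ps}\simto R^{\ver}(\brho^1)$ is an isomorphism, and under this identification $\check{\VV}(N_1)$ is the universal deformation of the non-split extension $\brho^1$ of $\omega$ by $\mathbbm{1}$ (\cite[Corollary 10.72]{pa10}). The annihilator and cycle statements are then immediate consequences of \cite[Corollary~6.5, Theorem~6.6]{pa12}, which already established the Breuil--M\'ezard conjecture for $\brho^1$; Proposition~\ref{Prop-J-prime} identifies the resulting cycle with $J$. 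The equality $R^{\ver}(k,\tau,\brho^1)\cong R^{\ps}/(I^{\ps}_{\irr}\cap I^{\ps}_1)$ is automatic, since any reducible deformation of $\brho^1$ contains a rank-one sub-representation lifting $\mathbbm{1}$.

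Your ``principal obstacle'' therefore dissolves: the asymmetry between $I^{\ps}_1$ and $I^{\ps}_2$ is encoded by the choice of $\brho^1$ (a non-split extension in one specific direction), not by any delicate module-theoretic property of $N_1$. In particular, you have $N_1$ and $N_2$ reversed: it is $N_2$ whose non-flatness is the genuine complication (see the remark following Lemma~\ref{Lemma-r-kappa(n)}), whereas $\check{\VV}(N_1)$, being the universal deformation of $\brho^1$, is free of rank $2$ over $R^{\ps}$ --- a fact the paper actually uses in the proof of Proposition~\ref{Prop-Pi(k(n))}. The elaborate program you outline (specialising at each $\fn$, invoking $p$-adic Langlands pointwise, verifying Cohen--Macaulayness by hand) is exactly what the paper must carry out for $M_2(\Theta)$ in \S\ref{subsection-M_2}, where no such shortcut exists; for $M_1(\Theta)$ it is redundant.
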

\begin{proof} 
Note that  $\check{\VV}(N_1)$ is isomorphic to the universal deformation of $\overline{\rho}^1$ by \cite[Corollary 10.72]{pa10}. 
By 
\cite[Corollary 6.5]{pa12} we know
\[R^{\rm ver}(\overline{\rho}^1)/\mathrm{Ann}_{R^{\ver}(\brho^1)}(M_1(\Theta))\cong R^{\rm ver}(k,\tau,\overline{\rho}^1).\]
The natural isomorphism $f^1: R^{\rm ps}\ra  R^{\rm ver}(\overline{\rho}^1)$ (see Proposition \ref{ngenericps}) induces an isomorphism from $R^{\rm ps}/(I_{\irr}^{\ps}\cap I_1^{\ps})$ to $R^{\rm ver}(k,\tau,\overline{\rho}^1)$. The first assertion  follows from this and the second assertion from \cite[Theorem 6.6]{pa12} and Proposition \ref{Prop-J-prime}, which say that
$\mathcal{Z}_1(R^{\ver}(k,\tau,\overline{\rho}^1)/\varpi)=a_{p-3,1}\mathcal{Z}_{1}(M_1(\sigma_{p-3,1}))=a_{p-3,1}J.$
\end{proof}

\subsection{The module $M_2(\Theta)$} 
\label{subsection-M_2}
We turn to study the action of $R^{\rm ps}$ on $M_2(\Theta)$.  
Recall that $N_2$ denotes a projective envelope of $\Sp^{\vee}$ in $\fC_{\psi}(\cO)$. For $\pi_1,\pi_2\in \mathrm{Mod}_{G,\psi}^{\rm l,fin}(\F)$ we will write $e^1(\pi_1,\pi_2):=\dim_{\F}\Ext^1_{G,\psi}(\pi_1,\pi_2)$. We refer to \cite[\S10.1]{pa10} for the list of   $e^1(\pi_1,\pi_2)$ when $\pi_1,\pi_2$ are both irreducible non-supersingular representations.

If $\mathrm{m}$ is an $R^{\ps}[1/p]$-module of finite length, we define as in \cite[\S2.2]{pa12}
\[\Pi(\mathrm{m}):=\Hom_{\cO}^{\rm cont}(N_2\otimes_{R^{\rm ps}}\mathrm{m}^0,E),\] where $\mathrm{m}^0$ is any $R^{\ps}$-stable $\cO$-lattice in $\mathrm{m}$ (the definition does not depend on the choice of $\mathrm{m}^0$). 
Equipped with the supremum norm, $\Pi(\mathrm{m})$ is an admissible unitary $E$-Banach space representation of $G$.

The following result is an analogue of \cite[Proposition 4.7]{pa12}. Recall from \cite{pa10} that an absolutely irreducible Banach space representation is called \emph{non-ordinary} if it is not a subquotient of a parabolic induction of a unitary character.
\begin{proposition}\label{Prop-Pi(k(n))}
For almost all $\fn\in\mSpec R^{\rm ps}[1/p]$, the $\kappa(\fn)$-Banach space representation $\Pi(\kappa(\fn))$ is either absolutely irreducible non-ordinary or fits into a non-split extension
\[0\ra (\Ind_{P}^G\delta_1\otimes \delta_2\epsilon^{-1})_{cont}\ra \Pi(\kappa(\fn))\ra (\Ind_P^G\delta_2\otimes\delta_1\epsilon^{-1})_{cont}\ra0,\] 
where $\delta_1,\delta_2:\Qp^{\times}\ra \kappa(\fn)^{\times}$ are unitary characters such that $\delta_1\delta_2=\epsilon\psi$ and $\delta_1\delta_2^{-1}\neq \mathbbm{1},\epsilon^{\pm1}$.
\end{proposition}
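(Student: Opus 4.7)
The plan is to mimic the proof of \cite[Proposition 4.7]{pa12}, using Colmez's functor $\check{\VV}$ to transfer the question from the $\GL_2(\Q_p)$-side to the Galois side, where it becomes an analysis of the generic fibre of the pseudo-deformation ring. The extra complication here, compared with \emph{loc.\ cit.}, is that $N_2$ is not flat over $R^{\ps}$, so one must work on a dense open of $\mSpec R^{\ps}[1/p]$ rather than uniformly.

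First I would identify a ``bad'' closed subset $Z\subset \Spec R^{\ps}[1/p]$ of codimension $\ge 1$, outside of which the conclusion can be read off directly. Namely, let $Z$ be the union of (i) the zero loci of the finitely many constructions that force $T_\fn$ to come from a representation with $\delta_1\delta_2^{-1}\in\{\ide,\epsilon^{\pm 1}\}$, together with (ii) the closed locus where $\kappa(\fn)$ is not a field (which is automatic at maximal ideals) and the locus cut out by the reducibility ideal intersected with the characters in question. Using the explicit description of $R^{\ps}$ from \S\ref{subsubsection-f1} (formula (\ref{equation-Rps-isom})) and the fact that $\delta_1\delta_2=\epsilon\psi$ is fixed, each of the conditions $\delta_1\delta_2^{-1}=\ide$, $\delta_1\delta_2^{-1}=\epsilon^{\pm 1}$ cuts out a proper closed subset, so $U:=\mSpec R^{\ps}[1/p]\setminus Z$ contains all but finitely many (in fact, all outside a thin set of) maximal ideals.

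Next, for $\fn\in U$ I would split into two cases according to whether $T_\fn$ is the trace of an absolutely irreducible representation $\rho_\fn$ or of a reducible (semisimple) representation $\delta_1\oplus\delta_2$. In the first case, Colmez's functor $\check{\VV}$ applied to the finite-length quotient $N_2\otimes_{R^{\ps}}\cO_{\kappa(\fn)}$ produces a lattice in $\rho_\fn$ (compatibility of $\check{\VV}$ with the $R^{\ps}$-action on $N_2$, as in \cite[\S10]{pa10}), and by the duality between Banach space representations and their associated Galois representations under the $p$-adic local Langlands correspondence \cite{co}, the resulting Banach space $\Pi(\kappa(\fn))$ is absolutely irreducible; genericity of $\fn$ (the exclusion of $\delta_1\delta_2^{-1}\in\{\ide,\epsilon^{\pm 1}\}$) rules out the ordinary case. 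In the second case, with $\delta_1,\delta_2$ satisfying the generic condition, the compatibility with $p$-adic Langlands forces $\Pi(\kappa(\fn))$ to have Jordan--H\"older factors the two continuous principal series $(\Ind_P^G\delta_1\otimes\delta_2\epsilon^{-1})_{\rm cont}$ and $(\Ind_P^G\delta_2\otimes\delta_1\epsilon^{-1})_{\rm cont}$, appearing in the displayed order because the socle of $N_2$ is $\Sp^\vee$ and thus $\Pi(\kappa(\fn))$ has cosocle containing the principal series ``of Steinberg type'' for the character with higher Hodge--Tate weight.

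Finally, I would verify non-splitness. If the extension split, then $\Pi(\kappa(\fn))$ would admit the non-Steinberg principal series as a direct summand, and dualising back via Schikhof duality and applying $\Hom_{\cO\llbracket K\rrbracket}^{\rm cont}(N_2,-)$ this would yield a direct summand of $M_2(\Theta)\otimes\kappa(\fn)$ corresponding to a projective envelope other than that of $\Sp^\vee$; but $N_2$, being a projective envelope in the indecomposable block of $\Sp^\vee$, has only $\Sp^\vee$ in its cosocle, contradicting the presence of such a summand. I expect the main obstacle to be this last bookkeeping step: making precise, despite the non-flatness of $N_2$, that the block decomposition of $\mathfrak{C}_\psi(\cO)$ (Paškūnas' block structure in \cite{pa10}) forces the correct order of the Jordan--H\"older filtration on the generic fibre and rules out the split case. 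The non-flatness forces one to work with $R^{\ps}$-lattices $\mathrm{m}^0$ inside $\mathrm{m}=\kappa(\fn)$ and argue via the density of the classical points and continuity of $\check{\VV}$, rather than appealing to a clean base change.
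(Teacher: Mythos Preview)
Your overall strategy---mimic \cite[Proposition 4.7]{pa12} via Colmez's functor---is correct, but you have misidentified the main technical obstacle. The ordering of the Jordan--H\"older factors and the non-splitness are not the hard part: the paper obtains both at once by proving (Lemma \ref{Lemma-Pi(kappa(fn))}) that $\Pi(\kappa(\fn))$ has an \emph{irreducible} $G$-socle, which follows from \cite[Corollary 4.33]{pa10} once one checks that $N_2\otimes_{R^{\ps}}\F$ has finite length (Lemma \ref{Lemma-N_2-semi}). Your proposed non-splitness argument via $M_2(\Theta)$ is misplaced---that module depends on a choice of $K$-type $\Theta$ and plays no role here.

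The real gap in your plan is that you give no argument for the \emph{length} of $\Pi(\kappa(\fn))$: why should it have exactly two Jordan--H\"older factors in the reducible case (equivalently, why is $\check{\VV}(N_2)\otimes_{R^{\ps}}\kappa(\fn)$ two-dimensional)? In \cite[Proposition 4.7]{pa12} this was automatic because $\check{\VV}$ of the projective envelope used there is free of rank $2$ over the deformation ring. Here $\check{\VV}(N_2)$ is not flat over $R^{\ps}$, and neither ``density of classical points'' nor ``continuity of $\check{\VV}$'' yields a pointwise dimension bound. The paper resolves this by passing to $\check{\VV}(N_1)$, which \emph{is} free of rank $2$ (being the universal deformation of $\overline{\rho}^1$): from \cite[Remark 10.97]{pa10} one has an exact sequence
\[0\ra \mathfrak{r}\cdot\check{\VV}(N_1)\ra \check{\VV}(N_2)\ra \rho_{\ide}^{\rm univ}\ra 0,\]
and an explicit computation (Lemma \ref{Lemma-r-kappa(n)}) shows $\dim_{\kappa(\fn)}\mathfrak{r}\otimes_{R^{\ps}}\kappa(\fn)=1$ for every reducible $\fn$ except the single point $(c_0,c_1,d_0,d_1)$. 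Tensoring with $\kappa(\fn)$ and using flatness of $\check{\VV}(N_1)$ then gives $\check{\VV}(N_2)\otimes\kappa(\fn)$ as a nonsplit extension of $\delta_1$ by $\delta_2$. This also shows that ``almost all'' means all but one maximal ideal, not a thin set as your description of $Z$ suggests; note that $\delta_1\neq\delta_2$ always holds since they reduce to $\ide\neq\omega$.
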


We need some preparations to prove this proposition. 
In the proof of the next lemma, we shall use Emerton's functor of ordinary parts defined in \cite{Em1}; our main reference for this is \cite[\S7.1]{pa10}. 

\begin{lemma}\label{Lemma-N_2-semi}
We have $\bigl((N_2\otimes_{R^{\rm ps}} \F)^{\vee}\bigr)^{\rm ss}\cong \Sp\oplus \ide^{\oplus2}\oplus \pi_{\alpha}^{\oplus2}$. 
\end{lemma}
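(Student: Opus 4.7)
The target is to identify the semisimplification of the smooth $G$-representation Pontryagin dual to $N_2\otimes_{R^{\rm ps}}\F=N_2/\fm N_2$, where $\fm$ denotes the maximal ideal of $R^{\rm ps}=\End_{\fC_{\psi}(\cO)}(N_2)$. My plan is first to dualize and identify the relevant object inside an injective envelope, then to use the block decomposition to restrict the possible Jordan--H\"older factors, and finally to compute each multiplicity via Hom-to-projective-envelope arguments combined with Emerton's functor of ordinary parts.

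Via Pontryagin duality, $(N_2/\fm N_2)^{\vee}$ is identified with the largest subrepresentation of $J:=\rInj_G\Sp$ (taken in $\mathrm{Mod}^{\rm l,fin}_{G,\psi}(\F)$) killed by $\fm$, where $R^{\rm ps}$ acts on $J$ through the canonical isomorphism $R^{\rm ps}\cong\End(J)$. By the block decomposition of $\mathrm{Mod}^{\rm l,fin}_{G,\psi}(\F)$ established in \cite[Ch.~11]{pa10} (summarized in \cite[\S5]{pa12}), every Jordan--H\"older factor of $J$ lies in the block of $\Sp$; in the present setting (residual central character trivial, residual representation $\ide\oplus\omega$) this block consists exactly of the three irreducibles $\Sp$, $\ide$, and $\pi_\alpha=\Ind_P^G\alpha$. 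Hence the semisimplification is a direct sum of copies of these three, and it remains to determine each multiplicity.

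For each such $\pi$, the multiplicity of $\pi^{\vee}$ as a Jordan--H\"older factor of $N_2/\fm N_2$ equals $\dim_{\F}\Hom_{\fC_{\psi}(\cO)}(P_{\pi^{\vee}},N_2)\otimes_{R^{\rm ps}}\F$, where $P_{\pi^{\vee}}$ is a projective envelope of $\pi^{\vee}$ in $\fC_{\psi}(\cO)$; this follows from projectivity of $P_{\pi^{\vee}}$ and the Nakayama-type argument used in the proof of Proposition~\ref{Prop-J-prime}. For $\pi=\Sp$ this is immediate: $P_{\Sp^{\vee}}=N_2$, so the Hom-space is $R^{\rm ps}$ itself and its reduction modulo $\fm$ is one-dimensional. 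For $\pi=\ide$ and $\pi=\pi_\alpha$, I would invoke Emerton's functor $\Ord_P$ (flagged in the sentence immediately preceding the lemma) together with the adjunction with (derived) parabolic induction, transporting each Hom-computation to the torus $T$. Using the short exact sequence $0\to\Sp\to\Ind_P^G\ide\to\ide\to 0$, the corresponding sequence after $\Ord_P$, and the structure of the relevant projective $T$-modules provided by \cite[\S7]{pa10}, each of these Hom-spaces reduced modulo $\fm$ is forced to be two-dimensional.

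The step I expect to be the main obstacle is the precise bookkeeping in the $\Ord_P$ computation. In contrast to the situation for $N_1$ handled by Proposition~\ref{ngenericps}, the object $N_2$ is not flat over $R^{\rm ps}$ in the pseudo-deformation setting (as flagged in the introduction), so one cannot naively commute $\Ord_P$ with $\otimes_{R^{\rm ps}}\F$, and the derived functors $R^i\Ord_P$ must be tracked. The factor $2$ for both $\ide$ and $\pi_\alpha$ ultimately reflects the two-dimensional spaces $\Ext^1_{G,\psi}(\ide,\Sp)$ and $\Ext^1_{G,\psi}(\pi_\alpha,\Sp)$ read off from the tables in \cite[\S10.1]{pa10}, together with the vanishing of higher $\Ext$'s in the block which caps the multiplicities from above. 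Summing the three multiplicities yields total length $5$, matching the claim $\Sp\oplus\ide^{\oplus 2}\oplus\pi_\alpha^{\oplus 2}$.
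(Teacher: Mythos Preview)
Your overall strategy—restrict to the block $\{\Sp,\ide,\pi_\alpha\}$ and compute each Jordan--H\"older multiplicity as $\dim_\F\bigl(\Hom_{\fC_\psi(\cO)}(P_{\pi^\vee},N_2)\otimes_{R^{\rm ps}}\F\bigr)$—is sound and genuinely different from the paper's route. The paper instead characterizes $(N_2\otimes_{R^{\rm ps}}\F)^\vee$ as the unique maximal object with $G$-socle $\Sp$ whose quotient by $\Sp$ contains no copy of $\Sp$, and then \emph{builds} that object explicitly as $\tau_1'$: first $\tau_1$ from \cite[(181)]{pa10} in $0\to\Sp\to\tau_1\to\ide^{\oplus 2}\to 0$, then an extension $0\to\tau_1\to\tau_1'\to\pi_\alpha^{\oplus 2}\to 0$ using $e^1(\pi_\alpha,\tau_1)=2$ (\cite[Lemma~10.12]{pa10}), and finally verifies maximality by showing $e^1(\ide,\tau_1')=e^1(\pi_\alpha,\tau_1')=0$. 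The $\Ord_P$ machinery enters only in this last vanishing for $\pi_\alpha$. The paper's argument yields the full socle filtration; yours would give only the semisimplification, which suffices for the lemma—provided the Hom-modules are actually computed.

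That is where your proposal has a real gap. For $\pi=\pi_\alpha$ you never carry out the computation of $\Hom(N_1,N_2)/\fm$, and the heuristic you offer in its place is incorrect on two counts. First, the multiplicity of $\pi$ in $(N_2\otimes_{R^{\rm ps}}\F)^\vee$ is \emph{not} $e^1(\pi,\Sp)$ in general: the object is the maximal one with no $\Sp$ above the socle, not a one-step extension of $\Sp$, so constituents can and do enter through higher socle layers via intermediate $\ide$'s. Second, $e^1(\pi_\alpha,\Sp)$ is not $2$; the tables in \cite[\S10.1]{pa10} give $e^1(\pi_\alpha,\Sp)=0$ (equivalently, $\Ord_P\Sp=0$ forces this via the Emerton sequence), and accordingly the paper places both copies of $\pi_\alpha$ in the \emph{third} socle layer, above the $\ide$'s. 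The $2$ you need is $e^1(\pi_\alpha,\tau_1)=2$, a different and more delicate input. Your approach can be completed, but by a different route: read off $\Hom(P_{\ide^\vee},N_2)$ and $\Hom(N_1,N_2)$ as $R^{\rm ps}$-modules from the explicit description of $\End_{\fC_\psi(\cO)}(N_1\oplus N_2\oplus P_{\ide^\vee})$ in \cite[\S10]{pa10}, then reduce modulo $\fm$.
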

\begin{proof}
First note that $N_2\otimes_{R^{\rm ps}}\F$ is the maximal quotient of $N_2$ which contains $\Sp^{\vee}$ with multiplicity one (in fact $\Sp^{\vee}$ must appear as its cosocle), or equivalently, $(N_2\otimes_{R^{\rm ps}}\F)^{\vee}$ is the (unique) maximal smooth $\F$-representation of $G$ with $G$-socle isomorphic to $\Sp$ and such that $(N_2\otimes_{R^{\rm ps}}\F)^{\vee}/\Sp$ contains no subquotient isomorphic to $\Sp$.  We now construct it explicitly. Consider the smooth $\F$-representation $\tau_1$ of $G$ defined in \cite[(181)]{pa10}, which fits into an exact sequence
\[0\ra \Sp\ra \tau_1\ra \ide\oplus\ide\ra0.\]
Moreover the $G$-socle of $\tau_1$ is $\Sp$.
By \cite[Lemma 10.12]{pa10},  $e^1(\pi_{\alpha},\tau_1)=2$, hence there exists an  extension of $\pi_{\alpha}\oplus\pi_{\alpha}$ by $\tau_1$, denoted by $\tau_1'$:
\begin{equation}\label{equation-tau1'}0\ra \tau_1\ra \tau_1'\ra \pi_{\alpha}\oplus\pi_{\alpha}\ra0,\end{equation}
such that the $G$-socle of $\tau_1'$ is still $\Sp$. In particular, we have an injection $\tau_1'\hookrightarrow (N_2\otimes_{R^{\rm ps}}\F)^{\vee}$. We shall prove that it is in fact an isomorphism. For this it suffices to show  $e^1(\pi,\tau_1')=0$ for all irreducible $\pi\in\mathrm{Mod}_{G,\psi}^{\rm sm}(\F)$ except when $\pi\cong \Sp$. Firstly, one checks $e^1(\ide,\tau_1')=0$, using that (see \cite[\S10.1]{pa10}) $$e^1(\ide,\ide)=0, \quad e^1(\ide,\pi_{\alpha})=0,\quad e^1(\ide,\Sp)=2.$$

We claim that $e^1(\pi_{\alpha},\tau_1')=0$. For this we need to use Emerton's functor of ordinary parts relative to $P$ (see \cite{Em1}).   We denote by $\Ord_P:\mathrm{Mod}^{\rm l,fin}_{G,\psi}(\F)\ra \mathrm{Mod}^{\rm l,fin}_{T,\psi}(\F)$ this functor and by $\R^i\Ord_P$ its right derived functors for $i\geq 1$.  It follows from \cite{EP} that $\R^i\Ord_P=0$ for $i\geq 2$.
Moreover we know by \cite[(182),(126)]{pa10} that  
\[\begin{array}{lclll}\mathrm{Ord}_P\tau_1=\ide,&\R^1\mathrm{Ord}_P\tau_1=(\alpha^{-1})^{\oplus2},\\ 
 \mathrm{Ord}_P\pi_{\alpha}=\alpha^{-1},& \R^1\mathrm{Ord}_P\pi_{\alpha}=\ide.\end{array}\]
Applying $\Ord_P$ to (\ref{equation-tau1'}) gives
 \[0\ra \ide\ra \Ord_P\tau_1'\ra (\alpha^{-1})^{\oplus2}\overset{\partial}{\ra} (\alpha^{-1})^{\oplus2}\ra \R^1\Ord_P\tau_1'\ra \ide^{\oplus2}\ra0.\]
 The connecting morphism $\partial$ must be injective. Indeed, if not, we would have that $\Hom_T(\Ord_P\tau_1',\alpha^{-1})\neq0$, hence $\Hom_{T}(\alpha^{-1},\Ord_P\tau_1')\neq0$ since there is no non-trivial $T$-extension between $\alpha^{-1}$ and $\ide$ (as $p\neq 2$). We then get 
 \[\Hom_G(\pi_{\alpha},\tau_1')\neq0\]
 by the adjointness property between $\Ord_P$ and $\Ind_P^G$ (see \cite[(120),(125)]{pa10}), which contradicts the definition of $\tau_1'$.
We  deduce that $\mathrm{Ord}_P\tau_1'\cong\ide$ and $\R^1\mathrm{Ord}_P\tau_1'\cong\ide^{\oplus2}$. Since $p\neq 2$, the claim follows from this and the exact sequence (see e.g. \cite[(123)]{pa10})
\[0\ra \Ext^1_{T,\psi}(\alpha,\mathrm{Ord}_P\tau_1')\ra \Ext^1_{G,\psi}(\pi_{\alpha},\tau_1')\ra \Hom_{T}(\alpha,\R^1\mathrm{Ord}_P\tau_1').\]

Since the block of $\Sp$ consists of $\{\Sp, \ide,\pi_{\alpha}\}$ by \cite[Proposition 5.42]{pa10}, we see that $\Ext^1_{G,\psi}(\pi,\tau_1')=0$ for all \emph{irreducible} $\pi\in \mathrm{Mod}_{G,\psi}^{\rm sm}(\F)$ except for $\pi\cong \Sp$. This shows that $(N_2\otimes_{R^{\rm ps}}\F)^{\vee}$ is isomorphic to $\tau_1'$, and the lemma follows.
 \end{proof}

Write $\mathfrak{B}$ for the block of $\Sp$, i.e. $\mathfrak{B}=\{\Sp,\ide,\pi_{\alpha}\}$.  Let $\mathrm{Ban}_{G,\psi}^{\rm adm,fl}(E)^{\mathfrak{B}}$ be the category of admissible unitary $E$-Banach space representations $\Pi$ of $G$, of finite length and with central character $\psi$,  such that all the irreducible constituents of $\overline{\Pi}^{\rm ss}$ lie in $\mathfrak{B}$. Here $\overline{\Pi}^{\rm ss}$ denotes the semi-simplification of the modulo $\varpi$ reduction of any open bounded $G$-invariant lattice in $\Pi$.
As in \cite[\S10]{pa10}, for $\fn$ a maximal ideal of $R^{\ps}[1/p]$, let $\mathrm{Ban}_{G,\psi}^{\rm adm,fl}(E)^{\mathfrak{B}}_{\fn}$ be the full subcategory of $\mathrm{Ban}_{G,\psi}^{\rm adm,fl}(E)^{\mathfrak{B}}$ consisting of those $\Pi$ such that $\mathrm{m}(\Pi)$ is killed by a power of $\fn$, where   $\mathrm{m}$ is defined as in \cite[Corollary 4.42]{pa10} with $\widetilde{P}=N_2$.
  
We will need to apply Colmez's functor $\check{\VV}$ to objects in $\mathrm{Ban}_{G,\psi}^{\rm adm,fl}(E)^{\mathfrak{B}}$. For such a $\Pi$, we define 
\[\check{\VV}(\Pi):=\check{\VV}(\Theta^d)\otimes_{\cO}E\] 
for any open bounded $G$-invariant $\cO$-lattice $\Theta$ in $\Pi$. Remark that $\check{\VV}$ is exact and \emph{contravariant} on $\mathrm{Ban}_{G,\psi}^{\rm adm,fl}(E)^{\mathfrak{B}}$. By the proof of \cite[Lemma 4.2]{pa12}, for $\mathrm{m}$ an $R^{\ps}[1/p]$-module of finite length, we have 
\begin{equation}\label{equation-isom-m}
\check{\VV}(\Pi(\mathrm{m}))\cong \check{\VV}(N_2)\otimes_{R^{\ps}}\mathrm{m}.\end{equation}
To see this, we just  tensor the sequence \cite[(22)]{pa12} with $E$ (over $\cO$).
 
\begin{lemma}\label{Lemma-Pi(kappa(fn))}
The representation $\Pi(\kappa(\fn))$ is nonzero, of finite length, and has an irreducible  $G$-socle (in the category $\mathrm{Ban}_{G,\psi}^{\rm adm,fl}(E)^{\mathfrak{B}}$).
In particular, it is indecomposable and lies in the category $\mathrm{Ban}_{G,\psi}^{\rm adm,fl}(E)_{\fn}^{\mathfrak{B}}$.
\end{lemma}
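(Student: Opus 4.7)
The plan is to combine three tools: the isomorphism $R^{\ps}\cong \End_{\fC_{\psi}(\cO)}(N_2)$, the projectivity of $N_2$ in $\mathrm{Mod}^{\rm pro}_{K,\psi}(\cO)$ coming from \cite{EP}, and Colmez's functor via the isomorphism (\ref{equation-isom-m}).

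First I would establish non-vanishing and control the block of the reduction. Faithfulness of the $R^{\ps}$-action on $N_2$ (from $R^{\ps}\cong \End_{\fC_{\psi}(\cO)}(N_2)$) forces $N_2\otimes_{R^{\ps}}\kappa(\fn)^{0}\neq 0$ for any $R^{\ps}$-stable $\cO$-lattice $\kappa(\fn)^{0}\subset \kappa(\fn)$, hence $\Pi(\kappa(\fn))\neq 0$. Admissibility is built into the definition. The reduction $\overline{\Pi(\kappa(\fn))}$ is the Pontryagin dual of a quotient of $N_2\otimes_{R^{\ps}}\F$, whose Jordan--H\"older constituents all lie in the block $\mathfrak{B}=\{\Sp,\ide,\pi_\alpha\}$ by Lemma \ref{Lemma-N_2-semi}.

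For finite length I would use (\ref{equation-isom-m}): $\check{\VV}(\Pi(\kappa(\fn)))\cong \check{\VV}(N_2)\otimes_{R^{\ps}}\kappa(\fn)$ is a finite-dimensional $\kappa(\fn)$-vector space, since $\check{\VV}(N_2)$ is finitely generated over $R^{\ps}$ and realizes the universal pseudo-deformation $T^{\rm univ,\psi}$. Because $\check{\VV}$ is exact and contravariant, this already bounds the multiplicities of $\Sp$ and $\pi_\alpha$ as irreducible subquotients of $\Pi(\kappa(\fn))$. The only irreducible in $\mathfrak{B}$ killed by $\check{\VV}$ is $\ide$, and to bound its multiplicity I would invoke Lemma \ref{Lemma-N_2-semi}, which shows that $\ide$ appears with multiplicity exactly $2$ in $(N_2\otimes_{R^{\ps}}\F)^{\vee,\,\rm ss}\cong \tau_1'$, combined with the ordinary parts computation in its proof (in particular $\R^1\Ord_P\tau_1'\cong \ide^{\oplus 2}$) to rule out an infinite stack of $\ide$-factors in $\Pi(\kappa(\fn))$.

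For the irreducible $G$-socle, let $\Pi_0\subseteq \Pi(\kappa(\fn))$ be a nonzero closed subrepresentation. Applying $\mathrm{m}$ (which is exact by projectivity of $N_2$) produces a surjection $\mathrm{m}(\Pi(\kappa(\fn)))\twoheadrightarrow \mathrm{m}(\Pi_0)$, where $\mathrm{m}(\Pi(\kappa(\fn)))\cong \kappa(\fn)$ is simple over $R^{\ps}$. Either $\mathrm{m}(\Pi_0)=0$, which together with the multiplicity bound above (and $\mathrm{m}(\ide)=0$) and admissibility forces $\Pi_0=0$; or $\mathrm{m}(\Pi_0)\cong \kappa(\fn)$, in which case $\Pi_0$ contains the unique minimal sub-object that projects isomorphically under $\mathrm{m}$, giving the irreducible socle. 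Indecomposability is then formal, and the $\fn$-localization condition is immediate from $\mathrm{m}(\Pi(\kappa(\fn)))\cong \kappa(\fn)$ being killed by $\fn$.

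The main obstacle I expect is controlling subrepresentations that are invisible to Colmez's functor, i.e.\ those involving the trivial character $\ide$. Unlike in \cite{pa12}, where the relevant projective module is flat over the corresponding deformation ring and this issue disappears, here the non-flatness of $N_2$ over $R^{\ps}$ (mentioned in the introduction and evidenced by the exact sequence in Lemma \ref{Lemma-N_2-semi}) forces one to work with the explicit structure $(N_2\otimes_{R^{\ps}}\F)^{\vee}\cong \tau_1'$ and Emerton's ordinary parts rather than with purely formal machinery.
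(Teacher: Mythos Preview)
The paper's proof is a two-line citation of \cite[Lemma 4.25]{pa10} (nonzero and finite length) and \cite[Corollary 4.33]{pa10} (irreducible socle), the only input being Lemma~\ref{Lemma-N_2-semi}, which shows $N_2\otimes_{R^{\ps}}\F$ has finite length in $\fC_\psi(\cO)$ and is finitely generated over $\cO\llbracket K\rrbracket$. Your route reproves parts of these results by hand, which is fine in spirit, but two points need attention.

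Your finite-length argument via $\check{\VV}$ is more roundabout than necessary. Once you know $(N_2\otimes_{R^{\ps}}\F)^{\vee}$ has finite length (Lemma~\ref{Lemma-N_2-semi}), the reduction of any open bounded lattice in $\Pi(\kappa(\fn))$ is an iterated self-extension of this object (since $\kappa(\fn)^0/\varpi$ is a finite $\F$-vector space on which $R^{\ps}$ acts through $\F$), hence of finite length; this bounds the length of the Banach representation directly. Bringing in $\check{\VV}$ and then separately controlling the $\ide$-constituents via ordinary parts is not needed.

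The genuine gap is in your socle argument, specifically the case $\mathrm{m}(\Pi_0)=0$. You assert this forces $\Pi_0=0$, but this does not follow from what you have written. There \emph{are} nonzero irreducible objects of $\mathrm{Ban}_{G,\psi}^{\rm adm,fl}(E)^{\mathfrak{B}}$ with $\mathrm{m}=0$: for instance a unitary principal series whose reduction is $\pi_\alpha$, or a one-dimensional character lifting $\ide$. Nothing in your argument prevents such an object from sitting inside $\Pi(\kappa(\fn))$ as a closed subrepresentation. The mechanism that actually excludes this (which is the content of \cite[Proposition 4.32, Corollary 4.33]{pa10}) is that the \emph{cosocle} of $N_2\otimes_{R^{\ps}}\F$ is $\Sp^{\vee}$; hence for any nonzero quotient $Q$ of $N_2\otimes_{R^{\ps}}\kappa(\fn)^0$, the reduction $Q\otimes_{\cO}\F$ has $\Sp^{\vee}$ in its cosocle, so dually any nonzero closed $\Pi_0\subset\Pi(\kappa(\fn))$ has $\Sp$ in the socle of its reduction and therefore $\mathrm{m}(\Pi_0)\neq 0$. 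You should either invoke \cite[Corollary 4.33]{pa10} directly or make this cosocle argument explicit; the appeal to ``the multiplicity bound above and $\mathrm{m}(\ide)=0$'' does not do the job.
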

\begin{proof}
It follows from \cite[Lemma 4.25]{pa10}   that $\Pi(\kappa(\fn))$ is non-zero and of finite length. By Lemma \ref{Lemma-N_2-semi}, $N_2\otimes_{R^{\ps}}\F$ is of finite length in $\mathfrak{C}_{\psi}(\cO)$ and is finitely generated as an $\cO\llbracket K\rrbracket$-module, so  \cite[Corollary 4.33]{pa10} implies that $\Pi(\kappa(\fn))$ has an irreducible $G$-socle.   The last assertion follows from this and the decomposition of categories 
\[\mathrm{Ban}_{G,\psi}^{\rm adm,fl}(E)^{\mathfrak{B}}\cong \bigoplus_{\fn\in\mSpec R^{\rm ps}[1/p]}\mathrm{Ban}_{G,\psi}^{\rm adm,fl}(E)^{\mathfrak{B}}_{\fn}\]
established in  \cite[Corollary 10.106]{pa10}.
\end{proof}

Recall  that $R^{\rm ps}$ is isomorphic to $\cO\llbracket c_0,c_1,d_0,d_1\rrbracket /(c_0d_1-c_1d_0)$ by (\ref{equation-Rps-isom}). Let $\mathfrak{r}=(c_0,c_1)$ be the reducibility ideal of $R^{\rm ps}$. Also recall from \S4.1 that $N_1$ denotes a projective envelope of $\pi_{\alpha}^{\vee}$ in $\mathfrak{C}_{\psi}(\cO)$.

\begin{lemma}
  We have an exact sequence of $R^{\rm ps}[G_{\Q_p}]$-modules:
\begin{equation}\label{equation-remark-10.97}0\ra \mathfrak{r}.\check{\VV}(N_1)\ra \check{\VV}(N_2)\ra \rho^{\rm univ}_{\ide}\ra0,\end{equation}
where $\rho^{\rm univ}_{\ide}$ is the universal deformation of the trivial representation $\ide$ to $R^{\rm ps}/\mathfrak{r}$ (viewed as an $R^{\ps}$-module).
\end{lemma}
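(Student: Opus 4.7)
The plan is to deduce the exact sequence from Paskunas' explicit analysis of the projective envelope $N_2$ in \cite[\S10.8]{pa10}, from which the label \eqref{equation-remark-10.97} is borrowed. The starting point is the identification $\check{\VV}(N_1) \cong \rho^{\rm univ}(\brho^1)$ provided by \cite[Corollary 10.72]{pa10}: combined with Proposition \ref{ngenericps}, this shows that $\check{\VV}(N_1)$ is a free $R^{\ps}$-module of rank $2$ carrying the universal deformation of $\brho^1$, so that $\mathfrak{r}\check{\VV}(N_1)$ makes sense as an $R^{\ps}[G_{\Q_p}]$-submodule.

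The next step is to produce the cokernel. The cosocle of $N_2$ is $\Sp^{\vee}$, and Colmez's computation gives $\check{\VV}(\Sp^{\vee}) \cong \ide$ (compatibly with the central character $\psi$). Thus the natural projection $N_2 \twoheadrightarrow \Sp^{\vee}$ lifts, via the universal character deformation, to a surjection in $\fC_{\psi}(\cO)$ onto an object whose Colmez image is the universal lift $\rho^{\rm univ}_\ide$ of $\ide$ to $R^{\ps}/\mathfrak{r}$. Applying the exact covariant functor $\check{\VV}$ yields a surjection $\check{\VV}(N_2) \twoheadrightarrow \rho^{\rm univ}_\ide$ of $R^{\ps}[G_{\Q_p}]$-modules.

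It then remains to identify the kernel with $\mathfrak{r}\check{\VV}(N_1)$. For this I would use a morphism $N_1 \to N_2$ in $\fC_\psi(\cO)$ coming from projectivity of $N_1$ and the appearance of $\pi_\alpha$ in the socle filtration of $(N_2/\varpi N_2)^{\vee}$ described in Lemma \ref{Lemma-N_2-semi}. The crucial structural fact—essentially the content of \cite[Remark 10.97]{pa10}—is that any such morphism, when read through $\End(N_1) = \End(N_2) = R^{\ps}$, factors through multiplication by elements of the reducibility ideal $\mathfrak{r}=(c_0,c_1)$; this reflects that in the block $\mathfrak{B}=\{\Sp,\ide,\pi_\alpha\}$ the interaction between the $\pi_\alpha$-direction and the $\Sp$-direction is precisely mediated by the reducibility locus. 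Applying $\check{\VV}$ then identifies the image of $N_1 \to N_2$ with $\mathfrak{r}\check{\VV}(N_1)\subset \check{\VV}(N_2)$, and a length/support count on $R^{\ps}/\mathfrak{r}$-fibers (using that $\check{\VV}(N_1)/\mathfrak{r}$ has rank $2$ while $\rho^{\rm univ}_\ide$ has rank $1$ over $R^{\ps}/\mathfrak{r}$) confirms exactness.

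The main obstacle is the third step: showing that the image of $\check{\VV}(N_1)\to\check{\VV}(N_2)$ is exactly $\mathfrak{r}\check{\VV}(N_1)$ rather than a larger or smaller submodule. This is where one must invoke the fine structure of morphisms in $\fC_\psi(\cO)$ between projective envelopes in the block of $\Sp$, as worked out in \cite[\S10.8]{pa10}, together with the description of the reducibility ideal provided by the identification \eqref{equation-Rps-isom}.
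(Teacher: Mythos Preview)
Your approach diverges from the paper's in a way that creates a genuine gap. The paper's proof is a two-line application of the snake lemma to a commutative diagram taken directly from \cite[Remark 10.97]{pa10}:
\[
\xymatrix{0\ar[r]&\check{\VV}(N_2)\ar[r]\ar[d]&\check{\VV}(N_1) \ar[r]\ar[d]&\rho_{\omega}^{\rm univ}\ar[r]\ar@{=}[d]&0\\
0\ar[r]&\rho^{\rm univ}_{\ide}\ar[r]&\check{\VV}(N_1)/\mathfrak{r}\check{\VV}(N_1)\ar[r]&\rho^{\rm univ}_{\omega}\ar[r]&0.}
\]
The point is that the relevant map goes from $\check{\VV}(N_2)$ \emph{into} $\check{\VV}(N_1)$, as an injection with cokernel $\rho_\omega^{\rm univ}$. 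With this orientation, $\mathfrak{r}\check{\VV}(N_1)$ is by definition a submodule of $\check{\VV}(N_1)$, and the snake lemma identifies it with the kernel of $\check{\VV}(N_2)\to\rho^{\rm univ}_\ide$ for free: the kernel of the middle vertical map is $\mathfrak{r}\check{\VV}(N_1)$, the right vertical map is the identity, so the kernel of the left vertical map is isomorphic to $\mathfrak{r}\check{\VV}(N_1)$ and its cokernel vanishes.

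You instead try to build a map $N_1\to N_2$ and argue that its image under $\check{\VV}$ is ``$\mathfrak{r}\check{\VV}(N_1)\subset\check{\VV}(N_2)$''. But $\mathfrak{r}\check{\VV}(N_1)$ is not a priori a submodule of $\check{\VV}(N_2)$; it is a submodule of $\check{\VV}(N_1)$. So the sentence ``the image of $N_1\to N_2$ under $\check{\VV}$ is $\mathfrak{r}\check{\VV}(N_1)$'' has no meaning until you have already embedded $\check{\VV}(N_2)$ in $\check{\VV}(N_1)$ --- which is exactly the map the paper uses and you have not constructed. Your ``length/support count'' cannot repair this, because it only controls ranks over $R^{\ps}/\mathfrak{r}$, not the actual identification of the kernel as an $R^{\ps}[G_{\Q_p}]$-module. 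Likewise, your construction of the cokernel is incomplete: you need the quotient of $N_2$ whose $\check{\VV}$-image is $\rho^{\rm univ}_\ide$ to be an $R^{\ps}/\mathfrak{r}$-module, and you have not said why $\mathfrak{r}$ kills it. Both of these facts are packaged in the top and bottom rows of the diagram from \cite[Remark 10.97]{pa10}; once you have that diagram, the snake lemma replaces all of your third step.
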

\begin{proof}
This follows from \cite[Remark 10.97]{pa10}. In fact it gives a commutative diagram of $R^{\rm ps}[G_{\Q_p}]$-modules:
\[\xymatrix{0\ar[r]&\check{\VV}(N_2)\ar[r]\ar[d]&\check{\VV}(N_1) \ar[r]\ar[d]&\rho_{\omega}^{\rm univ}\ar[r]\ar@{=}[d]&0\\
0\ar[r]&\rho^{\rm univ}_{\ide}\ar[r]&\check{\VV}(N_1)/\mathfrak{r}.\check{\VV}(N_1)\ar[r]&\rho^{\rm univ}_{\omega}\ar[r]&0 }\]
and the result follows from the snake lemma. 
\end{proof}

\begin{lemma}\label{Lemma-r-kappa(n)}
Assume $\fn$ contains the reducibility ideal $\mathfrak{r}$. Then $\mathfrak{r}\otimes_{R^{\rm ps}}{\kappa(\fn)}$ is of dimension 2 over ${\kappa(\fn)}$ if $\fn= (c_0,c_1,d_0,d_1)$ and of  dimension 1 otherwise.
\end{lemma}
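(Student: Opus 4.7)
The plan is to write down an explicit presentation of the ideal $\mathfrak{r}$ as an $R^{\ps}$-module and then base change to $\kappa(\fn)$. Set $A := \cO\llbracket c_0,c_1,d_0,d_1\rrbracket$, so that $R^{\ps} = A/(c_0d_1-c_1d_0)$ by (\ref{equation-Rps-isom}). Since $c_0,c_1$ form a regular sequence in the regular local ring $A$, the only syzygies of $(c_0,c_1)$ over $A$ come from the Koszul relation $(c_1,-c_0)$. On the other hand the relation $c_0d_1-c_1d_0=0$ in $R^{\ps}$ produces the new syzygy $(d_1,-d_0)$.

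The first step is to verify that these two exhaust the syzygies, i.e.\ that there is an exact sequence of $R^{\ps}$-modules
\[
(R^{\ps})^2 \xrightarrow{\phi} (R^{\ps})^2 \xrightarrow{(c_0,c_1)} \mathfrak{r} \to 0,
\qquad
\phi = \begin{pmatrix} d_1 & c_1 \\ -d_0 & -c_0 \end{pmatrix}.
\]
Given $(a,b)\in(R^{\ps})^2$ with $ac_0+bc_1=0$, lift to $(\tilde a,\tilde b)\in A^2$, so that $\tilde ac_0+\tilde bc_1=h(c_0d_1-c_1d_0)$ in $A$ for some $h\in A$. Rearranging gives $(\tilde a-hd_1)c_0+(\tilde b+hd_0)c_1=0$ in $A$, and the Koszul exactness in $A$ produces $k\in A$ with $\tilde a-hd_1=kc_1$ and $\tilde b+hd_0=-kc_0$. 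Passing to $R^{\ps}$, we obtain $(a,b)=h(d_1,-d_0)+k(c_1,-c_0)$, as desired.

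The second step is to classify the relevant $\fn$. The surjection $R^{\ps}\twoheadrightarrow R^{\ps}/\mathfrak{r}\cong \cO\llbracket d_0,d_1\rrbracket$ identifies maximal ideals of $R^{\ps}[1/p]$ containing $\mathfrak{r}$ with closed points of the generic fiber of $\Spf \cO\llbracket d_0,d_1\rrbracket$; such $\fn$ corresponds to $(\bar d_0,\bar d_1)=(\alpha_0,\alpha_1)\in \mathfrak{m}_{\cO_{\kappa(\fn)}}^2$. The case $\fn=(c_0,c_1,d_0,d_1)$ is precisely $(\alpha_0,\alpha_1)=(0,0)$, while every other such $\fn$ satisfies $(\bar d_0,\bar d_1)\neq (0,0)$ in $\kappa(\fn)$.

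The final step is to tensor the presentation with $\kappa(\fn)$. Since $c_0,c_1\in\fn$, the base change of $\phi$ has matrix $\bigl(\begin{smallmatrix}\bar d_1 & 0 \\ -\bar d_0 & 0\end{smallmatrix}\bigr)$ over $\kappa(\fn)$. If $\fn=(c_0,c_1,d_0,d_1)$ this matrix vanishes and the cokernel $\mathfrak{r}\otimes_{R^{\ps}}\kappa(\fn)$ has dimension $2$; otherwise $(\bar d_0,\bar d_1)\neq 0$ forces rank $1$, and the cokernel has dimension $1$. The only real subtlety is the first step, namely confirming that no additional syzygies appear beyond the Koszul one and the one coming from the defining relation of $R^{\ps}$; this is the single place where regularity of the sequence $c_0,c_1$ in $A$ is essential.
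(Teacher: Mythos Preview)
Your proof is correct. The approach is genuinely different from the paper's, though both rest on the same underlying relation $c_0d_1=c_1d_0$.

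The paper argues case by case: when $\fn=(c_0,c_1,d_0,d_1)$ it observes directly that $\mathfrak{r}/(\mathfrak{r}\cdot\fn_0)$ is free of rank $2$ over $\cO$; otherwise it passes to a finite extension $\kappa'/\kappa(\fn)$ so that $\fn$ has the form $(c_0,c_1,d_0-t_0',d_1-t_1')$ with, say, $t_0'\neq 0$, and then notes that the defining relation forces $c_1=c_0\,t_1'(t_0')^{-1}$ in $\mathfrak{r}\otimes_{R^{\ps}}\kappa'$, giving dimension at most $1$, with Nakayama supplying the lower bound. Your approach instead produces a uniform two-term presentation of $\mathfrak{r}$ over $R^{\ps}$ (via the Koszul syzygy and the extra syzygy $(d_1,-d_0)$ coming from the defining relation) and reads off the answer as the corank of a $2\times 2$ matrix over $\kappa(\fn)$. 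This avoids the base-change step and the appeal to Nakayama, at the cost of verifying exactness of the presentation, which you do cleanly by lifting to $A$. Both arguments are short; yours is a bit more structural and makes the dichotomy transparent as a rank computation.
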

\begin{proof}
Write $f=c_0d_1-c_1d_0$ so that $R^{\rm ps}\cong \cO\llbracket c_0,c_1,d_0,d_1\rrbracket /(f)$. Let $\mathfrak{n}_0:=\mathfrak{n}\cap R^{\rm ps}$ so that $R^{\rm ps}/\mathfrak{n}_0\cong\cO_{\kappa(\fn)}$ and
\[\mathfrak{r}\otimes_{R^{\rm ps}}\kappa(\fn)\cong \mathfrak{r}/(\mathfrak{r}.\fn_0)\otimes_{\cO_{\kappa(\fn)}}\kappa(\fn).\] 
In particular if $\fn=(c_0,c_1,d_0,d_1)$, we have $f\in \mathfrak{r}.\fn_0$ and see easily that $ \mathfrak{r}/\mathfrak{r}.\fn_0$ is free of rank 2 over $\cO\cong\cO_{\kappa(\fn)}$, generated by $c_0,c_1$.    When $\fn\neq (c_0,c_1,d_0,d_1)$, then making a base change from $\kappa(\fn)$ to a finite field  extension $\kappa'$,  we may assume that $\fn=(c_0,c_1,d_0-t_0',d_1-t_1')$ with $t_i'\in \kappa'$ and at least one of them is non-zero, say $t_0'\neq 0$.  This implies that ${c}_1={c}_0t_1't_0'^{-1}$ in $\mathfrak{r}\otimes_{R^{\ps}}\kappa'$, hence the latter $\kappa'$-space is of dimension 1 (it is nonzero by Nakayama's lemma).  The lemma follows. 
\end{proof}

\begin{proof}[Proof of Proposition \ref{Prop-Pi(k(n))}]
Suppose first that $T_{\fn}$  is absolutely irreducible. By  \cite[Proposition 10.107(i)]{pa10}, the category $\mathrm{Ban}_{G,\psi}^{\rm adm,fl}(E)^{\mathfrak{B}}_{\fn}$ contains only one absolutely irreducible object denoted by $\Pi_{\fn}$, which must be non-ordinary. In particular, each irreducible subquotient of $\Pi(\kappa(\fn))$ is isomorphic to $\Pi_{\fn}$ and Lemma \ref{Lemma-Pi(kappa(fn))} gives an injection $\Pi_{\fn}\hookrightarrow \Pi(\kappa(\fn))$. Lemma \ref{Lemma-N_2-semi} implies that the setup of \cite[Proposition 4.32]{pa10} is satisfied, which implies that $\mathrm{m}(\Pi(\kappa(\fn))/\Pi_{\fn})=0$ (we use the notation $\mathrm{m}$ as in \emph{loc.cit.}), hence $\Pi(\kappa(\fn))/\Pi_{\fn}=0$.

Suppose from  now on that $T_{\fn}$ is absolutely reducible and can be written as $T_{\fn}=\delta_1+\delta_2$ over a finite extension $L'$ of $\kappa(\fn)$ with $\delta_1\delta_2^{-1}\neq \ide,\epsilon^{\pm1}$. Since $\delta_1\neq \delta_2$ as they reduce to different characters, we have only excluded the case when $T_{\fn}=\delta+\delta\epsilon$ (with $\delta^2=\psi$). Using the isomorphism $R^{\ps}\cong R^{\ver}(\overline{\rho}^1)$, \cite[Corollary 10.94]{pa10} implies that we only exclude the ideal $(c_0,c_1,d_0,d_1)$.  

We first treat the case when $L'=\kappa(\fn)$.  Up to order, we may assume that $\delta_1$ reduces to $\ide$ modulo the maximal ideal of $\cO_{\kappa(\fn)}$, and therefore $\delta_2$ reduces to $\omega$. Then \cite[Proposition 10.107(ii)]{pa10} implies that $\mathrm{Ban}_{G,\psi}^{\rm adm,fl}(E)^{\mathfrak{B}}_{\fn}$ has exactly two (non-isomorphic) absolutely irreducible objects $\Pi_1$ and $\Pi_2$, where
\[\Pi_1=(\Ind_{P}^G\delta_1\otimes\delta_2\epsilon^{-1})_{cont}, \quad\Pi_2=(\Ind_P^G\delta_2\otimes\delta_1\epsilon^{-1})_{cont}.\]
Let $\Pi$ be the unique irreducible Banach space sub-representation of $\Pi(\kappa(\fn))$ given by Lemma \ref{Lemma-Pi(kappa(fn))}.  Since  $\overline{\Pi}^{\rm ss}$ contains $\Sp$ as a subquotient, we have $\Pi\cong \Pi_1$ by our convention. Moreover, by the assumption $p\geq 5$  we must have  
$\overline{\Pi_2}^{\rm ss}\cong \pi_{\alpha}$.
Put \[\Pi':=\Pi(\kappa(\fn))/\Pi.\] 
As in the irreducible case, \cite[Proposition 4.32]{pa10} implies that   each irreducible subquotient of $\Pi'$ is isomorphic to $\Pi_2$.  To conclude we need to show that $\Pi\cong \Pi_2$, or equivalently  $\check{\VV}(\Pi_2)\cong \delta_2$.

Tensoring the sequence (\ref{equation-remark-10.97}) with ${\kappa(\fn)}$ (over $R^{\rm ps}$) gives
\[ \mathfrak{r}.\check{\VV}(N_1)\otimes_{R^{\rm ps}}{\kappa(\fn)}\overset{\phi}{\ra} \check{\VV}(N_2)\otimes_{R^{\rm ps}}{\kappa(\fn)}\ra \rho^{\rm univ}_{\ide}\otimes_{R^{\rm ps}}{\kappa(\fn)}\ra0.\]
 On the one hand, since $ \fn$ contains the reducibility ideal $\mathfrak{r}$, $\rho_{\ide}^{\rm univ}\otimes_{R^{\ps}}\kappa(\fn)$ is non-zero and $\rho_{\ide}^{\rm univ}\otimes_{R^{\ps}}\cO_{\kappa(\fn)}$ is a deformation of $\ide$ to $\cO_{\kappa(\fn)}$. By our convention, this implies that $\rho_{\ide}^{\rm univ}\otimes_{R^{\ps}}\kappa(\fn)$ is isomorphic to $\delta_1$.  On the other hand, since  $\check{\VV}(N_1)$ is the universal deformation of $\overline{\rho}^1$ over $R^{\rm ver}(\brho^1)\simeq R^{\rm ps}$, it is flat over $R^{\rm ps}$. Together with Lemma \ref{Lemma-r-kappa(n)}, this implies that \[\mathfrak{r}.\check{\VV}(N_1)\otimes_{R^{\rm ps}}\kappa(\fn)\cong \check{\VV}(N_1)\otimes_{R^{\rm ps}}(\mathfrak{r}\otimes_{R^{\rm ps}}\kappa(\fn))\cong \check{\VV}(N_1)\otimes_{R^{\ps}}\kappa(\fn).\]
which is isomorphic to a non-split extension of $\delta_2$ by $\delta_1$ by \cite[Proposition 4.9(ii)]{pa12}.  The map $\phi$ can not be injective, since $\check{\VV}(N_2)\otimes_{R^{\ps}}\kappa(\fn)$ does not contain $\delta_1$ as a \emph{sub}-representation (otherwise, $\Pi(\kappa(\fn))$ would admit $\Pi_1$ as a \emph{quotient} which contradicts Lemma \ref{Lemma-Pi(kappa(fn))}). As consequence, $\mathrm{Im}(\phi)\cong \delta_2$, and $\check{\VV}(N_2)\otimes_{R^{\ps}}\kappa(\fn)$ is a non-split extension of $\delta_1$ by $\delta_2$.

For general $L'$, the same argument as above shows that $\check{\VV}(\Pi(\kappa(\fn)))\otimes_{\kappa(\fn)}L'$, which is isomorphic to $\check{\VV}(N_2)\otimes_{R^{\ps}}L'$ by (\ref{equation-isom-m}), is a non-split extension of $\delta_1$ by $\delta_2$. Since $\ide\neq \omega$ (as $p>2$),  \cite[Lemma 4.5]{pa12} implies that $\delta_1, \delta_2$ are in fact defined over $\kappa(\fn)$. As in the proof of \cite[Proposition 4.9]{pa12}, we see that $\Pi(\kappa(\fn))$ is a non-split extension of $\Pi_2$ by $\Pi_1$.   
\end{proof}

\begin{remark}
We thank Pa\v{s}k\={u}nas for pointing out to us that $N_2$ is not flat over $R^{\rm ps}$.
\end{remark}

\begin{proposition}\label{conditionc}
If $V=\sigma(k,\tau)$ (resp. $V=\sigma^{\mathrm{cr}}(k,\tau)$), then  $$\dim_{\kappa(\fn)}\Hom_K(V,\Pi(\kappa(\fn)))\leq 1$$ for almost all $\fn\in \mSpec R^{\rm ps}[1/p]$. Moreover,  for such $\fn$, $\dim_{\kappa(\fn)}\Hom_K(V,\Pi(\kappa(\fn)))=1$ if and only if $T_{\fn}$ is absolutely irreducible and potentially semi-stable (resp. potentially crystalline)  of type $(k,\tau,\psi)$, or $T_{\fn}$ is reducible and isomorphic to the trace of a potentially semi-stable  (resp. potentially crystalline) representation of type $(k,\tau,\psi)$ which is non-split and contains a one-dimensional sub-representation  lifting $\omega$. 
\end{proposition}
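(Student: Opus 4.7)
The plan is to use Proposition \ref{Prop-Pi(k(n))}, which for almost all $\fn$ describes $\Pi(\kappa(\fn))$ as being either absolutely irreducible non-ordinary, or the non-split extension
\[0\to\Pi_1\to\Pi(\kappa(\fn))\to\Pi_2\to 0\]
with $\Pi_1=(\Ind_P^G\delta_1\otimes\delta_2\epsilon^{-1})_{\rm cont}$ and $\Pi_2=(\Ind_P^G\delta_2\otimes\delta_1\epsilon^{-1})_{\rm cont}$. Since $V=\sigma(k,\tau)$ is a locally algebraic, finite-dimensional $K$-representation, every $K$-equivariant map $V\to\Pi(\kappa(\fn))$ factors through the locally algebraic vectors $\Pi(\kappa(\fn))^{\text{l-alg}}$, so I would compute these on each side and conclude via Henniart's uniqueness statement for the $K$-type.

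In the absolutely irreducible, non-ordinary case, I would invoke the Colmez--Emerton description of locally algebraic vectors coming from the $p$-adic local Langlands correspondence: $\Pi(\kappa(\fn))^{\text{l-alg}}$ is non-zero if and only if $\check{\VV}(\Pi(\kappa(\fn)))$ is potentially semi-stable with distinct Hodge--Tate weights, and in that case it is isomorphic to $\pi_{\rm sm}\otimes\mathrm{Sym}^{k-2}\kappa(\fn)^{2}$, where $\pi_{\rm sm}$ is the smooth irreducible $G$-representation attached by classical local Langlands to the Weil--Deligne representation $\mathrm{WD}(\check{\VV}(\Pi(\kappa(\fn))))$. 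Henniart's statement giving $\dim\Hom_K(\sigma(\tau),\pi_{\rm sm})\leq 1$ with equality exactly when the Weil--Deligne parameter restricted to inertia is $\tau$ produces the desired bound and equality criterion. For the crystalline variant one simply replaces $\sigma(\tau)$ by $\sigma^{\rm cr}(\tau)$, since a nonzero $K$-map from $\sigma^{\rm cr}(\tau)$ into $\pi_{\rm sm}$ additionally forces the monodromy operator to be trivial.

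In the reducible case, I would apply $\Hom_K(V,-)$ to the short exact sequence above to get
\[0\to\Hom_K(V,\Pi_1)\to\Hom_K(V,\Pi(\kappa(\fn)))\to\Hom_K(V,\Pi_2).\]
For locally algebraic $\delta_1,\delta_2$ with $\delta_1\delta_2=\epsilon\psi$, the locally algebraic vectors of each continuous principal series are of the form (smooth parabolic induction)$\otimes\mathrm{Sym}^{k-2}$, twisted according to the algebraic parts of $\delta_1,\delta_2$; crucially the assignment of the Hodge--Tate weights $0$ and $k-1$ to the two characters is reversed in passing from $\Pi_1$ to $\Pi_2$. Since $\delta_1$ lifts $\ide$ and $\delta_2$ lifts $\omega$ by Proposition \ref{Prop-Pi(k(n))}, only one of $\Pi_1^{\text{l-alg}}$ and $\Pi_2^{\text{l-alg}}$ is compatible with a non-zero map from $\sigma(k,\tau)$ for generic $\fn$; the configuration that matches the type $(k,\tau,\psi)$ together with the condition that $\delta_2$ (lifting $\omega$) be the sub-character carrying the higher Hodge--Tate weight corresponds exactly to $\Pi_1^{\text{l-alg}}$. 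Thus $\Hom_K(V,\Pi_2)=0$ for almost all $\fn$, and a further application of Henniart to $\Pi_1^{\text{l-alg}}$ yields $\dim\leq 1$ with equality precisely when the smooth Weil--Deligne parameter of $\delta_1,\delta_2$ restricts to $\tau$ on inertia. Non-splitness of the resulting Galois extension $\check{\VV}(\Pi(\kappa(\fn)))$ of $\delta_1$ by $\delta_2$ is transferred from non-splitness of $\Pi(\kappa(\fn))$ via exactness of Colmez's functor.

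The main obstacle I foresee is the bookkeeping: determining which of $\Pi_1$ and $\Pi_2$ can receive a non-zero $K$-map from $\sigma(k,\tau)$, and reconciling the asymmetric convention of Proposition \ref{Prop-Pi(k(n))} (with $\Pi_1$ as subrepresentation and $\delta_2$ lifting $\omega$) with the requirement, implicit in the type $(k,\tau,\psi)$ and the definition of $I_2^{\ps}$, that the sub-character of the Galois representation carry the larger Hodge--Tate weight $k-1$. Once these conventions are properly aligned, the crystalline analogue runs verbatim with $\sigma^{\rm cr}(\tau)$ in place of $\sigma(\tau)$, the additional triviality of monodromy being exactly what distinguishes $I^{\ps}_{\mathrm{cr},\bullet}$ from $I^{\ps}_{\bullet}$.
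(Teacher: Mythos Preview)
Your overall strategy matches the paper's: invoke Proposition~\ref{Prop-Pi(k(n))} and then analyze locally algebraic vectors, with the irreducible case handled exactly as in \cite[Proposition~4.14]{pa12}. The reducible case, however, has a genuine gap.

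The problematic step is the assertion ``$\Hom_K(V,\Pi_2)=0$ for almost all $\fn$.'' This is false in general. The reducible locus of $\mSpec R^{\rm ps}[1/p]$ contains infinitely many $\fn$ for which $\delta_1$ (rather than $\delta_2$) carries the higher Hodge--Tate weight and the inertial type matches $\tau$; for such $\fn$ one has $\Hom_K(V,\Pi_1)=0$ while $\Hom_K(V,\Pi_2)\neq 0$. Your exact sequence then gives only an injection $\Hom_K(V,\Pi(\kappa(\fn)))\hookrightarrow\Hom_K(V,\Pi_2)$, and nothing prevents this from being an isomorphism onto a one-dimensional space. But the proposition demands $\Hom_K(V,\Pi(\kappa(\fn)))=0$ at precisely these points, since the sub-character of $\check{\VV}(\Pi(\kappa(\fn)))$ lifts $\omega$, not $\mathbbm{1}$. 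In other words, your argument establishes the ``if'' direction of the equality criterion but not the ``only if'' direction.

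What is actually needed---and what the paper imports from the proof of \cite[Proposition~4.14]{pa12}---is the stronger statement that $\Pi(\kappa(\fn))^{\rm alg}\neq 0$ if and only if $\Pi_1^{\rm alg}\neq 0$, equivalently that the map $\Pi(\kappa(\fn))^{\rm alg}\to\Pi_2^{\rm alg}$ is zero even when the target is not. This relies on the non-splitness of the Banach extension together with Dospinescu's result \cite{Do} on locally algebraic vectors in extensions; it is not a consequence of Hodge--Tate weight bookkeeping alone. Once that is in hand, your Henniart argument on $\Pi_1^{\rm alg}$ goes through as you describe.
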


\begin{proof}
We exclude the finite set of $\fn$ as in Proposition \ref{Prop-Pi(k(n))}. The case when $T_{\fn}$ is absolutely irreducible is identical to that of  \cite[Proposition 4.14]{pa12}. Assume that $T_{\fn}$ is absolutely reducible. Then by the proof of Proposition \ref{Prop-Pi(k(n))}, $T_{\fn}$ can be written of the form $\delta_1+\delta_2$ over $\kappa(\fn)$ with $\delta_1\delta_2^{-1}\neq \ide,\epsilon^{\pm1}$, and $\Pi(\kappa(\fn))$ fits into a non-split extension
\[0\ra \Pi_1\ra \Pi(\kappa(\fn))\ra\Pi_2\ra0\] 
with $\Pi_1,\Pi_2$ absolutely irreducible and non-isomorphic. As in the proof of Proposition \ref{Prop-Pi(k(n))}, we assume that  $\delta_1$ reduces to $\ide$ and $\delta_2$ reduces to $\omega$  modulo the maximal ideal of $\cO_{\kappa(\fn)}$, so that $\check{\VV}(\Pi_i)\cong\delta_i$ for $i=1,2$. Now the proof of \cite[Proposition 4.14]{pa12} gives that $\Pi(\kappa(\fn))^{\rm alg}$, the subspace of locally algebraic vectors in $\Pi(\kappa(\fn))$, is non-zero if and only if $\Pi_1^{\rm alg}$ is non-zero, if and only if the $G_{\Q_p}$-representation   \begin{equation}\label{equation-VV(Pi)-nonsplit}0\ra \delta_2\ra\check{\VV}(\Pi(\kappa(\fn)))\ra\delta_1\ra0\end{equation} is potentially semi-stable (resp. potentially crystalline if $V=\sigma^{\rm cr}(k,\tau)$) of type $(k,\tau,\psi)$. We conclude as in the proof of \emph{loc.cit.}, noting that the sequence (\ref{equation-VV(Pi)-nonsplit}) is non-split since  $\Pi(\kappa(\fn))$ is a non-split extension of $\Pi_2$ by $\Pi_1$.
\end{proof}

Recall the fixed $K$-stable lattice $\Theta$ in $V$ and the  $R^{\ps}$-module  $M_2(\Theta)$.  As in \S4.1, we have the following result.
\begin{theorem}\label{Theorem-M2}
We have an isomorphism 
\[\mathrm{Ann}(M_2(\Theta))\cong I_{\irr}^{\ps}\cap I_{2}^{\ps}\]
and an equality of $1$-dimensional cycles 
\[\cZ_1\left(R^{\rm ps}/(I_{\irr}^{\ps}\cap I_{2}^{\ps},\varpi)\right)=(a_{0,0}+a_{p-1,0})J.\]
 
 The same statement holds if we replace $I^{\rm ps}_{\irr}$, $I^{\rm ps}_{2}$, $a_{0,0}$, $a_{p-1,0}$ by $I^{\rm ps}_{\rm cr,irr}$, $I^{\rm ps}_{\rm cr,2}$, $a_{0,0}^{\rm cr}$, $a_{p-1,0}^{\rm cr}$ respectively.
\end{theorem}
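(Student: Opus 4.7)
The plan is to adapt the strategy of Theorem \ref{theorem-M1}, while dealing with the non-flatness of $N_2$ over $R^{\ps}$ which prevents a direct application of \cite[Corollary 6.5]{pa12}. The crucial observation that makes the framework of Section 2 applicable to $N_2$ is that $N_2$ is simultaneously the projective envelope in $\mathfrak{C}_{\psi}(\cO)$ of $\Sp^{\vee}$ and of $\pi^{\vee}(0,1,\ide)$: indeed, $\pi(0,1,\ide)$ is a non-split extension of $\ide$ by $\Sp$, so its $G$-socle is $\Sp$, and two objects with the same socle share an injective envelope. The triple $(r,\lambda,\chi)=(0,1,\ide)$ satisfies both $(\mathbf{H})$ and $(r,\lambda)\neq(p-2,\pm 1)$, so Corollary \ref{kisintheta} identifies $\widetilde{E}\cong R^{\ps}$ and we may apply Proposition \ref{Prop-J-prime} to $N_2$.

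First I would establish the annihilator equality $\mathrm{Ann}(M_2(\Theta))=I^{\ps}_{\irr}\cap I^{\ps}_2$. Arguing as in \cite[\S4]{pa12}, for $\fn\in\mSpec R^{\ps}[1/p]$ the condition $\fn\supseteq \mathrm{Ann}(M_2(\Theta))$ is equivalent to $\Hom_K(V,\Pi(\kappa(\fn)))\neq 0$. Proposition \ref{conditionc} translates this into the assertion that $T_{\fn}$ is either the trace of an absolutely irreducible potentially semi-stable representation of type $(k,\tau,\psi)$, or the trace of a non-split indecomposable reducible potentially semi-stable representation of type $(k,\tau,\psi)$ with one-dimensional sub-representation lifting $\omega$; by Remark \ref{remark-I_{omega}} such $\fn$ form precisely the characteristic-zero support of $R^{\ps}/(I^{\ps}_{\irr}\cap I^{\ps}_2)$. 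Since $R^{\ps}[1/p]$ is Jacobson, $M_2(\Theta)$ is $\cO$-flat, and $I^{\ps}_{\irr}\cap I^{\ps}_2$ is radical, the two ideals coincide.

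Next I would compute $\cZ_1(M_2(\Theta)/\varpi)$. Since $N_2$ is also projective in $\mathrm{Mod}^{\rm pro}_{K,\psi}(\cO)$ by the main result of \cite{EP}, the functor $W\mapsto M_2(W)$ is exact on smooth finite-length $\F$-representations of $K$, and $M_2(\Theta)/\varpi\cong M_2(\overline{\Theta})$. Taking a $K$-stable filtration of $\overline{\Theta}$ with irreducible graded pieces yields
\begin{equation*}
\cZ_1(M_2(\Theta)/\varpi)=\sum_{n,m}a_{n,m}(k,\tau)\,\cZ_1(M_2(\sigma_{n,m})).
\end{equation*}
By Proposition \ref{Prop-J-prime} applied to $(r,\lambda,\chi)=(0,1,\ide)$, $M_2(\sigma_{n,m})$ is nonzero if and only if $\Hom_K(\sigma_{n,m},\pi(0,1,\ide))\neq 0$; since $\pi(0,1,\ide)|_K\cong\Sym^{p-1}\F^2\oplus\ide$ as a $K$-representation (the $G$-extension splits as a $K$-extension), this occurs exactly for $\sigma_{n,m}\in\{\sigma_{0,0},\sigma_{p-1,0}\}$, in which case $M_2(\sigma_{n,m})\cong R^{\ps}/J$ contributes the cycle $J$. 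Hence $\cZ_1(M_2(\Theta)/\varpi)=(a_{0,0}+a_{p-1,0})J$.

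Finally, to pass from this to the stated cycle formula for $R^{\ps}/(I^{\ps}_{\irr}\cap I^{\ps}_2,\varpi)$, I would invoke the inequality $\dim_{\kappa(\fn)}\Hom_K(V,\Pi(\kappa(\fn)))\leq 1$ from Proposition \ref{conditionc}, which shows that $M_2(\Theta)$ has generic rank at most one over $R^{\ps}/\mathrm{Ann}(M_2(\Theta))$; combined with the first step this forces the equality of $1$-dimensional cycles. The crystalline variant is proved identically, with $V=\sigma^{\rm cr}(k,\tau)$ replacing $V=\sigma(k,\tau)$ throughout. I expect the main obstacle to be the first step: because $N_2$ is not flat over $R^{\ps}$, we cannot quote \cite[Corollary 6.5]{pa12} directly, and must instead route through Proposition \ref{conditionc}, whose proof relies on the delicate analysis of $\Pi(\kappa(\fn))$ via the block decomposition of $\mathrm{Ban}^{\rm adm,fl}_{G,\psi}(E)^{\mathfrak{B}}$ given in Proposition \ref{Prop-Pi(k(n))}.
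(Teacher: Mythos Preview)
Your overall architecture is the same as the paper's, and your identification of $N_2$ with the $\widetilde{P}$ of Section~2 for $(r,\lambda,\chi)=(0,1,\ide)$ is exactly the point. Step~2 (computing $\cZ_1(M_2(\Theta)/\varpi)=(a_{0,0}+a_{p-1,0})J$ via exactness of $M_2(\cdot)$ and Proposition~\ref{Prop-J-prime}) is correct and is what the paper does.

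The genuine gap is in Steps~1 and~3. From Proposition~\ref{conditionc} and the Jacobson property you only obtain $\sqrt{\mathrm{Ann}(M_2(\Theta))}=I^{\ps}_{\irr}\cap I^{\ps}_2$; the three facts you list ($R^{\ps}[1/p]$ Jacobson, $M_2(\Theta)$ $\cO$-flat, $I^{\ps}_{\irr}\cap I^{\ps}_2$ radical) do not force $\mathrm{Ann}(M_2(\Theta))$ itself to be radical. Likewise in Step~3, generic rank~$1$ over $R^{\ps}/\mathrm{Ann}(M_2(\Theta))$ does not by itself yield $\cZ_1(M_2(\Theta)/\varpi)=\cZ_1\bigl(R^{\ps}/(\mathrm{Ann}(M_2(\Theta)),\varpi)\bigr)$: without control on embedded primes of $M_2(\Theta)$, the special-fibre cycle of the module can exceed that of the ring. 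You also never verify that $M_2(\Theta)$ is finitely generated over $R^{\ps}$, which is needed even to make sense of ``support $=V(\mathrm{Ann})$'' at closed points.

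What is missing is precisely condition~(b) of \cite[Theorem~2.42]{pa12}: one must show $M_2(\Theta)$ is a finitely generated Cohen--Macaulay $R^{\ps}$-module. The paper supplies this by exhibiting the regular sequence $(\varpi,x)$, where $x\in R^{\ps}$ is the lift of $S$ constructed in the proof of Proposition~\ref{Prop-J-prime}; regularity of $x$ on $M_2(\Theta/\varpi\Theta)$ comes from the exact sequence~(\ref{equation-S-regular}) for each irreducible $\sigma$, and finite generation comes from Lemma~\ref{Lemma-N_2-semi} together with \cite[Proposition~2.15]{pa12}. Once (a),(b),(c) are verified, \cite[Theorem~2.42]{pa12} simultaneously gives that $\mathrm{Ann}(M_2(\Theta))$ is radical and that the cycle of the module mod~$\varpi$ agrees with that of the quotient ring. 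Your Step~3 is thus not a shortcut around Cohen--Macaulayness but a restatement of what that theorem provides.
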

\begin{proof}
Write $\Sigma$ for the set of $\fn$ in the statement of Proposition \ref{conditionc} such that $\dim_{\kappa(\fn)}\Hom_K(V,\Pi(\kappa(\fn)))=1$. By  Proposition \ref{conditionc} and Remark \ref{remark-I_{omega}}, we see that $\Sigma$ forms a dense subset of $\Spec \left(R^{\ps}/(I_{\irr}^{\ps}\cap I_{2}^{\ps})\right)[1/p]$, hence of $\Spec R^{\ps}/(I_{\irr}^{\ps}\cap I_{2}^{\ps})$, see \cite[Remark 2.43]{pa12}.
Now \cite[Proposition 2.22]{pa12} implies that $\Sigma$ forms a dense subset of the support of $M_2(\Theta)$, so we get the equality $\sqrt{\mathrm{Ann}(M_2(\Theta))}=I^{\ps}_{\irr}\cap I_2^{\ps}$.

To prove the theorem, we need to check the conditions (a),(b),(c) in \cite[Theorem 2.42]{pa12} in order  to apply it. The condition (a)  follows from the definition of $N_2$, using the main result of \cite{EP}.  The condition (c)(i) is just Proposition \ref{conditionc} and (c)(ii) proceeds exactly as in \cite[\S4.2]{pa12} using the main result of \cite{Do} and Proposition \ref{conditionc} in place of \cite[Proposition 4.9]{pa12}. 

We are left to verify the condition (b). By \cite[Proposition 2.29]{pa12}, it suffices to prove that $M_2(\Theta)$ is a finitely generated Cohen-Macaulay $R^{\ps}$-module. Recall that we have constructed an element $x\in R^{\ps}$ in the proof of Proposition \ref{Prop-J-prime}, which is a lifting of $S$ via the surjection $R^{\ps}\twoheadrightarrow \F\llbracket S\rrbracket$. We claim that $(\varpi,x)$ forms a regular sequence for $M_2(\Theta)$. Firstly, since $N_2$ is projective in $\mathrm{Mod}_{K,\zeta}^{\rm pro}(\cO)$, the exact sequence $0\ra \Theta\overset{\varpi}{\ra} \Theta\ra \Theta/\varpi\Theta\ra0$  induces an exact sequence of $R^{\ps}$-modules
\[0\ra M_2(\Theta)\overset{\varpi}{\ra} M_2(\Theta)\ra M_2(\Theta/\varpi\Theta)\ra0.\]
This implies that $\varpi$ is regular for $M_2(\Theta)$ and $M_2(\Theta)/\varpi M_2(\Theta)\cong M_2(\Theta/\varpi\Theta)$.  Secondly, it follows from the exact sequence (\ref{equation-S-regular}) (in Proposition \emph{loc.cit.}) that $x$ is regular for $M_2(\sigma)$ for any smooth irreducible $\F$-representation $\sigma$ of $K$, hence also regular for $M_2(\Theta/\varpi\Theta)$ (here we use that $N_2$ is projective in $\mathrm{Mod}_{K,\zeta}^{\rm pro}(\cO)$). Moreover,  the quotient $M_2(\Theta/\varpi\Theta)/xM_2(\Theta/\varpi\Theta)$ is of Krull dimensional $0$ since this is true for $M_2(\sigma)/xM_2(\sigma)$ by (\ref{equation-S-regular}). This proves the claim.  Finally, Lemma \ref{Lemma-N_2-semi} and \cite[Proposition 2.15]{pa12} imply that $M_2(\Theta)$ is finitely generated over $R^{\ps}$.

All conditions of  \cite[Theorem 2.42]{pa12} being verified, we deduce that $\mathrm{Ann}(M_2(\Theta))$ is a radical ideal, hence the equality $\mathrm{Ann}(M_2(\Theta))=I^{\ps}_{\irr}\cap I_2^{\ps}$. We also deduce an equality of $1$-dimensional cycles  
\[\cZ_1\left(R^{\rm ps}/(I_{\irr}^{\ps}\cap I_{2}^{\ps},\varpi)\right)=\sum_{n,m}a_{n,m}\cZ_1(M_2(\sigma_{n,m})).\]
But it follows from Proposition \ref{Prop-J-prime} that $M_2(\sigma_{n,m})\neq 0$ if and only if $(n,m)=(0,0)$ or $(p-1,0)$, in which case the associated 1-dimensional cycle is $J$. 
\end{proof}

\section{Proof of the Breuil-M\'{e}zard conjecture}\label{proofbm}
In this section we prove the Breuil-M\'ezard conjecture for the residual representation $ \ide\oplus \omega$. To do this, we study the relation between potentially semi-stable pseudo-deformation rings and potentially semi-stable (of the same type) deformation rings so that we can use what is proved in Section 3 and Section 4 to deduce the multiplicities of potentially semi-stable deformation rings (modulo $\varpi$). 
\medskip
 
\emph{Notational remark}: As the character $\psi$ will be fixed everywhere, we omit it from the notation of the deformation rings for simplicity. For $\m\in\mSpec R^{\ver}[1/p]$, write $\rho_{\fm}$ for the associated deformation of $\brho$.
\medskip

Let  $\overline{\rho}$ be an extension of two distinct characters $\chi_2$ by $\chi_1$ and fix a $p$-adic Hodge type $(k,\tau,\psi)$. A closed point in $\Spec R^{\ver}(k,\tau,\overline{\rho})[1/p]$ is called of reducibility type $\rm irr$ if  the corresponding $G_{\Qp}$-representation  is absolutely irreducible.
For a closed point $x\in  \Spec R^{\ver}(k,\tau,\overline{\rho})[1/p]$ such that the corresponding $G_{\Qp}$-representation $V_x$   is reducible,  it has to be an (possibly split) extension of two distinct characters $\delta_i$ lifting  $\chi_i$, respectively. We say the point $x$ is of reducibility type $\chi_i$, or more briefly, of type $i$,  if $\delta_i$ has the higher Hodge-Tate weight.

For  $*\in\{\irr,1,2\}$, define an ideal  $I^{\ver}_*$ of $R^{\ver}$ as follows:
\[ I_{*}^{\rm ver}:=\Bigl(\bigcap_{\fm\in\mSpec R^{\rm ver}[1/p]}\fm\Bigr)\cap R^{\rm ver},\]for $\fm$ ranging over all the maximal ideals such that $\rho_{\fm}$ is potentially semi-stable of type $(k,\tau,\psi)$ and of reducibility type $*$, so that
\[R^{\ver}(k,\tau,\brho)= R^{\ver}/(I^{\ver}_{\irr}\cap I^{\ver}_{1}\cap I^{\ver}_2).\] 
In the pseudo-deformation ring $R^{\ps}=R^{\ps}(\mathrm{tr}\brho)$,  define the ideal
\[ I_{*}^{\rm ps}:=I^{\ver}_*\cap R^{\rm ps}.\] One sees that this definition coincides with the one defined at the beginning of Section 4.

We define in an obvious way the ideals $I^{\ver}_{\rm cr,*}$ and $I^{\ps}_{\rm cr,*}$ ($*\in\{\irr,1,2\}$) by considering potentially crystalline representations of type $*$.

\begin{remark}

In \cite{ki09}, a quotient ring $R^{\ps}_{U_0}$ of $R^{\ps}$ (denoted by $R^{\ps}(k,\tau,\brho)$ in \cite{bm2}) is introduced, which can be seen as the analogue of $R^{\ver}(k,\tau,\brho)$.  One checks that $R^{\ps}_{U_0}=R^{\ps}/(I^{\ps}_{\irr}\cap I^{\ps}_1\cap I^{\ps}_2)$ and that $I^{\ps}_*$  defines the (closure of union of) components  in $\Spec R^{\ps}_{U_0}[1/p]$  of type $*$ defined  loc.\ cit.

\end{remark}

In the rest of this section, we will take $\overline{\rho}= \mathbbm{1}\oplus\omega$ and use the convention $\chi_1=\mathbbm{1}$ and $\chi_2=\omega$ while we talk about reducibility types.

Recall from \S \ref{maps} that there are three minimal prime ideals of $R^{\ver}$ containing $JR^{\rm ver}$: $\fp_1,\fp_2,\fp_3$, and that $JR^{\ver}=\fp_1\cap\fp_2\cap\fp_3$. 
We first record the following fact, which says that they induce all possible minimal prime ideals of $R^{\rm ver}(k,\tau,\brho)/\varpi$. 

\begin{proposition}\label{prop-Iver-3}
(i) The quotient ring $R^{\rm ver}/(I^{\rm ver}_{\irr}+(\varpi))$ has at most three minimal prime ideals, that is among $\{\fp_1,\fp_2,\fp_3\}$. 

(ii) The quotient ring $R^{\rm ver}/(I^{\ver}_{1}+(\varpi))$ has at most one minimal prime ideal  $\fp_1$, with the quantity being  one if and only if $I^{\ver}_1\neq R^{\ver}$. 

(iii) The quotient ring $R^{\rm ver}/(I^{\ver}_2+(\varpi))$ has at most two minimal prime ideals $\fp_2$ and $\fp_3$.

The same hold in the crystalline case, i.e. with $I_*^{\ver}$ replaced by $I_{\rm cr, *}^{\ver}$.
\end{proposition}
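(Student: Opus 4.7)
The plan is to transfer the question to the pseudo-deformation ring $R^{\ps}$ via the trace map $f^{\ver}\colon R^{\ps}\to R^{\ver}$ of \S\ref{subsubsection-fver}, apply the cycle calculations from Section~4, and then sharpen the resulting information using the explicit matrix realization of \S\ref{splitng} together with a dimension count. Throughout, the key geometric identity is $JR^{\ver}=\fp_1\cap\fp_2\cap\fp_3$ established in \S\ref{subsubsection-fver}, which lets us translate containments involving $J$ into statements about the $\fp_i$'s.

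First I would note that for each $*\in\{\irr,1,2\}$ and each $\fm\in\mSpec R^{\ver}[1/p]$ whose $\rho_{\fm}$ is potentially semi-stable of type $(k,\tau,\psi)$ and reducibility type $*$, the trace $\tr\rho_{\fm}$ is a potentially semi-stable pseudo-representation of the same type, so $(f^{\ver})^{-1}(\fm\cap R^{\ver})$ lies in the vanishing locus of $I^{\ps}_*$; intersecting gives $f^{\ver}(I^{\ps}_*)R^{\ver}\subseteq I^{\ver}_*$. Next, combining Theorems~\ref{theorem-M1} and~\ref{Theorem-M2} with the Cohen--Macaulayness of $M_1(\Theta)$ and $M_2(\Theta)$ (established in the proof of Theorem~\ref{Theorem-M2} via the regular sequence $(\varpi,x)$) and the primality of $J$ from Proposition~\ref{Prop-J-prime}, one extracts
\[
  J\;\subseteq\;\sqrt{I^{\ps}_{\irr}\cap I^{\ps}_j+\varpi}\qquad(j=1,2)
\]
whenever the multiplicities $a_{p-3,1}$ or $a_{0,0}+a_{p-1,0}$ are positive; in the complementary case $M_j(\Theta)=0$ forces $I^{\ps}_{\irr}\cap I^{\ps}_j=R^{\ps}$, trivializing the corresponding statement. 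Since $I^{\ps}_*\supseteq I^{\ps}_{\irr}\cap I^{\ps}_j$ for the appropriate $j$, this gives $J\subseteq\sqrt{I^{\ps}_*+\varpi}$ for every $*\in\{\irr,1,2\}$. Pushing forward via $f^{\ver}$ and invoking Lemma~\ref{frob} and the identity $JR^{\ver}=\fp_1\cap\fp_2\cap\fp_3$, we then obtain
\[
  \fp_1\cap\fp_2\cap\fp_3\;\subseteq\;\sqrt{I^{\ver}_*+\varpi},
\]
so every minimal prime of $R^{\ver}/(I^{\ver}_*+\varpi)$ contains at least one $\fp_i$.

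To eliminate the unwanted $\fp_i$'s in parts~(ii) and~(iii) and to upgrade containment to equality, two more inputs are needed. Directly from the explicit matrices of \S\ref{splitng}: for any type-$1$ closed point $\fm$ the subrepresentation of $\rho_{\fm}$ lifts $\ide$, forcing the lower-left entries $c_0,c_1$ of $\rho_{R^{\ver}}$ to vanish at $\fm$, so $c_0,c_1\in I^{\ver}_1$; as $c_0\notin\fp_2\cup\fp_3$ neither $\fp_2$ nor $\fp_3$ contains $I^{\ver}_1$, excluding them as minimal primes of $R^{\ver}/(I^{\ver}_1+\varpi)$. The symmetric calculation gives $b\in I^{\ver}_2$ and $b\notin\fp_1$, excluding $\fp_1$ in~(iii). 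For the dimensional comparison, Theorem~\ref{pst} together with the standard framed-versal relation (taking into account $h^0(\mathrm{ad}\,\brho)=2$ in the split case) shows that $R^{\ver}(k,\tau,\brho)$ is $\cO$-flat of Krull dimension three, hence equidimensional of dimension two modulo $\varpi$; this dimension bound is inherited by the quotients $R^{\ver}/(I^{\ver}_*+\varpi)$. Since each $R^{\ver}/\fp_i$ is a two-dimensional domain admitting no proper quotient of the same dimension, a minimal prime $\fq\supseteq\fp_i$ must satisfy $\fq=\fp_i$, and the pairwise incomparability of $\fp_1,\fp_2,\fp_3$ then completes (i), (ii) and (iii). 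The dichotomy in~(ii) follows from observing that $\fp_1\supseteq I^{\ver}_1+\varpi$ holds precisely when $I^{\ver}_1\neq R^{\ver}$, using $(c_0,c_1,d_1,\varpi)\subseteq\sqrt{I^{\ver}_1+\varpi}$ proved above. The crystalline versions are formally identical, with $I^{\ps}_{\rm cr,*}$ and $I^{\ver}_{\rm cr,*}$ replacing their non-crystalline counterparts.

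The hardest step is the passage from the one-dimensional cycle identities of Theorems~\ref{theorem-M1} and~\ref{Theorem-M2} to the ideal-theoretic containment $J\subseteq\sqrt{I^{\ps}_{\irr}\cap I^{\ps}_j+\varpi}$, because cycles only record top-dimensional associated primes; ruling out embedded or higher-dimensional associated primes is precisely what the Cohen--Macaulayness of $M_j(\Theta)$ together with the regularity of $\varpi$ provides, since $M_j(\Theta)/\varpi M_j(\Theta)$ is then pure one-dimensional and its only associated prime matching the cycle calculation is $J$.
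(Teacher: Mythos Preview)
Your proof is correct and follows essentially the same route as the paper's: push the containment $J\subseteq\sqrt{I^{\ps}_*+\varpi}$ (coming from Theorems~\ref{theorem-M1} and~\ref{Theorem-M2}) forward along $f^{\ver}$ to see that every minimal prime of $R^{\ver}/(I^{\ver}_*+\varpi)$ contains some $\fp_i$, use the equidimensionality of $R^{\ver}(k,\tau,\brho)/\varpi$ (from Theorem~\ref{pst}) to force equality with some $\fp_i$, and then exclude the unwanted $\fp_i$'s in (ii) and (iii) via the explicit matrix description of \S\ref{splitng}. The only notable variations are that you justify the radical containment through the Cohen--Macaulayness of $M_j(\Theta)$ (the paper simply reads it off the cycle identities together with the equidimensionality of $R^{\ver}(k,\tau,\brho^1)/\varpi$), and that you exclude $\fp_2,\fp_3$ in (ii) by showing $c_0,c_1\in I^{\ver}_1$ while the paper argues dually that $\rho_{\fp_2},\rho_{\fp_3}$ are non-split with sub lifting $\omega$; both pairs of arguments are equivalent and your element-level version is perfectly valid (the key point, which you implicitly use, is that with $c_0=c_1=0$ and $b\neq 0$ the versal representation is genuinely non-split since $m_{p-2}$ has unipotent diagonal, so no split type-$2$ point can have $b\notin\fm$).
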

\begin{proof}
(i) Let $*\in\{\irr,1,2\}$ and $\fq\in\Spec R^{\ver}$ be any minimal prime ideal over $I^{\rm ver}_*+(\varpi)$. By Theorems \ref{theorem-M1} and \ref{Theorem-M2}, $J$ is the radical of the ideal $I^{\ps}_*+(\varpi)$.  Since the natural map $f^{\rm ver}:R^{\rm ps}\ra R^{\rm ver}$ maps $I^{\rm ps}_*$ into $I^{\rm ver}_*$, there exists  $r\in \N$ large enough such that
\[(\fp_1\cap \fp_2\cap \fp_3)^rR^{\rm ver}=J^rR^{\rm ver}\subset (I^{\ps}_*+(\varpi))R^{\rm ver}\subset  \fq.\]
Hence $\fq$ must contain one of the $\fp_1,\fp_2,\fp_3$.  
By Theorem \ref{pst} and \cite[Theorem 31.5]{mat}, $R^{\rm ver}/(I^{\ver}_{*}+(\varpi))$ is equidimensional of dimension $2$, which implies that $\fq$ has height 3. Since $\fp_i$ ($i=1,2,3$) also has height 3, the first claim follows. 
 
The proofs of (ii) and (iii) are similar and we only give that of (ii). We follow the arguments in the proof of \cite[Lemma 4.3.4(ii)]{bm2}. Let $\fq'$ be a minimal prime ideal over $I^{\ver}_{1}$. By the proof of (i) we only need to show $\fq'\nsubseteq \fp_2,\fp_3$. As in the proof \emph{loc.cit.}, the associated deformation 
\[\rho_{\fq'}:G_{\Q_p}\ra \GL_2(R^{\ver}/\fq')\]  
is reducible and it contains a free sub-$R^{\ver}/\fq'$-module  of rank $1$ as a direct summand, which is a deformation of the trivial character $\mathbbm{1}$.  The same property holds for any prime ideal of $R^{\ver}$ containing $\fq'$. However, by the explicit description of $\rho_{R^{\ver}}$ in \S \ref{splitng}, the deformations $\rho_{{\fp_2}}$ and $\rho_{{\fp_3}}$  are reducible \emph{non-split}, containing a free sub-module of rank $1$ lifting $\omega$. This implies $\fq'\nsubseteq \fp_2,\fp_3$ and the result follows.
\end{proof}

By \cite[Theorem 14.7]{mat} we have
\begin{equation}\label{sumof3}e(R^{\rm ver}(k,\tau,\brho)/\varpi)=\sum_{i=1}^3\ell(R^{\ver}(k,\tau,\brho)_{\fp_i}/\varpi)e(R^{\ver}/\fp_i)=\sum_{i=1}^3\ell(R^{\ver}(k,\tau,\brho)_{\fp_i}/\varpi)\end{equation}
where the second equality holds because $e(R^{\ver}/\fp_i)=1$ for $i=1,2,3$. We are left to study $\ell(R^{\ver}(k,\tau,\brho)_{\fp_i}/\varpi)$, which is also equal to $e(R^{\ver}(k,\tau,\brho)_{\fp_i}/\varpi)$. Of course, the same happens in the crystalline case.
    
\subsection{Multiplicities at $\fp_1$ and $\fp_2$}  \label{subsection-5.1}

Recall the maps (\ref{ngf1})   $f_i^{\ver}: R^{\rm ps}\ra R_{\fp_i}^{\rm ver}$, for   $i=1,2,3$.     
  
\begin{proposition}\label{Prop-Rps-Rver-II}
 (i) 
For $i=1,2$, we have $I_{\irr}^{\rm ver}R^{\ver}_{\fp_i}=I^{\ps}_{\irr}R^{\ver}_{\fp_i}$ and $I_{i}^{\rm ver}R_{\fp_i}^{\rm ver}=I^{\ps}_{i}R^{\ver}_{\fp_i}$.

 (ii)  For $*\in\{\irr,2\}$, we have  $I^{\ver}_{*}R^{\ver}_{\fp_3}[1/p]=\sqrt{I^{\ps}_{*}R^{\ver}_{\fp_3}[1/p]}$. 
\end{proposition}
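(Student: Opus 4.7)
The strategy is to exploit the flatness and radical-preservation properties of the trace maps $f^{\ver}_i$ from Propositions \ref{f1f2} and \ref{f3}, together with the fact that both $I^{\ver}_*$ and $I^{\ps}_*$ are radical ideals (each being the intersection of a collection of maximal ideals of the corresponding $[1/p]$-ring, contracted back to the integral ring).

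For part (i), fix $i\in\{1,2\}$ and $*\in\{\irr,i\}$. The inclusion $I^{\ps}_*R^{\ver}_{\fp_i}\subseteq I^{\ver}_*R^{\ver}_{\fp_i}$ is immediate from $I^{\ps}_*=I^{\ver}_*\cap R^{\ps}$. For the reverse, observe that both ideals are radical after localization: $I^{\ver}_*R^{\ver}_{\fp_i}$ because localization preserves radicality, and $I^{\ps}_*R^{\ver}_{\fp_i}$ by Proposition \ref{f1f2}, which says $f^{\ver}_i$ is flat and sends radical ideals to radical ideals. It therefore suffices to match the underlying zero loci in $\Spec R^{\ver}_{\fp_i}$. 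The flatness of $f^{\ver}_i$, together with going-down, identifies the minimal primes of $I^{\ps}_*R^{\ver}_{\fp_i}$ with those primes of $R^{\ver}_{\fp_i}$ whose contraction to $R^{\ps}$ is minimal over $I^{\ps}_*$. A lifting argument then shows that every such prime is a minimal prime of $I^{\ver}_*R^{\ver}_{\fp_i}$: for $*=\irr$, every absolutely irreducible pseudo-deformation lifts to an absolutely irreducible representation by Rouquier--Nyssen; for $*=i$, every reducible type-$i$ pseudo-deformation is the trace of a (possibly non-split) type-$i$ extension, using Remark \ref{remark-I_{omega}}.

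For part (ii), the map $f^{\ver}_3$ is not flat, which accounts for the weaker statement involving the radical and the localization at $[1/p]$. I would route the argument through the peu-ramifi\'ee deformation ring via the flat map $\gamma_3:R^{\rm peu}_{\fq_3}\to\widehat{R^{\ver}_{\fp_3}}$ of Proposition \ref{f3}, which also preserves radical ideals. Introducing the obvious potentially semi-stable ideals $I^{\rm peu}_*\subset R^{\rm peu}$ and applying the part (i) strategy to $\gamma_3$ yields $I^{\rm peu}_*\widehat{R^{\ver}_{\fp_3}}=I^{\ver}_*\widehat{R^{\ver}_{\fp_3}}$; the commutative triangle (\ref{equation-diagram}) then relates this to $I^{\ps}_*\widehat{R^{\ver}_{\fp_3}}$. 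After inverting $p$, every type-$*$ pseudo-deformation with $*\in\{\irr,2\}$ admits a peu ramifi\'ee type-$*$ lift, so taking radicals gives the asserted equality, and one descends from $\widehat{R^{\ver}_{\fp_3}}$ to $R^{\ver}_{\fp_3}$ using faithful flatness of completion.

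The main obstacle will be the lifting argument, especially in part (ii), where one must verify that the generic point of each irreducible component of $V(I^{\ps}_*)\cap\Spec R^{\ver}_{\fp_i}$ corresponds to a \emph{type-$*$} versal deformation rather than one of some other type; this requires tracking which branch of the reducibility locus (split vs.\ non-split, peu vs.\ tr\`es ramifi\'ee, sub-character $\ide$ vs.\ $\omega$) is selected by each $\fp_i$, information made explicit by the descriptions of $\fp_1,\fp_2,\fp_3$ in \S\ref{maps}.
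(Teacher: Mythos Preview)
Your plan has the right ingredients for part (i) but misidentifies where the work lies, and your approach to part (ii) is both unnecessary and circular relative to the paper's logic.

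\textbf{On part (i).} The ``lifting argument'' is not the right mechanism. A minimal prime $\fq$ of $I^{\ps}_*R^{\ver}_{\fp_i}$ is already a prime of the \emph{versal} ring, so it carries a well-defined deformation $\rho_\fq$; there is nothing to lift. Saying that the pseudo-deformation over $\fq\cap R^{\ps}$ lifts to some type-$*$ representation does not tell you that $\rho_\fq$ itself (equivalently, the closed points specializing to $\fq$) is of type $*$, which is what $\fq\supseteq I^{\ver}_*$ means. The paper instead works directly with closed points of $R^{\ver}[1/p]$: since this ring is Jacobson, both $I^{\ver}_{\irr}R^{\ver}[1/p]$ and $\sqrt{I^{\ps}_{\irr}R^{\ver}[1/p]}$ are intersections of maximal ideals, and one observes that for a closed point $\fn$ the conditions ``$\rho_\fn$ is absolutely irreducible of type $(k,\tau,\psi)$'' and ``$\tr\rho_\fn$ is the trace of such a representation'' coincide. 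This gives
\[
I^{\ver}_{\irr}R^{\ver}_{\fp_i}[1/p]=\sqrt{I^{\ps}_{\irr}R^{\ver}_{\fp_i}}[1/p]
\]
uniformly for $i=1,2,3$. For $i=1,2$, flatness of $f^{\ver}_i$ (Proposition~\ref{f1f2}) makes $R^{\ver}_{\fp_i}/I^{\ps}_*R^{\ver}_{\fp_i}$ (and hence its reduced quotient) $\cO$-flat, so the $[1/p]$ can be dropped; the radical then disappears because $f^{\ver}_i$ preserves radical ideals. The reducible case $*=i$ is handled the same way, using Proposition~\ref{prop-Iver-3} to see that fixing the reducibility type makes ``$\rho_\fn$ is of type $*$'' again a condition on the trace.

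\textbf{On part (ii).} No detour through $R^{\rm peu}$ is needed: the displayed $[1/p]$-identity above already \emph{is} part (ii) when $i=3$. Your proposed route is moreover circular: there is no map between $R^{\rm peu}$ and $R^{\ver}$, so the only way to compare $I^{\rm peu}_*$ with $I^{\ver}_*$ inside $\widehat{R^{\ver}_{\fp_3}}$ is via their common relation to $I^{\ps}_*$ through the triangle (\ref{equation-diagram}) --- and the $R^{\ver}$ side of that relation is exactly statement (ii). In the paper's logical order, it is Proposition~\ref{peuver} (the $R^{\rm peu}$ comparison) that invokes Proposition~\ref{Prop-Rps-Rver-II}(ii), not the reverse.
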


\begin{proof}
 
 First look at $I^{\ver}_{\irr}$. Using the fact that  $R^{\rm ver}[1/p]$ is a Jacobson ring, we have by definition
\[ I_{\irr}^{\rm ver}R^{\ver}[1/p]=\bigcap_{\fn\in\mSpec R^{\rm ver}[1/p]}\fn, \quad \sqrt{ I_{\irr}^{\rm ps}R^{\ver}[1/p]}= \bigcap_{\m\in\mSpec R^{\rm ver}[1/p]}\m,\]
where $\fn$ ranges over all maximal ideals such that $\rho_{\fn}$ is absolutely irreducible of type $(k,\tau,\psi)$, and $\fm$ ranges over all maximal ideals such that $\mathrm{tr}(\rho_{\fm})$ is absolutely irreducible of type $(k,\tau,\psi)$, that is $\mathrm{tr}(\rho_{\fm})\cong\mathrm{tr}(\rho')$ for some $\rho'$ which is absolutely irreducible of type $(k,\tau,\psi)$.  Clearly these conditions define the same subset of $\mSpec R^{\ver}[1/p]$, hence the equality
\begin{equation}\label{equation-equality-1/p}
I^{\ver}_{\irr}R^{\ver}[1/p]=\sqrt{I^{\ps}_{\irr}R^{\ver}[1/p]}=\sqrt{I^{\ps}_{\irr}R^{\ver}}[1/p]
\end{equation}where the second equality holds because taking radical commutes with localization. 
Taking localization at $\fp_i$ (viewing the two sides as $R^{\ver}$-modules), $i=1,2,3$, gives
\begin{equation}\label{equation-I-irr}
I^{\ver}_{\irr}R^{\ver}_{\fp_i}[1/p]=\sqrt{I^{\ps}_{\irr}R^{\ver}_{\fp_i}}[1/p]\end{equation}
hence (ii) holds for $*=\rm irr$. To deduce (i), first remark that if $A$ is an $\cO$-algebra and $I$ is an ideal of $A$ such that the quotient $A/I$ is an $\cO$-flat module, then $I=(IA[1/p])\cap A$.  
Since the map $R^{\ps}\ra R^{\ver}_{\fp_i}$ (here $i=1,2$) is flat by Proposition \ref{f1f2},   $R_{\fp_i}^{\ver}/I^{\ps}_{\irr}R_{\fp_i}^{\ver}$ is $\cO$-flat as $R^{\ps}/I^{\ps}_{\irr}$ is. This implies that $R^{\ver}_{\fp_i}/\sqrt{I^{\ps}_{\irr}R^{\ver}_{\fp_i}}$ is also $\cO$-flat and thus (\ref{equation-I-irr}) improves to be \[I^{\ver}_{\irr}R^{\rm ver}_{\fp_i}=\sqrt{I^{\ps}_{\irr}R^{\ver}_{\fp_i}},\quad i=1,2.\] 
Then we conclude still by Proposition \ref{f1f2}, which says that 
 $I^{\ps}_{\irr}R^{\ver}_{\fp_i}$ is already radical.  So far we have proved (i) and (ii) for $*=\rm irr$.

The claim for $I^{\ver}_i$ ($i=1,2$) is proved similarly, using Proposition \ref{prop-Iver-3}.   More precisely, with the notation in the proof of \emph{loc. cit.} let $\fn\in \Spec((R^{\ver}/\fq')[1/p])$ be any closed point such that $\tr\rho_{\fn}$ comes from some potentially semi-stable representation of type $(k,\tau,\psi)$. Since we have fixed its reducibility type, the representation $\rho_{\fn}$ itself has to be potentially semi-stable of type $(k,\tau,\psi)$.   The  rest of the proof  then goes over as in the irreducible case.
\end{proof}

\begin{remark}
In general, we do not expect  $I^{\ps}_{*}R^{\ver}_{\fp_3}=I^{\ver}_{*}R^{\ver}_{\fp_3}$ to be true (this would imply $I^{\ps}_{*}R^{\ver}=I^{\ver}_{*}$). For example, in the crystalline case it could happen that $I^{\ps}_{\rm cr,\irr}=(c_0-p,c_1,d_1)$ in $R^{\rm ps}$. 
Then $I^{\ps}_{\rm cr,\irr}R^{\ver}=(c_0d_1-c_1d_0,bc_0-p,bc_1,d_1)$ and $R^{\ver}/I^{\ps}_{\rm cr,\irr}R^{\ver}$ has $\fp_3$ as a minimal prime ideal, which implies that $R^{\ver}/I^{\ps}_{\rm cr,\irr}R^{\ver}$ is not equidimensional, while $R^{\ver}/I^{\ver}_{\rm cr,\irr}$ is equidimensional by Theorem \ref{pst}. 
\end{remark}

\begin{proposition}\label{prop-mult-fp12}
For $i=1,2$, we have 
 
\[\ell\left(R^{\ver}(k,\tau,\brho)_{\fp_i}/\varpi\right)=\ell\left(R^{\ps}_J/(I^{\ps}_{\irr}\cap I^{\ps}_i,\varpi)\right).\] 
\end{proposition}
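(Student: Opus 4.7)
The plan is to transport the length computation from the versal deformation ring to the pseudo-deformation ring via the flat local homomorphism $f_i^{\ver}: R^{\ps}_J \to R^{\ver}_{\fp_i}$ from Proposition \ref{f1f2}.

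First, I would simplify the intersection in $R^{\ver}(k,\tau,\brho) = R^{\ver}/(I^{\ver}_{\irr} \cap I^{\ver}_1 \cap I^{\ver}_2)$ after localizing at $\fp_i$, by showing that for $\{i,j\} = \{1,2\}$ one has $I^{\ver}_j \not\subseteq \fp_i$. The key input is that, by direct inspection of the matrices in \S\ref{splitng}, the representation $\rho_{\fp_i}$ is a non-split reducible extension (upper-triangular with $b$ generating the non-split direction when $i=1$, lower-triangular with $c_0$ generating it when $i=2$), so it admits no rank-$1$ free direct summand. On the other hand, by the argument from \cite[Lemma 4.3.4(ii)]{bm2} recalled in the proof of Proposition \ref{prop-Iver-3}(ii)--(iii), every deformation $\rho_{\fq}$ at a minimal prime $\fq$ over $I^{\ver}_j$ does carry such a direct summand (lifting $\chi_j$); if $\fq \subseteq \fp_i$ the summand would survive specialization to $\rho_{\fp_i}$, a contradiction. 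Combining this with Proposition \ref{Prop-Rps-Rver-II}(i) and the flatness of $f_i^{\ver}$, which allows finite intersections of ideals to commute with base change from $R^{\ps}_J$, I would obtain
$$(I^{\ver}_{\irr} \cap I^{\ver}_1 \cap I^{\ver}_2)R^{\ver}_{\fp_i} = (I^{\ver}_{\irr} \cap I^{\ver}_i)R^{\ver}_{\fp_i} = (I^{\ps}_{\irr} \cap I^{\ps}_i)R^{\ps}_J \cdot R^{\ver}_{\fp_i}.$$

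Second, I would invoke flat base change for lengths. By the proof of Proposition \ref{f1f2}, $f_i^{\ver}$ is flat local with $JR^{\ver}_{\fp_i} = \fp_i R^{\ver}_{\fp_i}$, so the closed fibre is just the residue field $\kappa(\fp_i)$, of length one over $R^{\ver}_{\fp_i}$. Hence for any finite-length $R^{\ps}_J$-module $M$ the standard formula
$$\ell_{R^{\ver}_{\fp_i}}(M \otimes_{R^{\ps}_J} R^{\ver}_{\fp_i}) = \ell_{R^{\ps}_J}(M) \cdot \ell_{R^{\ver}_{\fp_i}}(R^{\ver}_{\fp_i}/JR^{\ver}_{\fp_i})$$
preserves length verbatim. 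Taking $M = R^{\ps}_J/(I^{\ps}_{\irr} \cap I^{\ps}_i + \varpi)$---which has finite length because the Cohen--Macaulay $1$-dimensional cycle computations of Theorems \ref{theorem-M1} and \ref{Theorem-M2} force every associated prime in $\Spec R^{\ps}_J$ of the quotient to coincide with the maximal ideal---yields the desired equality.

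The main obstacle lies in the first step: verifying $I^{\ver}_j \not\subseteq \fp_i$ requires tracking direct summands at \emph{non-closed} minimal primes of reducibility type rather than at closed $\bQp$-points, and hinges on the direct-summand lemma underpinning Proposition \ref{prop-Iver-3}. Once that geometric input is in place, the remainder of the argument is essentially a standard application of flatness and length multiplicativity for local flat maps with trivial residue extension.
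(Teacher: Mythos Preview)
Your proposal is correct and follows essentially the same route as the paper's proof. The paper cites Proposition \ref{prop-Iver-3} directly to obtain $R^{\ver}(k,\tau,\brho)_{\fp_i}\cong R^{\ver}_{\fp_i}/(I^{\ver}_{\irr}\cap I^{\ver}_i)$ (exactly your first step, with the direct-summand argument you reproduce being precisely the content of that proposition's proof), then applies Proposition \ref{Prop-Rps-Rver-II}(i) and the flatness from Proposition \ref{f1f2} to identify this with a base change from $R^{\ps}_J$, and finishes via Lemma \ref{Lemma-Kisin-flat} using $e(R^{\ver}_{\fp_i}/J)=1$; your length-multiplicativity formula with trivial closed fibre is equivalent to that lemma in this Artinian situation.
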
 
\begin{proof} 
It follows from Proposition \ref{prop-Iver-3} that $R^{\ver}(k,\tau,\brho)_{\fp_i}\cong R^{\ver}_{\fp_i}/(I^{\ver}_{\irr}\cap I^{\ver}_{i})$ for $i=1,2$.
Then Proposition \ref{Prop-Rps-Rver-II}(i) implies further that  \[R^{\ver}(k,\tau,\brho)_{\fp_i}\cong R^{\ps}_J/(I^{\ps}_{\irr}\cap I_i^{\ps})\otimes_{R^{\ps}_J}R^{\ver}_{\fp_i}.\]
Since the local map $R^{\ps}_J\ra R^{\ver}_{\fp_i}$ is flat by (the proof of) Proposition \ref{f1f2}, so is  $R^{\ps}_J/(I^{\ps}_{\irr}\cap I^{\ps}_i,\varpi)\ra R^{\ver}(k,\tau,\brho)_{\fp_i}/\varpi$. Applying Lemma \ref{Lemma-Kisin-flat} below to it we obtain \[\ell\left(R^{\ver}(k,\tau,\brho)_{\fp_i}/\varpi\right)=\ell(R^{\ps}_J/(I^{\ps}_{\irr}\cap I^{\ps}_i,\varpi))e(R^{\ver}_{\fp_i}/J)=\ell(R^{\ps}_J/(I^{\ps}_{\irr}\cap I^{\ps}_i,\varpi)).\] Here we have used  the fact that $e(R^{\ver}_{\fp_i}/J)=1$.
\end{proof}

 \begin{lemma}\label{Lemma-Kisin-flat}
Let $A\ra B$ be a local map of Noetherian local rings with radicals $\fm$ and $\fn$, respectively. Let $\fp\subset A$ be a nilpotent prime ideal and suppose that all the minimal prime ideals of $B$ lie over $\fp$. Assume further that $B$ is flat over $A$.  Then
\[e_{\fn}(B)=e_{\fn/\fp B}(B/\fp B)\ell(A_{\fp}).\]
\end{lemma}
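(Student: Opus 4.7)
My plan is to reduce the identity to Matsumura's associativity formula for Hilbert--Samuel multiplicities (\cite[Theorem 14.7]{mat}) applied to both $B$ and $B/\fp B$, combined with the standard length identity for a flat local extension of Artinian local rings.

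First I would record some easy consequences of the hypotheses on $\fp$. Because $\fp$ is nilpotent, it is the nilradical of $A$, so $A_\fp$ is Artinian and $\ell(A_\fp)$ is finite. Because $\fp^n=0$ forces $\fp B$ to be nilpotent in $B$, the prime spectra of $B$ and $B/\fp B$ are in bijection via $\mathfrak{P}\mapsto\mathfrak{P}/\fp B$, we have $\dim B = \dim(B/\fp B)$, and the isomorphism $B/\mathfrak{P} \cong (B/\fp B)/(\mathfrak{P}/\fp B)$ yields $e_\fn(B/\mathfrak{P}) = e_{\fn/\fp B}((B/\fp B)/(\mathfrak{P}/\fp B))$ for every such $\mathfrak{P}$.

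Next I would write out \cite[Theorem 14.7]{mat} for $B$ and for $B/\fp B$ as modules over themselves:
\[
e_\fn(B) = \sum_\mathfrak{P} \ell(B_\mathfrak{P})\, e_\fn(B/\mathfrak{P}), \qquad
e_{\fn/\fp B}(B/\fp B) = \sum_\mathfrak{P} \ell((B/\fp B)_\mathfrak{P})\, e_\fn(B/\mathfrak{P}),
\]
both sums indexed by the same finite set of primes $\mathfrak{P}\subset B$ satisfying $\dim B/\mathfrak{P} = \dim B$. Any such $\mathfrak{P}$ is a minimal prime of $B$ and, by hypothesis, contracts to $\fp$. The lemma is thus reduced to the termwise identity $\ell(B_\mathfrak{P}) = \ell(A_\fp)\cdot \ell((B/\fp B)_\mathfrak{P})$ for every such $\mathfrak{P}$.

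To establish this, I would exploit the induced flat local map $A_\fp\to B_\mathfrak{P}$ (flatness is preserved under localization, and locality uses $\mathfrak{P}\cap A = \fp$). Both rings are Artinian: $A_\fp$ because $\fp$ is nilpotent in $A$, and $B_\mathfrak{P}$ because $\mathfrak{P}$ is minimal in $B$. I would then pick a composition series of $A_\fp$ over itself, tensor with $B_\mathfrak{P}$ over $A_\fp$ to obtain, by flatness, a filtration of $B_\mathfrak{P}$ whose successive quotients are all isomorphic to $B_\mathfrak{P}/\fp B_\mathfrak{P} = (B/\fp B)_\mathfrak{P}$, and sum lengths. There is no serious obstacle in the argument; the only point that must be watched is that the two associativity sums are indexed by the same set of primes, which is exactly what the nilpotence of $\fp B$ guarantees.
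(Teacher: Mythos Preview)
Your proposal is correct and follows essentially the same route as the paper: both apply \cite[Theorem 14.7]{mat} to $B$ and to $B/\fp B$, match the two sums termwise over the minimal primes of $B$, and reduce to the length identity $\ell(B_{\mathfrak{P}})=\ell(A_\fp)\cdot\ell((B/\fp B)_{\mathfrak{P}})$ for flat local $A_\fp\to B_\mathfrak{P}$. The only cosmetic difference is that the paper cites this last identity as Nagata's flatness theorem (\cite[Ex.~22.1]{mat}) whereas you supply the composition-series argument directly.
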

\begin{proof}
Let $\{\fq_1,\cdots,\fq_m\}$ be the set of minimal prime ideals of $B$. By \cite[Theorem 14.7]{mat}, 
we have
 \[e_{\fn}(B)=\sum_{i=1}^me_{\fn/\fq_i}(B/\fq_i)\ell_{B_{\fq_i}}(B_{\fq_i})\]
 and 
\[e_{\fn/\fp B}(B/\fp B)=\sum_{i=1}^me_{\fn/\fq_i}(B/\fq_i)\ell_{(B/\fp B)_{\fq_i}}((B/\fp B)_{\fq_i}).\]
Since $A\ra B$ is flat, so is $A_{\fp}\ra B_{\fq_i}$ for any $i$. By Nagata's flatness theorem (see for example \cite[Ex. 22.1]{mat}), we have
\[\ell_{B_{\fq_i}}(B_{\fq_i})=\ell_{A_{\fp}}(A_{\fp})\cdot \ell_{B_{\fq_i}}(B_{\fq_i}/\fp B_{\fq_i}).\]
The result follows.

Note that we can also adapt the proof of \cite[1.3.10]{ki09}, where all the inequalities appeared become equalities under the assumption that $B$ is flat over $A$. 
\end{proof}

\begin{remark}\label{bmgenericsplit}
In this remark, we take $\brho$ to be of the form $\brho\cong\chi_1\oplus\chi_2$ with $\chi_1\chi_2^{-1}\notin\{\ide,\omega^{\pm1}\}$. The situation is simpler, in the sense that the analogue of Proposition \ref{prop-Iver-3} holds except that the minimal ideal $\fp_3$ disappears. In this case, there are only two minimal prime ideals of $R^{\ver}$ containing $JR^{\rm ver}$; in the notation of Remark \ref{flatgeneric}, $J=(\varpi,y_2,y_3)$. By Remark \ref{flatgeneric}, the natural homomorphism $R^{\ps}\ra R^{\ver}$ is \emph{flat} and maps radical ideals to radical ideals. If we let $\brho^1$ (reap. $\brho^2$) be the unique non-split extension of $\chi_2$ by $\chi_1$ (resp. of $\chi_1$ by $\chi_2$), then we have 
 \[e(R^{\ver}(k,\tau,\brho)/\varpi)=e(R^{\ver}(k,\tau,\brho^1)/\varpi)+e(R^{\ver}(k,\tau,\brho^2)/\varpi).\]
which proves the Breuil-M\'ezard conjecture in this case; the conjecture for the two terms on the right hand side are already known by \cite{ki09} and \cite{pa12}. The crystalline case is shown in the same way.

\end{remark}

\subsection{Multiplicity at $\fp_2$  and $\fp_3$} \label{subsection-5.2}
 
 We determine the multiplicity of $R^{\ver}(k,\tau,\brho)/\varpi$ at $\fp_2$ and $\fp_3$, by means of deformation rings of peu ramifi\'e extensions, for which the Breuil-M\'{e}zard conjecture has been treated in \cite{pa12}.
\medskip

  Recall the   map (\ref{pspeu}) 
 \[f^{\rm peu}: R^{\rm ps}\simeq \frac{\cO\llbracket c_0,c_1,d_0,d_1\rrbracket}{(c_0d_1-c_1d_0)}\hookrightarrow R^{\rm peu}\simeq \cO\llbracket x_1,x_2,x_3\rrbracket, \]
\[c_0\mapsto x_3, \quad c_1\mapsto x_2x_3, \quad d_0\mapsto x_1,\quad d_1\mapsto x_1x_2.\]
Here $R^{\rm peu}:=R^{\ver}(\brho^{\rm peu})$ denotes the universal deformation ring (with fixed determinant $\epsilon\psi$) of $\brho^{\rm peu}$, the (non-split) peu ramifi\'e extension of $\ide$ by $\omega$. Recall that $R^{\rm peu}/JR^{\rm peu}$ has two minimal prime ideals $\fq_2=(\varpi,x_2,x_3)$ and $\fq_3=(\varpi,x_1,x_3)$.

By Proposition \ref{f3}  we have the following commutative diagram (\ref{equation-diagram})
\begin{equation*}
\xymatrix{R^{\rm ps}\ar_{f^{\rm peu}}[d]\ar^{{f^{\ver}}}[dr] \\
 R^{\rm peu}_{\fq_i}\ar[r]^{\gamma_i}&\widehat{R_{\fp_i}^{\rm ver}}}\end{equation*}
In the proof of Proposition \ref{f3}, we have seen that $\fp_i\widehat{R^{\ver}_{\fp_i}}$ lies over $\fq_iR^{\rm peu}_{\fq_i}$ ($i=2,3$) and $\fq_i\widehat{R^{\ver}_{\fp_i}}=\fp_i\widehat{R^{\ver}_{\fp_i}}$,  under the map $\gamma_i$ (\ref{peus}).  

 Denote by $I_{\irr}^{\rm peu}$ (resp. $I^{\rm peu}_{2}$) the ideal of $R^{\rm peu}$ cutting out the closure in $\Spec R^{\rm peu} $ of closed points in $\Spec R^{\rm ver}(k,\tau,\brho^{\rm peu})[1/p]$ which are of irreducible type (resp. of reducible type). The notation $I^{\rm peu}_2$ is chosen as a component of reducible type is automatically of type 2.

\begin{proposition}\label{peuver}
We have for $i=2,3$ the following relations under the map $\gamma_i$ (\ref{peus}): 
\[I^{\rm peu}_{\irr}\widehat{R^{\rm ver}_{\fp_i}}=I^{\rm ver}_{\irr}\widehat{R^{\ver}_{\fp_i}},\quad  I^{\rm peu}_{2}\widehat{R^{\rm ver}_{\fp_i}}=I^{\rm ver}_{2}\widehat{R^{\ver}_{\fp_i}}.\] \end{proposition}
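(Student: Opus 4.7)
The plan is to mimic the argument of Proposition \ref{Prop-Rps-Rver-II}, with the map $\gamma_i : R^{\rm peu}_{\fq_i} \to \widehat{R^{\ver}_{\fp_i}}$ playing the role previously played by $f^{\ver}$. The crucial input is Proposition \ref{f3}(ii), which says that $\gamma_i$ is flat and sends radical ideals to radical ideals for \emph{both} $i = 2$ and $i = 3$. This is precisely why the peu ramifi\'e ring was introduced: it provides the flatness at $\fp_3$ that $f^{\ver}_3$ lacks.

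First I would establish the peu-analogue of Proposition \ref{Prop-Rps-Rver-II} at the level of generic fibers,
\[I^{\rm peu}_* R^{\rm peu}[1/p] = \sqrt{I^{\ps}_* R^{\rm peu}[1/p]}, \quad * \in \{\irr, 2\},\]
by the same Jacobson-ring argument. The map $f^{\rm peu} : R^{\ps} \to R^{\rm peu}$ is the trace map on the universal deformation of $\brho^{\rm peu}$, and since $\chi_1 = \ide \neq \omega = \chi_2$, the reducibility type of a reducible potentially semi-stable deformation is determined by its trace. Localizing at $\fq_i$, applying $\gamma_i$, and using the commutativity $\gamma_i \circ f^{\rm peu} = f^{\ver}$ of \eqref{equation-diagram} together with the flatness and radical-preservation of $\gamma_i$, this yields the radical identity
\[I^{\rm peu}_* \widehat{R^{\ver}_{\fp_i}} = \sqrt{I^{\ps}_* \widehat{R^{\ver}_{\fp_i}}}\]
in $\widehat{R^{\ver}_{\fp_i}}$.

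The remaining task is to identify this common radical with $I^{\ver}_* \widehat{R^{\ver}_{\fp_i}}$. For $i = 2$, Proposition \ref{Prop-Rps-Rver-II}(i) gives $I^{\ps}_* \widehat{R^{\ver}_{\fp_2}} = I^{\ver}_* \widehat{R^{\ver}_{\fp_2}}$ and this ideal is already radical, finishing the case $i=2$. For $i = 3$, which I expect to be the main obstacle, only the weaker identity $I^{\ver}_* R^{\ver}_{\fp_3}[1/p] = \sqrt{I^{\ps}_* R^{\ver}_{\fp_3}[1/p]}$ of Proposition \ref{Prop-Rps-Rver-II}(ii) is at our disposal. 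Here I would invoke $\cO$-flatness: the quotients $R^{\rm peu}/I^{\rm peu}_*$ and $R^{\ver}/I^{\ver}_*$ are $\cO$-flat (each being an intersection of primes not containing $\varpi$), and, combined with the flatness of $\gamma_3$ and of completion, the quotients $\widehat{R^{\ver}_{\fp_3}}/I^{\rm peu}_* \widehat{R^{\ver}_{\fp_3}}$ and $\widehat{R^{\ver}_{\fp_3}}/I^{\ver}_* \widehat{R^{\ver}_{\fp_3}}$ are both $\cO$-flat. Equality of these $\cO$-flat ideals can thus be checked after inverting $p$, where the previous step and Proposition \ref{Prop-Rps-Rver-II}(ii) imply that both become the same radical ideal of $\widehat{R^{\ver}_{\fp_3}}[1/p]$.

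The hardest step is the $i = 3$ case: the failure of flatness of $f^{\ver}_3$ is exactly what forced the introduction of the peu ramifi\'e deformation ring, and the delicate technical point is that the flatness and radical-preservation of $\gamma_3$ survive completion at $\fp_3$, so that the radical identity after inverting $p$ can be lifted, via $\cO$-flatness, to the desired integral identity.
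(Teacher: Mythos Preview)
Your proposal is correct and follows essentially the same route as the paper: relate both $I^{\rm peu}_*$ and $I^{\ver}_*$ to $\sqrt{I^{\ps}_*}$ after inverting $p$ via the Jacobson argument and Proposition~\ref{Prop-Rps-Rver-II}, push through $\gamma_i$ using its flatness and radical-preservation (Proposition~\ref{f3}(ii)), and then remove the $[1/p]$ by $\cO$-flatness of the quotients. Two small points: the paper treats $i=2$ and $i=3$ uniformly rather than splitting cases, and it isolates the passage of $[1/p]$-identities and of radicals through completion into a separate lemma (your intermediate \emph{integral} identity $I^{\rm peu}_*\widehat{R^{\ver}_{\fp_i}}=\sqrt{I^{\ps}_*\widehat{R^{\ver}_{\fp_i}}}$ is asserted a step too early---it is really only available after $[1/p]$ until the final $\cO$-flatness argument, which you do supply).
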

\begin{proof}
By Proposition \ref{Prop-Rps-Rver-II}, we have for $*\in\{\irr,2\}$ and $i\in\{2,3\}$
\[I^{\ver}_{*}R_{\fp_i}^{\ver}[1/p]=\sqrt{I^{\ps}_{*}R_{\fp_i}^{\ver}[1/p]}=\sqrt{I^{\ps}_{*}R^{\ver}_{\fp_i}}[1/p].\] 
  Applying Lemma \ref{Lemma-localisation-completion} below to $A=R^{\ver}_{\fp_i}$ and $I=\sqrt{I^{\rm ps}_{*}R^{\ver}_{\fp_i}}$, $J=I^{\ver}_{*}R^{\ver}_{\fp_i}$ we get   
\begin{equation}\label{equation-Rver}I^{\ver}_{*}\widehat{R^{\ver}_{\fp_i}}[1/p]=\sqrt{I^{\ps}_{*}R^{\ver}_{\fp_i}}\widehat{R^{\ver}_{\fp_i}}[1/p]=\sqrt{I^{\rm ps}_{*}\widehat{R^{\ver}_{\fp_i}}}[1/p],\end{equation}
where to get the second equality we have applied Lemma \ref{Lemma-localisation-completion}(ii) to $A=R^{\ver}_{\fp_i}$ which is a Nagata ring, being a localization of a complete noetherian local ring (see \cite[Chapitre IX, \S4, n$^{\circ}$4]{bour}).
On the other hand, a similar proof  as in Proposition \ref{Prop-Rps-Rver-II}  shows  
\begin{equation}\label{equation-Rpeu}I^{\rm peu}_{*}R^{\rm peu}_{\fq_i}[1/p]=\sqrt{I^{\ps}_*R^{\rm peu}_{\fq_i}[1/p]}=\sqrt{I^{\ps}_*R^{\rm peu}_{\fq_i}}[1/p].\end{equation}
 Then using the commutative diagram (\ref{equation-diagram}), we get
\[I^{\ver}_{*}\widehat{R^{\ver}_{\fp_i}}[1/p]\overset{(\ref{equation-Rver})}{=}\sqrt{f^{\ver}(I_*^{\ps})\widehat{R^{\ver}_{\fp_i}}}[1/p]=\sqrt{I^{\ps}_*R^{\rm peu}_{\fq_i}}\widehat{R^{\ver}_{\fp_i}}[1/p]\overset{(\ref{equation-Rpeu})}{=} I^{\rm peu}_{*}\widehat{R^{\ver}_{\fp_i}}[1/p].\]
Here, we use (the proof of) Lemma \ref{Lemma-localisation-completion}(ii), applied to the morphism $\gamma_i$, to  get the second equality, since  $\gamma_i$ sends radical ideals to radical ideals by Proposition \ref{f3}. 
Since  $\gamma_i: R^{\rm peu}_{\fq_i}\ra \widehat{R^{\ver}_{\fp_i}}$  is flat by Proposition \ref{f3} again, we conclude as in the proof of Proposition \ref{Prop-Rps-Rver-II}(i).
\end{proof} 

\begin{lemma}\label{Lemma-localisation-completion}
Let $(A,\m)$ be a noetherian  local  ring and denote by $\hat{A}$ its $\m$-adic completion.

(i)  Let $I\subseteq J$ be two ideals of $A$ such that   $IA[1/p]=JA[1/p]$. Then we  have $I\hat{A}[1/p]=J\hat{A}[1/p]$. 

(ii) If moreover $A$ is a Nagata ring, then the natural morphism $A\ra \hat{A}$ sends radical ideals to radical ideals. In particular, $\sqrt{I\hat{A}}=\sqrt{I}\hat{A}$ for any ideal $I$ of $A$.
\end{lemma}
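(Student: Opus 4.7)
My plan is to dispatch part (i) by a straightforward noetherian argument, and to reduce part (ii) to the classical theorem that a reduced local Nagata ring is analytically unramified.

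For (i), I would first invoke noetherianness to pick a finite generating set $j_1,\dots,j_k$ of $J$. The hypothesis $IA[1/p]=JA[1/p]$ combined with $I\subseteq J$ then forces, for each $j_i$, an exponent $n_i$ with $p^{n_i}j_i\in I$ (since the kernel of $A\to A[1/p]$ consists of $p$-torsion, and over $A[1/p]$ the element $j_i$ lies in $IA[1/p]$). Taking $n=\max_i n_i$ yields $p^nJ\subseteq I$ inside $A$, whence $p^nJ\hat A\subseteq I\hat A$. Inverting $p$ gives $J\hat A[1/p]\subseteq I\hat A[1/p]$, and the reverse inclusion is automatic from $I\subseteq J$.

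For (ii), the main idea is to reduce to the classical statement (see e.g.\ \cite[Theorem 32.2]{mat}) that a reduced local Nagata ring has reduced completion. Given a radical ideal $I$ of $A$, the quotient $A/I$ is again a reduced local Nagata ring, since quotients of Nagata rings are Nagata. Its $\m$-adic completion is $\widehat{A/I}\cong \hat A/I\hat A$, which is therefore reduced, i.e.\ $I\hat A$ is a radical ideal of $\hat A$. For the ``in particular'' claim, I would apply this to the radical ideal $\sqrt I$: then $\sqrt I\cdot\hat A$ is radical and contains $I\hat A$, so $\sqrt{I\hat A}\subseteq \sqrt I\cdot\hat A$; the reverse inclusion follows at once since any $x\in\sqrt I$ satisfies $x^n\in I\subseteq I\hat A$ for some $n$, hence $x\in\sqrt{I\hat A}$.

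The only substantive step is invoking the Nagata theorem on analytically unramified rings; everything else is bookkeeping. In the application at hand this is unproblematic: $A=R^{\ver}_{\fp_i}$ is Nagata as a localization of a complete noetherian local ring, as observed just before the lemma.
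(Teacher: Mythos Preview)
Your proof is correct. Part (i) is essentially identical to the paper's argument: both show that $p^n$ annihilates $J/I$ for some $n$ (the paper phrases this via the module $M=J/I$ and the vanishing of $M[1/p]$, you phrase it by clearing denominators on a finite generating set of $J$), and then pass to the completion.

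For part (ii) the two arguments differ slightly in packaging. The paper invokes the Nagata--Zariski theorem (through \cite[Theorem 1.3]{Io}) to say that for $A$ Nagata the completion map $A\to\hat A$ is a \emph{reduced morphism}, from which the preservation of radical ideals follows formally. You instead pass to the quotient $A/I$, observe it is a reduced local Nagata ring, and apply the classical fact that such rings are analytically unramified (so $\widehat{A/I}\cong\hat A/I\hat A$ is reduced). These are essentially two formulations of the same underlying theorem; your route is arguably more direct, avoiding the language of reduced morphisms, while the paper's formulation makes the statement ``radical ideals go to radical ideals'' a formal consequence of a single cited result. The deduction of $\sqrt{I\hat A}=\sqrt I\cdot\hat A$ is handled the same way in both.
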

\begin{proof}
(i) Write $M=I/J$ and consider the exact sequence of $A$-modules:
\begin{equation}\label{equation-I-J}
0\ra I\ra J\ra M\ra0.\end{equation}
The assumption that $IA[1/p]=JA[1/p]$ implies that $M[1/p]=0$. Since $M$ is a finitely generated $A$-module, we can find $n\in\N$ large enough such that $p^nm=0$ for all $m\in M$. Taking $\m$-adic completions and inverting $p$, the sequence (\ref{equation-I-J}) induces an exact  sequence
\[0\ra I\hat{A}[1/p]\ra J\hat{A}[1/p]\ra \hat{M}[1/p]\ra0.\]
By definition we have $\hat{M}=\varprojlim_{i\geq 1}M/\m^iM$, so that $\hat{M}$ is also killed by $p^n$ and therefore $\hat{M}[1/p]=0$. The result follows. 

(ii) By Nagata-Zariski theorem, see for example \cite[Theorem 1.3]{Io}, the natural morphism $A\ra \hat{A}$ is a reduced morphism, hence sends radical ideals to radical ideals. To show the last assertion, we remark that  for any ring morphism $f: A\ra B$ which sends radical ideals to radical ideals and any ideal $I$ of $A$, we have $\sqrt{I}B=\sqrt{IB}$. Indeed,   the inclusion $\subseteq$ holds in general, and the  inclusion $\supseteq$ holds because $IB\subseteq \sqrt{I}B$ and $\sqrt{I}B$ is already radical. 
\end{proof}

\begin{corollary}\label{peu23}
We have the equality
\[\ell(R^{\ver}(k,\tau,\brho)_{\fp_2}/\varpi)+\ell(R^{\ver}(k,\tau,\brho)_{\fp_3}/\varpi)=a_{0,0}+2a_{p-1,0}.\]
\end{corollary}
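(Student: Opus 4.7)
The strategy is to transfer the multiplicities at $\fp_2$ and $\fp_3$ to the peu ramifié deformation ring $R^{\rm peu}$ via the flat maps $\gamma_i$ of Proposition \ref{f3}, and then invoke the Breuil-M\'ezard conjecture for $\brho^{\rm peu}$, which is already known by \cite{pa12} since non-split extensions have only scalar endomorphisms. As a preliminary step I would establish the analogue of Proposition \ref{prop-Iver-3} for $R^{\rm peu}$: every minimal prime of $R^{\rm peu}/(I^{\rm peu}_{*} + (\varpi))$ with $* \in \{\irr, 2\}$ lies in $\{\fq_2, \fq_3\}$. The proof copies that of Proposition \ref{prop-Iver-3}(i) once one computes $JR^{\rm peu} = (\varpi, x_3, x_1 x_2)$, so that $\sqrt{JR^{\rm peu}} = \fq_2 \cap \fq_3$, and invokes Theorem \ref{pst} for equidimensionality. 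Combined with \cite[Theorem 14.7]{mat} and the multiplicity-one identity $e(R^{\rm peu}/\fq_i) = 1$ (since $R^{\rm peu}/\fq_i$ is a regular local ring), this produces
\[e\bigl(R^{\rm peu}(k, \tau, \brho^{\rm peu})/\varpi\bigr) = \sum_{i=2,3}\ell\bigl(R^{\rm peu}(k,\tau,\brho^{\rm peu})_{\fq_i}/\varpi\bigr).\]

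The core step is to match local multiplicities: for $i \in \{2, 3\}$,
\[\ell\bigl(R^{\ver}(k,\tau,\brho)_{\fp_i}/\varpi\bigr) = \ell\bigl(R^{\rm peu}(k,\tau,\brho^{\rm peu})_{\fq_i}/\varpi\bigr).\]
By (the proof of) Proposition \ref{prop-Iver-3}(ii) we have $I^{\ver}_1 \nsubseteq \fp_i$ for $i = 2, 3$, so $R^{\ver}(k,\tau,\brho)_{\fp_i} \cong R^{\ver}_{\fp_i}/(I^{\ver}_{\irr} \cap I^{\ver}_2)$. After completion at $\fp_i$, the ideal equalities $I^{\ver}_* \widehat{R^{\ver}_{\fp_i}} = I^{\rm peu}_*\widehat{R^{\ver}_{\fp_i}}$ from Proposition \ref{peuver}, combined with the flatness of $\gamma_i$ and of completion (both of which preserve finite intersections of ideals), identify $\widehat{R^{\ver}(k,\tau,\brho)_{\fp_i}}$ with $R^{\rm peu}(k,\tau,\brho^{\rm peu})_{\fq_i}\otimes_{R^{\rm peu}_{\fq_i}} \widehat{R^{\ver}_{\fp_i}}$. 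Reducing modulo $\varpi$, one obtains a flat local base change of Artinian rings, whose closed fibre is a field thanks to $\fq_i \widehat{R^{\ver}_{\fp_i}} = \fp_i \widehat{R^{\ver}_{\fp_i}}$ (established in the proof of Proposition \ref{f3}). Lemma \ref{Lemma-Kisin-flat}, together with the fact that completion preserves the length of an Artinian local ring, yields the claimed equality of lengths.

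Summing over $i = 2, 3$ and invoking Pa\v{s}k\=unas' Breuil-M\'ezard theorem \cite{pa12} for $\brho^{\rm peu}$ then produces
\[\sum_{i=2,3}\ell\bigl(R^{\ver}(k,\tau,\brho)_{\fp_i}/\varpi\bigr) = \sum_{n,m}a_{n,m}(k,\tau)\mu_{n,m}(\brho^{\rm peu}).\]
Only the Serre weights $\sigma_{0,0}$ and $\sigma_{p-1, 0}$ of $\brho^{\rm peu}$ contribute, and by \cite[1.1.6]{ki09} one has $\mu_{0,0}(\brho^{\rm peu}) = 1$ and $\mu_{p-1, 0}(\brho^{\rm peu}) = 2$, the factor $2$ reflecting the extra moduli direction of peu ramifié crystalline lifts at weight $p+1$. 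This yields $a_{0,0} + 2 a_{p-1, 0}$, as claimed; the crystalline variant is proved identically, replacing $I^{\rm peu}_{*}$ by $I^{\rm peu}_{\rm cr, *}$ and $a_{n,m}$ by $a^{\rm cr}_{n,m}$ throughout. The main technical point is the compatibility of the base change along $\gamma_i$ with finite intersections of ideals, which is precisely what the joint use of Propositions \ref{f3} and \ref{peuver} achieves.
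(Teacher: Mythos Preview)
Your proposal is correct and follows essentially the same approach as the paper's proof: both establish the analogue of Proposition \ref{prop-Iver-3} for $R^{\rm peu}$, then prove the key length identity $\ell(R^{\ver}(k,\tau,\brho)_{\fp_i}/\varpi)=\ell(R^{\rm peu}(k,\tau,\brho^{\rm peu})_{\fq_i}/\varpi)$ for $i=2,3$ via Proposition \ref{peuver}, the flatness of $\gamma_i$, and Lemma \ref{Lemma-Kisin-flat}, and conclude by invoking the Breuil--M\'ezard conjecture for $\brho^{\rm peu}$ from \cite{pa12}. Your write-up is slightly more explicit in places (the computation of $\sqrt{JR^{\rm peu}}$, the reason only $\sigma_{0,0}$ and $\sigma_{p-1,0}$ contribute), but the argument is the same.
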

\begin{proof}
To lighten the notation, denote $R^{\rm peu}(k,\tau):=R^{\ver}(k,\tau,\brho^{\rm peu})$. First, a similar proof as that of Proposition \ref{prop-Iver-3} implies that $R^{\rm peu}(k,\tau)/\varpi$ has at most 2 minimal prime ideals $\fq_2$ and $\fq_3$, so that by \cite[Theorem 14.7]{mat}
\[e(R^{\rm peu}(k,\tau)/\varpi)=\ell(R^{\rm peu}(k,\tau)_{\fq_2}/\varpi)+\ell(R^{\rm peu}(k,\tau)_{\fq_3}/\varpi),\]where we have used that $R^{\rm peu}/\fq_2$ and $R^{\rm peu}/\fq_3$ both have Hilbert-Samuel multiplicity $1$.
Since we know $e(R^{\rm peu}(k,\tau)/\varpi)=a_{0,0}+2a_{p-1,0}$ by the Breuil-M\'ezard conjecture  for $\brho^{\rm peu}$ which is proved in \cite{pa12}, it suffices to show 
\begin{equation}\label{equation-fp-fq}\ell(R^{\ver}(k,\tau,\brho)_{\fp_i}/\varpi)=\ell(R^{\rm peu}(k,\tau)_{\fq_i}/\varpi),\ \ \ i=2,3.\end{equation}

Proposition \ref{prop-Iver-3}   and Proposition \ref{peuver} imply that  
  \[ \widehat{R^{\ver}(k,\tau,\brho)_{\fp_i}}\cong R^{\rm peu}(k,\tau)_{\fq_i}\otimes_{R^{\rm peu}_{\fq_i}} \widehat{R^{\ver}_{\fp_i}}.\]
Note that taking completion does not change Hilbert-Samuel multiplicities.  
Then using that   $\fq_i\widehat{R^{\ver}_{\fp_i}}=\fp_i\widehat{R^{\ver}_{\fp_i}}$ for $i=2,3$, we get (\ref{equation-fp-fq}) by applying Lemma \ref{Lemma-Kisin-flat} to the flat map $R^{\rm peu}(k,\tau)_{\fq_i}/\varpi \ra \widehat{R^{\ver}(k,\tau,\brho)_{\fp_i}}/\varpi$, base change of the flat local morphism $R^{\rm peu}_{\fq_i}\ra \widehat{R^{\ver}_{\fp_i}}$, as in the proof of Proposition \ref{prop-mult-fp12}.
\end{proof}

\subsection{Conclusion}
We can now prove the  (cycle version of)  Breuil-M\'ezard conjecture for $\brho=\ide\oplus\omega$. First we prove it for potentially semi-stable deformation rings.

  \begin{theorem}\label{bmng}

The cycle version of the Breuil-M\'{e}zard Conjecture (hence the original Conjecture \ref{bm}) is true for the representation  $\overline{\rho}=\ide\oplus\omega$. Precisely, we have
\[\cZ(R^{\ver}(k,\tau,\brho)/\varpi)=a_{p-3,1}\fp_1+a_{0,0}\fp_2+a_{p-1,0}(\fp_2+\fp_3).\]

\end{theorem}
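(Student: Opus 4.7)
The plan is to compute the length $\ell_i := \ell(R^{\ver}(k,\tau,\brho)_{\fp_i}/\varpi)$ at each of the three candidate minimal primes $\fp_1,\fp_2,\fp_3$ from \S\ref{splitng} and then assemble the cycle. Proposition \ref{prop-Iver-3} already restricts the support of $\cZ(R^{\ver}(k,\tau,\brho)/\varpi)$ to a subset of $\{\fp_1,\fp_2,\fp_3\}$, and since $R^{\ver}/\fp_i$ has Hilbert--Samuel multiplicity $1$ for each $i$, the cycle takes the form $\sum_{i=1}^{3}\ell_i\,\fp_i$. So the task reduces to determining the three multiplicities.

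For $\fp_1$ and $\fp_2$ I would transport the computation to the pseudo-deformation ring. By Proposition \ref{prop-mult-fp12}, $\ell_i$ equals $\ell(R^{\ps}_J/(I^{\ps}_{\irr}\cap I^{\ps}_i,\varpi))$ for $i=1,2$. Since $R^{\ps}_J$ is a one-dimensional local ring with maximal ideal $JR^{\ps}_J$, this length reads off directly from the one-dimensional cycle computations of Section \ref{section-pseudo-mult}: Theorem \ref{theorem-M1} gives $\cZ_1(R^{\ps}/(I^{\ps}_{\irr}\cap I^{\ps}_1,\varpi)) = a_{p-3,1}J$, hence $\ell_1 = a_{p-3,1}$; and Theorem \ref{Theorem-M2} gives $(a_{0,0}+a_{p-1,0})J$, hence $\ell_2 = a_{0,0}+a_{p-1,0}$.

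The delicate case, which is the main obstacle, is $\fp_3$: the map $f_3^{\ver}\colon R^{\ps}\to R^{\ver}_{\fp_3}$ is \emph{not} flat, so the argument above simply does not transport. To circumvent it, I would bypass the pseudo-deformation ring and route the computation through the peu ramifi\'e deformation ring $R^{\rm peu}$. The flat morphisms $\gamma_i$ of Proposition \ref{f3}, combined with the ideal identifications of Proposition \ref{peuver}, present the completion of $R^{\ver}(k,\tau,\brho)_{\fp_i}$ as a base change of $R^{\rm peu}(k,\tau)_{\fq_i}$ for $i=2,3$. Feeding in the Breuil--M\'ezard conjecture for $\brho^{\rm peu}$ (already established in \cite{pa12}) yields Corollary \ref{peu23}:
\[
\ell_2 + \ell_3 = a_{0,0} + 2a_{p-1,0}.
\]
Combined with $\ell_2 = a_{0,0}+a_{p-1,0}$ from the previous paragraph, this forces $\ell_3 = a_{p-1,0}$.

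Assembling the three multiplicities yields
\[
\cZ(R^{\ver}(k,\tau,\brho)/\varpi) = a_{p-3,1}\fp_1 + (a_{0,0}+a_{p-1,0})\fp_2 + a_{p-1,0}\fp_3,
\]
which is the claimed formula after regrouping the two $a_{p-1,0}$ contributions. The substantive work is thus packaged entirely in the preceding sections: Proposition \ref{prop-Iver-3} localizes the problem at $\{\fp_1,\fp_2,\fp_3\}$, the pseudo-deformation machinery handles $\fp_1$ and $\fp_2$, and the peu ramifi\'e comparison handles the non-flat prime $\fp_3$.
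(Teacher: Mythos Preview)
Your proposal is correct and follows essentially the same approach as the paper: compute $\ell_1$ and $\ell_2$ via Proposition \ref{prop-mult-fp12} combined with Theorems \ref{theorem-M1} and \ref{Theorem-M2}, then extract $\ell_3$ from Corollary \ref{peu23} by subtracting the already-known $\ell_2$, and assemble the cycle using (\ref{sumof3}). Your write-up is in fact more explicit than the paper's about why the $\fp_3$ case must be handled differently, but the logical structure is identical.
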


\begin{proof}
 Theorem \ref{theorem-M1}, Theorem \ref{Theorem-M2} and Proposition \ref{prop-mult-fp12} imply that  
\[\ell(R^{\rm ver}(k,\tau,\brho)_{\fp_1}/\varpi)=a_{p-3,1},\quad \ell(R^{\rm ver}(k,\tau,\brho)_{\fp_2}/\varpi)=a_{0,0}+a_{p-1,0}.\]
Together with Corollary \ref{peu23}, this implies that $\ell(R^{\rm ver}(k,\tau,\brho)_{\fp_3}/\varpi)=a_{p-1,0}$.  They prove the theorem by (\ref{sumof3}).
\end{proof}

To prove the  Breuil-M\'{e}zard Conjecture for potentially crystalline deformation rings, it is enough to assume that the Galois type $\tau$ is scalar, since otherwise potentially semi-stable and potentially crystalline deformation rings coincide by  \cite[Lemma 2.2.2.2]{bm1}.  
  
\begin{theorem}\label{bmngcris}
The cycle version of the crystalline Breuil-M\'{e}zard Conjecture (hence the original Conjecture \ref{bm}) holds for $\overline{\rho}=\ide\oplus\omega$:
\[\cZ(R_{\rm cr}^{\ver}(k,\tau,\brho)/\varpi)=a_{p-3,1}^{\rm cr}\fp_1+a_{0,0}^{\rm cr}\fp_2+a_{p-1,0}^{\rm cr}(\fp_2+\fp_3).\]
\end{theorem}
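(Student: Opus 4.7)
The plan is to reproduce, step by step, the proof of Theorem \ref{bmng} with all rings and ideals replaced by their potentially crystalline analogues. First I would reduce to the case where the Galois type $\tau = \chi \oplus \chi$ is scalar: for non-scalar $\tau$ one has $R_{\rm cr}^{\ver}(k,\tau,\brho) = R^{\ver}(k,\tau,\brho)$ and $a_{n,m}^{\rm cr} = a_{n,m}$ by \cite[Lemma 2.2.2.2]{bm1}, so the statement is immediate from Theorem \ref{bmng}. With $\tau$ scalar, Proposition \ref{prop-Iver-3} (formulated in both settings) together with \cite[Theorem 14.7]{mat} gives
\[e(R_{\rm cr}^{\ver}(k,\tau,\brho)/\varpi) = \sum_{i=1}^3 \ell\bigl(R_{\rm cr}^{\ver}(k,\tau,\brho)_{\fp_i}/\varpi\bigr),\]
and it suffices to compute each local multiplicity.

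For the contributions at $\fp_1$ and $\fp_2$ I would establish the crystalline analogue of Proposition \ref{Prop-Rps-Rver-II}(i), namely $I^{\ver}_{\rm cr,\irr}R^{\ver}_{\fp_i} = I^{\ps}_{\rm cr,\irr}R^{\ver}_{\fp_i}$ and $I^{\ver}_{\rm cr,i}R^{\ver}_{\fp_i} = I^{\ps}_{\rm cr,i}R^{\ver}_{\fp_i}$ for $i=1,2$. The proof given in \S\ref{subsection-5.1} transfers almost verbatim: for closed points of $\Spec R^{\ver}[1/p]$ with $\tr(\rho_{\fm})$ potentially crystalline of type $(k,\tau,\psi)$ and fixed reducibility type, the representation $\rho_{\fm}$ is itself potentially crystalline (using the parenthetical in Remark \ref{remark-I_{omega}}, which also ensures that non-split potentially crystalline extensions exist in every relevant situation, including the delicate case $k=2$ with $\tau$ scalar), and the flatness of $f_1^{\ver}, f_2^{\ver}$ from Proposition \ref{f1f2} handles the rest. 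Combined with the crystalline analogue of Proposition \ref{prop-mult-fp12} via Lemma \ref{Lemma-Kisin-flat} and the crystalline statements of Theorems \ref{theorem-M1} and \ref{Theorem-M2}, this yields
\[\ell(R_{\rm cr}^{\ver}(k,\tau,\brho)_{\fp_1}/\varpi) = a_{p-3,1}^{\rm cr}, \qquad \ell(R_{\rm cr}^{\ver}(k,\tau,\brho)_{\fp_2}/\varpi) = a_{0,0}^{\rm cr} + a_{p-1,0}^{\rm cr}.\]
The contribution at $\fp_3$ is extracted exactly as in \S\ref{subsection-5.2}: the arguments of Proposition \ref{peuver} and Corollary \ref{peu23} use only the flatness of $\gamma_i$, that $\gamma_i$ sends radical ideals to radical ideals (Proposition \ref{f3}), and Lemma \ref{Lemma-localisation-completion}, none of which discriminate between the potentially semi-stable and potentially crystalline settings. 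Invoking the crystalline Breuil-M\'ezard conjecture for $\brho^{\rm peu}$ proved in \cite{pa12} gives
\[\ell(R_{\rm cr}^{\ver}(k,\tau,\brho)_{\fp_2}/\varpi) + \ell(R_{\rm cr}^{\ver}(k,\tau,\brho)_{\fp_3}/\varpi) = a_{0,0}^{\rm cr} + 2 a_{p-1,0}^{\rm cr},\]
so by subtraction $\ell(R_{\rm cr}^{\ver}(k,\tau,\brho)_{\fp_3}/\varpi) = a_{p-1,0}^{\rm cr}$.

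The main obstacle, flagged in the remark following Proposition \ref{Prop-Rps-Rver-II}, is that in the crystalline setting the global equality $I^{\ps}_{\rm cr,*}R^{\ver} = I^{\ver}_{\rm cr,*}$ can genuinely fail: a component of $R^{\ver}/I^{\ps}_{\rm cr,*}R^{\ver}$ may be contained in $\fp_3$, in which case this quotient is not equidimensional while $R^{\ver}/I^{\ver}_{\rm cr,*}$ is. The strategy above sidesteps this precisely because we only need the strict equality locally at $\fp_1,\fp_2$, where flatness of $f_i^{\ver}$ forces it, and we access $\fp_3$ indirectly through the peu ramifi\'e ring, where the corresponding map $\gamma_3$ is flat by Proposition \ref{f3}.
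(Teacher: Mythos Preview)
Your argument is correct and matches the paper's for $k>2$, but there is a genuine gap when $k=2$ with $\tau$ scalar. The implication you rely on, namely that if $\tr(\rho_{\fm})$ comes from a potentially crystalline representation of reducibility type $2$ then $\rho_{\fm}$ itself is potentially crystalline, fails precisely here: writing $\tr(\rho_{\fm})=\delta_1+\delta_2$ with $\delta_2$ lifting $\omega$ of Hodge--Tate weight $k-1=1$, one has $\delta_2\delta_1^{-1}=\epsilon$, and $\Ext^1_{G_{\Qp}}(\delta_1,\delta_2)$ is $2$-dimensional with only a $1$-dimensional crystalline line. Thus there are semi-stable non-crystalline $\rho_{\fm}$ whose trace nonetheless agrees with that of a crystalline representation. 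Remark \ref{remark-I_{omega}} guarantees that \emph{some} non-split crystalline extension exists (so $I^{\ps}_{\rm cr,2}$ is cut out by genuine crystalline points), but it does not say that \emph{every} extension with that trace is crystalline, which is what the proof of Proposition \ref{Prop-Rps-Rver-II} would need. Consequently the crystalline analogue of Proposition \ref{Prop-Rps-Rver-II}(ii) fails for $*=2$, and since the proof of Proposition \ref{peuver} invokes exactly this statement, your route to the $\fp_3$-contribution via the peu ramifi\'e ring breaks down as well.

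The paper handles $k=2$ by a separate direct argument: after twisting to $\tau=\ide$, the already-established semi-stable case gives $\cZ(R^{\ver}(2,\ide,\brho)/\varpi)=\fp_2+\fp_3$; since $R^{\ver}_{\rm cr}(2,\ide,\brho)$ is formally smooth by \cite[Proposition 3.6]{kw2}, its special fibre cycle is a single $\fp_i$; and $\fp_3$ is excluded because the image of $\Spec(R^{\ver}/\fp_3)$ in $\Spec R^{\ps}$ is the closed point, whereas one can exhibit crystalline lifts of $\brho$ with distinct traces. This yields $\cZ(R^{\ver}_{\rm cr}(2,\ide,\brho)/\varpi)=\fp_2$, matching $a^{\rm cr}_{0,0}=1$, $a^{\rm cr}_{p-1,0}=a^{\rm cr}_{p-3,1}=0$.
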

 
  \begin{proof}

In the case $k>2$, all the previous arguments in \S\S\ref{subsection-5.1}-\ref{subsection-5.2} go over verbatim with $I^{\ps}_*$, $I^{\ver}_*$ and $R^{\ver}(k,\tau,\brho)$ replaced by $I^{\ps}_{\rm cr,*}$, $I^{\ver}_{\rm cr,*}$ and $R_{\rm cr}^{\ver}(k,\tau,\brho)$, respectively. For example, Proposition \ref{Prop-Rps-Rver-II}, which is the key result, holds true,  since  a representation is potentially crystalline  of type $(k,\tau,\psi)$ if and only if its trace is.

We are left to treat the special case $k=2$. In this case there are crystalline representations and semi-stable non-crystalline representations with the same trace, which makes Proposition \ref{Prop-Rps-Rver-II}(ii)  fail when $*=2$. However, we give a direct proof in this case. After twisting, we may assume $\tau=\ide$ is the trivial type and $\psi$ is the trivial character. 

 First of all, Theorem \ref{bmng} implies  that    $$\cZ(R^{\ver}(2,\ide,\brho)/\varpi)=\fp_2+\fp_3$$ since $\overline{\sigma(2,\ide)}^{\rm ss}=\sigma_{p-1,0}$. By definition, $\Spec R^{\ver}_{\rm cr}(2,\ide,\brho)/\varpi$ is a union of irreducible components of $\Spec R^{\ver}(2,\ide,\brho)/\varpi$.
Moreover, we know by  \cite[Proposition 3.6]{kw2} that $R_{\rm cr}^{\ver}(2,\ide,\brho)$ is formally smooth, which implies that the cycle $\mathcal{Z}(R^{\ver}_{\rm cr}(2,\ide,\brho)/\varpi)$ is simply of the form $\fp_i$ for some $i\in\{2,3\}$.  However, we cannot have $i=3$, since the image of $\Spec (R^{\ver}/\fp_3)$ in $\Spec R^{\ps}$  reduces to  the closed point, whereas   that of $\Spec R^{\ver}_{\rm cr}(2,\ide,\brho)/\varpi$ does not because we can find easily two crystalline liftings of $\brho$ with distinct traces.  Hence we have   $\cZ(R_{\rm cr}^{\ver}(k,\tau,\brho)/\varpi)=\fp_2$ which proves the theorem since $\overline{\sigma^{\rm cr}(2,\ide)}^{\rm ss}=\sigma_{0,0}$.
\end{proof}

  \section{The Fontaine-Mazur conjecture}\label{section6}
 
This section is devoted to the proof of Theorem \ref{fm}. Since the arguments for deducing the Fontaine-Mazur conjecture from the Breuil-M\'ezard conjecture  are now standard, thanks to  \cite{ki09} (and its errata in \cite{GK}), we only emphasize how to modify Kisin's original proof in the cases that are not covered in \cite{ki09}.  In the following, whenever we quote a result in \cite[\S 2]{ki09}, we mean the corrected version given in \cite[Appendix B]{GK}. 
\medskip
 
Let $F$ be a totally real field in which $p$ is split. Let $D$ be a quaternion algebra with centre $F$, ramified at all infinite places and a set of finite places $\Sigma$ which does not contain the places above $p$. Let $U\subset (D\otimes_F\mathbb{A}_F^f)^{\times}$ be the open compact as in \cite[2.1.1]{ki09}. Fix a continuous representation $\sigma: U\ra \Aut(\prod_vW_{\sigma_v})$ such that  \[W_{\sigma_v}=\mathrm{Sym}^{k_v-2}\cO_{F_v}^2\otimes\sigma(\tau_v)\otimes \mathrm{det}^{w_v}, \quad  \forall v|p\] with $w_v$ an integer and $\tau_v: I_v\ra \GL_2(E)$ a representation with open kernel, and $\sigma$ is trivial at other places. Fix a character $\psi: (\mathbb{A}_F^f)^{\times}/F^{\times}\ra \cO^{\times}$ so that at any $U_v\cap \cO_{F_v}^{\times}$, $\sigma$ is given by $\psi$. 
Extend $\sigma$ to be a representation of the product $U(\mathbb{A}_F^f)^{\times}$ by letting the second component act by $\psi$. Let $S_{\sigma,\psi}(U,\cO)$ be the set of continuous function  $f: D^{\times}\backslash(D\otimes_F\mathbb{A}_{F}^f)^{\times}\ra \prod_vW_{\sigma_v}$ defined in \cite[2.1.1]{ki09}, which is chosen  to be a finite projective  $\cO$-module by shrinking $U$; cf. \cite[2.1.2]{ki09}. 

We  take $S$ to  be the union of $\Sigma_p:=\Sigma\cup \{v,v|p\}$ and some other unramified places  $v$ such that $U_v\subset D_v^{\times}$ consists of matrices which are upper triangular and unipotent modulo $\varpi_v$.  Consider a continuous absolutely irreducible representation \[\overline{\rho}: G_{F,S}\ra \GL_2(\FF)\] such that there is an eigenform $f\in S_{\sigma,\psi}(U,\cO)$ with the associated Galois representation reducing to $\overline{\rho}$; cf. \cite[2.2.3]{ki09} for additional technical conditions on $\rhobar$. We have the universal deformation ring $R_{F,S}:=R^{\rm ver}(\overline{\rho})$ analogous to the local setting.

In the following, it is more convenient to use  the universal framed deformation rings; see, for example, \cite[Section 2]{kw2} for basics. Note that by \cite[Proposition 2.1]{kw2} a universal framed deformation ring $R^{\square}$ is formally smooth over a corresponding versal deformation ring $R^{\ver}$, and that all the closed points of $\Spec R^{\square}[1/p]$ lying above a given closed point of $\Spec R^{\rm ver}[1/p]$ give rise to isomorphic representations. Hence our main results in Section \ref{proofbm} hold for framed deformation rings. 

We add the superscript $\square$ to the notation of  deformation rings to indicate framed deformations, and as before use the superscript $\psi$ to indicate the deformations with fixed determinant $\psi$. Among them, the universal framed deformation ring $R_{F,S}^{\square}$ of the global absolutely irreducible $\overline{\rho}$ is defined by considering deformations of $\overline{\rho}$, together with the lifts of  a fixed basis of (the representation space of) $\overline{\rho}|_{G_{F_v}}$ for each $v\in \Sigma_p$.  In particular, this gives a natural map of $\cO$-algebras $R_{\Sigma_p}^{\square,\psi}:=\widehat{\otimes}R_v^{\square,\psi}\ra R_{F,S}^{\square,\psi}$, where $R_v^{\square,\psi}$ is the local framed deformation ring of $\rhobar|_{G_{F_v}}$. We denote the various quotient rings  analogously. 
 
 Let $Q_n$ (for any $n\geq 1$) be the set of auxiliary primes as in \cite[2.2.4]{ki09}, for which $h:=|Q_n|=\mathrm{dim}_{\FF}H^1(G_{F,S},\mathrm{ad}^0\overline{\rho}(1))$ is independent of $n$ and $R_{S_{Q_n}}^{\square,\psi}$ ($S_{Q_n}=S\cup Q_n$ and  $U_v$ for $v\in Q_n$ are defined as in \cite[2.1.6]{ki09}) is topologically generated by $g=h+j-d$ elements  as an $R_{\Sigma_p}^{\square,\psi}$-algebra, with
$j=4|\Sigma_p|-1$ and $d=[F:\mathbb{Q}]+3|\Sigma_p|$. Set \[M_n=S_{\sigma,\psi}(U_{Q_n},\cO)_{\fm_{Q_n}}\otimes_{R_{F,S_{Q_n}}^{\psi}}R_{F,S_{Q_n}}^{\square,\psi},\] where the ideal $\fm_{Q_n}$ is associated to $\overline{\rho}$ and $Q_n$ as in \cite[2.1.5, 2.1.6]{ki09}, and $U_{Q_n}=\prod_{v\in Q_n}U_v$.

Fix a $K$-stable filtration of $W_{\sigma}\otimes_{\cO}\F$ by $\FF$-vector spaces:
\[0=L_0\subset\cdots\subset L_s=W_{\sigma}\otimes_{\cO}\FF,\] such that the graded piece $\sigma_i=L_{i+1}/L_i$ is absolutely irreducible, which then has the form $\sigma_i=\otimes_{v|p}\sigma_{n_{i,v},m_{i,v}}$, with $n_{i,v}\in \{0,\cdots,p-1\}$ and $m_{i,v}\in \{0,\cdots,p-2\}$.  This induces a filtration $\{M_n^i\}$ on $M_n\otimes_{\cO}\FF$ for any $n\geq 0$. Let $\mathfrak{c}_n\subset \cO\llbracket y_1,\cdots, y_{h+j}\rrbracket$ be the ideal as in \cite[2.2.9]{ki09}. There are maps of $R_{\infty}=R_{\Sigma_p}^{\square,\psi}\llbracket x_1,\cdots,x_g\rrbracket$-modules $f_n: M_{n+1}/\mathfrak{c}_{n+1}M_{n+1}\ra M_n/\mathfrak{c}_nM_n$ compatible with the filtrations (modulo $\varpi$). The $R_{\infty}$-module $M_{\infty}=\varprojlim M_n/\mathfrak{c}_nM_n$ is finite free as an $\cO\llbracket y_1,\cdots,y_{h+j}\rrbracket$-module, whose reduction mod $\varpi$ has a filtration \[0=M_{\infty}^0\subset\cdots\subset M_{\infty}^s=M_{\infty}\otimes_{\cO}\FF,\] each of whose graded pieces is a finite free $\FF\llbracket y_1,\cdots,y_{h+j}\rrbracket$-module.

 As explained in \cite[2.2.10]{ki09}, the action of $R_{v}^{\square,\psi}$ on $M_{\infty}$ for  $v|p$   factors through the potentially semi-stable quotient $\bar{R}_{v}^{\square,\psi}$, twist of $R^{\square,\psi}(k_v,\tau_v,\overline{\rho}|_{G_{F_v}}\otimes \omega^{-w_v})$, and for $v\in \Sigma$ factors through certain quotient $\bar{R}_v^{\square,\psi}$ whose closed points parametrize extensions of $\gamma_v$ by $\gamma_v(1)$, where    $\gamma_v$ is the unramified character such that $\gamma_v^2=\psi|_{G_{F_v}}$. Denote $\bar{R}_{\Sigma_p}^{\square,\psi}:=\widehat{\otimes}_{v\in \Sigma_p}\bar{R}_v^{\square,\psi}$. It can be shown that  $\bar{R}_{\Sigma_p}^{\square,\psi}$ is of relative dimension $d$ over $\cO$.  Now $M_{\infty}$ is an $\bar{R}_{\infty}=\bar{R}_{\Sigma_p}^{\square,\psi}\llbracket x_1,\cdots,x_g\rrbracket$-module.

Let $i\in \{1,\cdots,s\}$. For $v\in \Sigma$ and $v|p$ such that  $\overline{\rho}|_{G_{F_v}}$ is not a twist of $
\left(
\begin{array}{ccc}
  \omega & *     \\
  0&\mathbbm{1}   
\end{array}
\right)$, let $\bar{R}_{v,i}^{\square,\psi}$ be as   in the proof of \cite[2.2.15]{ki09}.  Otherwise, we define  for $v|p$ with $\overline{\rho}|_{G_{F_v}}$ (possibly split) peu ramifi\'{e} (resp. tr\`{e}s ramifi\'{e})  that  \[\bar{R}_{v,i}^{\square,\psi}=R^{\square,\psi_{i,v}}(2,(\tilde{\omega}^{m_{i,v}})^{\oplus 2},\overline{\rho}|_{G_{F_v}})/\varpi_v\] with  $\psi_{i,v}:G_{F_v}\ra \cO^{\times}$ any character such that  $\psi_{i,v}|_{I_{F_v}}=\epsilon^{n_{i,v}}\tilde{\omega}^{2m_{i,v}}$ and $\psi_{i,v}\equiv \psi|_{G_{F_v}}$ mod $\varpi_v$ (cf. \cite[2.2.13]{ki09}). That is, we use the semi-stable instead of crystalline deformation rings in the latter cases as building blocks, because of the appearance of components of semi-stable non-crystalline points. Then we form the completed tensor product $\bar{R}_{\Sigma_p,i}^{\square,\psi}$ of the $\bar{R}_{v,i}^{\square,\psi}$ for all $v\in \Sigma_p$ and set $\bar{R}_{\infty}^i:=\bar{R}_{\Sigma_p,i}^{\square,\psi}\llbracket x_1,\cdots,x_g\rrbracket$.


\begin{lemma}\label{prescribe}

For any $i=1,\cdots,s$, the support of the  $\bar{R}_{\infty}^i$-module $M_{\infty}^i/M_{\infty}^{i-1}$ is all of  $\Spec \bar{R}_{\infty}^i$.

\end{lemma}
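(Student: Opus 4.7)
The plan is to mimic the argument of \cite[Lemma 2.2.15]{ki09}, with Theorems \ref{bmng} and \ref{bmngcris} as the new input covering places $v|p$ where $\overline{\rho}|_{G_{F_v}}$ is a twist of an extension of $\ide$ by $\omega$ (split or not). The strategy is to show that $M_\infty^i/M_\infty^{i-1}$ is a maximal Cohen-Macaulay $\bar{R}_\infty^i$-module, and then to compare its Hilbert-Samuel multiplicity with that of $\bar{R}_\infty^i$; the support of a maximal Cohen-Macaulay module is a union of irreducible components, so once the multiplicities match it must be the whole of $\Spec\bar{R}_\infty^i$.

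First I would check that $M_\infty^i/M_\infty^{i-1}$ is finite free over $\FF\llbracket y_1,\ldots,y_{h+j}\rrbracket$. This follows from the freeness of $M_\infty$ over $\cO\llbracket y_1,\ldots,y_{h+j}\rrbracket$ together with the compatibility of the filtration on $M_\infty\otimes_\cO\FF$ with the transition maps $f_n$ modulo $\varpi$. Since $\dim\bar{R}_\infty^i = g+d = h+j$ (using that each local ring $\bar{R}_{v,i}^{\square,\psi}$ has the expected dimension by Theorem \ref{pst}(iii) in Kisin's cases, and by the analogous statement for the new semi-stable rings), the module $M_\infty^i/M_\infty^{i-1}$ is maximal Cohen-Macaulay over $\bar{R}_\infty^i$, hence its support is a union of irreducible components.

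To compare multiplicities, on the one hand the Breuil-M\'ezard conjecture, now known in all cases by combining \cite{ki09}, \cite{pa12} and Theorems \ref{bmng}, \ref{bmngcris} here, gives
\[
e(\bar{R}_\infty^i) = \prod_{v\in\Sigma_p} e(\bar{R}_{v,i}^{\square,\psi}),
\]
each factor at $v|p$ equaling $\mu_{n_{i,v},m_{i,v}}(\overline{\rho}|_{G_{F_v}})$ up to a common framing contribution. On the other hand, summing over the graded pieces and using $\overline{\sigma(k,\tau)}^{\rm ss}=\bigoplus_i\sigma_i$ at each $v|p$, one has
\[
e(M_\infty/\varpi M_\infty) = \sum_i e(M_\infty^i/M_\infty^{i-1}),
\]
which is also expressible via patching as a sum of products of local Breuil-M\'ezard contributions. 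The inequality $e(M_\infty^i/M_\infty^{i-1}) \geq r_i \cdot e(\bar{R}_\infty^i)$ (valid because $M_\infty^i/M_\infty^{i-1}$ is maximal Cohen-Macaulay of some generic rank $r_i$ on each component of its support) combined with equality in the sum forces equality term by term, and thus full support.

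The main obstacle is to verify that Kisin's patching framework remains valid in the new setting, namely that the action of $R_\infty$ on $M_\infty^i/M_\infty^{i-1}$ factors through $\bar{R}_\infty^i$ even when we are forced to use the \emph{semi-stable} deformation ring (rather than the crystalline one) at primes $v|p$ with $\overline{\rho}|_{G_{F_v}}$ a twist of an extension involving $\ide$ and $\omega$. This is built into the definition of $\bar{R}_{v,i}^{\square,\psi}$ as the reduction modulo $\varpi_v$ of the appropriate semi-stable ring; the crucial input is that Theorem \ref{bmng} computes the Breuil-M\'ezard multiplicity for this semi-stable ring (not only for its crystalline counterpart), which is exactly what Kisin's count requires.
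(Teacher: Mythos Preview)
The maximal Cohen-Macaulay step is correct and is part of the argument, but the multiplicity comparison you propose is circular. You assert that $e(M_\infty/\varpi)$ is ``expressible via patching as a sum of products of local Breuil-M\'ezard contributions,'' yet there is no independent formula for $e(M_\infty/\varpi)$ available at this point: one only knows that $M_\infty$ is free over $\cO\llbracket y_1,\dots,y_{h+j}\rrbracket$, which says nothing about its Hilbert-Samuel multiplicity as an $\bar{R}_\infty$-module without first controlling its support and generic ranks over $\bar{R}_\infty$ --- precisely what the lemma is meant to establish. Relatedly, the inequality $e(M_\infty^i/M_\infty^{i-1}) \geq r_i\cdot e(\bar{R}_\infty^i)$ you invoke fails whenever the support is a \emph{proper} union of components of $\Spec\bar{R}_\infty^i$, so you have no lower bound to sum against the right-hand side, and the ``equality in the sum'' you need is itself unproven.

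The paper instead uses a global input. After the (correct) observation that the support is a union of irreducible components of $\Spec\bar{R}_\infty^i$, it invokes \cite[Theorem~9.7]{kw2} on the existence of modular liftings of prescribed local type to show that the support meets \emph{every} component; dimension counting then finishes. For this one must know the irreducible components of $\Spec\bar{R}_{v,i}^{\square,\psi}$ explicitly, which is where \cite[Theorem~5.3.1(i)]{bm1} and the Breuil-M\'ezard conjecture (in the peu and tr\`es ramifi\'e cases) enter: they identify the one or two components of the semi-stable weight-two ring, and Khare--Wintenberger then supply a modular point on each. The multiplicity comparison you sketch is essentially the content of the \emph{subsequent} Proposition~\ref{faithful} (via \cite[2.2.17]{ki09}), which takes the present lemma as input rather than the other way around.
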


\begin{proof}

This is a modification of the proof of \cite[2.2.15]{ki09}, which uses the existence of modular liftings of prescribed type. For the latter in the cases that $\overline{\rho}|_{G_{F_v}}$ is  a twist of $
\left(
\begin{array}{ccc}
  \omega & *     \\
  0&\mathbbm{1}   
\end{array}
\right)$  ($v|p$), which is not treated in \cite{ki09}, we use \cite[Theorem 9.7]{kw2} as follows. 

Suppose we are  in these cases.  
By \cite[Theorem 5.3.1(i)]{bm1}, we know that the cycle $\cZ(R^{\square,\psi_{i,v}}(2,(\tilde{\omega}^{m_{i,v}})^{\oplus 2},\overline{\rho}|_{G_{F_v}})/\varpi)$ is irreducible if $\overline{\rho}$ is tr\`{e}s ramifi\'{e}.  In the (possibly split) peu ramifi\'{e} case, it is the sum of two irreducible components, one of which is just $\cZ(R_{\rm cr}^{\square,\psi_{i,v}}(2,(\tilde{\omega}^{m_{i,v}})^{\oplus 2},\overline{\rho}|_{G_{F_v}})/\varpi)$, and the other of which is the closure of the semi-stable non-crystalline points, as predicted by the Breuil-M\'{e}zard conjecture. 
Now \cite[Theorem 9.7]{kw2} tells us that the support of $M_{\infty}^i/M_{\infty}^{i-1}$, as an $\bar{R}_{\infty}^i$-module, meets each irreducible component of $\bar{R}_{\infty}^i$, and in fact consists of all of it by dimension counting; cf. the proof of \cite[2.2.15]{ki09}.
\end{proof}

  \begin{proposition}\label{faithful}

  $M_{\infty}$ is a faithful $\bar{R}_{\infty}$-module. 
 \end{proposition}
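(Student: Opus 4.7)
The plan is to follow the proof of \cite[Proposition 2.2.17]{ki09} (corrected in \cite[Appendix B]{GK}), using our Theorems \ref{bmng}, \ref{bmngcris} and Remark \ref{bmgenericsplit} as the Breuil-M\'{e}zard input at places $v|p$ where $\overline{\rho}|_{G_{F_v}}$ is, up to twist, an extension of $\mathbbm{1}$ by $\omega$ (split or not); at all other places the required Breuil-M\'ezard statement is already supplied by \cite{ki09, pa12}. Thus the proposition is an adaptation of an existing patching argument with new local input.

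The dimension count $\dim \cO\llbracket y_1,\ldots,y_{h+j}\rrbracket = h+j+1 = (d+1)+g = \dim \bar{R}_\infty$, together with the freeness of $M_\infty$ over $\cO\llbracket y_1,\ldots,y_{h+j}\rrbracket$, shows that $M_\infty$ is a maximal Cohen-Macaulay $\bar{R}_\infty$-module, and moreover that $\varpi$ is $M_\infty$-regular. Consequently, faithfulness of $M_\infty$ is equivalent to the statement that every minimal prime of $\bar{R}_\infty/\varpi$ lies in $\Supp_{\bar{R}_\infty}(M_\infty/\varpi M_\infty)$, which will follow from an equality of top-dimensional cycles
\[
\cZ_{h+j}(M_\infty/\varpi M_\infty) \;=\; r\cdot \cZ_{h+j}(\bar{R}_\infty/\varpi),
\]
where $r$ denotes the rank of $M_\infty$ over $\cO\llbracket y_1,\ldots,y_{h+j}\rrbracket$.

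To establish this cycle identity, additivity along the filtration $\{M_\infty^i\}$ combined with Lemma \ref{prescribe} reduces the left-hand side to $\sum_i r_i \cdot \cZ_{h+j}(\bar{R}_\infty^i/\varpi)$, where $r_i$ is the generic rank of $M_\infty^i/M_\infty^{i-1}$ over $\bar{R}_\infty^i$. For the right-hand side, Theorem \ref{mainth} applied at each $v|p$ rewrites $\cZ(\bar{R}_v^{\square,\psi}/\varpi) = \sum_{n,m} a_{n,m}(k_v,\tau_v)\,\cZ_{n,m}(\overline{\rho}|_{G_{F_v}})$ (and analogously in the crystalline case), and the compatibility of Hilbert-Samuel cycles with completed tensor products of power series rings over $\FF$ allows one to rearrange $\cZ(\bar{R}_\infty/\varpi)$ into a sum indexed by the graded pieces $\sigma_i = \otimes_{v|p}\sigma_{n_{i,v},m_{i,v}}$ of the $K$-filtration of $W_\sigma \otimes_{\cO}\FF$. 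Matching the two sides piece by piece then proceeds exactly as in \cite{ki09, GK}.

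The main (and really only) obstacle is the cycle-level Breuil-M\'ezard identity at the remaining residual types, namely our Theorem \ref{mainth}; once this is in hand, no further new ingredient is needed, and the rest of the argument is transcribed essentially verbatim from the proof of \cite[Proposition 2.2.17]{ki09}. In particular, one does not need to revisit the global automorphic/patching input, which is already formulated in sufficient generality in \cite{ki09, GK, kw2}.
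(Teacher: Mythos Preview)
Your proposal is correct and takes essentially the same approach as the paper: the paper's proof simply invokes Theorem \ref{bmng} together with the main result of \cite{pa12} as the Breuil--M\'ezard input, cites Lemma \ref{prescribe}, and then defers to the argument of \cite[2.2.17]{ki09}. Your write-up is a faithful expansion of that argument, spelling out the dimension count, the Cohen--Macaulay reduction, and the cycle comparison that are implicit in the reference to \cite{ki09}.
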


\begin{proof}

Recall Theorem \ref{bmng} and the main result of \cite{pa12}.  Now the result follows from Lemma \ref{prescribe} and   the argument of \cite[2.2.17]{ki09}.
\end{proof}

 \begin{theorem} Let $F$ be a totally real field in which $p$ splits. Let $\rho: G_{F,S}\ra \GL_2(\cO)$ be a continuous representation such that $\rhobar$ is odd, $\overline{\rho}|_{G_{F(\zeta_p)}}$ is absolutely irreducible, the restriction $\rho|_{G_{F_v}}$ for each place $v|p$  is potentially semi-stable of distinct Hodge-Tate weights, and the residual representation $\overline{\rho}$ is modular. Then $\rho$ comes from a Hilbert modular form. 
 
 As a consequence, Theorem \ref{fm} holds. 
 \end{theorem}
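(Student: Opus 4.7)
The plan is to deduce modularity of $\rho$ from the faithfulness statement of Proposition \ref{faithful} by the standard patching formalism, and then descend Theorem \ref{fm} to $\Q$ by solvable base change. First I would match the data: choose the $p$-adic Hodge type $(k_v,\tau_v,\psi_v)$ prescribed by $\rho|_{G_{F_v}}$ for each $v\mid p$, together with suitable principal series or special types at places in $\Sigma$, and pick bases for $\rho|_{G_{F_v}}$ at $v\in\Sigma_p$. This exhibits $\rho$ as a closed point $x\in\Spec\bar{R}_{\Sigma_p}^{\square,\psi}[1/p]$, which (using the formal smoothness provided by the auxiliary variables $x_1,\ldots,x_g$) lifts to a closed point of $\Spec\bar{R}_\infty[1/p]$.

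Next, I would observe that $M_\infty$ is a maximal Cohen--Macaulay $\bar{R}_\infty$-module: it is finite free over $\cO\llbracket y_1,\ldots,y_{h+j}\rrbracket$, while $\bar{R}_\infty$ is equidimensional of the same dimension over $\cO$ by the results recalled before Lemma \ref{prescribe}. Hence its support is a union of irreducible components of $\Spec\bar{R}_\infty$, and by Proposition \ref{faithful} this support is the whole of $\Spec\bar{R}_\infty$; in particular $x\in\Supp M_\infty$. Pulling the point $x$ back through the patching construction, in the manner of \cite[\S 2.2]{ki09} (as corrected in \cite[Appendix B]{GK}), specializing the patching ideal to zero produces a cuspidal automorphic eigenform $f$ on $D^{\times}$ whose associated Galois representation has the same trace as $\rho$; Jacquet--Langlands then yields a Hilbert modular eigenform, proving the first assertion.

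For Theorem \ref{fm}, I would follow the argument of \cite[\S 3.4]{ki09}. Given a geometric $\rho\colon G_\Q\to\GL_2(\cO)$ satisfying the hypotheses, the plan is to choose a solvable totally real extension $F/\Q$ in which $p$ splits and over which $\rho|_{G_F}$ (i) remains unramified outside a finite set of primes with $\overline{\rho}|_{G_{F(\zeta_p)}}$ still absolutely irreducible, and (ii) is residually modular, e.g.\ after an auxiliary base change obtained via Langlands--Tunnell and Skinner--Wiles. The modularity of $\rho|_{G_F}$ will then follow from the Hilbert modular statement proved above, since Proposition \ref{faithful} no longer excludes the case when $\overline{\rho}|_{G_{\Qp}}$ is a twist of an extension of $\ide$ by $\omega$, split or not---this is precisely the new input contributed by Theorem \ref{mainth} and the main result of \cite{pa12}. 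Cyclic base change descent will then propagate modularity from $F$ back to $\Q$.

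The main obstacle is the verification that the chosen solvable extension can simultaneously be arranged to preserve residual irreducibility, residual modularity, and the local conditions at $p$ needed to invoke the Hilbert theorem; all the pieces are available in the literature, and the genuinely new ingredient is the removal of the local restriction on $\overline{\rho}|_{G_{\Qp}}$ afforded by Theorem \ref{mainth}.
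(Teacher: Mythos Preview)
Your proposal is essentially the same as the paper's: deduce modularity from Proposition \ref{faithful} via the patching mechanism of \cite[2.2.11]{ki09}, then descend. Two minor points deserve comment.

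First, in the Hilbert statement you place $\rho$ directly as a point of $\Spec\bar{R}_{\Sigma_p}^{\square,\psi}[1/p]$, but the patching framework set up in \S\ref{section6} (following \cite[\S2.2]{ki09}) has already fixed the local component $\bar{R}_v^{\square,\psi}$ at each $v\in\Sigma$ to parametrize extensions of $\gamma_v$ by $\gamma_v(1)$; an arbitrary $\rho$ need not land there. The paper accordingly proves modularity in two steps: first in this special case via Proposition \ref{faithful} and \cite[2.2.11]{ki09}, then in general by the solvable base change argument of \cite[2.2.18]{ki09}. Your sketch omits this intermediate reduction.

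Second, for Theorem \ref{fm} the paper takes a shorter route than you propose: since $\Q$ is itself totally real with $p$ trivially split, one applies the Hilbert statement directly with $F=\Q$, and the only missing hypothesis is residual modularity of $\overline{\rho}$ over $\Q$, supplied by Serre's conjecture \cite{kw1,kw2}. Your proposed detour through an auxiliary solvable extension and Langlands--Tunnell/Skinner--Wiles would also work, but is unnecessary once Khare--Wintenberger is available.
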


\begin{proof}
By Proposition \ref{faithful} and \cite[2.2.11]{ki09}, the modularity holds  in the case that $\rho|_{I_{F_v}}, v\in \Sigma$, is an extension of $\gamma_v$ by $\gamma_v(1)$.  The general case then follows from the base change arguments as in the proof of \cite[2.2.18]{ki09}. For Theorem \ref{fm}, one only needs that $\overline{\rho}$ is modular, which  is the main result of \cite{kw1}, \cite{kw2}.
\end{proof}

\vspace{\baselineskip}

Yongquan Hu

IRMAR - UMR CNRS 6625

Campus Beaulieu, 35042 Rennes cedex, France

E-mail: yongquan.hu@univ-rennes1.fr

\vspace{\baselineskip}
 \vspace{\baselineskip}

Fucheng Tan

Department of Mathematics, Michigan State University

East Lansing, Michigan 48824, USA

E-mail: ftan@math.msu.edu

\end{document}